\newtheorem{theorem}{Theorem}
\newtheorem{lemma}[theorem]{Lemma}
\newtheorem{proposition}[theorem]{Proposition}
\newtheorem{remark}{Remark}
\theoremstyle{definition}
\newcommand{\be}{\begin{equation}}
\newcommand{\bel}[1]{\begin{equation}\label{#1}}
\newcommand{\ee}{\end{equation}}
\newcommand{\barr}{\begin{eqnarray}}
\newcommand{\earr}{\end{eqnarray}}
\newcommand{\bars}{\begin{eqnarray*}}
\newcommand{\ears}{\end{eqnarray*}}
\newtheorem{subn}{\name}
\newcommand{\bsn}[1]{\def\name{#1}\begin{subn}}
\newcommand{\esn}{\end{subn}}
\newtheorem{sub}{\name}[section]
\newcommand{\bs}{\begin{sub}}
\newcommand{\es}{\end{sub}}
\newcommand{\bth}[1]{\def\name{Theorem}\begin{sub}\label{t:#1}}
\newcommand{\blemma}[1]{\def\name{Lemma}\begin{sub}\label{l:#1}}
\newcommand{\bcor}[1]{\def\name{Corollary}\begin{sub}\label{c:#1}}
\newcommand{\bdef}[1]{\def\name{Definition}\begin{sub}\label{d:#1}}
\newcommand{\bprop}[1]{\def\name{Proposition}\begin{sub}\label{p:#1}}
\newcommand{\BA}{\begin{array}}
\newcommand{\EA}{\end{array}}
\newcommand{\BAN}{\renewcommand{\arraystretch}{1.2}
\setlength{\arraycolsep}{2pt}\begin{array}}
\newcommand{\BAV}[2]{\renewcommand{\arraystretch}{#1}
\setlength{\arraycolsep}{#2}\begin{array}}
\newcommand{\BSA}{\begin{subarray}}
\newcommand{\ESA}{\end{subarray}}
\newcommand{\BAL}{\begin{aligned}}
\newcommand{\EAL}{\end{aligned}}
\newcommand{\BALG}{\begin{alignat}}
\newcommand{\EALG}{\end{alignat}}
\newcommand{\BALGN}{\begin{alignat*}}
\newcommand{\EALGN}{\end{alignat*}}
\def\angb<#1>{\langle #1 \rangle}%% angle bracket
\newcommand {\rd}{\color{red}}
\numberwithin{equation}{section}
\theoremstyle{definition}
\newenvironment{formula}[1]{\begin{equation}\label{eq:#1}}
                       {\end{equation}\noindent}
\def\Fi#1{\begin{formula}{#1}}
\def\Ff{\end{formula}\noindent}
\title[]{The eigentheory for nonlocal  cooperative-advective system  and its role in the study of free boundary system for directional epidemic models}
\author[1]{Soufiane Bentout} \let\Author\@author
\affil[1]{Department of Mathematics and Informatics, University of  Ain Temouchent,\\ Belhadj Bouchaib, BP 284 RP, 46000, Algeria\\
Engineering and Sustainable Development Laboratory, Faculty of Science and Technology,  University of Ain Temouchent, Ain Temouchent, 46000, Algeria}
\author[$2,*$]{Hoang-Hung Vo}
\affil[$2$]{Faculty of Mathematics and Applications, Saigon University, 273 An Duong Vuong st., Ward Choquan, Ho Chi Minh City, Viet Nam}
\email{\rd vhhung@sgu.edu.vn}
\email{soufiane.bentout@univ-temouchent.edu.dz}
\begin{document}

%%%%%%%%%%%%
%%Setting up the TITLE and AUTHOR
\date{\today}

\keywords{Free boundaries; Epidemic models; Nonlocal diffusion; Advection; Principal eigenvalue}

%  \textheight=8 true in
%   \textwidth=6 true in
%   \oddsidemargin=-0.8 cm
%   \evensidemargin=-0.8 cm
\maketitle
\begin{adjustwidth}{1.5cm}{1.5cm}
\begin{center}
\let\thefootnote\relax\footnotetext{\textit{}}

\let\thefootnote\relax\footnotetext{\textit{$^{2,*}$Corresponding author. Email address: vhhung@sgu.edu.vn}}

\Author
\end{center}
\end{adjustwidth}

%  \textheight=9 true in
%   \textwidth=7 true in
%   \oddsidemargin=-0.8 cm
%   \evensidemargin=-0.8 cm
%\maketitle

\begin{abstract}
In this paper, in the absence of a variational structure but under sharper regularity requirements on the solutions, we propose and analyze a nonlocal cooperative reaction--diffusion system with free boundaries and drift terms,  motivated by the models of directional epidemic spread. We first establish the well-posedness of the local problem and the global existence and uniqueness of classical solutions in $C^1$ space, which present substantial analytical challenges compared with the earlier works~\cite{Du,Berestycki2016a,Berestycki2016b,Cao2019,NguyenVo2022,Tang2024a,Tang2024b}.

Next, we investigate the associated nonlocal eigenvalue problem and establish the existence, simplicity, qualitative properties, and asymptotic behavior of the principal eigenvalue.
The analysis relies on a combination of Fredholm theory, the Crandall–Rabinowitz bifurcation theorem, and Hadamard-type derivative formulae to describe its dependence on the model parameters and to elucidate its connection with the basic reproduction number~$R_0$.

Based on the spectral characterization, we demonstrate that the system exhibits a \emph{sharp vanishing--spreading dichotomy} in its long-term dynamics.
In particular, when $R_0 \le 1$, all solutions vanish, whereas for $R_0 > 1$, the outcome depends on the initial domain size~$h_0$ and the free-boundary expansion rate~$\mu$.
Moreover, there exists a critical habitat length~$\mathcal L^\ast$ such that if $h_0 < \mathcal L^\ast$, one can identify a critical threshold $\widehat{\mu}>0$  that vanishing occurs for $\mu \in (0,\widehat{\mu}]$, while spreading happens for $\mu > \widehat{\mu}$. In the spreading regime, solutions converge to the unique positive steady state, whereas in the vanishing regime they decay uniformly to zero. These results establish a \emph{rigorous theoretical framework} for understanding the threshold dynamics of cooperative--advective nonlocal systems with free boundaries arising in directional epidemic models, and potentially provide a mathematical foundation for further studies in \emph{epidemic modeling}, \emph{ecological invasion}, and \emph{population dynamics}.

.

\end{abstract}

%utilizing

%\subjclass[2010]{Primary 35B50, 47G20; secondary 35J60}

%\date{January 1, 2001 and, in revised form, June 22, 2001.}

%\dedicatory{This paper is dedicated to Professor E. Zuazua on his 60th birthday with our deep admiration}

\tableofcontents

\section{ Introduction}

Infectious diseases and ecological invasions have posed persistent threats to human society and natural ecosystems for centuries. Their spatial spread is shaped not only by local random movement but also by long-range dispersal and directed drift driven by environmental flows or human activities. Capturing these mechanisms in mathematical models is essential for elucidating threshold dynamics and predicting long-term outcomes. Classical reaction–diffusion frameworks based on the Laplacian provide a baseline, but they can be inadequate when movements are inherently nonlocal or exhibit preferential directions. Waterborne diseases, including cholera, botulism, giardiasis, diarrhoea, dysentery,
typhoid, campylobacteriosis and salmonellosis, remain a major threat to global public health,
particularly in developing countries. These infections are transmitted when pathogenic
microorganisms contained in contaminated freshwater, surface water, lakes or aquatic reservoirs
enter human populations. Recent estimates indicate that waterborne diseases contribute approximately
3.6\% of the total global burden of disease measured in DALYs (disability-adjusted life years) and are
responsible for about 1.5 million deaths annually \cite{Weiss2016}. In the case of cholera, the impact
is especially severe, with an estimated 2.9 million cases and 95,000 deaths each year across
69 endemic countries \cite{Ali2015}. To analyze and predict the spread of waterborne infections, numerous mathematical
models have been developed. Early approaches, typically formulated as systems of
ordinary differential equations, employed community--water compartmental structures
to represent pathogen transmission during cholera outbreaks
\cite{Wang2015, Andrews2011, Codeco2001, Hartley2006, Mukandavire2011,
Tien2010, Bertuzzo2011, Eisenberg2013, Wang2015, Pascual2017,
Mukandavire2013, Chao2011, Bertuzzo2017, Tien2011, Eisenberg2016, Tuite2011, King_2018}. To capture more realistic dynamics, subsequent studies incorporated
environmental and spatial heterogeneity, thereby extending the modeling framework
beyond purely temporal dynamics \cite{Bertuzzo2008, Bertuzzo2010, Righetto2013,
Mari2012}. In particular, spatially explicit models have been proposed using patches,
networks, or directed graphs \cite{Bertuzzo2007, Righetto2011, Casagrandi2006,
Bertuzzo2016}, where nodes represent localities such as towns or villages and edges
describe transport-mediated connections. Within these frameworks, the dynamics of
waterborne pathogens are governed by systems of differential equations that explicitly
account for dispersal and directed transport processes. Comprehensive reviews of
mathematical modeling for cholera outbreaks are given in \cite{Feng2016}, while
practical implications for public health interventions are summarized in \cite{Codeco2009}.

The mathematical modeling of man--environment--man epidemic systems originally traces back
to the pioneering works of Capasso and his collaborators
\cite{Capasso1978,Capasso1979,Capasso1982,Capasso1988,Capasso1997}.
Their central idea was to distinguish explicitly between the dynamics of the
infected human population and the pathogen concentration in the environment.
This framework provided one of the first rigorous mathematical models for
waterborne diseases such as cholera. In its simplest form, the core model can be written as a system of two ordinary
differential equations:
\begin{equation}\label{eq:capasso-ode}
\begin{cases}
\dot z_1(t) = -\,a_{11}\, z_1(t) + a_{12}\, z_2(t),\\[0.4em]
\dot z_2(t) = -\,a_{22}\, z_2(t) + g\!\big(z_1(t)\big),
\end{cases}\quad\quad t>0
\end{equation}
where $z_1(t)$ denotes the concentration of pathogens in the environment and
$z_2(t)$ the density of infected individuals. The parameter $1/a_{11}$
represents the mean lifetime of the pathogen in the aquatic medium,
$1/a_{22}$ the average duration of infection, and $a_{12}$ the rate at which
infected individuals contribute to the environmental contamination. The
function $g(\cdot)$ denotes the force of infection, typically assumed to be
increasing, with $g(0)=g'(0)=0$, and saturating at high contamination levels.

To account for spatial spread, Capasso and Kunisch \cite{Capasso1988} extended
this model to a reaction--diffusion framework, in which the environmental
pathogen diffuses in space while the infected population remains localized:
\begin{equation}\label{eq:capasso-pde}
\begin{cases}
\partial_t u_1(x,t) = d\,\Delta u_1(x,t) - a_{11}\,u_1(x,t) + a_{12}\,u_2(x,t),\\[0.4em]
\partial_t u_2(x,t) = -\,a_{22}\,u_2(x,t) + a_{21}\,g\!\big(u_1(x,t)\big),
\end{cases}
\qquad x \in \Omega, \; t>0,
\end{equation}
where $u_1(x,t)$ denotes the environmental pathogen density and $u_2(x,t)$
the density of infected individuals. Appropriate boundary conditions
(Dirichlet or Neumann) are imposed depending on the epidemiological
context. Later, an important models for waterborne disease dynamics was proposed by Hsu and Yang~\cite{Hsu2013}, who studied the existence, uniqueness, monotonicity and asymptotic behavior of traveling waves to the following reaction--diffusion system:

\begin{equation}
\label{eq:HYmodel}
\begin{cases}
\partial_t u = d_1 \Delta u - a u + H(v), & t>0,\; x\in \mathbb{R},\\[0.4em]
\partial_t v = d_2 \Delta v - b v + G(u), & t>0,\; x\in \mathbb{R},
\end{cases}
\end{equation}
where $u(t,x)$ denotes the pathogen density in the aquatic environment, while $v(t,x)$ represents the density of infective hosts.
Here $d_1,d_2>0$ are diffusion coefficients, and $a,b>0$ are the pathogen decay and host removal (recovery/mortality) rates, respectively.
The feedback functions $H,G:\mathbb{R}_+\to\mathbb{R}_+$ are assumed to satisfy
\begin{equation}
\label{eq:HYassump}
\left\{
\begin{aligned}
& H,G \in C^2([0,\infty)), \quad H(0)=G(0)=0,\quad H'(z),G'(z)>0,\ \forall z\geq 0, \\[0.4em]
& H''(z),G''(z)<0,\ \forall z>0; \exists\,\bar z>0:\ G\!\left(\tfrac{H(\bar z)}{a}\right)< b\,\bar z .
\end{aligned}
\right.
\tag{1.4}
\end{equation}
Here, the variable $u$ describes the concentration of pathogens (e.g.\ \emph{Vibrio cholerae}) in water sources and its dynamics combine spatial diffusion, natural clearance at rate $a$, and  $H(v)$ captures nonlinear shedding dynamics, which may saturate due to limits on pathogen output, stage-dependent infectivity, or environmental capacity.
The variable $v$ denotes the infected host population, which diffuses spatially with coefficient $d_2$, leaves the class at rate $b$, $G(u)$ represents nonlinear dose--response infection, reflecting that risk saturates at high pathogen concentrations. The concavity conditions $H''<0$ and $G''<0$ capture biological saturation: additional infectives do not increase shedding indefinitely, and infection risk does not grow linearly with pathogen concentration but levels off due to immune responses or behavioral constraints. In contrast with linear or bilinear incidence models, the use of two separate concave functions, $H(v)$ and $G(u)$, allows the shedding and infection processes to be modeled independently. This dual structure yields richer threshold behavior, backward bifurcation, and realistic wave propagation dynamics.

A modern development of waterborne epidemic modeling is due to Fitzgibbon,
Morgan, Webb and Wu \cite{Fitzgibbon2020}, who formulated a coupled
community--river system to describe the 2010 cholera outbreak in Haiti. In
their framework, the human population of each community is divided into
susceptible $S_j$, infected $I_j$, and a local pathogen reservoir $B_j$,
while $b(x,t)$ denotes the pathogen concentration transported through the
river system. The complete system reads
\begin{equation}\label{eq:fitzgibbon-haiti}
\left\{
\begin{aligned}
\dfrac{dS_j}{dt} &= -\,\frac{\beta_j S_j B_j}{k_j+B_j},
&& t>0,\ j=1,2,\dots,n, \\[0.6em]
\dfrac{dI_j}{dt} &= \frac{\beta_j S_j B_j}{k_j+B_j} - \gamma_j I_j,
&& t>0,\ j=1,2,\dots,n, \\[0.6em]
\dfrac{dB_j}{dt} &= r_j I_j
+ \mu_j\,\frac{1}{d_j}\!\int_{x_j}^{x_j+d_j}\! b(x,t)\,dx
- (\rho_j+\delta_j)B_j,
&& t>0,\ j=1,2,\dots,n, \\[0.8em]
\partial_t b(x,t) &= \varepsilon\,\partial_{xx} b - q\,\partial_x b
+ \sum_{j=1}^n \chi_j(x)\Big(\tfrac{\rho_j}{d_j}B_j - \mu_j b\Big) - \delta_r b,
&& t>0,\ x\in\mathbb{R},
\end{aligned}
\right.
\end{equation}
with initial conditions
\[
b(x,0)=b_0(x), \qquad S_j(0)=S_{j0},\ I_j(0)=I_{j0},\ B_j(0)=B_{j0}.
\]
Here, $\chi_j(x)=\mathbf{1}_{[x_j,x_j+d_j]}(x)$ denotes the indicator of the
river segment corresponding to community $j$. The parameters $\varepsilon$ and
$q$ represent the diffusion and advection coefficients in the river,
respectively. The terms $\rho_j$ and $\mu_j$ model the exchange of pathogens
between the community reservoir $B_j$ and the river compartment $b$. A central feature of this model is the explicit advection term
$-q\,\partial_x b$ in the river dynamics. While diffusion captures only random
mixing of pathogens in water, the advection term represents the directed
transport due to river currents. In practice, such directed transport is the
primary mechanism by which cholera bacteria spread from rural upstream regions
to densely populated downstream communities. Ignoring advection would
significantly underestimate the velocity and spatial extent of outbreaks.
Including this effect allows the model to reproduce the observed rapid spread
of cholera along the Haitian river network, and highlights the crucial role of
hydrological flows in shaping epidemic dynamics and designing intervention
strategies. In this model, $S_j$ and $I_j$ denote the numbers of susceptible and infected
individuals in community $j$, while $B_j$ represents the pathogen load in the
local water reservoir of that community. The variable $b(x,t)$ describes the
pathogen concentration in the river, with $x$ representing position along the
river domain. Disease transmission follows a saturated incidence rate of the
form $\beta_j S_j B_j /(k_j + B_j)$, where $\beta_j$ and $k_j$ are transmission
parameters. Infected individuals recover at rate $\gamma_j$ and shed pathogens
into the local water source at rate $r_j$. The exchange of pathogens between
the river and community reservoirs is modeled by two parameters: $\mu_j$, the
rate of river-to-community transfer over the segment $[x_j, x_j+d_j]$ (expressed
as the average $\tfrac{1}{d_j}\int b$), and $\rho_j$, the rate of
community-to-river transfer. Natural pathogen decay occurs at rates $\delta_j$
in the local reservoirs and $\delta_r$ in the river. The river dynamics further
include diffusion with coefficient $\varepsilon$ and directed transport with
velocity $q$, represented by the advection term $-q\,\partial_x b$, which
captures downstream flow in the positive $x$-direction. Finally, $\chi_j$
denotes the indicator function identifying the river segment associated with
community $j$.

To  deeper understand  the long-term behavior of the spatial spread of waterborne pathogens and the expansion of the infected region, many mathematical models are proposed to explore the different dynamics of the infectious diseases \cite{Bertuzzo2007, Bertuzzo2008, Bertuzzo2010, Bertuzzo2011, Bertuzzo2011, Bertuzzo2017, Chao2011, Codeco2001, Codeco2009, DuNi2020, Eisenberg2013, Eisenberg2016, Feng2016, Fitzgibbon2020, Grassly2006, Hartley2006, King2008, King_2018, Mari2012, Mukandavire2011, Mukandavire2013, Pascual2017, Righetto2011, Righetto2013, Tien2010, Tien2011, Wang2015, Weiss2016, Wu2016}. The free boundary problems have  been also proposed for this model \cite{DuNi2020,Zhao2020, Du, LiXuZhang2017, NguyenVo2022}. In particular, the long time dynamics of a nonlocal diffusion single population model with advection and free boundaries was recently investigated by Tang and Dai \cite{Tang2024a}, their model read as as follows :
 \begin{equation}\label{eq:model}
\begin{cases}
u_t = d \displaystyle \left[\int_{g(t)}^{h(t)} J(x-y)u(t,y)\,dy - u(t,x)\right]
      - q u_x + f(t,x,u),
      & t>0,\; x\in(g(t),h(t)), \\[2ex]
u(t,g(t)) = u(t,h(t)) = 0, & t>0, \\[1ex]
h'(t) = \mu \displaystyle \int_{g(t)}^{h(t)} \int_{h(t)}^{+\infty} J(x-y)u(t,x)\,dy\,dx, & t>0, \\[2ex]
g'(t) = -\mu \displaystyle \int_{g(t)}^{h(t)} \int_{-\infty}^{g(t)} J(x-y)u(t,x)\,dy\,dx, & t>0, \\[2ex]
h(0) = -g(0) = h_0, \quad u(0,x) = u_0(x), & x\in[-h_0,h_0],
\end{cases}
\end{equation}
where the advection term represents a directional drift caused by wind, animal migration, or the spreading of epidemics through water or river flows. Inspired by the mentioned works,  in this paper, we investigate the spectral theory and the long-time dynamics of an advection–free boundary system involving coupled nonlocal diffusions and a concave nonlinearity in heterogeneous environment, which is precisely given by :
 % requires \usepackage{changepage}
\[
\begin{cases}\label{eq:main-problem}
u_t &= d_1\!\left[\displaystyle\int_{g(t)}^{h(t)} J_1(x-y)u(t,y)\,dy - u(t,x)\right]
+ p\,u_x - a(x)u(t,x) + H\!\left(v(t,x)\right),
 t>0, g(t)<x<h(t),\\[2mm]
v_t &= d_2\!\left[\displaystyle\int_{g(t)}^{h(t)} J_2(x-y)v(t,y)\,dy - v(t,x)\right]
+ q\,v_x - b(x)v(t,x) + G\!\left(u(t,x)\right),
 t>0,~ g(t)<x<h(t),\\[2mm]
u(t,x) &= v(t,x)=0,
 t>0,~ x\in\{g(t),h(t)\},\\[2mm]
h'(t) &= \mu\!\left(
\displaystyle\int_{g(t)}^{h(t)}\!\!\int_{h(t)}^{\infty}
J_1(x-y)u(t,x)\,dy\,dx
+ \rho\!\!\int_{g(t)}^{h(t)}\!\!\int_{h(t)}^{\infty}
J_2(x-y)v(t,x)\,dy\,dx
\right),
 t>0,\\[2mm]
g'(t) &= -\mu\!\left(
\displaystyle\int_{g(t)}^{h(t)}\!\!\int_{-\infty}^{g(t)}
J_1(x-y)u(t,x)\,dy\,dx
+ \rho\!\!\int_{g(t)}^{h(t)}\!\!\int_{-\infty}^{g(t)}
J_2(x-y)v(t,x)\,dy\,dx
\right),
 t>0,\\[2mm]
-g(0) &= h(0)=h_0,\quad
u(0,x)=u_0(x),\quad
v(0,x)=v_0(x),
 x\in[-h_0,h_0].
\end{cases}
\]

From an epidemiological viewpoint, in this model, $J_1,J_2$ are symmetric dispersal kernels, which permit jumps over finite or even large distances, capturing realistic scenarios in which pathogens or infected individuals can be transported by environmental factors (e.g. contaminated water, wind, or human mobility) to non-adjacent locations. This leads to faster and wider epidemic spread compared to purely local diffusion.

The advection terms $(p u_x,\; q v_x)$ represent the \emph{directed movement or transport} of infected individuals or pathogens. In the real world, the advection corresponds to biased movement caused by environmental or social factors, such as river or water flow in the case of waterborne diseases (e.g., cholera), prevailing winds in airborne diseases, or directional human migration. Without advection, the spread is symmetric; with advection, the infection front advances faster in the downstream direction.

The heterogeneous environment coefficients $a(x), b(x)$ act as \emph{spatially dependent removal or decay rates}. From an epidemiological aspect, $a(x)$ can represent the recovery or death rate of infected individuals, which may vary according to local healthcare quality or demographic conditions while $b(x)$ describes the local decay or clearance rate of pathogens in the environment, which depends on climatic or ecological conditions (e.g., temperature, sunlight, dryness). This spatial heterogeneity models unequal control measures or environmental survival conditions across regions.

The free boundary functions $h'(t), g'(t)$ describe the \emph{expansion speed of the infected domain}. The boundary conditions link the front propagation to the gradients of $u$ and $v$ at the edges. Epidemiologically, $h'(t)$ corresponds to the rightward expansion speed of the epidemic, while $g'(t)$ corresponds to the leftward speed. If the infection intensity near the boundary remains high, the epidemic front expands indefinitely (\emph{successful invasion}); if it weakens, the boundary stabilizes and the infection may vanish (\emph{containment scenario}).

Finally, the concave nonlinear terms $H(v)$ and $G(u)$ represent the cooperative cross-interactions between the two populations, ensuring that the presence of one species enhances the growth of the other. Near the disease-free or low-density state, the linearization of these functions determines the threshold dynamics, such as the basic reproduction number and invasion conditions. At higher densities, however, the nonlinear behavior of $H$ and $G$ introduces saturation effects that prevent unbounded growth and stabilize the system at biologically meaningful equilibria. The nonlinearities are increasing and strictly concave on $(0,\infty)$, reflecting diminishing returns: as the interacting population grows, the marginal effect on growth decreases. This concavity is crucial for modeling realistic epidemic or ecological interactions, as it ensures boundedness and stabilizing effects in the system.

Throughout this paper, we assume the following conditions :

\begin{enumerate}
    \item[\textbf{(J)}] The kernels $J_i, i\in\{1,2\}$ satisfy: $J_i \in C^1(\mathbb{R}) \cap L^\infty(\mathbb{R})$, $J_i \geq 0$, $J_i(x) = J_i(-x)$ for all $x\in\mathbb{R}$, $J_i(0) > 0$ and
         \[
        \int_{\mathbb{R}} J_1(x)\, dx = \int_{\mathbb{R}} J_2(x)\, dx = 1.
        \]

    \item[\textbf{(GH)}] Given functions $G, H: \mathbb{R}^+ \to \mathbb{R}^+$ verify the following assumptions:
    \begin{itemize}
        \item $H, G \in C^2([0, \infty))$,
        \item $G(0) = H(0) = 0$ and $H(z), G(z) > 0, \forall z \geq 0$,
        \item There exist constants $a, b > 0$ such that $G\left(\frac{H(z)}{a}\right) < bz$ for all $z \geq 0$,
        \item $H'(z) < 0$ and $G'(z) < 0$ for all $z \in (0, \infty)$.
    \end{itemize}

    \item[\textbf{(IC)}] We also assume the conditions on the initial conditions:
    \begin{itemize}
        \item $u_0, v_0 \in C([-h_0, h_0])$, with $u_0(\pm h_0) = v_0(\pm h_0) = 0$,
        \item $u_0(x), v_0(x) > 0$ for all $x \in (-h_0, h_0)$.
    \end{itemize}
\end{enumerate}

\medskip
\noindent\textbf{Linearization and spectral threshold.}
Linearizing \eqref{eq:main-problem} at the disease-free state $(u,v)=(0,0)$ over a fixed domain $\Omega\subset\mathbb{R}$ (e.g., a snapshot or a frozen free boundary) yields the cooperative system
\begin{equation}\label{eq:lin}
\partial_t
\begin{pmatrix} u \\ v \end{pmatrix}
=
\underbrace{\begin{pmatrix}
d_1(N_1-I)+p\,\partial_x & 0\\
0 & d_2(N_2-I)+q\,\partial_x
\end{pmatrix}}_{\displaystyle \mathcal{T}}
\begin{pmatrix} u \\ v \end{pmatrix}
+
\underbrace{\begin{pmatrix}
-a(x) & H'(0)\\
G'(0) & -b(x)
\end{pmatrix}}_{\displaystyle \mathcal{M}(x)}
\begin{pmatrix} u \\ v \end{pmatrix},
\quad x\in\Omega.
\end{equation}
The associated stationary eigenproblem (seeking exponential solutions $e^{-\lambda t}\phi$) reads
\begin{equation}\label{eq:eig}
\Big(\mathcal{T}+\mathcal{M}(\cdot)\Big)\,\Phi + \lambda\,\Phi = 0,
\qquad \Phi=(\phi_1,\phi_2)^{\!\top}\!, \qquad \Phi\not\equiv 0.
\end{equation}
The \emph{principal eigenvalue} $\lambda_p$ (if it exists) governs linear stability and serves as a surrogate for the basic reproduction number $R_0$ via the sign relation
\begin{equation}\label{eq:R0-lambda}
R_0>1 \iff \lambda_p<0,
\qquad
R_0<1 \iff \lambda_p>0.
\end{equation}
We underline that, in previous studies \cite{NguyenVo2022,ThuVo2025},  for scalar nonlocal problems without advection, the principal eigenvalue $\lambda_p$ enjoys a variational formula and it can often be written in $L^2(\Omega)$; for cooperative systems these formulas extend to $E:=L^2(\Omega)\times L^2(\Omega)$ by
\begin{equation}\label{eq:var}
\lambda_p
=
-\,\sup_{\|\Phi\|_{E}=1}
\left\langle
\begin{pmatrix}
d_1(N_1-I) & 0\\[0.15em]
0 & d_2(N_2-I)
\end{pmatrix}\Phi
+
\begin{pmatrix}
-a(\cdot) & H'(0)\\
G'(0) & -b(\cdot)
\end{pmatrix}\Phi
\,,\,\Phi\right\rangle_{E}
\ \ \
\end{equation}
where $\langle\cdot,\cdot\rangle_E$ is the $L^2$ inner product on $E$. However, the presence of the advection terms $(p u_x,\; q v_x)$ in~\eqref{eq:lin}
makes the study of the existence, simplicity, and qualitative properties of the
principal eigenvalue significantly more challenging, mainly due to the sharper
$C^1(\Omega)$ regularity requirement of the eigenfunction and the lack of a
variational characterization. In order to overcome this, we may consider $-\mathcal{T}-\mathcal{M}$ in~\eqref{eq:lin} as an accretive perturbation of a positive compact operator (or of a compact resolvent generator), allowing one to invoke non-selfadjoint spectral theory and perturbation arguments \cite{Kato1995}. Advection terms $p\,\partial_x$, $q\,\partial_x$ typically break self-adjointness; nonetheless, one can restore a comparison structure by suitable transforms (e.g., exponential weights) or by working with the resolvent and positive cones, in the spirit of \cite{KreinRutman1948,Burger1988,Kato1995} and this represents a completely new approach compared with previous studies \cite{Berestycki2016a,Du,Tang2024a, Tang2024b, ThuVo2025, NguyenVo2022, DuNi2020, Cao2019, Bao2017, Ahn2016}.

\medskip
\noindent\textbf{Free boundaries and spreading--vanishing.}
When the habitat is allowed to expand through flux-driven free boundaries, as in \eqref{eq:main-problem}, the long-time dynamics exhibit a sharp dichotomy: either the solution vanishes uniformly (the front stalls), or it \emph{spreads} and a positive profile invades with the boundaries moving outward. Although, this paradigm has been established for several nonlocal models without advection \cite{Tang2024b, NguyenVo2022, DuNi2020, Cao2019, Du,Zhao2020}, again introducing advection requires significantly deeper techniques due to the requirement of sharper regularity requirements on time-dependent and stationary solutions, spectral thresholds and the front motion (the integrals defining $g'(t)$ and $h'(t)$), with qualitative consequences for the spreading speed and critical domain size. The main contributions of the present work are:

\begin{itemize}
\item[(i)] \emph{Well-posedness with advection.} We establish local, global existence and uniqueness of classical solutions to \eqref{eq:main-problem} under (J), (GH), (IC) in $C^1$ functional space. Key tools include comparison principle adapted to jump–advection operators and a priori sharper estimates on $C^1$ in order to apply the Banach fixed point theorem.

\item[(ii)] \emph{Eigentheory with drift.} We establish the existence and simplicity of the principal eigenvalue for the linearized problem \eqref{eq:eig} on bounded domains in the \emph{absence of a variational structure} (cf.\ \cite{Berestycki2016a,Cao2019,NguyenVo2022,ThuVo2025}).
We further analyze its qualitative dependence on parameters, employing \emph{Fredholm theory}, the \emph{Crandall--Rabinowitz} bifurcation theorem, and \emph{Hadamard-type} derivative formulae as developed in the recent work of Benguria et al. \cite{Benguria2024} to establish differentiability with respect to those parameters.
Our approach necessitates a significantly more refined eigentheoretic framework than that used in the existing literature.

\item[(iii)] \emph{Basic reproduction number.} We define $R_0$ through the spectral bound of the next-generation operator and show the threshold equivalence \eqref{eq:R0-lambda}. For constant coefficients we derive implicit/explicit formulas for $R_0$, clarifying the role of $p,q$ and of kernel features.

\item[(iv)] \emph{Spreading--vanishing criteria with advection.} We identify critical thresholds in terms of the initial half-width $h_0$ and the free-boundary parameters $(\mu,\rho)$ in \eqref{eq:main-problem}.
In particular, we prove that \emph{vanishing} occurs when $R_0 \le 1$, whereas for $R_0 > 1$, the long-time behavior depends delicately on $(h_0,\mu,\rho)$.
In this case, a \emph{sharp transition} between vanishing and spreading regimes is established, which considerably extends the known results of the long-time dynamics for free-boundary systems with coupled advection terms;
see, for instance, \cite{Tang2024a,Tang2024b,NguyenVo2022,DuNi2020,Cao2019,Du}.

\end{itemize}

The principal results for system \eqref{eq:main-problem} are stated precisely as follows.

\begin{theorem}[\textbf{Global existence and uniqueness}]\label{thm:global-existence}
Suppose that $J_1, J_2$ satisfy assumption \textbf{(J)}, and assumptions \textbf{(A1)}–\textbf{(A2)} hold. Then, for any \( h_0>0\), problem~\eqref{eq:main-problem} admits a unique positive solution \( (u, v, g, h) \) defined for all \( t > 0 \).

Moreover, for any fixed \( T > 0 \), the solution components satisfy
\[
u \in X_{u_0, g, h}, \quad v \in X_{v_0, \infty}, \quad g \in \mathcal{G}_{h_0,T}, \quad h \in \mathcal{H}_{h_0,T}.
\]
\end{theorem}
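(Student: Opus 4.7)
The proof proceeds in two stages: local well-posedness via a Banach fixed-point scheme on a short time interval, followed by global extension through a priori $L^\infty$ and free-boundary bounds derived from the cooperative structure together with the sublinearity encoded in~(GH). As a preliminary step I would straighten the moving domain through the affine change of variables $y = h_0\bigl(2x - h(t) - g(t)\bigr)/(h(t)-g(t))$, converting the problem into a system on the fixed interval $[-h_0,h_0]$ at the cost of additional transport-type correctors of order $g'(t)$ and $h'(t)$. On the resulting fixed domain, for small $T>0$ I would define a self-map $\Gamma$ on the complete metric space $X_{u_0,g,h}\times X_{v_0,\infty}\times \mathcal{G}_{h_0,T}\times \mathcal{H}_{h_0,T}$: given an input quadruple $(u,v,g,h)$, solve the transport--reaction subproblems to obtain $(\tilde u,\tilde v)$ with nonlocal sources frozen from the input, then recover $(\tilde g,\tilde h)$ by integrating in time the double-integral formulas for $h'$ and $g'$ in \eqref{eq:main-problem}.

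The main technical obstacle is the absence of parabolic smoothing: unlike the Laplacian, the nonlocal operator $d_1(N_1-I)$ provides no gain of regularity, so the advection $p\,u_x$ forces us to propagate $C^1$ bounds by hand rather than trade regularity in the classical parabolic manner. To do so, I would represent solutions along the characteristics $\dot X=-p$,
\[
u(t,x) = u_0\bigl(X(0)\bigr)\,e^{-\int_0^t a(X(s))\,ds} + \int_0^t e^{-\int_s^t a(X(\tau))\,d\tau}\Bigl[d_1(N_1 u - u)(s,\cdot)+H\bigl(v(s,\cdot)\bigr)\Bigr]\bigl(X(s)\bigr)\,ds,
\]
where $X(\cdot)=X(\cdot;t,x)$ is the backward characteristic starting at $x$ at time $t$, and analogously for $v$. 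The hypothesis $J_i\in C^1(\mathbb{R})$ is crucial here: differentiation in $x$ can be pushed onto the kernel, $(N_i u)_x=\int J_i'(x-y)u(y)\,dy$, giving a $C^0$ bound in terms of $\|u\|_\infty$ alone without controlling $u_x$. Differentiating the integral representation in $x$ and invoking Gronwall's inequality then closes a $\|u_x\|_\infty$ estimate on $[0,T]$. Analogous Lipschitz estimates for the free-boundary map follow directly from the boundary formulas, since the double integrals depend smoothly on the endpoints. Together these bounds make $\Gamma$ a strict contraction provided $T$ is chosen sufficiently small, and the Banach fixed-point theorem yields a unique local solution.

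For the global extension, I would first establish a comparison principle adapted to the jump--advection--reaction operator by a standard touching-point argument: if two ordered pairs of sub/supersolutions satisfy the differential inequalities with the ordering on the parabolic boundary, the cooperative sign structure of the reaction matrix together with the positivity of $J_i$ forces the ordering to persist. Using the structural hypothesis on $G$ and $H$ in~(GH), namely $G(H(\bar z)/a)<b\bar z$ for some $\bar z>0$, I would construct a spatially constant supersolution $(\bar u,\bar v)=(H(\bar z)/a,\bar z)$ of the cooperative upper bound system, yielding $\|u(t,\cdot)\|_\infty+\|v(t,\cdot)\|_\infty\le C$ uniformly in $t$ and independently of the current free-boundary position. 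Plugging this bound into the boundary ODEs gives $|g'(t)|+|h'(t)|\le C'\mu(1+\rho)\bigl(h(t)-g(t)\bigr)$, so that the habitat width grows at most exponentially and stays finite on every compact interval. The standard blow-up alternative combined with the local uniqueness from the contraction then rules out finite-time breakdown, producing a unique classical solution on $[0,\infty)$ in the claimed function spaces.
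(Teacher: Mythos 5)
Your overall two-stage strategy (local contraction, then global extension via $L^\infty$ bounds and an exponential control of the habitat width) matches the paper's, and your global estimate $|g'(t)|+|h'(t)|\lesssim \mu(1+\rho)\bigl(h(t)-g(t)\bigr)$ is exactly the paper's mechanism for ruling out finite-time blow-up. However, the local step is organized quite differently. You propose straightening the domain to $[-h_0,h_0]$ and running a joint Banach fixed point on the full quadruple $(u,v,g,h)$. The paper instead stays on the moving domain and uses a \emph{nested} scheme: it fixes $\widetilde v$, invokes the scalar free-boundary theory (à la Tang--Dai) to produce $(\hat u,\hat g,\hat h)$ solving the $u$-subsystem \eqref{eq:2.3}, extends $\hat u$ by zero to $\mathbb R$, solves the Cauchy problem \eqref{eq:2.6} for $\hat v$ on the whole line, and proves that $\widetilde v\mapsto\hat v$ is a contraction on $X_{v_0,\infty}$ alone. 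This decoupling is what lets the paper handle the free boundaries directly, at the price of a delicate three-case analysis in $\hat x$ (interior, recently engulfed, or in the symmetric difference of the two competitors' domains) together with the ``transition time'' $t_x$ tracking when each $x$ entered the habitat. Your straightening avoids those cases, but it introduces coefficients proportional to $g'(t)$, $h'(t)$ (themselves unknowns in the fixed point), and the nonlocal kernel $J_i$ becomes a time-dependent kernel $J_i\bigl(\phi_t(x)-\phi_t(y)\bigr)\,|\det D\phi_t^{-1}|$; these are manageable but would need to be spelled out. Also, your phrasing ``self-map on $X_{u_0,g,h}\times\cdots$'' after straightening is internally inconsistent: once straightened, $u$ lives on the fixed interval, so a separate (fixed-domain) space must be introduced.

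One genuinely useful addition in your sketch is the observation that $J_i\in C^1$ allows $(N_i u)_x=\int J_i'(x-y)u(y)\,dy$ to be bounded by $\|u\|_\infty$ alone, so that the characteristic representation propagates $C^1$ bounds by Gronwall without presupposing a $\|u_x\|$ bound on the right. The paper needs this too, but it defers to an external reference (Theorem~2.11 of Tang--Dai) rather than making the mechanism explicit. Your sketch of the comparison principle and of the supersolution $(\bar u,\bar v)=(H(\bar z)/a,\bar z)$ built from the condition $G(H(\bar z)/a)<b\bar z$ is likewise correct and closer to the classical cooperative-system bootstrap than what the paper writes (the paper simply invokes a constant $M$). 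In short: same architecture, genuinely different local machinery (joint fixed point on a straightened domain vs.\ nested fixed point on the moving domain), with your route potentially cleaner in the contraction estimates but requiring you to make the straightened coefficients and the two-sided Dirichlet/transport compatibility explicit, while the paper's route localizes all the subtlety in the case analysis of $\hat x$ near the moving endpoints.
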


\medskip

To analyze the spreading–vanishing dichotomy, we first introduce the following eigenvalue problem:
\begin{equation}\label{eq:maineigenrvalue}
\begin{cases}
d_1 \displaystyle\int_{-\mathcal{Z}}^{\mathcal{Z}} J_1(x-y)\psi_1(y)\,dy - d_1 \psi_1(x) - a(x) \psi_1(x) +H'(0) \psi_2(x)
+ p \psi_1^\prime(x) = \lambda \psi_1(x), & x \in [-\mathcal{Z}, \mathcal{Z}], \\[2ex]
d_2 \displaystyle\int_{-\mathcal{Z}}^{\mathcal{Z}} J_2(x-y)\psi_2(y)\,dy - d_2 \psi_2(x) - b(x) \psi_2(x) +G'(0) \psi_1(x)
+ q \psi_2^\prime(x) = \lambda \psi_2(x), & x \in [-\mathcal{Z}, \mathcal{Z}].
\end{cases}
\end{equation}

\begin{theorem}
[\textbf{Principal eigenvalue}]\label{Principlaeignevalue}

The problem \eqref{eq:maineigenrvalue} admits a principal eigenvalue
$\lambda^\ast$, which is simple and associated with a strictly positive eigenfunction in $C^1([-\mathcal{Z}, \mathcal{Z}])$.

\end{theorem}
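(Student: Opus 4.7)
The plan is to prove existence and simplicity of $\lambda^\ast$ via a Krein--Rutman argument applied to a shifted resolvent of the cooperative--advective operator, using variation-of-constants to handle the drift and positive-cone arguments to handle the coupling, thus bypassing the absence of a variational formula.

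Let $\mathcal{L}\Phi := \mathcal{T}\Phi + \mathcal{M}(x)\Phi$ denote the operator on the left-hand side of~\eqref{eq:maineigenrvalue}, acting on $E := C([-\mathcal{Z},\mathcal{Z}]) \times C([-\mathcal{Z},\mathcal{Z}])$. First I would fix a shift $k > d_1 + d_2 + \|a\|_\infty + \|b\|_\infty + H'(0) + G'(0)$ and solve the resolvent equation $(kI - \mathcal{L})\Phi = F$ for arbitrary $F\in E$. Each scalar equation takes the form
\[
-p\,\psi_1'(x) + \bigl(k + d_1 + a(x)\bigr)\psi_1(x) = d_1(N_1\psi_1)(x) + H'(0)\psi_2(x) + f_1(x),
\]
a first-order linear ODE in $\psi_1$ with strictly positive coefficient. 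Imposing the Dirichlet condition at the upstream endpoint (the one inherited from the free boundary of~\eqref{eq:main-problem} depending on the sign of $p$), I would solve via the variation-of-constants formula, and then iterate between the two components and the nonlocal self-coupling $N_i$. The largeness of $k$ yields contraction, so $\mathcal{K} := (kI - \mathcal{L})^{-1}$ is well-defined on $E$ and automatically preserves the positive cone $E_+$ by the nonnegativity of $J_i$, $H'(0)$, $G'(0)$ and the positivity of the integrating factor $\exp\bigl(\int (k+d_1+a)/p\bigr)$.

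Next, I would verify that $\mathcal{K}$ is compact and strongly positive. Compactness follows from the Volterra-type representation of $\mathcal{K}$, which maps bounded subsets of $E$ into bounded subsets of $C^1([-\mathcal{Z},\mathcal{Z}])^2$, hence relatively compact in $E$ by Arzel\`a--Ascoli. For strong positivity of $\mathcal{K}$, I would combine three ingredients: (i) the upstream ODE propagation, which turns a positive source into a strictly positive profile on a right-neighborhood of the inflow endpoint; (ii) the continuity and strict positivity of $J_i$ at $0$, which by a finite iteration of the resolvent spreads positivity throughout $(-\mathcal{Z},\mathcal{Z})$; and (iii) the off-diagonal cooperative terms $H'(0)\psi_2$ and $G'(0)\psi_1$, which transfer positivity between components and supply the irreducibility needed for the cooperative system. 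With compactness and strong positivity in hand, the Krein--Rutman theorem delivers a simple largest eigenvalue $r(\mathcal{K}) > 0$ with strictly positive eigenfunction $\Phi^\ast = (\psi_1^\ast,\psi_2^\ast)\in E_+$. Setting $\lambda^\ast := k - 1/r(\mathcal{K})$ gives the principal eigenpair of~\eqref{eq:maineigenrvalue}, with simplicity of $\lambda^\ast$ transferring directly from simplicity of $r(\mathcal{K})$. The $C^1$ regularity of $\Phi^\ast$ is immediate: solving algebraically for $p\psi_1^{\ast\prime}$ in the eigenvalue equation expresses it as a sum of continuous quantities, $N_1\psi_1^\ast$ being continuous by dominated convergence, and symmetrically for $\psi_2^{\ast\prime}$.

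The main obstacle is establishing the strong positivity of $\mathcal{K}$ in the presence of the drift. Unlike the symmetric self-adjoint nonlocal case treated in \cite{NguyenVo2022,ThuVo2025}, where positivity propagates immediately from the symmetric kernel, the one-sided character of the ODE integration means that a nontrivial nonnegative source produces $\mathcal{K}F > 0$ only near the upstream endpoint in a first pass, and one must propagate this positivity through the entire interval by iterating the resolvent. I expect this to require a bootstrap argument in which a finite number of applications of $\mathcal{K}$, combined with the joint action of the advective transport, the nonlocal averaging by $J_i$, and the cooperative transfer through $H'(0), G'(0)$, fills out the positivity of both components on all of $(-\mathcal{Z},\mathcal{Z})$. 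Once this is achieved, the remaining spectral conclusions follow from the standard strong form of the Krein--Rutman theorem.
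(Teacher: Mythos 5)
The proposal is correct in outline but takes a genuinely different route from the paper. You apply the Krein--Rutman theorem directly to the shifted full resolvent $\mathcal{K} := (kI-\mathcal{L})^{-1}$ and recover the principal eigenvalue as $\lambda^\ast = k - 1/r(\mathcal{K})$. The paper instead decomposes the operator as $\mathscr J - \mathscr T$ (nonlocal birth part versus local loss-plus-drift part), introduces the one-parameter family of next-generation operators $A(\xi) := \mathscr J(\mathscr T + \xi I)^{-1}$, proves in Lemmas~\ref{lem:3.1}--\ref{lemma33} that $\xi \mapsto \rho(A(\xi))$ is continuous, strictly decreasing, blows up as $\xi \downarrow K$, and decays to zero as $\xi \to \infty$, and then locates the unique $\xi_0$ with $\rho(A(\xi_0))=1$ by the intermediate value theorem; Krein--Rutman applied to $A(\xi_0)$ produces a fixed point $\varphi$, and $\psi := (\mathscr T+\xi_0 I)^{-1}\varphi$ is the eigenfunction with $\lambda^\ast = -\xi_0$. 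The two routes are equivalent in the end, but they buy different things. Your direct route is shorter and more elementary, needing one resolvent and one Krein--Rutman application, and the choice of $E=C([-\mathcal Z,\mathcal Z])^2$ is a more comfortable Banach lattice (nonempty cone interior) than the $L^2$ space the paper works in. The paper's next-generation decomposition is structurally aligned with the epidemiological interpretation: the scalar function $\rho(\xi)$ it constructs is the central object of Sections~3--4 (with $\mathcal R_0 = \rho(A(0))$), and its monotonicity in $\xi$ and in $\mathcal Z$ drives the threshold and spreading--vanishing results; rebuilding the paper on your resolvent approach would require reconstructing that machinery separately.

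Two caveats are worth flagging. First, the strong-positivity bootstrap you correctly single out as the main technical step is also required in the paper's setup (positivity of $(\mathscr T+\xi I)^{-1}$ is proved by a test-function argument, and irreducibility is invoked for strong positivity); neither version is trivial with a one-sided drift, and your plan is honest about where the work lies but has not carried it out. Second, you impose a Dirichlet condition only at the upstream endpoint, which is the correct amount of boundary data for a first-order ODE perturbed by a compact nonlocal term; the paper itself is not consistent here (no boundary condition appears in the statement of \eqref{eq:maineigenrvalue}, the integration by parts in the proof of Lemma~\ref{lem:3.1} tacitly assumes one, and Lemma~\ref{Vanishfreeboundaries} later imposes two endpoint conditions on a first-order operator). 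Your explicit choice is a small but genuine gain in precision.
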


\begin{theorem}[\textbf{Qualitative structure and parameter dependence of the principal eigenvalue}]\label{thm:eigen-properties}
Let $p,q>0$, $a,b\in L^\infty(\mathbb R)$, and $H'(0),G'(0)\geq 0$.
Assume that the nonlocal kernels $J_1,J_2$ satisfy \textnormal{\textbf{(J)}} so that each integral operator
$\mathscr J_l$ is positive and compact on $X_l:=L^2([-\mathcal Z,\mathcal Z];\mathbb R)$ for all $\mathcal Z>0$.
Denote by $\lambda^\ast(\mathcal Z)$ the principal eigenvalue of \eqref{eq:maineigenrvalue} on $[-\mathcal Z,\mathcal Z]$ and define
\[
\mathcal R_0:=\rho\!\big(\mathscr J(-\mathscr T)^{-1}\big)
   =\lim_{\mathcal Z\to\infty}\rho_{\mathcal Z}(0),
\]
where $\rho_{\mathcal Z}(0)$ denotes the spectral radius of the corresponding next–generation operator on $[-\mathcal Z,\mathcal Z]$.

Then the following hold:
\begin{enumerate}[label=(\roman*),wide,labelwidth=!,labelindent=0pt]

\item \textbf{Monotonicity and continuity.}
The mapping $\mathcal Z\mapsto\lambda^\ast(\mathcal Z)$ is strictly increasing and continuous on $(0,\infty)$.

\medskip
\item \textbf{Threshold characterization.}
There exists a sharp critical domain size $\mathcal Z_\ast\in(0,\infty]$ such that
\[
\lambda^\ast(\mathcal Z)<0 \quad\text{for } \mathcal Z<\mathcal Z_\ast,
\qquad
\lambda^\ast(\mathcal Z)>0 \quad\text{for } \mathcal Z>\mathcal Z_\ast.
\]
Moreover, $\mathcal Z_\ast<\infty$ if and only if $\mathcal R_0>1$, in which case
$\lambda^\ast(\mathcal Z_\ast)=0$ (equivalently, $\rho_{\mathcal Z_\ast}(0)=1$);
if $\mathcal R_0\le1$, then $\lambda^\ast(\mathcal Z)<0$ for all $\mathcal Z>0$.

\medskip
\item \textbf{Differentiability with respect to diffusion rates.}
Fix $\mathcal Z>0$ and let $d=(d_1,d_2)\in U\subset(0,\infty)^2$.
For each $d$, define
\[
\mathcal L_d:=\mathscr J_d-\mathscr T_d
\]
acting on a Banach lattice $X$. Suppose $\mathcal L_d$ admits a simple, isolated principal eigenvalue $\lambda^\ast(d)$ with normalized positive eigenfunction $\varphi(d)\in\mathcal D\subset X$ and positive adjoint eigenfunction $w^*(d)\in X^*$ satisfying
$\langle w^*(d),\varphi(d)\rangle=1$.
If $d\mapsto\mathcal L_d$ is $C^1$ in the operator norm (in particular affine in $d$), then $\lambda^\ast$ is of class $C^1$ on $U$, and
\[
\frac{\partial\lambda^\ast}{\partial d_j}(d)
=\big\langle w^*(d),\,\partial_{d_j}\mathcal L_d[\varphi(d)]\big\rangle,
\qquad j=1,2.
\]
For the present operator, one has explicitly
\[
\mathcal L_d(\psi_1,\psi_2)(x)
=\begin{pmatrix}
d_1\!\displaystyle\int J_1(x-y)\psi_1(y)\,dy - d_1\psi_1(x) - a(x)\psi_1(x) + H'(0)\psi_2(x) + p\psi_1'(x)\\[6pt]
d_2\!\displaystyle\int J_2(x-y)\psi_2(y)\,dy - d_2\psi_2(x) - b(x)\psi_2(x) + G'(0)\psi_1(x) + q\psi_2'(x)
\end{pmatrix},
\]
and therefore
\[
\frac{\partial\lambda^\ast}{\partial d_j}(d)
= \int_{-\mathcal Z}^{\mathcal Z} w_j^*(x)
\Big(\int_{-\mathcal Z}^{\mathcal Z}J_j(x-y)\varphi_j(y)\,dy - \varphi_j(x)\Big)\,dx,
\qquad j=1,2.
\]
\end{enumerate}
\end{theorem}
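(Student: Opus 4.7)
The plan is to combine the Krein--Rutman apparatus developed in Theorem~\ref{Principlaeignevalue} with three standard ingredients --- domain monotonicity of positive compact next-generation operators, Kato's continuity of simple isolated eigenvalues under norm perturbations, and a Hadamard-type adjoint pairing --- to address the three items in sequence.

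\textbf{Part (i).} I would write the eigenproblem \eqref{eq:maineigenrvalue} as $\mathcal L_{\mathcal Z}\Phi=\lambda\Phi$ on the Banach lattice $X_{\mathcal Z}:=C^1([-\mathcal Z,\mathcal Z])^2$ with the cone of nonnegative pairs, and denote by $(\lambda^\ast(\mathcal Z),\varphi_{\mathcal Z})$ the simple positive principal pair. For monotonicity, given $\mathcal Z_1<\mathcal Z_2$, the argument proceeds by duality: letting $w^\ast_{\mathcal Z_2}$ be the strictly positive eigenfunction of the adjoint $\mathcal L_{\mathcal Z_2}^\ast$ produced by Krein--Rutman applied to the adjoint, I pair the equation $\mathcal L_{\mathcal Z_1}\varphi_{\mathcal Z_1}=\lambda^\ast(\mathcal Z_1)\varphi_{\mathcal Z_1}$ against $w^\ast_{\mathcal Z_2}$ restricted to $[-\mathcal Z_1,\mathcal Z_1]$, exploiting that the nonlocal integral on the smaller interval is strictly smaller than on the larger one on a positively-measured set, yielding $\lambda^\ast(\mathcal Z_2)>\lambda^\ast(\mathcal Z_1)$. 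For continuity, I would pull back onto the fixed reference interval $[-1,1]$ by the affine change of variable $x=\mathcal Z\xi$; the resulting family $\widetilde{\mathcal L}_{\mathcal Z}$ depends smoothly on $\mathcal Z$ in the operator norm on $C^1([-1,1])^2$, and Kato's perturbation theory for simple isolated eigenvalues \cite{Kato1995} then yields continuity (in fact smoothness) of $\mathcal Z\mapsto\lambda^\ast(\mathcal Z)$.

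\textbf{Part (ii).} The idea is to use the next-generation operator $\mathcal K_{\mathcal Z}:=\mathscr J_{\mathcal Z}(-\mathscr T_{\mathcal Z})^{-1}$, which is positive and compact by assumption and whose spectral radius equals $\rho_{\mathcal Z}(0)$. The spectral-threshold identity $\operatorname{sgn}\lambda^\ast(\mathcal Z)=\operatorname{sgn}(\rho_{\mathcal Z}(0)-1)$ follows from Krein--Rutman applied to $\mathcal K_{\mathcal Z}$ together with the decomposition $\mathcal L_{\mathcal Z}=\mathscr T_{\mathcal Z}+\mathscr J_{\mathcal Z}$. Domain monotonicity of $\mathcal K_{\mathcal Z}$, inherited from the monotone expansion of the integration interval, gives $\mathcal Z\mapsto\rho_{\mathcal Z}(0)$ strictly increasing, and by dominated convergence on the kernel representation, $\rho_{\mathcal Z}(0)\nearrow\mathcal R_0$ as $\mathcal Z\to\infty$. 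If $\mathcal R_0\le1$, then $\rho_{\mathcal Z}(0)<1$ for all $\mathcal Z$, hence $\lambda^\ast(\mathcal Z)<0$. If $\mathcal R_0>1$, then $\rho_{\mathcal Z}(0)>1$ for large $\mathcal Z$; combined with $\rho_{\mathcal Z}(0)\to 0$ as $\mathcal Z\to 0$ and the continuity/monotonicity from (i), the intermediate value theorem furnishes the unique $\mathcal Z_\ast\in(0,\infty)$ with $\rho_{\mathcal Z_\ast}(0)=1$, equivalently $\lambda^\ast(\mathcal Z_\ast)=0$.

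\textbf{Part (iii).} Since $\lambda^\ast(d)$ is a simple isolated eigenvalue and $d\mapsto\mathcal L_d$ is affine (hence real-analytic) in the operator norm, the Kato--Rellich analytic perturbation theory provides $C^1$ selections $\lambda^\ast(d)$, $\varphi(d)\in\mathcal D$, and $w^\ast(d)\in X^\ast$ with $\mathcal L_d^\ast w^\ast(d)=\lambda^\ast(d)w^\ast(d)$ and $\langle w^\ast(d),\varphi(d)\rangle=1$ on a neighborhood of each point of $U$. Differentiating $\mathcal L_d\varphi(d)=\lambda^\ast(d)\varphi(d)$ in $d_j$ produces
\[
(\partial_{d_j}\mathcal L_d)\varphi(d)+\mathcal L_d(\partial_{d_j}\varphi(d))=(\partial_{d_j}\lambda^\ast)\varphi(d)+\lambda^\ast(\partial_{d_j}\varphi(d)),
\]
and pairing with $w^\ast(d)$ and using the adjoint eigenrelation cancels the two $\partial_{d_j}\varphi(d)$ terms, yielding $\partial_{d_j}\lambda^\ast(d)=\langle w^\ast(d),(\partial_{d_j}\mathcal L_d)\varphi(d)\rangle$. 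Inserting the explicit $\partial_{d_j}\mathcal L_d$, which acts only on the $j$-th component via $\psi_j\mapsto\int J_j(x-y)\psi_j(y)\,dy-\psi_j(x)$, gives the displayed Hadamard formula. The most delicate step is the strict monotonicity in Part (i) in the absence of a variational formula: zero-extension of the eigenfunction is not $C^1$ across $\pm\mathcal Z_1$, so the naive super/sub-solution approach is obstructed by the advective term $p\psi_1'$. I expect to need the adjoint-duality trick sketched above --- which replaces differentiation of a non-smooth extension by the testing of a smooth positive adjoint eigenfunction --- to obtain strict inequality rigorously. A secondary technicality is verifying $d$-smoothness of the simple eigenprojection in the $C^1$ setting rather than $L^2$; after the reparametrization in (i) this reduces to checking that the resolvent of $\mathscr T_d$ acts boundedly on $C^1$, which follows from standard ODE estimates for the first-order transport part $p\partial_x-a(x)I$.
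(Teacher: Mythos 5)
Your Parts (ii) and (iii) track the paper's own argument (Lemma \ref{ThresholdCaracterestique} via the monotone increase of $\rho_{\mathcal Z}(0)$ to $\mathcal R_0$, and Lemma \ref{lem:lambdaC1}'s Kato perturbation plus adjoint pairing), so there is nothing to add there. Part (i) is where you and the paper diverge: the paper proves strict monotonicity via Proposition \ref{prop:monotonicity}, comparing the next–generation operators $A_{\mathcal Z}(\xi)=\mathscr J_{\mathcal Z}(\mathscr T_{\mathcal Z}+\xi I)^{-1}$ on the cone of $X_{\mathcal Z_1}^+$ extended by zero and then invoking monotonicity of the spectral radius; and it proves continuity via joint continuity of $(\mathcal Z,\xi)\mapsto\rho_{\mathcal Z}(\xi)$ together with a root–continuation argument (reinforced by the Lipschitz estimate of Lemma \ref{lem:lambdaLipschitz}). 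Your adjoint-duality route for monotonicity and your rescaling $x=\mathcal Z\xi$ plus Kato perturbation for continuity are genuine alternatives; the rescaling has the advantage of making the $\mathcal Z$-dependence manifestly smooth in the operator norm on a fixed space, which gives you more than the continuity you need and sidesteps the paper's need to argue joint continuity of the spectral radius.

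One imprecision worth flagging in your monotonicity argument: you attribute the strict inequality to ``the nonlocal integral on the smaller interval [being] strictly smaller than on the larger one.'' If you mean the nonlocal integral of the zero-extended eigenfunction $\tilde\varphi_{\mathcal Z_1}$, that is not where the strict inequality lives: since $\tilde\varphi_{\mathcal Z_1}\equiv 0$ on $[-\mathcal Z_2,\mathcal Z_2]\setminus[-\mathcal Z_1,\mathcal Z_1]$, the integral $\int_{-\mathcal Z_2}^{\mathcal Z_2}J(x-y)\tilde\varphi_{\mathcal Z_1}(y)\,dy$ equals $\int_{-\mathcal Z_1}^{\mathcal Z_1}J(x-y)\varphi_{\mathcal Z_1}(y)\,dy$ for every $x$, with no gain whatsoever. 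The strict inequality in the duality pairing comes from the \emph{adjoint} side: after integrating by parts (legitimate because $\varphi_{\mathcal Z_1}$ vanishes at $\pm\mathcal Z_1$ and $w^\ast_{\mathcal Z_2}$ is smooth), the nonlocal term you must compare is $\int_{-\mathcal Z_2}^{\mathcal Z_2}J(x-y)\,w^\ast_{\mathcal Z_2}(x)\,dx$ versus $\int_{-\mathcal Z_1}^{\mathcal Z_1}J(x-y)\,w^\ast_{\mathcal Z_2}(x)\,dx$ for $y$ in $(-\mathcal Z_1,\mathcal Z_1)$, and here the excess is $\int_{[-\mathcal Z_2,\mathcal Z_2]\setminus[-\mathcal Z_1,\mathcal Z_1]}J(x-y)\,w^\ast_{\mathcal Z_2}(x)\,dx>0$ because $w^\ast_{\mathcal Z_2}>0$ on the whole larger interval and $J(0)>0$ forces positivity for $y$ near $\pm\mathcal Z_1$. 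So the strict gain is supplied by the positivity of the adjoint eigenfunction where $\tilde\varphi_{\mathcal Z_1}$ has been killed, not by the eigenfunction itself. With this correction the argument closes and gives $\lambda^\ast(\mathcal Z_2)>\lambda^\ast(\mathcal Z_1)$ as you intend.
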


\begin{theorem}[\textbf{Vanishing-Spreading}]\label{thm:threshold-mu}
Suppose $R_0>1$ and $h_0<\mathcal L^\ast$. Then there exists a threshold $\hat\mu>0$, depending on the initial data $(u_0,v_0)$, such that
\begin{itemize}
  \item vanishing occurs for every $\mu\in(0,\hat{\mu}]$ (i.e.\ $h_\infty-g_\infty<\infty$),
  \item spreading occurs for every $\mu\in(\hat{\mu},\infty)$ (i.e.\ $h_\infty-g_\infty=\infty$ and $(u,v)\to(\mathscr U^\ast,\mathscr V^\ast)$).
\end{itemize}
\end{theorem}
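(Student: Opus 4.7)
The proof follows the sharp vanishing/spreading dichotomy paradigm, modified to accommodate the drifts $p,q$, the cooperative cross-coupling $(H,G)$, and the spectral threshold $\mathcal R_0$ from Theorems~\ref{Principlaeignevalue}--\ref{thm:eigen-properties}. Set
\[
\Sigma_- := \{\mu>0 : h_\infty - g_\infty < \infty\},\qquad \Sigma_+ := \{\mu>0 : h_\infty - g_\infty = \infty\}.
\]
The plan is to prove that the quadruple $(u^\mu,v^\mu,g^\mu,h^\mu)$ is monotone in $\mu$, that $\Sigma_-$ and $\Sigma_+$ are each non-empty, and that $\Sigma_-$ is a left interval of the form $(0,\hat\mu]$ with $\hat\mu := \sup\Sigma_-$. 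The structural ingredients are (a) the comparison principle for the free-boundary cooperative advective system developed in the proof of Theorem~\ref{thm:global-existence}, (b) the sign characterisation of $\lambda^\ast$ on rescaled intervals from Theorem~\ref{thm:eigen-properties}, and (c) the existence and uniqueness of a positive steady state $(\mathscr U^\ast,\mathscr V^\ast)$ for the Cauchy limit problem on $\mathbb R$, which I would obtain from the cooperative concave structure of $H,G$ together with a standard sub/super-solution construction based on the eigenpair of Theorem~\ref{Principlaeignevalue}.

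I would first establish strict monotonicity in $\mu$: using $H'(0),G'(0)\geq 0$, the non-negativity of $J_1,J_2$, and the comparison principle underlying Theorem~\ref{thm:global-existence}, the inequality $0<\mu_1<\mu_2$ gives $(u^{\mu_1},v^{\mu_1})\leq(u^{\mu_2},v^{\mu_2})$ on the smaller spatial interval and $[g^{\mu_2}(t),h^{\mu_2}(t)]\supset[g^{\mu_1}(t),h^{\mu_1}(t)]$ for every $t>0$, so that $\Sigma_-$ is a left-initial segment. Next, to show $\Sigma_-\ne\emptyset$ for small $\mu$: since $h_0<\mathcal L^\ast$, Theorem~\ref{thm:eigen-properties} furnishes $\delta>0$ such that the principal eigenvalue on $[-h_0-\delta,h_0+\delta]$ remains in the stable regime, with positive eigenpair $(\phi_1,\phi_2)$. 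Building the upper solution $(\bar u,\bar v)=Ae^{-\kappa t}(\phi_1,\phi_2)$ on the frozen interval $[-h_0-\delta,h_0+\delta]$—where concavity of $H,G$ preserves the upper-solution property for the full nonlinear system—a direct estimate of the double integrals defining $h',g'$ shows that, for $\mu$ sufficiently small depending on $A,\kappa,\delta,\|\phi_i\|_\infty$, the cumulative boundary motion $\int_0^\infty(|h'|+|g'|)\,dt$ stays below $2\delta$, and comparison forces vanishing. Finally, to show $\Sigma_+\ne\emptyset$ for large $\mu$: fixing $\mathcal Z_1>\mathcal Z_\ast$ with associated positive eigenpair $(\psi_1,\psi_2)$ in the unstable regime, the formulae for $h'(0),g'(0)$ in \eqref{eq:main-problem} imply that the boundary exits $[-\mathcal Z_1,\mathcal Z_1]$ in finite time $T_\mu$ once $\mu$ is large, and from $T_\mu$ onward $(u,v)$ dominates an exponentially growing eigen-type sub-solution; this prevents decay, yields $h_\infty=+\infty$, $g_\infty=-\infty$, and convergence to $(\mathscr U^\ast,\mathscr V^\ast)$ follows from a cooperative sandwich between that sub-solution and the unique positive steady state of the limit problem on $\mathbb R$, combined with the monotonicity of the cooperative semigroup.

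The main obstacle is the sharp identification $\hat\mu\in\Sigma_-$. Monotonicity alone only gives $(0,\hat\mu)\subset\Sigma_-$ and $(\hat\mu,\infty)\subset\Sigma_+$. I would close the argument by contradiction: if $\hat\mu\in\Sigma_+$, then for some finite $T^\ast$ one has $h^{\hat\mu}(T^\ast)-g^{\hat\mu}(T^\ast)>2\mathcal Z_\ast$, and continuous dependence of $(u^\mu,v^\mu,g^\mu,h^\mu)$ on $\mu$ over $[0,T^\ast]$ transfers this strict crossing to $\mu=\hat\mu-\varepsilon$ for $\varepsilon$ small, forcing $\hat\mu-\varepsilon\in\Sigma_+$ and contradicting $\sup\Sigma_-=\hat\mu$. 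The hard part is precisely this continuous-dependence estimate in a norm strong enough to control the $C^1$ advection terms $pu_x,qv_x$ and the integral free-boundary speeds; because the drift destroys any variational or self-adjoint structure, the required uniform-in-$\mu$ $C^1_{t,x}$ bounds are non-routine and rely crucially on the $C^1$ regularity of the kernels $J_i$ from assumption \textbf{(J)} and on the sharp a priori estimates that underpin the Banach fixed-point scheme of Theorem~\ref{thm:global-existence}.
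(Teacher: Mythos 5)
Your proposal follows the same overall strategy as the paper's proof: define the "vanishing set" $\mathcal E$ of parameters $\mu$ for which the habitat stays bounded, prove it is a nonempty left interval via monotonicity in $\mu$, verify $\mathcal E\ne\emptyset$ by constructing a decaying upper solution from the stable-regime eigenpair (this is the paper's Lemma~\ref{Vanishfreeboundaries} and Remark~\ref{Remarkvanishing}), verify $\mathcal E$ is bounded above by a large-$\mu$ spreading criterion (the paper's Lemma~\ref{lem:mu-large-spreading}), and then close the argument by showing $\hat\mu=\sup\mathcal E\in\mathcal E$ via a continuous-dependence contradiction. The only technical divergence is the large-$\mu$ step: you propose a \emph{direct} exit-time estimate (for $\mu$ large the boundary escapes $[-\mathcal Z_1,\mathcal Z_1]$ in finite time using lower bounds on $u,v$ near the front), whereas the paper argues by \emph{contradiction}, assuming vanishing for all $\mu$ and extracting a uniform upper bound on $\mu$ from the integral identity $\mu=(h_{\mu,\infty}-h_\mu(t_1))/\int_{t_1}^\infty(\cdots)\,ds$. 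Both routes are standard in the free-boundary literature and are of comparable difficulty. You are also right to flag the continuous-dependence step as the delicate point; the paper invokes it as "standard" without supplying the uniform-in-$\mu$ $C^1$ estimates needed to control the drift terms, so your remark is a fair criticism of the paper itself rather than a gap in your argument.
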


\medskip
\noindent\textbf{Organization of the paper. }
Section~2 presents the precise assumptions, several preliminary results, and the well-posedness of problem~\eqref{eq:main-problem}.
Section~3 develops the spectral theory and establishes the existence, simplicity, and qualitative properties of the principal eigenvalue, together with its connection to the basic reproduction number~$R_0$. Section~4 is devoted to the spreading–vanishing dichotomy and the corresponding threshold criteria for the free-boundary problem. Finally, Section~5 discusses several extensions, including heterogeneous advections, nonsymmetric kernels, shape sensitivity analyses, and related open problems.

\medskip
\noindent\textbf{Notation.}
We write $\langle\cdot,\cdot\rangle$ for the $L^2$-inner product and $\|\cdot\|$ for the corresponding norm (componentwise on $E=L^2(\Omega)\times L^2(\Omega)$). For an operator $\mathcal{S}$, $r(\mathcal{S})$ and $s(\mathcal{S})$ denote its spectral radius and spectral bound, respectively. The positive cone in $E$ is $E_+:=\{(u,v):u\ge 0,\,v\ge 0\ \text{a.e.}\}$, and inequalities between vectors are understood componentwise. We use $C$ to denote a generic positive constant whose value may change from line to line.

\section{Preliminaries}

 In this section, we focus on demonstrating the existence and uniqueness of solutions to the model \eqref{eq:main-problem}. Firstly, we need the following lemma to guarantee the maximum principle.

\begin{lemma}[\textbf{Maximum Principle}] \label{MaximumLemma}
Let $h_0, T > 0$ and $\Omega := [0,T] \times [-h_0, h_0]$. Suppose that $A, B, C, D \in L^\infty(\Omega)$, $B, C > 0$, $(u(t,x), v(t,x))$ as well as $(u_t(t,x), v_t(t,x))$ are continuous in $\Omega$ and satisfy
\begin{equation*}
\left\{
\begin{array}{ll}
u_t(t,x) \geq d_1 \int_{g(t)}^{h(t)} J_1(x-y)u(t,y)dy - d_1 u +pu_x+ Au + Bv, & 0 < t \leq T, -h_0 \leq x \leq h_0, \\
v_t(t,x) \geq d_2 \int_{g(t)}^{h(t)} J_2(x-y)v(t,y)dy - d_2 v +qv_x+ Dv + Cu, & 0 < t \leq T, -h_0 \leq x \leq h_0, \\
u(0,x) \geq 0, & \\
v(0,x) \geq 0, &
\end{array}
\right.
\end{equation*}
Then $(u(t,x),v(t,x)) \geq (0,0)$ for all $0 < t \leq T$ and $-h_0 \le x \le h_0$.
\end{lemma}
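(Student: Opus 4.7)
The plan is to argue by contradiction via a first--time/first--point principle, regularizing the pair $(u,v)$ by an exponentially growing positive perturbation that restores strict inequalities. Specifically, I would introduce
\[
u_\varepsilon(t,x) := u(t,x) + \varepsilon\, e^{\lambda t},
\qquad
v_\varepsilon(t,x) := v(t,x) + \varepsilon\, e^{\lambda t},
\]
for small $\varepsilon>0$ and a constant $\lambda$ to be chosen. Substituting into the inequalities produces
\[
(u_\varepsilon)_t - d_1\!\!\int_{g(t)}^{h(t)}\!\! J_1(x-y)u_\varepsilon(t,y)\,dy + d_1 u_\varepsilon - p(u_\varepsilon)_x - A u_\varepsilon - B v_\varepsilon
\ \geq\ \varepsilon e^{\lambda t}\Big[\lambda + d_1\Big(1-\!\!\int_{g(t)}^{h(t)}\!\! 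J_1(x-y)\,dy\Big) - A - B\Big],
\]
and likewise for $v_\varepsilon$. Since $\int J_i \le 1$, the kernel contribution is nonnegative, so choosing $\lambda > \|A\|_\infty + \|B\|_\infty + \|C\|_\infty + \|D\|_\infty$ yields a strict differential inequality for $(u_\varepsilon,v_\varepsilon)$ with strictly positive initial data $\varepsilon$.

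Next I would run the first-touching time argument. Let
\[
t_0 := \inf\bigl\{t\in(0,T]\ :\ \min_{x\in[-h_0,h_0]} u_\varepsilon(t,x)\le 0\ \text{or}\ \min_{x\in[-h_0,h_0]} v_\varepsilon(t,x)\le 0\bigr\},
\]
with the convention $\inf\emptyset = +\infty$. Continuity of $(u_\varepsilon,v_\varepsilon)$ and strict positivity at $t=0$ guarantees $t_0>0$; if $t_0\le T$, there exists $x_0\in[-h_0,h_0]$ at which, without loss of generality, $u_\varepsilon(t_0,x_0)=0$ while $u_\varepsilon,\,v_\varepsilon\ge 0$ on $[0,t_0]\times[-h_0,h_0]$. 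At such a touching point the $t$-minimality gives $(u_\varepsilon)_t(t_0,x_0)\le 0$, while $x_0$ is a spatial minimum of $u_\varepsilon(t_0,\cdot)$, so for $x_0\in(-h_0,h_0)$ one has $(u_\varepsilon)_x(t_0,x_0)=0$. Since $\int J_1(x_0-y)u_\varepsilon(t_0,y)\,dy\ge 0$ and $B(t_0,x_0)v_\varepsilon(t_0,x_0)\ge 0$, the strict inequality established above forces $(u_\varepsilon)_t(t_0,x_0)>0$, a contradiction. Hence $u_\varepsilon,v_\varepsilon>0$ everywhere on $\Omega$; letting $\varepsilon\to 0^+$ then yields $u,v\ge 0$.

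The main obstacle is the endpoint case $x_0\in\{-h_0,h_0\}$, where the advection terms $p\,(u_\varepsilon)_x$ and $q\,(v_\varepsilon)_x$ may carry the wrong sign: at a left-boundary minimum $(u_\varepsilon)_x\ge 0$, which defeats the argument if $p<0$, and symmetrically at the right boundary if $p>0$. I would handle this in one of two ways. First, by exploiting the setting to which the lemma is applied: the Dirichlet conditions $u(t,g(t))=v(t,g(t))=0=u(t,h(t))=v(t,h(t))$ in~\eqref{eq:main-problem} combined with the expansion $[-h_0,h_0]\subseteq[g(t),h(t)]$ reduce the boundary case to $u_\varepsilon\ge\varepsilon e^{\lambda t}>0$ at $x=\pm h_0$, eliminating boundary touching altogether. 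Alternatively, replace the perturbation by a weighted one, $u_\varepsilon := u + \varepsilon e^{\lambda t}\phi(x)$ with $\phi(x)=e^{\beta x}$ and $\beta$ chosen of sign opposite to $p$ (and similarly for $v_\varepsilon$ with $\delta$ opposite to $q$). The resulting extra contribution $-p\beta\,\phi$ is positive and, after absorbing the kernel factor $1-\int J_1(z)e^{-\beta z}\,dz$ (bounded below on a finite $\beta$-interval) into an enlarged $\lambda$, the strict inequality persists, while at the problematic boundary the sign of $(u_\varepsilon)_x = \varepsilon e^{\lambda t}\beta e^{\beta x}$ lines up with $p$ to preserve the contradiction. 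With this boundary treatment, the scheme goes through uniformly in $\varepsilon$, and the limit $\varepsilon\to 0^+$ completes the proof.
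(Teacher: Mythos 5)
Your framework---exponential perturbation $(u,v)\mapsto(u_\varepsilon,v_\varepsilon)$ with $\lambda$ large, plus a first-touching-time contradiction---is the natural one and is what the paper intends: the proof is omitted there with a pointer to Lemma~3.1 of Du and Ni (2020), which treats the drift-free analogue by exactly this scheme, and you correctly flag the \emph{new} difficulty introduced by advection: at a boundary touching point (say $x_0=h_0$ when $p>0$), minimality of $u_\varepsilon(t_0,\cdot)$ forces $(u_\varepsilon)_x(t_0,h_0)\le 0$ and hence $p(u_\varepsilon)_x(t_0,h_0)\le 0$, a term the hypotheses do not bound from below. However, your Fix~2 does not remove this obstruction. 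The sign condition $(u_\varepsilon)_x(t_0,h_0)\le 0$ is a statement about the one-sided derivative of the \emph{total} function $u_\varepsilon=u+\varepsilon e^{\lambda t}\phi$, and it holds for \emph{any} weight $\phi$, simply because $u_\varepsilon(t_0,\cdot)$ attains a minimum at the right endpoint; your displayed identity $(u_\varepsilon)_x=\varepsilon e^{\lambda t}\beta e^{\beta x}$ silently drops the $u_x$ contribution, which is precisely the uncontrolled quantity. Choosing $\beta$ of sign opposite to $p$ does make $-p\phi'>0$ in the \emph{residual}---that part is fine---but it has no bearing on the one-sided sign of $(u_\varepsilon)_x$ at the boundary minimum, so the contradiction still fails at the inflow endpoint.

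Your Fix~1 identifies the real issue, but it imports a Dirichlet condition that the lemma's hypotheses do not contain. On the fixed rectangle $\Omega=[0,T]\times[-h_0,h_0]$ nothing is assumed about $u,v$ at the inflow edge ($x=h_0$ when $p>0$, $x=-h_0$ when $p<0$), and for any $T>0$ the backward characteristics of $\partial_t-p\,\partial_x$ emanating from some points of $\Omega$ exit through that edge before reaching $t=0$, so the initial data alone cannot control $u$ there; the first-touching argument, and in fact the statement itself, cannot close without an inflow hypothesis. The reading that reconciles the lemma with the paper's subsequent comparison lemma is that the spatial domain should really be $[g(t),h(t)]$ with $u=v=0$ on the moving boundary; then $u_\varepsilon,v_\varepsilon>0$ on the boundary, boundary touching is excluded, and your scheme goes through as written. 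A complete proof should state that hypothesis explicitly rather than leave the boundary case unaddressed.
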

We omit the proof of this lemma, as it is almost similar to the proof of Lemma 3.1 \cite{DuNi2020}.

\begin{lemma}[Comparison Principle]
We suppose that conditions \textbf{(J)} and \textbf{(GH)}  satisfy and that the initial data $u_0(x), v_0(x)$ satisfy (A1)-(A2). Let $T > 0$, and suppose that $h(t), g(t) \in C^1([0,T])$ and $u, v \in C_{D_T}^{g,h}$ satisfy the following system:
\begin{equation} \label{eq:comparison-system}
\left\{\begin{aligned}
    \bar{u}_t &\geq d_1 \left[ \int_{g(t)}^{h(t)} J_1(x-y)\bar{u}(t,y)\,dy - \bar{u}(t,x) \right] + p \bar{u}_x - a \bar{u}(t,x) + H(\bar{v}(t,x)), \\[0.5em]
    \bar{v}_t &\geq d_2 \left[ \int_{g(t)}^{h(t)} J_2(x-y)\bar{v}(t,y)\,dy - \bar{v}(t,x) \right] + q \bar{v}_x - b \bar{v}(t,x) + G(\bar{u}(t,x)),
\end{aligned} \right.
\end{equation}
for $t \in (0,T]$, $x \in (g(t), h(t))$, and
\begin{equation*}
\begin{aligned}
    & \bar{u}(t,x) \geq 0, \quad \bar{v}(t,x) \geq 0, \quad \text{for all } t \in (0,T], \\
    & h^{\prime}(t) \geq \mu \left( \int_{h(t)}^\infty \int_{g(t)}^{h(t)} J_1(x-y)u(t,x)\,dx\,dy + \rho \int_{h(t)}^\infty \int_{g(t)}^{h(t)} J_2(x-y)v(t,x)\,dx\,dy \right), \\
    & g^{\prime}(t) \leq -\mu \left( \int_{-\infty}^{g(t)} \int_{g(t)}^{h(t)} J_2(x-y)v(t,x)\,dx\,dy + \rho \int_{-\infty}^{g(t)} \int_{g(t)}^{h(t)} J_1(x-y)u(t,x)\,dx\,dy \right), \\
    & -g(0) \leq -h(0), \quad h(0) \geq h_0, \\
    & u(0,x) = u_0(x), \quad v(0,x) = v_0(x), \quad x \in [-h_0, h_0].
\end{aligned}
\end{equation*}
Then the unique solution $(u, v, g, h)$ of system \eqref{eq:main-problem} satisfies:
\[
    u(t,x) \leq \bar{u}(t,x), \quad v(t,x) \leq \bar{v}(t,x), \quad g(t) \geq \bar{g}(t), \quad h(t) \leq \bar{h}(t), \quad \text{for all } t \in (0,T],~x \in [g(t), h(t)].
\]
\end{lemma}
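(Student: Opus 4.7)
The plan is to prove the comparison principle by a continuity-plus-contradiction argument combined with a small perturbation of the super-solution that enforces strict inequalities at the initial time. Concretely, for $\varepsilon>0$ small, I would replace $\bar h(0)$ and $-\bar g(0)$ by $\bar h(0)+\varepsilon$ and $-\bar g(0)+\varepsilon$, the initial data $\bar u(0,\cdot),\bar v(0,\cdot)$ by $\bar u(0,\cdot)+\varepsilon,\bar v(0,\cdot)+\varepsilon$, and $\mu$ by $\mu(1+\varepsilon)$, so that all comparisons between the perturbed super-solution and the solution $(u,v,g,h)$ are strict at $t=0$. The original (non-strict) comparison then follows by letting $\varepsilon\to 0^+$ and invoking continuous dependence on data furnished by the fixed-point construction in Theorem~\ref{thm:global-existence}.

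With the perturbation in place, define
\[
\tau^*:=\sup\bigl\{\tau\in[0,T]\,:\,\bar g(t)<g(t)\ \text{and}\ h(t)<\bar h(t)\ \text{for all}\ t\in[0,\tau]\bigr\}.
\]
By strict initial inequalities and continuity, $\tau^*>0$. Assume for contradiction that $\tau^*<T$, so that either $\bar g(\tau^*)=g(\tau^*)$ or $h(\tau^*)=\bar h(\tau^*)$ is attained; I treat the right-endpoint contact, as the left case is symmetric. On $Q_{\tau^*}:=\{(t,x)\,:\,t\in[0,\tau^*],\ g(t)\le x\le h(t)\}$, put $w:=\bar u-u$ and $z:=\bar v-v$. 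Subtracting the differential equations from the differential inequalities, the mean-value theorem gives $H(\bar v)-H(v)=H'(\xi_1)z$ and $G(\bar u)-G(u)=G'(\xi_2)w$ (under the cooperative monotonicity $H',G'\ge 0$ naturally compatible with \textbf{(GH)}), while $\bar u,\bar v\ge 0$ on $[\bar g(t),\bar h(t)]\setminus[g(t),h(t)]$ provides nonnegative additional nonlocal contributions. The result is a cooperative linear integro-differential inequality for $(w,z)$ on $Q_{\tau^*}$ with non-negative boundary traces $w(t,g(t)),\,w(t,h(t)),\,z(t,g(t)),\,z(t,h(t))\ge 0$. An adaptation of Lemma~\ref{MaximumLemma} to the time-dependent interval $[g(t),h(t)]$ and the drift terms $p\,w_x,\,q\,z_x$ (handled either by the method of characteristics $\xi=x-pt$, $\eta=x-qt$, or by an exponential reweighting) yields $w,z\ge 0$ on $Q_{\tau^*}$. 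Extending $u,v$ by zero outside $[g(t),h(t)]$, the domination $\bar u\ge u,\ \bar v\ge v$ then holds on the full interval $[\bar g(t),\bar h(t)]$.

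Setting $h^*:=h(\tau^*)=\bar h(\tau^*)$ and inserting this into the two boundary ODEs,
\[
\bar h'(\tau^*)-h'(\tau^*)\ \ge\ \mu\!\int_{\bar g(\tau^*)}^{g(\tau^*)}\!\!\int_{h^*}^{\infty}\!J_1(x-y)\bar u(\tau^*,x)\,dy\,dx + \rho\mu\!\int_{\bar g(\tau^*)}^{g(\tau^*)}\!\!\int_{h^*}^{\infty}\!J_2(x-y)\bar v(\tau^*,x)\,dy\,dx,
\]
and the strictness $\bar g(\tau^*)<g(\tau^*)$ combined with $J_1(0),J_2(0)>0$ from \textbf{(J)} and the strong positivity of $\bar u,\bar v$ in the interior enforced by the $\varepsilon$-perturbation implies $\bar h'(\tau^*)>h'(\tau^*)$. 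But $h(t)<\bar h(t)$ on $[0,\tau^*)$ with equality at $\tau^*$ forces $h'(\tau^*)\ge\bar h'(\tau^*)$, contradicting the previous strict inequality. Hence $\tau^*=T$, and the conclusion of the lemma follows after the limit $\varepsilon\to 0^+$.

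The principal obstacle is the coupling between the PDE inequalities on a moving domain and the nonlocal free-boundary ODEs: because $h'$ depends on an integral of $u,v$ across the entire interior, there is no purely local boundary-point argument analogous to the classical Hopf lemma, and one must transfer non-negativity of $(w,z)$ in the interior into a strict inequality of boundary speeds at the contact time. Strictness of the initial perturbation and of the kernels at $0$ is what bridges this gap; without the perturbation the boundary ODEs could be satisfied with equality at $\tau^*$, defeating the contradiction. Handling the drift terms requires some additional care, since the straightforward subtraction does not eliminate $p\,w_x,\ q\,z_x$ and the maximum principle must be applied along characteristics rather than at fixed $x$.
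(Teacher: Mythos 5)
Your continuity-plus-contradiction skeleton (define $\tau^*$ as the first contact time, deduce interior nonnegativity of $w=\bar u-u$, $z=\bar v-v$ from a maximum principle, then obtain a contradiction from the boundary ODEs) is the standard one for this type of free-boundary comparison lemma, and the final passage $\varepsilon\to 0^+$ via continuous dependence is also correct in spirit. However, the way you introduce strictness is flawed.

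You propose to perturb the \emph{super-solution} by replacing $\bar u(0,\cdot),\bar v(0,\cdot)$ with $\bar u(0,\cdot)+\varepsilon,\bar v(0,\cdot)+\varepsilon$ and $\mu$ with $\mu(1+\varepsilon)$. This does not, in general, preserve the super-solution property. Adding the constant $\varepsilon$ to $\bar v$ increases the right-hand side of the first inequality in \eqref{eq:comparison-system} by $H(\bar v+\varepsilon)-H(\bar v)>0$, while adding $\varepsilon$ to $\bar u$ changes the corresponding left-hand side by nothing and the nonlocal and $-a\bar u$ terms by at most $-a\varepsilon$ (not enough to absorb the reaction increment if $H'$ is large). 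Hence the perturbed pair need not satisfy \eqref{eq:comparison-system}. The replacement $\mu\mapsto\mu(1+\varepsilon)$ is worse: the boundary conditions for a super-solution read $\bar h'\ge\mu(\cdots)$, so asking for $\bar h'\ge\mu(1+\varepsilon)(\cdots)$ is a strictly \emph{stronger} requirement that the given $\bar h$ may well violate. A time-dependent shift $\varepsilon e^{Kt}$ with $K$ large can repair the interior inequalities, but it still does not fix the $\mu$-scaling issue. The standard and correct way to inject strictness — used in the references this paper builds on, e.g.\ \cite{DuNi2020,Du} — is to perturb the \emph{solution} rather than the super-solution: construct the unique solution $(u_\varepsilon,v_\varepsilon,g_\varepsilon,h_\varepsilon)$ of \eqref{eq:main-problem} with $h_0$ shrunk to $(1-\varepsilon)h_0$, $\mu$ replaced by $(1-\varepsilon)\mu$, and scaled-down initial data. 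Because this perturbed quadruple satisfies the exact equalities, no super-solution property needs to be verified; one then gets strict initial comparisons against $(\bar u,\bar v,\bar g,\bar h)$, runs your contact-time argument with the slack $\mu-(1-\varepsilon)\mu=\varepsilon\mu>0$ in the boundary ODEs supplying the strict inequality, and finally lets $\varepsilon\to 0^+$.

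Two smaller remarks. First, at the contact time $\tau^*$ you need to rule out simultaneous contact of both free boundaries: if $\bar g(\tau^*)=g(\tau^*)$ and $\bar h(\tau^*)=h(\tau^*)$ simultaneously, the integral $\int_{\bar g(\tau^*)}^{g(\tau^*)}$ vanishes and your strictness has to come entirely from the $\mu$-slack or from the positivity of $\bar u-u$ in the interior; this should be spelled out. Second, you correctly use $H',G'\ge0$ for the mean-value-theorem step, which is indeed what cooperativity requires; note the paper's statement of \textbf{(GH)} has a sign typo ($H'<0$, $G'<0$) that should read $H'>0$, $G'>0$ (and $H''<0$, $G''<0$, as in the Hsu--Yang hypotheses it paraphrases), and your implicit correction is the right reading. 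You also write that the maximum principle has to be adapted to include the drift; in fact Lemma~\ref{MaximumLemma} already carries the $p u_x$ and $q v_x$ terms, so the only real adaptation needed is to the time-dependent interval.
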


For convenience, we introduce the following notation. Given \( h_0 > 0 \) and \( T > 0 \), we define:

\begin{align*}
\mathcal{H}_{h_0,T} &= \left\{ h \in C([0,T]) : h(0) = h_0, \ \inf_{0 \leq t_1 < t_2 \leq T} \frac{h(t_2) - h(t_1)}{t_2 - t_1} > 0 \right\}, \\
\mathcal{G}_{h_0,T} &= \left\{ g \in C([0,T]) : -g(t) \in \mathcal{H}_{h_0,T} \right\}, \\
C_0([{-}h_0,h_0]) &= \left\{ u \in C([{-}h_0,h_0]) : u(-h_0) = u(h_0) = 0, \ u_0(x) > 0 \text{ in } (-h_0,h_0) \right\}.
\end{align*}

For \( g \in \mathcal{G}_{h_0,T} \), \( h \in \mathcal{H}_{h_0,T} \), and \( u_0(x) \in C_0([{-}h_0,h_0]) \), we define the domains:

\begin{align*}
\Omega_{g,h} &= \left\{ (t,x) \in \mathbb{R}^2 : 0 < t \leq T, \ g(t) < x < h(t) \right\}, \\
\Omega_\infty &= \left\{ (t,x) \in \mathbb{R}^2 : 0 < t \leq T, \ x \in \mathbb{R} \right\}.
\end{align*}

We also define the function spaces:

\begin{align*}
X_{v_0,\infty} &= \left\{ \varphi \in C^1(\Omega_\infty) : \varphi(0,x) = v_0(x) \text{ for } x \in \mathbb{R}, \ 0 < \varphi \leq M_1 \text{ in } \Omega_\infty \right\}, \\
X_{u_0,g,h} &= \left\{ \psi \in C^1(\overline{\Omega_{g,h}}) : \psi \geq 0 \text{ in } \Omega_{g,h}, \ \psi(0,x) = u_0(x) \text{ for } x \in [-h_0,h_0], \right. \\
&\quad \left. \psi(t,g(t)) = \psi(t,h(t)) = 0 \text{ for } 0 \leq t \leq T \right\}.
\end{align*}

We now proceed to proof the main theorem of this section.

\textbf{Proof of Theorem \ref{thm:global-existence}.}

\begin{proof}
We divide the proof into two steps. We rewrite the system for $t>0$ :

 \textbf{Step 01 : } Local  existence.

 \begin{equation}\label{eq:2.3}
\left\{
\begin{aligned}
u_t &= d_1 \!\!\int_{g(t)}^{h(t)}\! J_1(x-y)u(t,y)\,dy
      - u(x,t) + p\,u_x - a(x)u(x,t) + H\!\big(\widetilde{v}(t,x)\big),
      && \ x\in[g(t),h(t)], \\[4pt]
u(t,g(t)) &= u(t,h(t)) = 0,
      &&\\[4pt]
h'(t) &= \mu \!\!\int_{g(t)}^{h(t)}\!\!\int_{h(t)}^{+\infty}\!
           J_1(x-y)u(t,x)\,dy\,dx
         + \rho \!\!\int_{g(t)}^{h(t)}\!\!\int_{h(t)}^{+\infty}\!
           J_2(x-y)\widetilde{v}(t,x)\,dy\,dx,
      &&  \\[4pt]
g'(t) &= -\mu \!\!\int_{g(t)}^{h(t)}\!\!\int_{-\infty}^{g(t)}\!
            J_1(x-y)u(t,x)\,dy\,dx
          + \rho \!\!\int_{g(t)}^{h(t)}\!\!\int_{-\infty}^{g(t)}\!
            J_2(x-y)\widetilde{v}(t,x)\,dy\,dx,
      &&  \\[4pt]
u(0,x) &= u_0(x), \quad h(0)=-g(0)=h_0,
      && x\in[-h_0,h_0].
\end{aligned}
\right.
\end{equation}

has a unique solution, denoted by $(\hat{u}(t, x), \hat{g}(t), \hat{h}(t))$, which is defined for all $t > 0$. Moreover, for any $T > 0$, we have:
\[
\hat{g}(t) \in \mathcal{G}_{h_0, T},\quad \hat{h}(t) \in \mathcal{H}_{h_0, T},\quad \hat{u}(t, x) \in \mathcal{X}_{u_0, g, h}.
\]
Furthermore, it holds that
\begin{equation}\label{eq:2.4}
0 < \hat{u}(t, x) \leq M := \max \left\{ \max_{-h_0 \leq x \leq h_0} u_0(x),\ K_0 \right\}, \quad \text{for } 0 < t < T,\ x \in (\hat{g}(t), \hat{h}(t)).
\end{equation}

Define the function:
\begin{equation}\label{eq:2.5}
\tilde{u}(t, x) =
\begin{cases}
\hat{u}(t, x), & x \in [\hat{g}(t), \hat{h}(t)], \\
0, & x \notin [\tilde{g}(t), \tilde{h}(t)].
\end{cases}
\end{equation}

Then, consider the second equation:
\begin{equation}\label{eq:2.6}
\left\{
\begin{aligned}
v_t &= d_2 \int_{\mathbb{R}} J_2(x - y) v(t, y)\, dy - v(t, x) + q v_x + G\left(\tilde{u}(t, x)\right), && 0 < t < T,\ x \in \mathbb{R}, \\
v(0, x) &= v_0(x), && x \in \mathbb{R}.
\end{aligned}
\right.
\end{equation}

The local existence and uniqueness of solutions to \eqref{eq:2.6} follow from standard theory. The unique solution $\tilde{v}(t,x)$ possesses $C^1(\Omega_\infty).$

Next, we show the coincidence between  $\widetilde{v}(t, x) $  and $\hat{v}$  by establishing that the mapping $\mathcal{T} \widetilde{v}(t, x) = \hat{v}$ admits a unique fixed point in the function space $\mathcal{X}_{v_0,\infty}$. The proof follows from Banach's contraction mapping principle. The operator $\mathcal{T}(X_{v_0,\infty}) \subset X_{v_0,\infty}  $. The essential step consists in showing that for $T$ sufficiently small, $\mathcal{T}$ satisfies the contraction on $X_{v_0,\infty}$.

Next, we choose  $\widetilde{v}_1, \widetilde{v}_2  \in X_{v_0, \infty}$ . Let \( \hat{u}_1, \hat{u}_2 \) be the solution of \eqref{eq:2.3} associated with \( \widetilde{v}_1, \widetilde{v}_2  \), and \( \hat{v}_1, \hat{v}_2 \) the solution of \eqref{eq:2.6} associated to \( \tilde{u}_1, \tilde{u}_2  \). Then, applying the transport equation, we get
\begin{equation*}
\begin{aligned}
\hat{v}_1(t,x) - \hat{v}_2(t,x) & =  \hat{V}_0(x+q(t-t_0))  + \int_{t_0-t}^0 \int_{\mathbb{R}} d_2 J_2(x-y)\left[\hat{v}_1(s,y) - \hat{v}_2(s,y)\right] dy\, ds \\&- \int_{t_0-t}^0 d_2\left[\hat{v}_1 - \hat{v}_2\right](s,x) ds + \int_{t_0-t}^0 \left[G(\widetilde{u}_1) -  G(\widetilde{u}_1)\right](s,x) ds.
\end{aligned}
\end{equation*}
which implies that
\[
\|\hat{v}_1 - \hat{v}_2\|_{C(\Omega_\infty)}
\leq  \| \hat{V}_0(x+q(t-t_0))\|_{C(\Omega_\infty)}  +2d_2 \|\hat{v}_1 - \hat{v}_2\|_{C(\Omega_\infty)} t
+ \|\widetilde{u}_1 - \widetilde{u}_2\|_{C(\Omega_\infty)} t,
\]
then,
\[
\|\hat{v}_1 - \hat{v}_2\|_{C(\Omega_\infty)}  \leq \| \hat{V}_0(x+q(t-t_0))\|_{C(\Omega_\infty)} + 2d_2  \|\hat{v}_1 - \hat{v}_2\|_{C(\Omega_\infty)} \mathbf{T}
+ K \|\hat{u}_1 - \hat{u}_2\|_{C(\Omega_{\hat{g},\hat{h}})} T.
\]

We choose $T$ small enough such that $2d_2 \mathbf{T} \leq \dfrac{1}{2},$ then we we obtain
\begin{equation}\label{Eq_Finale222}
\|\hat{v}_1 - \hat{v}_2\|_{C(\Omega_\infty)}  \leq \| \hat{V}_0(x+q(t-t_0))\|_{C(\Omega_\infty)} + 2 K T\|\hat{u}_1 - \hat{u}_2\|_{C(\Omega_{\hat{g},\hat{h}})} .
\end{equation}
We now establish an estimate for $\|\hat{u}_1 - \hat{u}_2\|_{C(\Omega_{\hat{g},\hat{h}})}.$. For any given $(\hat{t},\hat{x})\in \Omega_{\hat{g},\hat{h}}$ and $t\in(0,T]$, we define the following functions as follows.
\begin{align*}
\mathcal{U}(t, \hat{x}) &= \hat{u}_1(t,\hat{x}) - \hat{u}_2(t, \hat{x}), \\
\mathcal{V}(t, \hat{x}) &= \hat{v}_1(t, \hat{x}) - \hat{v}_2(t, \hat{x}),
\end{align*}
and the transition time:
\begin{equation}\label{eq:transition_time}
t_x =
\begin{cases}
t_{x,\hat{g}} & \text{if } x\in[\hat{g}(T),-h_0) \text{ and } x = \hat{g}(t_{x,\hat{g}}), \\
0 & \text{if } x\in[-h_0,h_0], \\
t_{x,\hat{h}} & \text{if } x\in(h_0,\hat{h}(T)] \text{ and } x = \hat{h}(t_{x,\hat{h}}).
\end{cases}
\end{equation}

We further introduce the extreme boundary functions:
\begin{align*}
\mathcal{H}_1(t) &= \min\{\hat{h}_1(t), \hat{h}_2(t)\}, \quad\quad\quad \mathcal{H}_2(t) = \max\{\hat{h}_1(t), \hat{h}_2(t)\},\\
\mathcal{G}_1(t) &= \min\{\hat{g}_1(t), \hat{g}_2(t)\}, \quad\quad\quad \mathcal{G}_2(t) = \max\{\hat{g}_1(t), \hat{g}_2(t)\},\\
\end{align*}
and the combined domain:
\[
\Omega_T = \Omega_{G_1,H_2} = \Omega_{\hat{g}_1, \hat{h}_1} \cup \Omega_{\hat{g}_2,\hat{h}_2}.
\]

The analysis proceeds by considering three distinct cases.

\textbf{Case 1:} $\hat{x} \in[-h_0,h_0]$. From the governing equations for $\hat{u}_i(t, \hat{x})$, we obtain the differential relation:
\begin{equation}\label{eq:u_difference}
\mathcal{U}_t(t, \hat{x}) + a(\hat{x}) \mathcal{U}(t, \hat{x})+ \mathcal{C}_1(t,\hat{x})\mathcal{U}(t, \hat{x}) + q \mathcal{U}_x(t, \hat{x})  = \mathcal{C}_2(t, \hat{x})\hat{V}(t,\hat{x}) + \mathcal{F}(t,\hat{x}), \quad \mathcal{U}(0,x^*) = 0,
\end{equation}

where $\hat{V}(t, \hat{x}) = \widetilde{v}_1(t, \hat{x}) - \widetilde{v}_2(t, \hat{x})$, and the coefficient functions are defined by:
\begin{equation}\label{eq:coefficients}
\begin{aligned}
\mathcal{C}_1(t,\hat{x}) &= d_1 - \frac{H(\widetilde{v}_1(t, \hat{x}))- H(\widetilde{v}_2(t, \hat{x}))}{\hat{u}_1(t,\hat{x}) - \hat{u}_2(t,\hat{x})}, \\
\mathcal{C}_2(t,\hat{x}) &= \frac{G(\hat{u}_1(t,\hat{x})) - G(\hat{u}_2(t,\hat{x}))}{\widetilde{v}_1(t,\hat{x}) - \widetilde{v}_2(t,\hat{x})},
\end{aligned}
\end{equation}
with the interaction term:
\begin{equation}\label{eq:I_term}
\mathcal{F}(t, \hat{x}) = d_1\left(\int_{\hat{g}_1(t)}^{\hat{h}_1(t)} J_1(\hat{x}-y)\hat{u}_1(t,y)\,dy - \int_{\hat{g}_2(t)}^{\hat{h}_2(t)} J_1(\hat{x}-y)\hat{u}_2(t,y)\,dy\right).
\end{equation}

We decompose the term $\mathcal{F}\left(t, \hat{x}\right)$ as follows:
\begin{equation}\label{EQssss}
\begin{aligned}
\mathcal{F}\left(t, \hat{x}\right) &= d_1 \displaystyle\int_{\hat{h}_1(t)}^{\hat{g}_1(t)} J_1(\hat{x}-y)\hat{u}_1(t,y)dy - d_1 \displaystyle\int_{\hat{h}_2(t)}^{\hat{g}_2(t)} J(\hat{x}-y)\hat{u}_2(t,y)dy \\
&= d_1 \left( \displaystyle\int_{\hat{h}_1(t)}^{\hat{g}_1(t)} J_1(\hat{x}-y)[\hat{u}_1(t,y) - \hat{u}_2(t,y)]dy \right) \\
&\quad + d_1 \left( \int_{\tilde{h}_1(t)}^{\hat{g}_1(t)} J_1(\hat{x}-y)\hat{u}_2(t,y)dy - \displaystyle\int_{\hat{h}_2(t)}^{\hat{g}_2(t)} J(\hat{x}-y)\tilde{u}_2(t,y)dy \right),
\end{aligned}
\end{equation}
for the first term of \eqref{EQssss} , we have
\begin{equation}\label{EQBOUD1}
\left| \displaystyle\int_{\hat{h}_1(t)}^{\hat{g}_1(t)} J_1(\hat{x}-y)\mathcal{U}(t,y)dy \right| \leq \|J_1\|_\infty \|\mathcal{U}\|_{C(\Omega_T)} (\hat{g}_1(t) - \hat{h}_1(t)),
\end{equation}
For the second term of \eqref{EQssss}, we have
\begin{equation}\label{EQBOUD2}
\begin{aligned}
\bigg| \int_{\hat{h}_1(t)}^{\hat{g}_1(t)} &J(\hat{x}-y)\hat{u}_2(t,y)dy - \int_{\hat{h}_2(t)}^{\hat{g}_2(t)} J(\hat{x}-y)\hat{u}_2(t,y)dy \bigg| \\
&\leq \|J_1\|_{\infty} \mathbb{M} \big( |\hat{g}_1(t) - \hat{g}_2(t)| + |\hat{h}_1(t) - \hat{h}_2(t)| \big)
\end{aligned}
\end{equation}
where $\mathbb{M} := \max\{\|u_0\|_{\infty}, \|v_0\|_{\infty}, \mathbb{K}\}$ bounds $\|\hat{u}_2\|_{\infty}$ uniformly.

Combining \eqref{EQBOUD1} with \eqref{EQBOUD2}, we obtain the following:
\begin{equation}\label{eq:final_estimate}
|\mathcal{F}(t, \hat{x})| \leq d_1\|J_1\|_{\infty}\big(\|\mathcal{U}\|_{C(\Omega_T)} + \mathbb{M}\mathbb{H}_{C([0,T])}\big),
\end{equation}
with
\[
\mathbb{H}_{C([0,T])} := \|\hat{h}_1 - \hat{h}_2\|_{C([0,T])} + \|\hat{g}_1 - \hat{g}_2\|_{C([0,T])}.
\]
Next, we analyze the terms $\mathcal{C}_1, \mathcal{C}_2.$ Under the assumption that $H, G$ are Lipschitz continuous with constant $K_1(\mathbb{M}^{\ast})$ for $\|u\|_\infty, \|v\|_\infty \leq \mathbb{M}^{\ast},$ we find

\begin{equation}\label{eq:bounds}
\begin{aligned}
&|\mathcal{C}_1(t,\hat{x})| \leq d_1 + \left|\frac{H(\tilde{v}_1(t, \hat{x})) - (H(\tilde{v}_1(t, \hat{x}))}{\hat{u}_1 - \hat{u}_2}\right| \leq d_1 + K(\mathbb{M}^{\ast}) \\
&|\mathcal{C}_2(t,x^*)| \leq \left|\frac{G(\hat{u}_1(t, \hat{x}) - G(\tilde{u}_2)}{\widetilde{v}_1 - \widetilde{v}_2}\right| \leq K(\mathbb{M}^{\ast})
\end{aligned}
\end{equation}

Hence, we obtain the combined estimate,

\begin{equation}\label{eq:finalbound}
\max\left(\|\mathcal{C}_1\|_\infty, \|\mathcal{C}_2\|_\infty\right) \leq d_1 + K(M) =: \mathbb{K},
\end{equation}

Next, we apply the characteristic method on the \eqref{eq:u_difference}, we obtain the solution:
Integrating from \(0\) to \(t\) and using the initial condition,
\begin{equation}\label{eq:u_solution}
\begin{aligned}
\mathcal{U}(t, \hat{x}) &= \int_0^t \exp\left( -\int_s^t \left[ a(\hat{x} - q(t - \tau)) + \mathcal{C}_1(\tau, \hat{x} - q(t - \tau)) \right] d\tau \right)
\left[ \mathcal{C}_2(s, \hat{x} - q(t - s)) \hat{V}(s, \hat{x} - q(t - s)) \right. \\& \left.+ \mathcal{F}(s, \hat{x} - q(t - s)) \right] ds,
\end{aligned}
\end{equation}

employing \eqref{eq:bounds}-\eqref{eq:finalbound}-\eqref{eq:final_estimate}, we conclude that

\begin{equation*}
|\mathcal{U}(t, \hat{x})| \leq \int_0^t \left[ \mathbb{K} \|\hat{V}\|_{C(\Omega_T)} + d_1 \|J_1\|_\infty \left( \|\mathcal{U}\|_{C(\Omega_T)} + \mathbb{M} \mathbb{H}_{C([0,T])} \right) \right] ds.
\end{equation*}
Therefore,
\begin{equation*}\label{eq:u_bound}
\|\mathcal{U}\|_{C(\Omega_T)} \leq T \left[ \mathbb{K} \|\hat{V}\|_{C(\Omega_T)} + d_1 \|J_1\|_\infty \left( \|\mathcal{U}\|_{C(\Omega_T)} + \mathbb{M} \mathbb{H}_{C([0,T])} \right) \right],
\end{equation*}

Bringing the term \(\|\mathcal{U}\|\) to the left-hand side,
\begin{equation}
\|\mathcal{U}\|_{C(\Omega_T)} \left( 1 - T d_1 \|J_1\|_\infty \right)
\leq T \left[ \mathbb{K} \|\hat{V}\|_{C(\Omega_T)} + d_1 \|J_1\|_\infty \mathbb{M} \mathbb{H}_{C([0,T])} \right],
\end{equation}

Suppose that \( T < \frac{1}{d_1 \|J_1\|_\infty} \), we have the following:
\begin{equation}\label{eq:u_final_estimate}
\|\mathcal{U}\|_{C(\Omega_T)} \leq \frac{T}{1 - T d_1 \|J_1\|_\infty} \left[ \mathbb{K} \|\hat{V}\|_{C(\Omega_T)} + d_1 \|J_1\|_\infty \mathbb{M} \mathbb{H}_{C([0,T])} \right].
\end{equation}

\textbf{Case 2:}

In this case, we consider
\[
\hat{x} \in (h_0, \mathcal{H}_1(t)),
\]
and we set $\hat{t}_1, \hat{t}_2 \in (0, t)$ such that
$\hat{t}_1 \leq \hat{t}_2$ and
\[
\hat{h}_1(\hat{t}_1) = \hat{h}_1(\hat{t}_2) = \hat{x}.
\]
Using a similar approach as in Case~1, we obtain
\begin{equation}\label{eq:U_solution_final}
\begin{aligned}
\mathcal{U}(t, \hat{x}) =\; & \mathcal{U}(\hat{t}_2, \hat{x} - q(t - \hat{t}_2))
\exp\!\left( -\int_{\hat{t}_2}^t \Big[ a(\hat{x} - q(t - \tau)) + \mathcal{C}_1(\tau, \hat{x} - q(t - \tau)) \Big] d\tau \right) \\
& + \int_{\hat{t}_2}^t
\exp\!\left( -\int_s^t \Big[ a(\hat{x} - q(t - \tau)) + \mathcal{C}_1(\tau, \hat{x} - q(t - \tau)) \Big] d\tau \right) \\
& \quad \times \Big[ \mathcal{C}_2(s, \hat{x} - q(t - s)) \hat{V}(s, \hat{x} - q(t - s))
+ \mathcal{F}(s, \hat{x} - q(t - s)) \Big] ds.
\end{aligned}
\end{equation}

then,
\begin{equation}\label{eq:U_estimate_bound}
\begin{aligned}
|\mathcal{U}(t, \hat{x})| \leq\; & |\mathcal{U}(\hat{t}_2, \hat{x} - q(t - \hat{t}_2))|
\exp\left( -\int_{\hat{t}_2}^t \left[ a(\hat{x} - q(t - \tau)) + \mathcal{C}_1(\tau, \hat{x} - q(t - \tau)) \right] d\tau \right) \\
& + \int_{\hat{t}_2}^t
\exp\left( -\int_s^t \left[ a(\hat{x} - q(t - \tau)) + \mathcal{C}_1(\tau, \hat{x} - q(t - \tau)) \right] d\tau \right) \\
& \quad \times \left[ \mathbb{K} \|\hat{V}\|_{C(\Omega_T)}
+ d_1\|J_1\|_{\infty}\left( \|\mathcal{U}\|_{C(\Omega_T)} + \mathbb{M}\mathbb{H}_{C([0,T])} \right) \right] ds.
\end{aligned}
\end{equation}
Assuming $a(\cdot) + \mathcal{C}_1 \geq 0,$ and setting $\hat{y}=\hat{x} - q(t - \hat{t}_2)$ it then follows that \begin{equation*}\exp\left( -\int_{\hat{t}_2}^t \left[ a(\hat{x} - q(t - \tau)) + \mathcal{C}_1(\tau, \hat{x} - q(t - \tau)) \right] d\tau \right) \leq 1. \end{equation*}We observe that $\mathcal{U}(\hat{t}_2, \cdot)$ can be decomposed as
\begin{align}
\mathcal{U}(\hat{t}_2, \hat{y}) &= \big[\hat{u}_1(\hat{t}_2, \hat{y}) - \hat{u}_1(\hat{t}_1, \hat{y})\big] + \big[\hat{u}_1(\hat{t}_1, \cdot) - \hat{u}_2(\hat{t}_2, \hat{y})\big] \nonumber \\
&= \big[\hat{u}_1(\hat{t}_2, \hat{y}) - \hat{u}_1(\hat{t}_1, \hat{y})\big] + \big[\hat{u}_1(\hat{t}_1, \hat{h}_1(\hat{t}_1)) - \hat{u}_2(\hat{t}_2, \hat{h}_1(\hat{t}_1))\big] \nonumber \\
&= \hat{u}_1(\hat{t}_2, \hat{y}) - \hat{u}_1(\hat{t}_1, \hat{y}) \nonumber \\
&= \displaystyle\int_{\hat{t}_1}^{\hat{t}_2} \dfrac{\partial \hat{u}}{\partial t} \hat{u}_1(t, \hat{y}) \, dt.
\end{align}

 which leads to

 \begin{equation}
 \begin{aligned}
|\mathcal{U}(\hat{t}_2, \hat{y})| &\leq \int_{\hat{t}_1}^{\hat{t}_2} \left| d_1 \int_{\hat{h}_1(t)}^{\hat{g}_1(t)} J_1(\hat{t}_1-y)\hat{u}_1(t,y) dy - d_1\hat{u}_1(t,\hat{y}) -a(\hat{y}) \hat{u}_1(t,\hat{y}) + H(\hat{v}) - q  \partial_x \hat{u}_1(t,\hat{y})  \right| dt\\
&\leq \mathsf{C}\big( \mathbb{M}, \partial_x \hat{u}_1 ,  H\big) |\hat{t}_2- \hat{t}_1|,
\end{aligned}
\end{equation}
Clearly, if we assume that $\hat{t}_2= \hat{t}_1,$ we can easily find $\mathcal{U}(\hat{t}_2, \hat{y})=0.$ Next, we suppose that $ \hat{t}_1 < \hat{t}_2,$ using the definition of $\mathcal{H}_{h_0, T},$ we can find a positive constant $c>0$ such that

\begin{equation}\label{Eqhhh}\dfrac{\hat{h}_1(\hat{t}_2)- \hat{h}_1(\hat{t}_2) }{\hat{t}_2- \hat{t}_2} > c,\end{equation} we arrive at
$$\hat{t}_2- \hat{t}_2 <\dfrac{\hat{h}_1(\hat{t}_2)- \hat{h}_1(\hat{t}_2)} { c},$$ we also have $\hat{h}_1(\hat{t}_2)- h_2(\hat{t}_1)=0, $ we conclude that $ \hat{h}_1(\hat{t}_2)- \hat{h}_2(\hat{t}_1) +\hat{h}_1(\hat{t}_2)- \hat{h}_1(\hat{t}_2) =0,$ then
$\hat{h}_1(\hat{t}_2)- \hat{h}_1(\hat{t}_2)= \hat{h}_1(\hat{t}_2)- h_2(\hat{t}_1) ,$ by substituting into \eqref{Eqhhh}, we derive the following result,
$$\hat{t}_2- \hat{t}_1 \leq \dfrac{\hat{h}_1(\hat{t}_2)- \hat{h}_2(\hat{t}_1)} {c},$$ it then follows
\begin{equation*}
\left|\hat{t}_2- \hat{t}_1\right| \leq \left|\hat{h}_1(\hat{t}_2)- \hat{h}_2(\hat{t}_1)\right| c^{-1},
\end{equation*}

we set $\mathrm{C}= c^{-1} \mathsf{C}\big( \mathbb{M}, \partial_x \hat{u}_1 ,  H\big) ,$ by replacing in \eqref{eq:U_estimate_bound}, we find

\begin{equation}\label{eq:U_estimate_bound2}
\begin{aligned}
|\mathcal{U}(t, \hat{x})| &\leq\;  \mathrm{C}   \left|\hat{h}_1(\hat{t}_2)- \hat{h}_2(\hat{t}_1)\right|
 + T \left[ \mathbb{K} \|\hat{V}\|_{C(\Omega_T)}
+ d_1\|J_1\|_{\infty}\left( \|\mathcal{U}\|_{C(\Omega_T)} + \mathbb{M}\mathbb{H}_{C([0,T])} \right) \right], \\
 &\leq \mathrm{C}   \left\|\hat{h}_1- \hat{h}_2\right\|
 + T \left[ \mathbb{K} \|\hat{V}\|_{C(\Omega_T)}
+ d_1\|J_1\|_{\infty}\left( \|\mathcal{U}\|_{C(\Omega_T)} + \mathbb{M}\mathbb{H}_{C([0,T])} \right) \right].
\end{aligned}
\end{equation}

\textbf{Case 3}, we assume that $\hat{x} \in [\mathcal{H}_1(t), \mathcal{H}_2(t)], $ given $\mathcal{H}_1= \hat{h}_1$ and $\mathcal{H}_2= \hat{h}_2$ and we also suppose that $\hat{u}(t_1, \hat{x})=0$ if $t \in  [\hat{t}_2, \hat{t}]$ since we neglect $ \hat{t}_1, $ we suppose that $\hat{u}_1(\hat{t}_1, \hat{x})=0,$  and since $\hat{h}_1 < \hat{h}_2,$ then $
\hat{h}_2(\hat{t}) - \hat{h}_2(t_2^\ast)= \hat{h}_2(\hat{t}) -\hat{h}_1(\hat{t}) + \hat{h}_1(\hat{t})- \hat{h}_2(t_2^\ast)  \leq \hat{h}_2(\hat{t}) - \hat{h}_1(\hat{t}).
$
we compute the following computation as follows

\begin{equation}
\hat{u}_2(\hat{t}, \hat{x}) =
\displaystyle\int_{\hat{t}_2}^{\hat{t}} \left[ d_1 \displaystyle\int_{\hat{h}_1(t)}^{\hat{h}_2(t)} g_2(t) J(\hat{x} - y) \hat{u}_2(t, y) \, dy
- d_1 \hat{u}_2(t, \hat{x}) - a(\hat{x}) \hat{u}_2(t, \hat{x})+ H(\tilde{v})- q \partial_x \hat{u}_2(t, \hat{x}) \right] dt.
\end{equation}
Using bounds, we obtain the following

\begin{align*}
\hat{u}_2\left(\hat{t}, \hat{x}\right)
&\leq \mathbb{M} [\mathfrak{D}_s + K(\mathbb{M} )] \left(\hat{t} - \hat{t}_2\right)\leq \mathbb{M} \mathbb{K} \cdot \frac{ \hat{h}_2(\hat{t}) - \hat{h}_2(\hat{t}_2) }{  c^{-1} }\leq \mathbb{M} \mathbb{K} \cdot \frac{ \hat{h}_2(\hat{t}) - \hat{h}_1(\hat{t}) }{  c^{-1} }\leq \mathsf{C}^\ast \| \hat{h}_1 - \hat{h}_2 \|_{C([0,T])}.
\end{align*}
where $\mathsf{C}^\ast = \dfrac{\mathbb{M} \mathbb{K}}{ c^{-1}} .$ Given that $\hat{u}_1(\hat{t}_1, \hat{x})=0,$, we find
\begin{equation}\label{Eq:u_final222}
|\mathcal{U}(\hat{t}, \hat{x})|= |\hat{u}_2\left(\hat{t}, \hat{x}\right)| \leq \mathsf{C}^\ast \| \hat{h}_1 - \hat{h}_2 \|_{C([0,T])}.
\end{equation}
we set $ \mathbb{C}=\max\{\mathsf{C}^\ast, \mathrm{C}, \mathbb{K}, d_1 \left\|J_1 \right\|_\infty \mathbb{M}, \}$
by combining \eqref{eq:u_final_estimate}-\eqref{eq:U_estimate_bound2} and \eqref{Eq:u_final222}, and by using the definition of $\mathbb{H},$ we find that
\begin{equation}\label{Eq:u_final1}
|\mathcal{U}(\hat{t}, \hat{x})|= |\hat{u}_2\left(\hat{t}, \hat{x}\right)| \leq \mathbb{C} \left[ \| \mathcal{H} \|_{C([0,T])}+ T\left\| \hat{V} \right\|_{ C_{\Omega_T}}  + \left\| \mathcal{U} \right\|_{ C_{\Omega_T}}    \right],
\end{equation}
Given that $T < \dfrac{1}{d_1 \left\| J_1\right\|},$ we conclude that

\begin{equation}\label{Eq:u_final11}
\left\|\mathcal{U}\right\|= \leq \mathbb{C} \left[ \| \mathcal{H} \|_{C([0,T])}+ T\left\| \hat{V} \right\|_{ C_{\Omega_T}}  + T\left\| \mathcal{U} \right\|_{ C_{\Omega_T}}    \right],
\end{equation}

For the case where $\hat{x} \in (\mathcal{G}_2(s), -h_0)$ or
$\hat{x} \in (\mathcal{G}_1(s), \mathcal{G}_2(s)]$, one can proceed with an argument similar to that used in \textbf{Case~2} and \textbf{Case~3}.

We now focus on analyzing the term $ \| \mathcal{H} \| _{C ([0,T])}. $   By the definition of $\mathcal{H}, $ we have  that  $$\left\| \mathcal{H} \right\|_{C([0,T])} = \|\hat{h}_1- \hat{h}_2 \| + \left\|\hat{g}_1- \hat{g}_2 \right\|,$$ on other hand, we know  for $i=1,2 ,$

\begin{equation}
\begin{aligned}
\hat{h}_{i}^{\prime}(t) &= \mu\left( \int_{g(t)}^{h(t)}\!\!\int_{h(t)}^{\infty}J_1(x-y)u(t,x)dydx
+ \rho\int_{g(t)}^{h(t)}\!\!\int_{h(t)}^{\infty}J_2(x-y)v(t,x)dydx\right), & t>0,\\
\hat{g}_{i}^{\prime}(t) &= -\mu\left( \int_{g(t)}^{h(t)}\!\!\int_{-\infty}^{g(t)}J_1(x-y)u(t,x)dydx
+ \rho\int_{g(t)}^{h(t)}\!\!\int_{-\infty}^{g(t)}J_2(x-y)v(t,x)dydx\right).
\end{aligned}
\end{equation}

Next, by integrating and computing the following difference as follows

\begin{equation}\label{Boundbound_h}
\begin{aligned}
|\hat{h}_1(t) - \hat{h}_2(t)|
&\leq \mu \int_0^t \Bigg|
\int_{\hat{g}_1(\tau)}^{\hat{h}_1(\tau)} \int_{\hat{h}_1(\tau)}^{\infty}
J_1(x - y) \, \hat{u}_1(\tau, x) \, dy \, dx
+ \rho \int_{\hat{g}_1(\tau)}^{\hat{h}_1(\tau)} \int_{\hat{h}_1(\tau)}^{\infty}
\widetilde{h}_1(\tau) \, J_2(x - y) \, \tilde{v}_1(\tau, x) \, dy \, dx \\
&\quad - \mu \int_{\hat{g}_2(\tau)}^{\hat{h}_2(\tau)} \int_{\hat{h}_2(\tau)}^{\infty}
J_1(x - y) \, \hat{u}_2(\tau, x) \, dy \, dx
+ \rho \int_{\hat{g}_2(\tau)}^{\hat{h}_2(\tau)} \int_{\hat{h}_2(\tau)}^{\infty}
\widetilde{h}_1(\tau) \, J_2(x - y) \, \tilde{v}_2(\tau, x) \, dy \, dx
\Bigg| \, d\tau \\[0.3em]
&\leq \mu \int_0^t \Bigg|
\int_{\hat{g}_1(\tau)}^{\hat{h}_1(\tau)} \int_{\hat{h}_1(\tau)}^{\infty}
J_1(x - y) \, (\hat{u}_1(\tau, x) - \hat{u}_2(\tau, x)) \, dy \, dx \\
&\quad + \rho \int_{\hat{g}_1(\tau)}^{\hat{h}_1(\tau)} \int_{\hat{h}_1(\tau)}^{\infty}
\widetilde{h}_1(\tau) \, J_2(x - y) \, (\tilde{v}_1(\tau, x) - \tilde{v}_2(\tau, x)) \, dy \, dx
\Bigg| \, d\tau \\[0.3em]
&\quad + \mu \int_0^t \Bigg[
\int_{\hat{g}_1(\tau)}^{\hat{g}_2(\tau)} \int_{\tilde{h}_1(\tau)}^{\infty}
+ \int_{\hat{h}_1(\tau)}^{\hat{h}_2(\tau)} \int_{\hat{h}_1(\tau)}^{\infty}
+ \int_{\hat{g}_2(\tau)}^{\hat{h}_2(\tau)} \int_{\hat{h}_2(\tau)}^{\tilde{h}_1(\tau)}
\Bigg]
J_1(x - y) \, \hat{u}_2(\tau, x) \, dy \, dx \, d\tau \\[0.3em]
&\quad + \rho \int_0^t \Bigg[
\int_{\hat{g}_1(\tau)}^{\hat{g}_2(\tau)} \int_{\tilde{h}_1(\tau)}^{\infty}
+ \int_{\hat{h}_1(\tau)}^{\hat{h}_2(\tau)} \int_{\hat{h}_1(\tau)}^{\infty}
+ \int_{\hat{g}_2(\tau)}^{\hat{h}_2(\tau)} \int_{\hat{h}_2(\tau)}^{\tilde{h}_1(\tau)}
\Bigg]
J_2(x - y) \, \tilde{v}_2(\tau, x) \, dy \, dx \, d\tau \\[0.3em]
&\leq \mathbb{C}^\ast T \Big(
\| \hat{u}_1 - \hat{u}_2 \|_{C(\Omega_T)}
+ \|\tilde{v}_1 - \tilde{v}_2 \|_{C(\Omega_T)}
+ \| \hat{h}_1 - \hat{h}_2 \|_{C([0,T])}
+ \| \hat{g}_1 - \hat{g}_2 \|_{C([0,T])}
\Big).
\end{aligned}
\end{equation}

where $\mathbb{C}^\ast$ is a positive constant that depends on $(\mu, \rho, \mathbb{M}, J_1, J_2 ),$

by applying a similar approach as \eqref{Boundbound_h} to $|\hat{g}_1- \hat{g}_2|, $, we get

\begin{equation*}
 |\hat{g}_1- \hat{g}_2|   \leq \mathbb{C}^\ast T \Big(
\| \hat{u}_1 - \hat{u}_2 \|_{C(\Omega_T)}
+ \|\tilde{v}_1 - \tilde{v}_2 \|_{C(\Omega_T)}
+ \| \hat{h}_1 - \hat{h}_2 \|_{C([0,T])}
+ \| \hat{g}_1 - \hat{g}_2 \|_{C([0,T])}
\Big).
\end{equation*}
by the definition of $\left\| \mathcal{H} \right\|_{C([0, T])}, $ we get

\begin{equation}
\left\| \mathcal{H} \right\|_{C([0, T])} \leq 2  \mathbb{C}^\ast T \Big(
\| \hat{u}_1 - \hat{u}_2 \|_{C(\Omega_T)}
+ \|\tilde{v}_1 - \tilde{v}_2 \|_{C(\Omega_T)}
+ \| \hat{h}_1 - \hat{h}_2 \|_{C([0,T])}
+ \| \hat{g}_1 - \hat{g}_2 \|_{C([0,T])}
\Big),
\end{equation}

Then, assuming the existence of a constant $\eta > 0$ such that $1 - 2\mathbb{C}^\ast \eta_1 > \frac{1}{2}$ and $2\mathbb{C}^\ast \eta_1 \leq \frac{1}{2}$, From the previous step one has the inequality
\[
S := \|\hat{h}_1 - \hat{h}_2\|_{C([0,T])} + \|\hat{g}_1 - \hat{g}_2\|_{C([0,T])}
\le 2 \mathbb{C}^\ast  T \,\|\hat{u}_1 - \hat{u}_2\|_{C(\Omega_T)} + \,\|\tilde{v}_1 - \tilde{v}_2\|_{C(\Omega_T)} + 2 \mathbb{C}^\ast T\, S.
\]
Bringing the $2 \mathbb{C}^\ast T S$ term to the left-hand side gives
\[
(1 - 2 \mathbb{C}^\ast T) S \leq 2\mathbb{C}^\ast T\, \|\hat{u}_1 - \hat{u}_2\|_{C(\Omega_T)}+ ,\|\tilde{v}_1 - \tilde{v}_2\|_{C(\Omega_T)}
\]
Now choose $T$ small enough so that $2\mathbb{C}^\ast T \leq \frac{1}{2}$
(this is the choice $\delta_3$ in the text).
Then $1 - 2\mathbb{C}^\ast \geq \frac12$, and therefore
\begin{equation}\label{EQ:HH1}
S \leq \frac{2\mathbb{C}^\ast T}{1 - 2\mathbb{C}^\ast T} \, \|\tilde{u}_1 - \tilde{u}_2\|_{C(\Omega_T)}
\leq \frac{2\mathbb{C}^\ast T}{1/2} \Big( \, \|\hat{u}_1 - \hat{u}_2\|_{C(\Omega_T)} + \|\tilde{v}_1 - \tilde{v}_2\|_{C(\Omega_T)}\Big)
= 4\mathbb{C}^\ast T\, \Big( \, \|\hat{u}_1 - \hat{u}_2\|_{C(\Omega_T)} + \|\tilde{v}_1 - \tilde{v}_2\|_{C(\Omega_T)}\Big).
\end{equation}
by substituting \eqref{EQ:HH1} into \eqref{Eq:u_final1}, we find
\begin{equation}\label{Eq:u_final123}
\left\| \mathcal{U} \right\|_{ C_{\Omega_T}}= \leq \mathbb{C} T \left[ (1+ 4 \mathbb{C}^\ast)  \left\| \hat{V} \right\|_{ C_{\Omega_T}}  + (1+ 4 \mathbb{C}^\ast)  \left\| \mathcal{U} \right\|_{ C_{\Omega_T}}   \right],
\end{equation}
assuming that there exists $\eta_2$  such that $\mathbb{C} (1+ 4 \mathbb{C}^\ast) \eta_2 <\frac12, $ we have
\begin{equation}\label{Eq:u_final1234}
\left\| \mathcal{U} \right\|_{ C_{\Omega_T}} \leq \frac12 \left\| \hat{V} \right\|_{ C_{\Omega_T}}  \leq  \left\| \hat{V} \right\|_{ C_{\Omega_T}}, ~T \leq \eta_2,
\end{equation}

Following the approach used in \eqref{Eq:u_final1234}, we can simplify \eqref{eq:u_final_estimate} and \eqref{eq:U_estimate_bound2} to obtain that

\begin{equation}\label{Eq:u_final12345}
\left\| \mathcal{U} \right\|_{ C_{\Omega_T}}  \leq  \left\| \hat{V} \right\|_{ C_{\Omega_T}}, ~T \leq \eta_2.
\end{equation}
Assuming that $\hat{v}_{1,0}= \hat{v}_{2,0},$ and replacing \eqref{Eq:u_final12345} with \eqref{Eq_Finale222}, we find

\begin{equation}\label{Eq_Finale222}
\|\hat{v}_1 - \hat{v}_2\|_{C(\Omega_\infty)}  \leq 2 K T\|\hat{u}_1 - \hat{u}_2\|_{C(\Omega_{\hat{g},\hat{h}})} .
\end{equation}
by \eqref{Eq:u_final12345}, we have

\begin{equation}\label{Eq_Finale222}
\|\hat{v}_1 - \hat{v}_2\|_{C(\Omega_\infty)}  \leq 2 K T\|\tilde{v}_1 - \tilde{v}_2\|_{C(\Omega_{\hat{g},\hat{h}})} .
\end{equation}

Supposing that $T < \min\{ \eta_1, \eta_2, \frac{1}{4K} , 1 \}, $ we deduce that $\mathcal{T}$ verifies the contraction on $X_{v_0,\infty}.$

\textbf{Step 02:} Global existence and uniqueness 

In light of Step $1$, we conclude that problem \eqref{eq:main-problem} admits a unique solution $(\tilde{u}, \hat{v}, \hat{h}, \hat{g})$ defined for $t \in (0, T)$.
If we take $u(\sigma, \cdot)$ and $v(\sigma, \cdot)$ as initial functions and repeat Step~1,  the solution of \eqref{eq:main-problem} can be extended from $t = \sigma$ to some $T^\ast \geq T$
Additionally, for any given $\sigma \in (0, T)$, we get that
$u(\sigma, x ) > 0$ for all $x \in (g(\sigma), h(\sigma))$ and $v(s,x) > 0$ for all $x \in \mathbb{R}$.
Moreover, $u(\sigma, \cdot)$ and $v(\sigma, \cdot)$ are continuous in $[g(\sigma), h(\sigma)]$ and $\mathbb{R}$, respectively.

By iterating this extension procedure, we assume that $(0, T_{\max})$ is the maximal existence interval of the solution $(\tilde{u}, \hat{v}, \hat{g}, \hat{h})$.
We shall establish $T^{\max} = \infty$.

\smallskip
Proceeding by contradiction, we assume that $T^{\max}$ is finite, and we have

\begin{align}
\hat{h}'(t) - \hat{g}'(t)
&= \mu \int_{\hat{g}(t)}^{\hat{h}(t)} \left[ \int_{\hat{h}(t)}^{\infty} + \int_{-\infty}^{\hat{g}(t)} \right] J_1(x-y) \hat{u}(t,x) \, dy \, dx
+ \rho \int_{\hat{g}(t)}^{\hat{h}(t)} \left[ \int_{\hat{h}(t)}^{\infty} + \int_{-\infty}^{\hat{g}(t)} \right] J_2(x-y) \tilde{v}(t,x) \, dy \, dx \\
&\leq \mathbb{M} \left( \mu + \rho \right) \left( \hat{h}(t) - \hat{g}(t) \right),
\end{align}

which implies
\[
\hat{h}(t) - \hat{g}(t) \leq 2 h_0 e^{ \mathbb{M} \left( \mu + \rho \right) t} \leq 2 h_0 e^{ \mathbb{M} \left( \mu + \rho \right) T^{\max}}.
\]
where $h_0 = \hat{h}(0) = -\hat{g}(0)$.

Given $\hat{h}(t)$ and $\hat{g}(t)$ are monotone on $[0, T^{\max})$, we obtain
\[
\hat{h}(T^{\max}) := \lim_{t \to T^{\max}} \hat{h}(t),
\quad
\hat{g}(T^{\max}) := \lim_{t \to T^{\max}} \hat{g}(t),
\]
so,
\[
\hat{h}(T^{\max}) - \hat{g}(T^{\max}) \leq 2 h_0 e^{\mathbb{M} (\mu + \rho) T^{\max}}.
\]

then, we conclude that
$0 < \hat{u}(t,x), \hat{v}(t,x) \leq \mathbb{M}$, which gives
$\hat{h}'(t), \hat{g}'(t) \in L^{\infty}([0, T_{\max}))$.
Therefore, $\hat{h}, \hat{g} \in C([0, T^{\max}])$.

\smallskip
Furthermore, we define the following sets as follows

\[
\mathcal{E}_{T^{\max, B}} := \{ (t,x) \in \mathbb{R}^2 \mid t \in [0, T^{\max}], \; x \in (\hat{g}(t), \hat{h}(t)) \},
\]
and
\[
\mathcal{E}_{T^{\max}, \infty} := \{ (t,x) \in \mathbb{R}^2 \mid t \in [0, T^{\max}], \; x \in \mathbb{R}\}.
\]
since $u \in L^\infty(\mathcal{E}_{T^{\max, B}} ), v\in L^\infty(\mathcal{E}_{T^{\max, \infty}} ) $ , we get

\[
d_1 \!\! \displaystyle\int_{\hat{g}(t)}^{\hat{h}(t)} J_1(x-y) u(t,y) \, dy - u(t,x) +  \in L^{\infty} - a(x) u(t,x) + H(v)  (\mathcal{E}^{F}_{T^{\max}}),
\]
\[
d_2 \!\! \displaystyle\int_{\mathbb{R}} J_2(x-y) v(t,y) \, dy - v(t,x) - b(x) v(t,x) + G(u)\in L^{\infty}(\mathcal{E}_{T^{\max}, \infty}),
\]
and by using the same approach as Theorem 2.11 in \cite{Tang2024a}, we can obtain that $u_x$ and $v_x$ are uniformly bounded. Therefore, we deduce that the right term of the first and second equations are bounded in $\mathcal{E}_{T^{\max}, B},$ and $\mathcal{E}_{T^{\max}, \infty},$ respectively. This implies that $\hat{u}_t \in L^\infty(\mathcal{E}_{T^{\max}, B})$ and $\hat{v}_t \in L^{\infty}(\mathcal{E}_{T^{\max}, \infty})$.
Hence, for each $x \in (\hat{g}(T^{\max}), \hat{h}(T^{\max}))$, we get
\[
\hat{u}(T^{\max}, x) := \lim_{t \uparrow T_{\max}} \hat{u}(t, x) \quad \text{exists},
\]
and for each $x \in \mathbb{R}$,
\[
\tilde{v}(T^{\max}, x) := \lim_{t \uparrow T^{\max}} \tilde{v}(t, x) \quad \text{exists}.
\]
Furthermore, $\hat{u}(\cdot, x)$ and $\tilde{v}(\cdot, x)$ are continuous for $t = T^{\max}$.

\smallskip
For the known $(\hat{v}, \hat{g}, \hat{h})$ and $t_x$ defined in~\eqref{eq:transition_time} with $T$ replaced by $T^{\max}$, if we suppose that $\hat{u}$ as the unique solution of the following ODE system:
\[
\begin{cases}
\hat{u}_t = d_1 \displaystyle\int_{\hat{g}(t)}^{\hat{h}(t)} J_1(x-y) \varphi(t,y) \, dy
- d_1 \hat{u}(t,x) +  p \hat{u}_x - a(x) \hat{u} + H( \tilde{v}), & t_x < t \leq T^{\max}, \\
\hat{u}(t_x, x) = \tilde{u}_0(x), & x \in (\hat{g}(T^{\max}), \hat{h}(T^{\max})),
\end{cases}
\]
with $\varphi = \hat{u}$ and
\[
\hat{u}_0(x) =
\begin{cases}
u_0(x), & x \in [-h_0, h_0], \\
0, & \text{otherwise},
\end{cases}
\]

then, since $t_x$, $J(\cdot)$, and $H(V)$ are continuous in $x$, the continuous dependence of ODE solutions on initial data and parameters and by applying the Lemma $2.2$ in \cite{Tang2024a}, we obtain  $\hat{u} \in C(\mathcal{E}_{T^{\max}, B})$ for any $\sigma \in (0, T_{\max})$.
A similar argument gives $\tilde{v} \in C(\Omega^{\infty}_s)$.

\smallskip
Our objective is to prove that $(\hat{u}, \tilde{v})$ is continuous at $t = T^{\max}$  to show this contradiction.
For $\hat{u}$, it suffices to prove that
\[
\hat{u}(t,x) \to 0
\quad \text{as} \quad (t,x) \to (T^{\max}, \hat{g}(T^{\max}))
\quad \text{or} \quad (t,x) \to (T^{\max}, \hat{h}(T_{\max})).
\]
We show only the first case $\hat{u}(t,x) \to 0 $; the second is similar to $\tilde{v}(t,x) \to 0$.
Indeed, as $(t,x) \to (T^{\max}, \hat{g}(T^{\max}))$, by applying the same approach as  Lemma 2.2 in \cite{Tang2024a},  we get
\[
|\hat{u}(t,x)| \leq e^{(\mathfrak{D}-d_1 (t-\tilde{t}_x)} \, \mathbb{M}(t-\tilde{t}_x) \;\;\longrightarrow\;\; 0,
\quad \text{as } (t,x) \to \big(\tilde{T}^{-}, \hat{h}(T^{\max})\big) \text{ or } \big(\tilde{T}^{-}, g(T^{\max})\big).
\]
with  $\mathfrak{D}= L \left\|\tilde{v}\right\|_\infty +\underline{a},$
since $t_x \to T^{\max}$ as $x \to \hat{g}(T_{\max})$.
Similarly, for each $x \in \mathbb{R}$,
\[
|\tilde{v}(t,x) - \tilde{v}(t_x, x)| \leq (t - t_x) [2d_2 + K_1] M_0 \to 0.
\]
Hence, $(\hat{u}, \tilde{v}) \in C^1(\mathcal{E}_{T^{\max, B}}) \times C^1(\mathcal{E}_{T^{\max, \infty}})$,
and $(\tilde{u}, \tilde{v}, \tilde{g}, \tilde{h})$ solves problem~(1.1) for $t \in (0, T_{\max})$.

\smallskip
By Lemma~2.1, we have
$\hat{u}(T^{\max}, x) > 0$ and $\tilde{v}(T^{\max}, x) > 0$ in $(\hat{g}(T_{\max}), \hat{h}(T_{\max}))$.
Using $(\tilde{u}(T_{\max}, \cdot), \tilde{v}(T_{\max}, \cdot))$ as initial data and applying Step~1 again,
the solution of~ \eqref{eq:main-problem} can be extended to $(0, \hat{T})$ with $\hat{T} > T^{\max}$,
contradicting the definition of $T^{\max}$.
Hence, $T^{\max} = \infty$, which completes the proof.
\end{proof}

\section{Spectral Properties of Integro-Differential Operators}

In this section, we focus on the analysis of eigenvalue problems, which play a fundamental role in understanding the long-term dynamics of elliptic and parabolic equations.
In particular, the sign of the principal eigenvalue associated with the linearized operator is crucial, as it determines the existence and stability of nontrivial steady states, distinguishes between spreading and vanishing phenomena of the system.

We give some previously known results concerning the principal eigenvalue of the linear nonlocal operator with drift
\[
\mathcal{L}_{\Omega,J_1,p} + a,
\]
which acts on functions in \( C^{1}(\Omega) \) and is defined by
\[
(\mathcal{L}_{\Omega,J,q} + a)[\psi(x)] := d \int_{\Omega} J(x-y)\psi(y)\,dy
- q \psi'(x) + a(x)\psi(x) ,
\]
where \( a(x) \in C(\overline{\Omega}) \cap L^{\infty}(\Omega) \).

We then consider the eigenvalue problem
\begin{equation}\label{eq:eigen}
d \int_{\Omega} J(x-y)\psi(y)\,dy - p \psi'(x) + a(x)\psi(x) + \lambda \psi(x) = 0,
\quad x \in \Omega.
\end{equation}
It is proved in \cite{CovilleHamel2020} that admits a principal eigenvalue \(\lambda\), which is simple and associated with a positive continuous eigenfunction \(\psi\) satisfying \eqref{eq:eigen}. Moreover,  if \( a(x) \) is bounded,
one may further characterize it as the \emph{generalized principal eigenvalue} of
\( \mathcal{L}_{\Omega,J,q} + a \) by
\[
\lambda_p(\mathcal{L}_{\Omega,J,q} + a) :=
\sup \Big\{ \lambda \in \mathbb{R} \;\big|\; \exists \psi \in C^{1}(\Omega) \cap C(\Omega), \;
\psi > 0 \text{ in } \Omega, \;
(\mathcal{L}_{\Omega,J,q}[\psi(x)] + a(x)\psi(x) + \lambda \psi(x) \leq 0 ) \Big\}.
\]
We are going to investigate the eigentheory for the nonlocal cooperative system with advections in this section.

\subsection{The associated eigenvalue system}

In this subsection, we focus on the analysis of the eigenvalue problem corresponding to the associated system, which can be formulated as follows:

\begin{align}\label{eq:main-problem1223}
\left\{\begin{array}{lll}
u_t =  d_1\left[\displaystyle\int\limits_{-\mathcal{Z}}^{\mathcal{Z}}J_1(x-y)u(t,y)dy - u(t,x)\right] +pu_x- a(x)u(x,t) + H\left(v(t, x)\right), & t>0,\,\,\,\, \, g(t)<x<h\left(t\right), \\
v_t = d_2\left[\displaystyle\int\limits_{-\mathcal{Z}}^{\mathcal{Z}}J_2(x-y)v(t,y)dy - v(t,x)\right]+qv_x -b(x)v(t,x)+ G\left(u(t,x)\right), & t>0,\,\,\,\, \, g(t)<x<h\left(t\right), \\ -g(0) = h(0) = \mathcal{m},\,\, u(0, x) = u_0(x),\,\,v(0, x) = v_0(x),& x\in \left[-h_0, h_0\right], \end{array}\right.
\end{align}

where  $-g(t) = h(t) \equiv \mathcal{Z} > 0.$
 We set
\[
\mathcal D = H^2(\Omega;\mathbb C^2)\cap(\text{BCs}),\qquad X=L^2(\Omega;\mathbb R^2).
\]

Next, we introduce the associated eigenvalue problem, which is derived from the linearisation of the system at the trivial solution \((0,0)\). It takes the following form:

\begin{equation}\label{eq:maineigenrvalue3}
\begin{cases}
d_1 \displaystyle\int_{-\mathcal{Z}}^{\mathcal{Z}} J_1(x-y)\psi_1(y)\,dy - d_1 \psi_1(x) - a(x) \psi_1(x) + H'(0) \psi_2(x)
+ p \psi_1'(x) = \lambda \psi_1(x), & x \in [-\mathcal{Z}, \mathcal{Z}], \\[2ex]
d_2 \displaystyle\int_{-\mathcal{Z}}^{\mathcal{Z}} J_2(x-y)\psi_2(y)\,dy - d_2 \psi_2(x) - b(x) \psi_2(x) + G'(0) \psi_1(x)
+ q \psi_2'(x) = \lambda \psi_2(x), & x \in [-\mathcal{Z}, \mathcal{Z}].
\end{cases}
\end{equation}

Let
\[
\mathbf{D} := \big[C^1([-\mathcal{Z}, \mathcal{Z}])\big]^2
\]
be equipped with the norm
\[
\|\psi\|_D = \sup_{x \in [-\mathcal{Z}, \mathcal{Z}]} |\psi(x)|,
\]
where \(\psi = (\psi_1,\psi_2) \in \mathcal{D}\) and
\[
|\psi(x)| = \sqrt{\psi_1^2(x) + \psi_2^2(x)}, \quad x \in [-\mathcal{Z}, \mathcal{Z}].
\]
We then define two operators \(\mathscr{J}: \mathcal{D} \to \mathcal{D}\) by
\[
\mathscr{J}(\psi_1, \psi_2)(x) =
\left(
d_1 \int_{-\mathcal{Z}}^{\,\mathcal{Z}} J_1(x-y)\psi_1(y)\,dy,\;
d_2 \int_{-\mathcal{Z}}^{\,\mathcal{Z}} J_2(x-y)\psi_2(y)\,dy
\right),
\]

and

\[
\mathscr{T}(\psi_{1}, \psi_{2})(x) =
\begin{pmatrix}
 -\left(d_1  + a(x)\right) \psi_1(x) +H'(0) \psi_2
+ p \psi_1^\prime(x) \\[1.2em]
- \left(d_2  + b(x)\right) \psi_2(x) +G'(0) \psi_1
+ q \psi_2^\prime(x)
\end{pmatrix},
\quad \text{for } \psi = (\psi_{1}, \psi_{2}) \in \mathcal{D}.
\]

Thus, we rewrite \eqref{eq:maineigenrvalue3} as follows
\[
(\mathscr{J} - \mathscr{T})\psi = \lambda \psi.
\]

In the following analysis, we always assume that $J_{1}, J_{2}$,  fulfill assumption~($\mathbf{J}$), and that $G$ fulfills assumption~($\mathbf{GH}$).

In the forthcoming analysis, it is essential to investigate the existence of the principal eigenvalue of problem~\eqref{eq:maineigenrvalue3}. To do this, we first investigate the following lemmas.

\begin{lemma}\label{lem:3.1}
The following assertions are valid,
\begin{enumerate}
    \item $(\mathscr{T} + \xi I)^{-1}$ exists and is a bounded strongly positive operator for any $\alpha > -\lambda_{1}$;
    \item $\|(\mathscr{T} + \xi I)^{-1}\| \to 0 \quad \text{as } \xi \to \infty$;
    \item  Let $K>0$ be the coercivity constant of $\mathscr{T}$.
Then for any finite interval $[\xi_{1}, \xi_{2}] \subset (K,\infty)$ one has
\[
\sup_{\xi \in [\xi_{1}, \xi_{2}]} \|(\mathscr{T} + \xi I)^{-1}\|_{\mathcal{L}(X)}
   \le \frac{1}{\xi_{1}-K} < \infty.
\]
\end{enumerate}
\end{lemma}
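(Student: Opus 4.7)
\emph{Plan.} The strategy is to exploit the fact that $\mathscr T$ is first–order in $x$ with positive advection coefficients $p,q>0$, so that $(\mathscr T+\xi I)\psi=f$ is a coupled pair of linear transport ODEs with inflow Dirichlet data $\psi_1(-\mathcal Z)=\psi_2(-\mathcal Z)=0$ inherited from the nonlocal/free boundary formulation by the zero extension. I would (a) construct $(\mathscr T+\xi I)^{-1}$ explicitly for $\xi$ large via an integrating factor together with a Picard scheme to handle the cross–coupling; (b) read off the operator–norm decay from the resulting exponential kernel; and (c) prove the uniform bound in part (3) through an $L^2$ coercivity identity and Cauchy–Schwarz.

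\emph{Step 1 (invertibility and strong positivity).} Rewrite the system componentwise as
\begin{align*}
p\,\psi_1'(x)+\bigl(\xi-d_1-a(x)\bigr)\psi_1(x) &= f_1(x)-H'(0)\psi_2(x),\\
q\,\psi_2'(x)+\bigl(\xi-d_2-b(x)\bigr)\psi_2(x) &= f_2(x)-G'(0)\psi_1(x).
\end{align*}
Using the integrating factors $\mu_i(x;\xi)=\exp\!\bigl(\tfrac{1}{p_i}\int_{-\mathcal Z}^{x}(\xi-d_i-a_i(r))\,dr\bigr)$ and the inflow condition, each scalar equation is solved as
\[
\psi_i(x)=\tfrac{1}{p_i}\!\int_{-\mathcal Z}^{x}\!e^{-\int_{s}^{x}(\xi-d_i-a_i(r))/p_i\,dr}\bigl[f_i(s)-c_i\,\psi_{j}(s)\bigr]\,ds.
\]
For $\xi>K$, the exponential factor decays on the scale $(\xi-K)^{-1}$, so the coupled integral system defines a contraction on $[C([-\mathcal Z,\mathcal Z])]^2$, giving a unique fixed point $\psi\in\mathbf D$. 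Strong positivity then follows from the non-negativity of the exponential kernel combined with a monotone Picard iteration starting from $\psi^{(0)}=0$: the iterate stays in the positive cone, and the maximum principle for the underlying cooperative transport system promotes $\psi\ge0$ to $\psi>0$ in the interior.

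\emph{Step 2 (norm decay as $\xi\to\infty$).} Plugging the uniform bound $|f_i|\le\|f\|_\infty$ into the integral representation and using that the exponential kernel is dominated by $e^{-(\xi-K)(x-s)/\min(p,q)}$, integration yields
\[
\|\psi\|_\infty\;\le\;\frac{C}{\xi-K}\,\|f\|_\infty,
\]
and a similar bound for $\psi'$ via the ODE itself. Thus $\|(\mathscr T+\xi I)^{-1}\|\to 0$ as $\xi\to\infty$.

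\emph{Step 3 (uniform $\mathcal L(X)$-bound via coercivity).} Regard $\mathscr T$ as an unbounded operator on $X=L^2([-\mathcal Z,\mathcal Z])^2$ with domain $\mathbf D$, and test $(\mathscr T+\xi I)\psi$ against $\psi$. Integration by parts gives
\[
\int_{-\mathcal Z}^{\mathcal Z}p\,\psi_1'\psi_1\,dx=\tfrac{p}{2}\bigl[\psi_1^2(\mathcal Z)-\psi_1^2(-\mathcal Z)\bigr]\ge 0,
\]
with the analogous identity for $\psi_2$. The diagonal terms contribute at worst $-(d_i+\|a_i\|_\infty)\|\psi_i\|_X^2$, and the two cross-couplings are handled by Young's inequality,
\[
\Bigl|\!\int H'(0)\psi_1\psi_2+G'(0)\psi_1\psi_2\,dx\Bigr|\le \tfrac{|H'(0)|+|G'(0)|}{2}\,\|\psi\|_X^2.
\]
Combining these estimates gives the coercivity identity $\langle\mathscr T\psi,\psi\rangle_X\ge -K\|\psi\|_X^{2}$ with $K$ as in the statement, so that
\[
\|(\mathscr T+\xi I)\psi\|_X\|\psi\|_X\ge\langle(\mathscr T+\xi I)\psi,\psi\rangle_X\ge(\xi-K)\|\psi\|_X^{2}.
\]
Therefore $\|(\mathscr T+\xi I)^{-1}\|_{\mathcal L(X)}\le(\xi-K)^{-1}$, and the uniform bound by $(\xi_1-K)^{-1}$ on $[\xi_1,\xi_2]\subset(K,\infty)$ is immediate.

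\emph{Main obstacle.} The delicate point is Step 1: making the Picard iteration contract uniformly while simultaneously preserving the positive cone, since $\xi$ must be chosen large enough to absorb both the coupling strengths $H'(0),G'(0)$ and the drift-induced loss through the outflow boundary, and the monotone scheme has to be set up so that the off-diagonal cooperative couplings propagate positivity forward along characteristics. Once the strongly positive, compact resolvent is in hand, Steps 2 and 3 reduce to straightforward exponential and $L^2$-energy computations.
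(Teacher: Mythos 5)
Your Steps 2 and 3 are sound and essentially reproduce the paper's argument: the $L^2$ coercivity identity $\langle(\mathscr T+\xi I)\psi,\psi\rangle\ge(\xi-K)\|\psi\|^2$ is exactly how the paper obtains the resolvent bound $1/(\xi-K)$ and hence items (2) and (3). Your Step 1 for existence and boundedness takes a genuinely different route from the paper — you build the resolvent explicitly from the scalar transport equations via integrating factors and a contraction in $[C([-\mathcal Z,\mathcal Z])]^2$, whereas the paper passes through a Lax--Milgram argument on a variational space. That is a legitimate and arguably more transparent construction given that $\mathscr T$ is first order, and it yields the norm decay in Step 2 essentially for free from the exponential kernel.

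The gap is in your positivity argument. In your own integral representation the coupling enters as $f_i(s)-c_i\psi_j(s)$, where $c_i=H'(0)$ or $G'(0)\ge0$. Starting the Picard scheme from $\psi^{(0)}=0$ gives $\psi^{(1)}\ge0$, but at the next step the forcing $f_i-c_i\psi_j^{(1)}$ is \emph{smaller} than $f_i$ and can go negative; the iteration is alternating, not monotone, and the fixed point need not lie in the cone. In fact, with the paper's stated $\mathscr T$ (diagonal $-(d_i+a_i)$, off-diagonal $+H'(0),+G'(0)$) the claim fails outright even without drift: for the constant-coefficient matrix $\mathscr T=\begin{pmatrix}-3&1\\1&-3\end{pmatrix}$ one has $(\mathscr T+5I)^{-1}=\tfrac13\begin{pmatrix}2&-1\\-1&2\end{pmatrix}$, which sends $(1,0)^{\!\top}$ to a vector with a negative entry. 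Positivity of the resolvent requires $\mathscr T$ to be of M-matrix type (positive diagonal, nonpositive off-diagonal), i.e.\ the sign of $\mathscr T$ must be flipped so that $(\mathscr J-\mathscr T)$ really equals the operator in \eqref{eq:maineigenrvalue3}; with that sign, the ODE reads $-p\psi_1'+(d_1+a+\xi)\psi_1=f_1+H'(0)\psi_2$, the coupling enters with a \emph{plus}, the Picard iterates are monotone increasing from zero, and your argument does go through. Note however that the sign of the advection in $\mathscr T$ then also flips, which moves the inflow datum from $x=-\mathcal Z$ to $x=+\mathcal Z$, so the integration in your representation runs from $\mathcal Z$ down to $x$; you should track this, since both the nonnegativity of the exponential kernel and the boundary term $\frac{p}{2}\bigl[\psi_1^2(\mathcal Z)-\psi_1^2(-\mathcal Z)\bigr]$ in Step 3 depend on which endpoint carries the Dirichlet condition.
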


\begin{proof}

Firstly, we establish the existence of $(\mathscr{T}+\xi I)^{-1}$ by proving that the operator
\[
\mathscr{T}+\xi I : \mathcal{D} \to \mathcal{D}
\]
is bijective. More precisely, we show that it is
\emph{injective} (one-to-one) and \emph{surjective} (onto).

Assuming that $\left( \psi_1, \psi_2\right), $ such that
\begin{equation*}
\left(\mathscr{T}+\xi I\right)\left( \psi_1, \psi_2\right)=0
\end{equation*}

\[
\mathscr{T}(\psi_{1}, \psi_{2})(x) =
\begin{pmatrix}
 -\big(d_1  + a(x)\big) \psi_1(x) + H'(0) \psi_2(x)
+ p \psi_1^\prime(x) \\[1.2em]
 - \big(d_2  + b(x)\big) \psi_2(x) + G'(0) \psi_1(x)
+ q \psi_2^\prime(x)
\end{pmatrix},
\quad \text{for } \psi = (\psi_{1}, \psi_{2}) \in \mathcal{D}.
\]

We want to show that the operator $\mathscr{T} + \xi I$ is injective, i.e.,
\[
(\mathscr{T}+\xi I)(\psi_1,\psi_2)=0 \quad \Longrightarrow \quad \psi_1=\psi_2=0.
\]

The equation $(\mathscr{T}+\xi I)\psi=0$ is equivalent to
\[
\begin{cases}
p\,\psi_1'(x) - \big(d_1+a(x)-\xi\big)\psi_1(x) + H'(0)\,\psi_2(x)=0, \\[0.5em]
q\,\psi_2'(x) - \big(d_2+b(x)-\xi\big)\psi_2(x) + G'(0)\,\psi_1(x)=0.
\end{cases}
\]

\paragraph{Step 2. Energy identity.}
Take the $L^2(-\mathcal{Z}, \mathcal{Z})$ inner product of $(\mathscr{T}+\xi I)\psi$ with $\psi$ and integrate.
Using $\Re(p\psi_1'\overline{\psi_1}) = \tfrac{p}{2}\partial_x |\psi_1|^2$ (which integrates to zero under the boundary/decay conditions), we obtain
\[
\Re\langle (\mathscr{T}+\xi I)\psi,\psi\rangle
= \int \Big( (-d_1-a(x)+\xi)|\psi_1|^2 + (-d_2-b(x)+\xi)|\psi_2|^2 \Big)\,dx
\]
\[
\quad + \Re\int \big(H'(0)\psi_2\overline{\psi_1} + G'(0)\psi_1\overline{\psi_2}\big)\,dx.
\]
with $\Re$ being a function of real value.

\paragraph{Step 3. Estimates for the cross terms.}
Applying Young’s inequality with $\varepsilon>0$ gives
\[
\big|\Re(H'(0)\psi_2\overline{\psi_1})\big|
\leq |H'(0)|\left( \tfrac{\varepsilon}{2}|\psi_1|^2 + \tfrac{1}{2\varepsilon}|\psi_2|^2 \right),
\]
\[
\big|\Re(G'(0)\psi_1\overline{\psi_2})\big|
\leq |G'(0)|\left( \tfrac{\varepsilon}{2}|\psi_2|^2 + \tfrac{1}{2\varepsilon}|\psi_1|^2 \right).
\]

Therefore,
\[
\Re\langle (\mathscr{T}+\xi I)\psi,\psi\rangle
\geq \int \Big( (\,\xi - d_1 - a(x) - \tfrac{|H'(0)|}{2}\varepsilon - \tfrac{|G'(0)|}{2\varepsilon}\,) |\psi_1|^2
\]
\[
+ (\,\xi - d_2 - b(x) - \tfrac{|G'(0)|}{2}\varepsilon - \tfrac{|H'(0)|}{2\varepsilon}\,) |\psi_2|^2 \Big)\,dx.
\]

\paragraph{Step 4. Coercivity and injectivity.}
Let
\[
M_1 := \sup_x (d_1+a(x)), \quad
M_2 := \sup_x (d_2+b(x)), \quad
C_1:=|H'(0)|, \quad C_2:=|G'(0)|.
\]
Then, choosing for example $\varepsilon=1$, there exists a constant
\[
K := \max\left\{ M_1 + \tfrac{C_1}{2}+\tfrac{C_2}{2}, \; M_2 + \tfrac{C_1}{2}+\tfrac{C_2}{2} \right\}
\]
such that
\[
\Re\langle (\mathscr{T}+\xi I)\psi,\psi\rangle
\;\;\geq\;\; (\xi-K)\|\psi\|_{L^2}^2.
\]

If $\xi>K$, this inequality implies coercivity. In particular, if $(\mathscr{T}+\xi I)\psi=0$ then
\[
0 = \Re\langle (\mathscr{T}+\xi I)\psi,\psi\rangle \geq (\xi-K)\|\psi\|_{L^2}^2,
\]
which forces $\|\psi\|_{L^2}=0$, hence $\psi\equiv 0$.

Thus, $\mathscr{T}+\xi I$ is injective.

\emph{Surjectivity.} Define the sesquilinear form $a(\cdot,\cdot)$ on $\mathcal V\times\mathcal V$ by the same identity
$a(\psi,\phi)=\langle(\mathscr T+\xi I)\psi,\phi\rangle_X$. Each term of $a(\cdot,\cdot)$ is continuous on $\mathcal V\times\mathcal V$ (use boundedness of $a,b,H'(0),G'(0)$ and the embedding $H^2\hookrightarrow L^2$). From the coercivity estimate above and the inequality $\|\psi\|_{L^2}\leq C\|\psi\|_{H^2}$ we obtain, for $\xi>K$, a coercivity constant $\alpha>0$ so that
\[
\Re a(\psi,\psi)\geq \alpha\|\psi\|_{\mathcal V}^2\qquad\forall\psi\in\mathcal V.
\]
For any $f\in X$ the anti-linear functional $\ell(\phi):=\langle f,\phi\rangle_X$ is continuous on $\mathcal V$, hence by Lax--Milgram there exists a unique $\psi\in\mathcal V$ such that
\[
a(\psi,\phi)=\ell(\phi)\qquad\forall\phi\in\mathcal V.
\]
This variational identity is the weak form of $(\mathscr T+\xi I)\psi=f$. Standard regularity (here immediate in 1D first-order system or by choosing $\mathcal D=\mathcal V$) yields $\psi\in\mathcal D$. Uniqueness follows from injectivity. Therefore $(\mathscr T+\xi I)$ is onto $X$.

Combining injectivity and surjectivity gives bijectivity for every $\xi>K$.

We need to show that the operator $\mathscr T + \xi I ^{-1}$ is a bounded linear. Let $f=(f_1,f_2)\in X$ and $\psi=(\psi_1,\psi_2)\in\mathcal V$ solve
\[
(\mathscr T+\xi I)\psi=f,
\]
i.e.
\[
\begin{cases}
p\,\psi_1' - (d_1+a-\xi)\psi_1 + H'(0)\psi_2 = f_1,\\
q\,\psi_2' - (d_2+b-\xi)\psi_2 + G'(0)\psi_1 = f_2.
\end{cases}
\]
\emph{(i) $L^2$-bound.} From the coercivity already established, there exists $c_0>0$ such that
\[
c_0\|\psi\|_{L^2}^2 \leq \Re\langle(\mathscr T+\xi I)\psi,\psi\rangle
= \Re\langle f,\psi\rangle \leq \|f\|_{L^2}\,\|\psi\|_{L^2}.
\]
Hence $\|\psi\|_{L^2}\leq c_0^{-1}\|f\|_{L^2}$.

\emph{(ii) Derivative bound.}
Solve each equation for the derivative and take $L^2$-norms:
\[
\|\psi_1'\|_{L^2}
\leq \tfrac{1}{p}\Big(\|d_1+a-\xi\|_{L^\infty}\|\psi_1\|_{L^2} + |H'(0)|\,\|\psi_2\|_{L^2} + \|f_1\|_{L^2}\Big),
\]
\[
\|\psi_2'\|_{L^2}
\leqq \tfrac{1}{q}\Big(\|d_2+b-\xi\|_{L^\infty}\|\psi_2\|_{L^2} + |G'(0)|\,\|\psi_1\|_{L^2} + \|f_2\|_{L^2}\Big).
\]
Using the $L^2$-bound from (i) and boundedness of the coefficients, we find
\[
\|\psi_1'\|_{L^2}+\|\psi_2'\|_{L^2}\leq C_1\|f\|_{L^2}.
\]
Combining (i) and this estimate yields
\[
\|\psi\|_{H^2} \le C\,\|f\|_{L^2}.
\]
Thus $(\mathscr T+\xi I)^{-1}:X\to\mathcal V$ is bounded, hence also bounded as $L^2\to L^2$.

Assuming that  the coupling constants satisfy $H'(0)\geq 0$ and $G'(0)\geq 0$ (cooperative system), let $f\in X$ with $f\ge0$ and set $\psi=R(\xi)f\in\mathcal V$. The variational (weak) formulation is
\[
a(\psi,\phi) := \langle(\mathscr T+\xi I)\psi,\phi\rangle_{X} = \langle f,\phi\rangle_X
\qquad\forall \phi\in\mathcal V.
\]
Recall the bilinear form (writing components)
\[
a(\psi,\phi)=\int_{\Omega}\Big( p\psi_1'\overline{\phi_1} + q\psi_2'\overline{\phi_2}
- (d_1+a)\psi_1\overline{\phi_1} - (d_2+b)\psi_2\overline{\phi_2}
+ H'(0)\psi_2\overline{\phi_1} + G'(0)\psi_1\overline{\phi_2} + \xi\psi\cdot\overline\phi \Big) dx.
\]

Define the negative part of the vector $\psi$ componentwise:
\[
\psi_i^- := \min\{\psi_i,0\}\qquad (i=1,2),
\]
and take the test function $\phi=\psi^-=(\psi_1^-,\psi_2^-)\in\mathcal V$ in the weak formulation. Because $f\geq0$ and $\psi^- \le 0$ a.e., the right-hand side satisfies
\[
\langle f,\psi^-\rangle_X = \int_\Omega f_1\psi_1^- + f_2\psi_2^- \le 0.
\]
Thus
\[
a(\psi,\psi^-) \le 0. \tag{1}
\]

We now estimate the left-hand side from below in terms of $\|\psi^-\|_{L^2}^2$.

First, integration by parts kills the derivative-terms because $\psi_i'\psi_i^-$ is a total derivative and boundary terms vanish by the BCs/decay, hence
\[
\Re\int_\Omega p\psi_1'\overline{\psi_1^-}\,dx = 0,\qquad
\Re\int_\Omega q\psi_2'\overline{\psi_2^-}\,dx = 0.
\]
Therefore (taking real parts)
\[
\Re\,a(\psi,\psi^-)
= \int_\Omega \Big( \xi(|\psi_1^-|^2+|\psi_2^-|^2) - (d_1+a)|\psi_1^-|^2 - (d_2+b)|\psi_2^-|^2 \Big)\,dx
\]
\[
\qquad + \Re\int_\Omega\Big( H'(0)\psi_2\overline{\psi_1^-} + G'(0)\psi_1\overline{\psi_2^-}\Big)\,dx.
\]

Handle the coupling integrals using the decomposition $\psi_i=\psi_i^+ + \psi_i^-$ (with $\psi_i^+:=\max\{\psi_i,0\}$). Observe the algebraic sign properties:
\[
\psi_2\psi_1^- = \psi_2^+\psi_1^- + \psi_2^-\psi_1^-
\]
where $\psi_2^+\psi_1^-\le 0$ and $\psi_2^-\psi_1^-\geq0$. Since $H'(0)\geq0$,
\[
\Re\int H'(0)\psi_2\overline{\psi_1^-}
\le H'(0)\int \psi_2^-\psi_1^-.
\]
Similarly,
\[
\Re\int G'(0)\psi_1\overline{\psi_2^-}
\le G'(0)\int \psi_1^-\psi_2^-.
\]
Summing these,
\[
\Re\int\Big( H'(0)\psi_2\overline{\psi_1^-} + G'(0)\psi_1\overline{\psi_2^-}\Big)
\le (H'(0)+G'(0))\int \psi_1^-\psi_2^-.
\]

Apply Young's inequality to the last term: for any $\varepsilon>0$,
\[
\psi_1^-\psi_2^- \leq \tfrac{\varepsilon}{2}|\psi_1^-|^2 + \tfrac{1}{2\varepsilon}|\psi_2^-|^2.
\]
Hence
\[
\begin{aligned}
\Re\,a(\psi,\psi^-)
&\ge \int_\Omega \Big( (\xi - (d_1+a))|\psi_1^-|^2 + (\xi - (d_2+b))|\psi_2^-|^2 \Big)\,dx \\
&\qquad - (H'(0)+G'(0))\int_\Omega\Big( \tfrac{\varepsilon}{2}|\psi_1^-|^2 + \tfrac{1}{2\varepsilon}|\psi_2^-|^2\Big)\,dx.
\end{aligned}
\]
Choose $\varepsilon>0$ (fixed) and set
\[
M_1:=\sup_{x\in\Omega}(d_1+a(x)),\qquad M_2:=\sup_{x\in\Omega}(d_2+b(x)),
\]
and \(C:=H'(0)+G'(0)\). Then there exists a constant \(K\) (depending on \(M_1,M_2,C,\varepsilon\)) such that
\[
\Re\,a(\psi,\psi^-)\geq (\xi-K)\|\psi^-\|_{L^2}^2. \tag{2}
\]

Combine (1) and (2). Since $a(\psi,\psi^-)$ is real (we took real parts), we have
\[
0 \geq a(\psi,\psi^-) = \Re a(\psi,\psi^-) \geq (\xi-K)\|\psi^-\|_{L^2}^2.
\]
For $\xi>K$ the factor \(\xi-K>0\), hence \(\|\psi^-\|_{L^2}^2=0\). Therefore \(\psi^- \equiv 0\), i.e. \(\psi_i\geq0\) a.e.\ in \(\Omega\) for \(i=1,2\).

This proves the positivity of the resolvent: \(f\geq0\) implies \(R(\xi)f=\psi\geq0\).

\textbf{(B) Strong positivity (use compactness + Kreĭn--Rutman).} On a bounded interval the embedding $H^2(\Omega)\hookrightarrow L^2(\Omega)$ is compact, hence the resolvent operator $R(\xi):X\to X$ is compact for $\xi$ in the resolvent set (indeed $R(\xi):L^2\to H^2\hookrightarrow L^2$). We have just shown $R(\xi)$ is positive (order-preserving) and compact. Now apply the Kreĭn--Rutman theorem (or the infinite-dimensional analogue of Perron--Frobenius): a compact, positivity-preserving operator on a Banach lattice has a principal eigenvalue with associated eigenvector in the interior of the positive cone, provided the operator is \emph{strongly positive} on some iterate or the operator is irreducible. Concretely, irreducibility of the cooperative system (for instance $H'(0)G'(0)>0$) implies that some power $R(\xi)^n$ maps nonzero nonnegative functions to functions strictly positive everywhere. Kreĭn--Rutman then yields that $R(\xi)$ itself maps every nonzero $f\geq0$ to a strictly positive vector, i.e. $R(\xi)f\gg0$.

Therefore $R(\xi)$ is strongly positive under the irreducibility hypothesis.

Next, we establish that for $\xi >k,~(\mathscr{T}+ \xi I)^{-1} \rightarrow 0,$ as $\xi \rightarrow \infty.$

Taking the real part of the $L^2$ inner product with $\psi$ and using the coercivity,
\[
(\xi-K)\|\psi\|_{X}^{2}
\;\leq\; \Re\langle (\mathscr{T} +\xi I)\psi,\psi\rangle
\;=\; \Re\langle f,\psi\rangle
\;\leq\; \|f\|_{X}\,\|\psi\|_{X}.
\]
If $\psi\neq 0$, divide both sides by $\|\psi\|_{X}$ to get
\[
\|\psi\|_{X} \;\leq\; \frac{1}{\xi-K}\,\|f\|_{X}.
\]
If $\psi=0$, the inequality is trivial. Taking the supremum over $\|f\|_{X}=1$ yields
\[
\big\|(\mathscr T+\xi I)^{-1}\big\|_{\mathcal L(X)} \leq \frac{1}{\xi-K}.
\]
Letting $\xi\to\infty$ gives the claimed limit.

Lastly, we need to show  $(3).$  For $\xi>K$ and $f \in X$, let $\psi = (\mathscr{T} + \xi I)^{-1} f$.
By coercivity,
\[
(\xi-K)\|\psi\|_{X}^{2}
  \leq \Re \langle f, \psi \rangle
  \leq \|f\|_{X}\|\psi\|_{X}.
\]
Hence $\|\psi\|_{X} \leq \tfrac{1}{\xi-K}\|f\|_{X}$, which leads
\[
\|(\mathscr{T} + \xi I)^{-1}\|_{\mathcal{L}(X)} \leq \frac{1}{\xi-K}.
\]
Taking the supremum over $\xi\in[\xi_{1},\xi_{2}]$ yields
\[
\sup_{\xi \in [\xi_{1}, \xi_{2}]} \|(\mathscr{T} + \xi I)^{-1}\|_{\mathcal{L}(X)}
   \leq \frac{1}{\xi_{1}-K} < \infty.
\]

\end{proof}

\begin{lemma}\label{lemma33}
For $\xi > K$, setting by
\[
\rho(\xi) := \rho\!\left(\mathscr J\big(\mathscr{T} + \xi I\big)^{-1}\right)
\]
the spectral radius of $\mathscr{J} (\mathscr{T} + \xi I)^{-1}$. Hence, we have the following statements
\begin{enumerate}
    \item $\rho\!\left((\mathscr{T} + \xi I)^{-1}J\right) = \rho(\xi),$
    \item $\displaystyle \lim_{\xi \downarrow K} \rho(\xi) = \infty,$
    \item $\displaystyle \lim_{\xi \to \infty} \rho(\xi) = 0,$
    \item $\rho(\xi)$ is continuous and strictly decreasing for $\xi > K.$
\end{enumerate}
\end{lemma}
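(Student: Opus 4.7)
The plan is to prove the four assertions in the order (1), (3), (4), (2), with (2) being the main technical obstacle.

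For (1), I will invoke the elementary spectral identity $\sigma(AB)\setminus\{0\}=\sigma(BA)\setminus\{0\}$, valid for any bounded linear operators $A,B$ on a Banach space, applied to $A:=\mathscr{J}$ and $B:=(\mathscr{T}+\xi I)^{-1}$. Both factors are bounded on $X$ by Lemma~\ref{lem:3.1}(1) and assumption~(\textbf{J}), which immediately gives $\rho((\mathscr{T}+\xi I)^{-1}\mathscr{J})=\rho(\mathscr{J}(\mathscr{T}+\xi I)^{-1})=\rho(\xi)$. Part~(3) then follows from the submultiplicativity $\rho(\xi)\leq\|\mathscr{J}\|_{\mathcal{L}(X)}\,\|(\mathscr{T}+\xi I)^{-1}\|_{\mathcal{L}(X)}$ combined with Lemma~\ref{lem:3.1}(2), which forces the second factor to vanish as $\xi\to\infty$.

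For (4), strict monotonicity rests on the first resolvent identity
\[
(\mathscr{T}+\xi_1 I)^{-1}-(\mathscr{T}+\xi_2 I)^{-1}=(\xi_2-\xi_1)(\mathscr{T}+\xi_1 I)^{-1}(\mathscr{T}+\xi_2 I)^{-1},\qquad K<\xi_1<\xi_2,
\]
together with the strong positivity of each resolvent (Lemma~\ref{lem:3.1}(1)), which makes the right-hand side strongly positive. Composition with the positive operator $\mathscr{J}$ then yields $\mathscr{J}(\mathscr{T}+\xi_1 I)^{-1}\gg\mathscr{J}(\mathscr{T}+\xi_2 I)^{-1}$, and a classical Kreĭn--Rutman comparison argument -- testing both operators against the adjoint Perron eigenvector of $\mathscr{J}(\mathscr{T}+\xi_2 I)^{-1}$ -- yields $\rho(\xi_1)>\rho(\xi_2)$. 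Continuity follows because $\xi\mapsto(\mathscr{T}+\xi I)^{-1}$ is analytic on $(K,\infty)$ in the operator-norm topology, $\mathscr{J}(\mathscr{T}+\xi I)^{-1}$ is compact (as $\mathscr{J}$ is compact), and by Kato's perturbation theorem a simple isolated eigenvalue of a compact operator depends continuously on the operator in the norm topology.

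Part~(2) is the main obstacle. My plan is a contradiction argument based on the Collatz--Wielandt characterization together with compactness. For each $\xi>K$, Kreĭn--Rutman supplies a normalized positive eigenvector $\phi_\xi\in\mathcal{D}$ with $\mathscr{J}(\mathscr{T}+\xi I)^{-1}\phi_\xi=\rho(\xi)\phi_\xi$ and $\|\phi_\xi\|_X=1$. Setting $w_\xi:=(\mathscr{T}+\xi I)^{-1}\phi_\xi$, this reads $\mathscr{J} w_\xi=\rho(\xi)\phi_\xi$ together with $(\mathscr{T}+\xi I)w_\xi=\phi_\xi$. Suppose for contradiction that $\rho(\xi_n)\leq M<\infty$ along some $\xi_n\downarrow K$. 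Pairing $(\mathscr{T}+\xi_n I)w_{\xi_n}=\phi_{\xi_n}$ with a suitable positive test functional and exploiting the degeneration of the coercivity estimate at $\xi=K$, one deduces $\|w_{\xi_n}\|_X\to\infty$. Renormalizing $\widetilde w_n:=w_{\xi_n}/\|w_{\xi_n}\|_X$ gives $(\mathscr{T}+\xi_n I)\widetilde w_n=\phi_{\xi_n}/\|w_{\xi_n}\|_X\to 0$ in $X$, and the compact embedding $H^1\hookrightarrow L^2$ on the bounded interval $[-\mathcal{Z},\mathcal{Z}]$ permits extracting a subsequential limit $\widetilde w_\ast\in X$ with $\|\widetilde w_\ast\|_X=1$, $\widetilde w_\ast\geq 0$, satisfying $(\mathscr{T}+KI)\widetilde w_\ast=0$ weakly. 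This produces a nontrivial kernel element of $\mathscr{T}+KI$, contradicting the bijectivity established in Lemma~\ref{lem:3.1} at the sharp endpoint $\xi=K$. The hardest step will be to justify the passage to the limit rigorously -- in particular, controlling the first-order drift terms $p\widetilde w_{n,1}',\,q\widetilde w_{n,2}'$ in the weak formulation, and verifying that the strict positivity of $\widetilde w_n$ is preserved in the limit so as to ensure $\widetilde w_\ast\not\equiv 0$.
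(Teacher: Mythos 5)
Parts (1), (3) and (4) follow the same route as the paper's proof (spectral identity for $\sigma(AB)\setminus\{0\}$, submultiplicativity plus the resolvent decay, resolvent identity plus positivity for monotonicity, analyticity and Kato for continuity); there is nothing to add.

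Part (2) is where you genuinely depart from the paper. The paper argues \emph{constructively}: it invokes a Laurent expansion of the resolvent near the pole $\xi=K$, with principal eigenfunction $w$ of $\mathscr T$ and adjoint eigenfunction $w^*$, extracts a pointwise lower bound $R(\xi)\psi\ge\frac{c}{\xi-K}\psi$ on the positive cone, and then feeds this into the Gelfand formula to get $\rho(\xi)\ge c/(\xi-K)$. Your plan instead proceeds by contradiction: assume $\rho(\xi_n)\le M$ along $\xi_n\downarrow K$, set $w_{\xi_n}=R(\xi_n)\phi_{\xi_n}$, argue $\|w_{\xi_n}\|\to\infty$, renormalize, and extract a weak limit solving $(\mathscr T+KI)\widetilde w_*=0$.

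There are two gaps in your part (2), one of which is fatal as written.

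First, the step ``$\|w_{\xi_n}\|_X\to\infty$'' is not justified by ``the degeneration of the coercivity estimate''. Coercivity, i.e.\ $(\xi-K)\|\psi\|^2\le\Re\langle(\mathscr T+\xi I)\psi,\psi\rangle$, gives only an \emph{upper} bound $\|R(\xi)\phi\|\le\|\phi\|/(\xi-K)$; it never forces blow-up. The only way to force $\|w_{\xi_n}\|\to\infty$ is to pair $(\mathscr T+\xi_n I)w_{\xi_n}=\phi_{\xi_n}$ with an adjoint eigenvector $w^*$ satisfying $\mathscr T^*w^*=-Kw^*$, obtaining $(\xi_n-K)\langle w_{\xi_n},w^*\rangle=\langle\phi_{\xi_n},w^*\rangle$, and then argue the right side is bounded away from $0$. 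But this is exactly the assumption the paper makes tacitly (that $K$ is the sharp spectral bound of $-\mathscr T$, so $\xi=K$ is a pole of the resolvent); you have not stated it and it does not come for free from Lemma~\ref{lem:3.1}, whose $K$ is merely a Young-inequality constant.

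Second, and fatally, your terminal contradiction ``contradicting the bijectivity established in Lemma~\ref{lem:3.1} at the sharp endpoint $\xi=K$'' does not exist. Lemma~\ref{lem:3.1} proves bijectivity of $\mathscr T+\xi I$ only for $\xi>K$; coercivity degenerates precisely at $\xi=K$, and the entire heuristic of the paper's proof is that $-K$ \emph{is} an eigenvalue of $\mathscr T$, so producing a nontrivial $\widetilde w_*\in\ker(\mathscr T+KI)$ is expected, not absurd. The contradiction you should exploit instead is already in your hands: from $\mathscr J\widetilde w_n=\rho(\xi_n)\phi_{\xi_n}/\|w_{\xi_n}\|\to 0$ you obtain $\mathscr J\widetilde w_*=0$ in the limit, while $\widetilde w_*\ge0$, $\|\widetilde w_*\|=1$; since the kernels satisfy $J_i\ge0$, $J_i(0)>0$ with $J_i$ continuous, a nontrivial nonnegative $\widetilde w_*$ cannot be annihilated by the integral operator $\mathscr J$. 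That, and not bijectivity at $\xi=K$, is the valid contradiction. Replacing your final sentence accordingly -- and stating the implicit hypothesis that $\xi=K$ is a pole of the resolvent so that the blow-up of $\|w_{\xi_n}\|$ can be derived via the adjoint eigenvector -- would turn this into a correct alternative proof.
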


\begin{proof}

$(1)$ For $\xi>K$ the operators $R(\xi)=(\mathscr T+\xi I)^{-1}$ and $\mathscr J$ are compact and positive on $X$. Hence $\mathscr J\,R(\xi)$ is a compact, positive operator, so by the Kreĭn--Rutman theorem and lemma \eqref{lem:3.1} its spectral radius $\rho(\xi)$ is a positive, algebraically simple eigenvalue with a corresponding nonnegative eigenfunction. If $\mathscr J\,R(\xi)$ is irreducible, the eigenfunction is strictly positive. The equality of spectral radius for $\mathscr J\,R(\xi)$ and $R(\xi)\,\mathscr J$ follows from the spectral identity for products of bounded operators.

$(2),$

We aim to prove that
\[
\lim_{\xi \downarrow K} \rho(\xi) = \infty,
\]
where
\[
\rho(\xi) := \rho\!\big(\mathscr J (\mathscr T + \xi I)^{-1}\big),
\qquad
R(\xi) := (\mathscr T + \xi I)^{-1},
\]
with is the resolvent.
\smallskip

\noindent
First, recall that \(R(\xi)\) is the resolvent operator of \(\mathscr T\).
Near the pole at \(\xi = K\), standard spectral theory gives the expansion
\[
(R(\xi)\psi)(x) \;\approx\; \frac{1}{\xi - K}\,\langle \psi, w^{\ast} \rangle\, w(x) \;+\; \text{bounded part},
\]
where \(w\) is the positive eigenfunction of \(\mathscr T\) associated to the eigenvalue \(-K\), and \(w^{\ast}\) is the positive eigenfunction of the adjoint operator \(\mathscr T^{\ast}\).

\smallskip

\noindent
Now, let us act with \(\mathscr J R(\xi)\) on a nonnegative vector
\(\psi = (\psi_1,\psi_2)\). By definition, for each component \(i=1,2\),
\[
\big(\mathscr J R(\xi)\psi\big)_i(x)
= d_i \int_{-\mathcal Z}^{\mathcal Z} J_i(x-y)\,(R(\xi)\psi)_i(y)\,dy.
\]
Using the above expansion of the resolvent, we obtain the lower bound
\[
(R(\xi)\psi)_i(y) \;\geq\; \frac{c}{\xi - K}\,\psi_i(y),
\qquad y \in [-\mathcal Z,\mathcal Z],
\]
for some constant \(c>0\) depending only on \(w,w^{\ast}\). Hence,
\[
\big(\mathscr J R(\xi)\psi\big)_i(x)
\;\geq\; \frac{d_i}{\xi - K} \int_{-\mathcal Z}^{\mathcal Z} J_i(x-y)\,\psi_i(y)\,dy,~ i=1,2
\]

\smallskip

\noindent
This shows that, for every nonnegative \(\psi \not\equiv 0\),
\[
\mathscr J R(\xi)\psi \;\geq\; \frac{c}{\xi - K}\, \psi,
\]
in the cone of nonnegative functions. Therefore, by the definition of the spectral radius,
\[
\rho(\xi) \;\geq\; \frac{c}{\xi - K}.
\]

\smallskip

Iterating the last inequality gives, for every integer \(n\geq1\),
\[
\big(\mathscr J R(\xi)\big)^n \psi \;\geq\; \Big(\frac{c}{-K+\xi}\Big)^n \psi.
\]
Taking norms (for instance the \(L^2\)-norm) and dividing by \(\|\psi\|>0\) gives
\[
\big\|\big(\mathscr J R(\xi)\big)^n\big\|
\;\geq\; \frac{\big\|\big(\mathscr J R(\xi)\big)^n\psi\big\|}{\|\psi\|}
\;\leq\; \Big(\frac{c}{-K+\xi}\Big)^n .
\]

 By the spectral radius formula,
\[
\rho(\xi) \;=\; \lim_{n\to\infty} \big\|\big(\mathscr J R(\xi)\big)^n\big\|^{1/n}
\;\geq\; \lim_{n\to\infty} \Big(\frac{c}{-K+\xi}\Big) \;=\; \frac{c}{-K+\xi}.
\]
Since \(-K+\xi\to 0^+\) as \(\xi\downarrow K\), the right-hand side diverges to \(+\infty\).
Hence,
\[
\lim_{\xi\downarrow K} \rho(\xi) = \infty.
\]
 $(3)$

By the coercivity estimate proved earlier (in Lemma \eqref{lem:3.1}) there exists $K\in\mathbb R$ such that for every $\xi>K$ we have the resolvent bound
\[
\|R(\xi)\|_{\mathcal L(X)} \leq \frac{1}{\xi-K}.
\]
Since $\mathscr J$ is a fixed bounded operator on $X$, the operator norm of the product satisfies
\[
\|\mathscr J R(\xi)\|_{\mathcal L(X)} \leq \|\mathscr J\|_{\mathcal L(X)} \,\|R(\xi)\|_{\mathcal L(X)}
\leq \frac{\|\mathscr J\|_{\mathcal L(X)}}{\xi-K}.
\]
Because the spectral radius is bounded by the operator norm for bounded operators, we obtain
\[
0 \le \rho(\xi) = \rho\!\big(\mathscr J R(\xi)\big) \leq \|\mathscr J R(\xi)\|_{\mathcal L(X)}
\leq \frac{\|\mathscr J\|_{\mathcal L(X)}}{\xi-K}.
\]
Letting $\xi\to\infty$ in the last inequality yields
\[
\lim_{\xi\to\infty}\rho(\xi)=0.
\]

To  show continuity, for $\xi>K$ the resolvent $R(\xi)=(\mathscr T+\xi I)^{-1}$ is analytic (hence continuous) as a map
\(\xi\mapsto R(\xi)\in\mathcal L(X)\). Since $\mathscr J\in\mathcal L(X)$ is fixed, the map
\[
\xi \longmapsto \mathscr J R(\xi) \in \mathcal L(X)
\]
is continuous in the operator norm. The spectral radius of a bounded operator may be defined by
\(\rho(T)=\lim_{n\to\infty}\|T^n\|^{1/n}\). Because the operator norm \(\|\cdot\|\) is continuous and the
limit defining \(\rho\) is uniform under small perturbations, it follows that \(\rho(T)\) depends continuously
on \(T\) in the operator norm topology. Hence the composition \(\xi\mapsto\rho(\mathscr J R(\xi))=\rho(\xi)\)
is continuous on \((K,\infty)\).

\medskip
\noindent\textbf{Monotonicity.}
Fix \(\xi_1,\xi_2\) with \(K<\xi_1<\xi_2\). The resolvent identity gives
\[
R(\xi_1)-R(\xi_2) = (\xi_2-\xi_1)\,R(\xi_1)R(\xi_2).
\]
Since, for \(\xi>K\), each \(R(\xi)\) is a positive operator (order-preserving), the product
\(R(\xi_1)R(\xi_2)\) is also positive. Therefore
\[
R(\xi_1)-R(\xi_2) \geq 0 \qquad\text{(operator order)}.
\]
Multiplying on the left by the positive operator \(\mathscr J\) preserves the order, so
\[
\mathscr J R(\xi_1) - \mathscr J R(\xi_2) \geq 0,
\]
i.e. \(0 \leq    \mathscr J R(\xi_2) \leq \mathscr J R(\xi_1)\).

For positive compact operators on a Banach lattice the spectral radius is monotone with respect to the order:
if \(0\leq A\leq B\) then \(\rho(A)\leq\rho(B)\). Applying this where
\(A=\mathscr J R(\xi_2)\), \(B=\mathscr J R(\xi_1)\) yields
\[
\rho(\xi_2)=\rho(\mathscr J R(\xi_2)) \leq \rho(\mathscr J R(\xi_1))=\rho(\xi_1).
\]
Thus, \(\rho(\xi)\) is not increasing in \((K,\infty)\).

\medskip
\noindent\textbf{Strict monotonicity (under irreducibility).}
If, in addition, the operators \(\mathscr J R(\xi)\) are \emph{irreducible} (or some power is strongly positive),
then the order monotonicity is strict whenever the two operators are not equal. Indeed, for
\(\xi_1<\xi_2\) we have \(R(\xi_1)-R(\xi_2)=(\xi_2-\xi_1)R(\xi_1)R(\xi_2)\not\equiv 0\), hence
\(\mathscr J R(\xi_1)-\mathscr J R(\xi_2)\) is a nonzero positive operator. Kreĭn--Rutman / Perron--Frobenius
type results for irreducible compact positive operators imply that if \(0\leq A\leq B\) and \(A\neq B\)
(with \(A,B\) compact, positive and irreducible in the appropriate sense) then \(\rho(A)<\rho(B)\).
Applying this to \(A=\mathscr J R(\xi_2)\) and \(B=\mathscr J R(\xi_1)\) gives the strict inequality
\(\rho(\xi_2)<\rho(\xi_1)\). Hence \(\rho(\xi)\) is strictly decreasing on \((K,\infty)\) under the irreducibility assumption.
\end{proof}

We are in a position to establish the principal result of this section, stated in the Theorem below.

\textbf{Proof of Theorem \eqref{Principlaeignevalue}.}

\begin{proof}
Consider the operator family
\[
R(\xi):=(\mathscr T+\xi I)^{-1},\qquad \xi>-K,
\]
and the compact positive operator
\[
A(\xi):=\mathscr J R(\xi)=\mathscr J(\mathscr T+\xi I)^{-1}
\]
on the Banach lattice \(X=L^2([-{\mathcal Z},{\mathcal Z}];\mathbb R^2)\).
Recall from the  lemma \eqref{lemma33} that the map \(\xi\mapsto \rho(\xi):=\rho(A(\xi))\) is continuous and strictly decreasing on \((K,\infty)\), and moreover
\[
\lim_{\xi\downarrow -K}\rho(\xi)=\infty,\qquad \lim_{\xi\to\infty}\rho(\xi)=0.
\]
By the intermediate value theorem there exists a unique \(\xi_0>-K\) such that
\[
\rho(\xi_0)=1.
\]

By Kreĭn--Rutman applied to the compact positive operator \(A(\xi_0)\) there exists a nontrivial vector
\(\varphi\in X\), \(\varphi\ge 0\), \(\varphi\not\equiv 0\), such that
\[
A(\xi_0)\varphi = \varphi.
\]
Moreover, under the irreducibility/strong-positivity hypothesis this eigenvector may be chosen strictly positive and the eigenvalue \(1\) is algebraically simple and isolated in the spectrum of \(A(\xi_0)\).

Set
\[
\psi := R(\xi_0)\varphi = (\mathscr T+\xi_0 I)^{-1}\varphi.
\]
Then \(\psi\in\mathcal D\) and by the definition of \(R(\xi_0)\) we have
\[
(\mathscr T+\xi_0 I)\psi = \varphi.
\]
Using \(A(\xi_0)\varphi=\varphi\) we get \(\mathscr J\psi = \varphi\). Substituting yields
\[
(\mathscr T+\xi_0 I)\psi = \mathscr J\psi.
\]
Rearranging,
\[
\mathscr T\psi - \mathscr J\psi = -\xi_0 \psi.
\]
Thus \(\psi\) is an eigenfunction of the operator \(\mathscr T-\mathscr J\) with eigenvalue \(-\xi_0\). Equivalently, writing \(\lambda^\ast:=-\xi_0\), we obtain the eigenpair
\[
(\lambda^\ast,\psi),\qquad \psi=(\psi_1,\psi_2)\gg0,
\]
which satisfies the system \eqref{eq:maineigenrvalue3}
\[
\begin{cases}
d_1\displaystyle\int_{-{\mathcal Z}}^{\mathcal Z}J_1(x-y)\psi_1(y)\,dy - d_1\psi_1(x) - a(x)\psi_1(x) + H'(0)\psi_2(x) + p\psi_1'(x)
= \lambda^\ast \psi_1(x),\\[6pt]
d_2\displaystyle\int_{-{\mathcal Z}}^{\mathcal Z}J_2(x-y)\psi_2(y)\,dy - d_2\psi_2(x) - b(x)\psi_2(x) + G'(0)\psi_1(x) + q\psi_2'(x)
= \lambda^\ast \psi_2(x).
\end{cases}
\]

It remains to show uniqueness (simplicity) of \(\lambda^\ast\) among eigenvalues with a positive eigenfunction. Suppose \((\tilde\lambda,\tilde\psi)\) is another eigenpair with \(\tilde\psi\geq0\), \(\tilde\psi\not\equiv0\). Then the same construction (set \(\tilde\xi:=-\tilde\lambda\)) shows that \(1\) is in the spectrum of \(A(\tilde\xi)\). But by monotonicity of \(\rho(\xi)\) the equation \(\rho(\xi)=1\) has the unique solution \(\xi_0\). Hence \(\tilde\xi=\xi_0\) and therefore \(\tilde\lambda=\lambda^\ast\).

Finally, Kreĭn--Rutman also gives algebraic simplicity of the eigenvalue \(1\) of \(A(\xi_0)\) (under irreducibility), which translates into algebraic simplicity of \(\lambda^\ast\) for the original problem: no other linearly independent eigenfunction corresponds to \(\lambda^\ast\) in the positive cone. This proves that system \eqref{eq:maineigenrvalue3} admits a unique principal eigenvalue \(\lambda^\ast\) with an associated positive eigenfunction \(\psi\), and that \(\lambda^\ast\) is simple and isolated.
\end{proof}

Next, we have the following property on the principal eigenvalue $\lambda^\ast.$

\begin{proposition}[\textbf{Monotonicity with respect to domain size}]\label{prop:monotonicity}
Assume $p,q>0$, $a,b\in L^\infty$, $H'(0),G'(0)\geq0$, and that the kernels $J_1, J_2$ satisfy assumption \textbf{(J)} so that each $\mathscr J_l$ is a positive compact integral operator on $X_l:=L^2([-\mathcal{Z}, \mathcal{Z}];\mathbb R^2)$.
Let $\lambda^\ast(\mathcal{Z})$ denote the principal eigenvalue of \eqref{eq:maineigenrvalue} on $[-\mathcal{Z}, \mathcal{Z}]$. Then
\[
\mathcal{Z}\longmapsto \lambda^\ast(\mathcal{Z})
\]
is strictly increasing and continuous on $(0,\infty)$.
\end{proposition}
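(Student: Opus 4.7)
The plan is to reduce the proposition to the monotonicity of the auxiliary spectral radius $\rho_{\mathcal{Z}}(\xi)$ introduced in Lemma~\ref{lemma33} with respect to $\mathcal{Z}$, and then establish that monotonicity by comparing the integral operators on the two domains via zero extension. By Theorem~\ref{Principlaeignevalue} and Lemma~\ref{lemma33}, the principal eigenvalue $\lambda^{\ast}(\mathcal{Z})$ is characterised through the unique solution $\xi=\xi_{0}(\mathcal{Z})$ of $\rho_{\mathcal{Z}}(\xi)=1$ (with $\xi\mapsto\rho_{\mathcal{Z}}(\xi)$ continuous and strictly decreasing on $(K,\infty)$), where $\rho_{\mathcal{Z}}(\xi)$ is the spectral radius of the compact positive operator $\mathcal{A}_{\mathcal{Z}}(\xi):=\mathscr{J}_{\mathcal{Z}}(\mathscr{T}_{\mathcal{Z}}+\xi I)^{-1}$ acting on $X_{\mathcal{Z}}:=L^{2}([-\mathcal{Z},\mathcal{Z}];\mathbb{R}^{2})$. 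Consequently, strict monotonicity and continuity of $\mathcal{Z}\mapsto\lambda^{\ast}(\mathcal{Z})$ follow once one proves the same properties for $\mathcal{Z}\mapsto\rho_{\mathcal{Z}}(\xi)$ at every fixed $\xi>K$, by comparing the roots of $\rho_{\mathcal{Z}}(\xi)=1$.

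Fix $0<\mathcal{Z}_{1}<\mathcal{Z}_{2}$ and $\xi>K$. I would introduce the zero-extension operator $E\colon X_{\mathcal{Z}_{1}}\to X_{\mathcal{Z}_{2}}$ and the restriction $P\colon X_{\mathcal{Z}_{2}}\to X_{\mathcal{Z}_{1}}$, both positive, and verify the cone inequality
\[
P\,\mathscr{J}_{\mathcal{Z}_{2}}\,E\,\phi\;\ge\;\mathscr{J}_{\mathcal{Z}_{1}}\phi\qquad(\phi\ge 0),
\]
strictified after one iteration by the mass contribution of the annulus $A:=[-\mathcal{Z}_{2},\mathcal{Z}_{2}]\setminus[-\mathcal{Z}_{1},\mathcal{Z}_{1}]$, which is guaranteed by $J_{l}(0)>0$ and the continuity in assumption \textbf{(J)}. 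Combining this with the positivity of the resolvent from Lemma~\ref{lem:3.1} and the irreducibility of $\mathcal{A}_{\mathcal{Z}}(\xi)$ already used in Lemma~\ref{lemma33}, the strict monotonicity of the spectral radius under strict cone domination for compact irreducible positive operators (a Krein--Rutman / Perron-type comparison) yields $\rho_{\mathcal{Z}_{1}}(\xi)<\rho_{\mathcal{Z}_{2}}(\xi)$. Continuity of $\mathcal{Z}\mapsto\rho_{\mathcal{Z}}(\xi)$ follows from the norm-continuity of $\mathcal{Z}\mapsto\mathcal{A}_{\mathcal{Z}}(\xi)$: dominated convergence handles $\mathscr{J}_{\mathcal{Z}}$, while the uniform coercivity estimate of Lemma~\ref{lem:3.1} ensures stability of the weak formulation defining the resolvent under small perturbations of the endpoints. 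An implicit-function argument applied to $\rho_{\mathcal{Z}}(\xi)=1$ then transfers both strict monotonicity and continuity from $\rho_{\mathcal{Z}}$ to $\xi_{0}(\mathcal{Z})$, and hence to $\lambda^{\ast}(\mathcal{Z})$.

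The main obstacle is that $\mathcal{A}_{\mathcal{Z}_{1}}(\xi)$ and $\mathcal{A}_{\mathcal{Z}_{2}}(\xi)$ act on different function spaces, and the zero extension of a principal eigenfunction of $\mathcal{A}_{\mathcal{Z}_{1}}(\xi)$ is not $C^{1}$ and therefore not in the natural domain $\mathcal{D}$ of the advection-bearing operator $\mathscr{T}_{\mathcal{Z}_{2}}$. I plan to handle this by performing the entire spectral comparison in $L^{2}$, where $\mathcal{A}_{\mathcal{Z}}(\xi)$ is genuinely compact and positive, and invoking the $C^{1}$ bootstrap from the equation only at the very end to recover the regularity of the final Krein--Rutman eigenfunction; alternatively, a smooth cut-off approximation of the zero extension, combined with the continuity established above, allows one to pass to the limit. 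A second delicate point is upgrading the non-strict cone inequality to a strict one at the level of the spectral radius, which demands a nontrivial mass contribution from the annulus $A$ detected by the Krein--Rutman eigenfunction; this is precisely where assumption \textbf{(J)} (positivity and continuity of $J_{l}$ near zero) and the strict positivity of the eigenfunction established in Theorem~\ref{Principlaeignevalue} enter decisively.
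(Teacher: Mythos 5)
Your proposal follows essentially the same route as the paper's proof: embed $X_{\mathcal Z_1}$ into $X_{\mathcal Z_2}$ by zero extension, obtain a cone inequality between the compact positive operators $A_{\mathcal Z_1}(\xi)$ and $A_{\mathcal Z_2}(\xi)$ at fixed $\xi$, invoke Krein--Rutman strict monotonicity of the spectral radius under irreducibility, compare the unique roots of $\rho_{\mathcal Z}(\xi)=1$, and pass joint continuity of $\rho_{\mathcal Z}(\xi)$ through that root equation to get continuity of $\lambda^\ast$. The only caveat worth adding is that the cone inequality you display compares only $\mathscr J_{\mathcal Z_1}$ and $\mathscr J_{\mathcal Z_2}$, whereas what is needed is the comparison of $\mathcal A_{\mathcal Z}(\xi)=\mathscr J_{\mathcal Z}R_{\mathcal Z}(\xi)$, which also requires a domain-monotonicity of the transport resolvent $R_{\mathcal Z}(\xi)$ under zero extension (the paper asserts this comparison in Step~1 of its proof with only a brief justification, so both your sketch and the paper would benefit from making this step explicit); your remark about the $L^2$ versus $C^1$ mismatch of zero extensions is a valid and useful additional observation that the paper leaves implicit.
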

\begin{proof}
Fix $0<\mathcal Z_1<\mathcal Z_2$. We compare $A_{\mathcal Z_1}(\xi)$ and $A_{\mathcal Z_2}(\xi)$ for a fixed $\xi>K$.

\medskip

View $X_{\mathcal Z_1}:=L^2([-\mathcal Z_1,\mathcal Z_1];\mathbb R^2)$ as a subspace of
$X_{\mathcal Z_2}:=L^2([-\mathcal Z_2,\mathcal Z_2];\mathbb R^2)$ by extending functions by $0$ on
$[-\mathcal Z_2,-\mathcal Z_1)\cup(\mathcal Z_1,\mathcal Z_2]$.
Under our hypotheses, for $\xi>K$ the resolvent $R_{\mathcal Z}(\xi)$ is positive on $X_{\mathcal Z}$ and $\mathscr J_{\mathcal Z}$ is a positive integral operator with kernels $J_i\ge0$.
For $f\in X_{\mathcal Z_1}$, $f\geq0$, we then have (pointwise a.e.)
\[
\big(A_{\mathcal Z_2}(\xi)f\big)_i(x)
= d_i\!\int_{-\mathcal Z_2}^{\mathcal Z_2}\! J_i(x-y)\,\big(R_{\mathcal Z_2}(\xi)f\big)_i(y)\,dy
\;\geq\; d_i\!\int_{-\mathcal Z_1}^{\mathcal Z_1}\! J_i(x-y)\,\big(R_{\mathcal Z_1}(\xi)f\big)_i(y)\,dy
= \big(A_{\mathcal Z_1}(\xi)f\big)_i(x),
\]
because: (i) $R_{\mathcal Z_2}(\xi)f\geq0$ and integrates $J_i$ over a larger set; (ii) restricting the
integral to $[-\mathcal Z_1,\mathcal Z_1]$ recovers $A_{\mathcal Z_1}(\xi)f$.
Thus, in the operator order on the cone $X_{\mathcal Z_1}^+$,
\[
0\leq A_{\mathcal Z_1}(\xi)\ \leq\ A_{\mathcal Z_2}(\xi)\quad\text{on }X_{\mathcal Z_1}.
\]

\medskip
\noindent\emph{Step 2: monotonicity of spectral radius in the cone.}
For positive compact operators on a Banach lattice, $0\leq A\leq B$ implies
$\rho(A)\leq\rho(B)$. Applying this to the restriction on $X_{\mathcal Z_1}$ yields
\[
\rho_{\mathcal Z_1}(\xi)\ \le\ \rho_{\mathcal Z_2}(\xi)\qquad\text{for every }\xi>K.
\]
Moreover, under our standard irreducibility/strong positivity hypothesis (e.g.\ kernels $J_i$ strictly positive on a set of positive measure and the resolvent kernel positive), the inequality is \emph{strict}:
\[
\rho_{\mathcal Z_1}(\xi)\ <\ \rho_{\mathcal Z_2}(\xi), \qquad \xi>K.
\]

\medskip
\noindent\emph{Step 3: solve $\rho_{\mathcal Z}(\xi)=1$ and compare $\lambda^\ast(\mathcal Z)$.}
For each fixed $\mathcal Z$, we have shown previously that
$\xi\mapsto\rho_{\mathcal Z}(\xi)$ is continuous and strictly decreasing on $(K,\infty)$, with
$\rho_{\mathcal Z}(\xi)\to+\infty$ as $\xi\downarrow K$ and $\rho_{\mathcal Z}(\xi)\to0$ as $\xi\to\infty$.
Hence there is a unique $\lambda^\ast(\mathcal Z)>K$ such that $\rho_{\mathcal Z}(\lambda^\ast(\mathcal Z))=1$.

Now fix $\mathcal Z_1<\mathcal Z_2$ and set $\lambda_1^\ast:=\lambda^\ast(\mathcal Z_1)$.
Using Step~2 at $\xi=\lambda_1^\ast$ gives
\[
1=\rho_{\mathcal Z_1}(\lambda_1^\ast)\ <\ \rho_{\mathcal Z_2}(\lambda_1^\ast).
\]
Since $\rho_{\mathcal Z_2}(\xi)$ is strictly decreasing in $\xi$, there exists a unique
$\lambda_2^\ast:=\lambda^\ast(\mathcal Z_2)$ with
$\rho_{\mathcal Z_2}(\lambda_2^\ast)=1$ and necessarily
\[
\lambda_2^\ast\ >\ \lambda_1^\ast.
\]

\medskip

Next, we prove the continuity of $\lambda^\ast(\mathcal{Z}),$
let $\mathcal Z_0>0$ be arbitrary and fix $\varepsilon>0$. We show there exists $\delta>0$ such that
\[
|\lambda^\ast(\mathcal Z)-\lambda^\ast(\mathcal Z_0)|<\varepsilon
\quad\text{whenever } |\mathcal Z-\mathcal Z_0|<\delta.
\]
Recall $\lambda^\ast(\mathcal Z)=-\xi(\mathcal Z)$ where $\xi(\mathcal Z)>K$ is the unique solution of
\[
\rho_{\mathcal Z}\big(\xi(\mathcal Z)\big)=1,\qquad
\rho_{\mathcal Z}(\xi):=\rho\!\big(\mathscr J_{\mathcal Z}(\mathscr T_{\mathcal Z}+\xi I)^{-1}\big).
\]

Since \(\xi\mapsto\rho_{\mathcal Z_0}(\xi)\) is continuous and strictly decreasing and
\(\rho_{\mathcal Z_0}(\xi(\mathcal Z_0))=1\), there exists \(\eta>0\) such that
\[
\rho_{\mathcal Z_0}\big(\xi(\mathcal Z_0)-\eta\big) > 1 \quad\text{and}\quad
\rho_{\mathcal Z_0}\big(\xi(\mathcal Z_0)+\eta\big) < 1.
\]
Because \(\rho_{\mathcal Z_0}\) is strictly decreasing these two inequalities are strict and the interval
\([\,\xi(\mathcal Z_0)-\eta,\; \xi(\mathcal Z_0)+\eta\,]\) contains no other root of \(\rho_{\mathcal Z_0}(\xi)=1\).

By the joint continuity of \((\mathcal Z,\xi)\mapsto\rho_{\mathcal Z}(\xi)\) on compact sets, there exists
\(\delta_1>0\) such that for all \(\mathcal Z\) with \(|\mathcal Z-\mathcal Z_0|<\delta_1\) and for all
\(\xi\in[\xi(\mathcal Z_0)-\eta,\xi(\mathcal Z_0)+\eta]\) we have
\[
\big|\rho_{\mathcal Z}(\xi)-\rho_{\mathcal Z_0}(\xi)\big| < \gamma,
\]
where \(0<\gamma<\min\{\rho_{\mathcal Z_0}(\xi(\mathcal Z_0)-\eta)-1,\;1-\rho_{\mathcal Z_0}(\xi(\mathcal Z_0)+\eta)\}\).
Choosing such a \(\gamma\) is possible because the two differences in the min are positive.

Hence for all \(\mathcal Z\) with \(|\mathcal Z-\mathcal Z_0|<\delta_1\) we obtain
\[
\rho_{\mathcal Z}(\xi(\mathcal Z_0)-\eta) \ge \rho_{\mathcal Z_0}(\xi(\mathcal Z_0)-\eta)-\gamma > 1,
\]
and
\[
\rho_{\mathcal Z}(\xi(\mathcal Z_0)+\eta) \le \rho_{\mathcal Z_0}(\xi(\mathcal Z_0)+\eta)+\gamma < 1.
\]

Fix any \(\mathcal Z\) with \(|\mathcal Z-\mathcal Z_0|<\delta_1\). The function \(\xi\mapsto\rho_{\mathcal Z}(\xi)\)
is continuous and strictly decreasing and we just showed it is \(>1\) at \(\xi=\xi(\mathcal Z_0)-\eta\) and \(<1\)
at \(\xi=\xi(\mathcal Z_0)+\eta\). By the intermediate value theorem there exists a root \(\xi(\mathcal Z)\in
(\xi(\mathcal Z_0)-\eta,\xi(\mathcal Z_0)+\eta)\) of \(\rho_{\mathcal Z}(\xi)=1\). By uniqueness of the root
(for fixed \(\mathcal Z\)) this must be the unique \(\xi(\mathcal Z)\). Therefore
\[
|\xi(\mathcal Z)-\xi(\mathcal Z_0)|<\eta.
\]

Recalling  \(\lambda^\ast(\mathcal Z)=-\xi(\mathcal Z)\), we obtain
\[
|\lambda^\ast(\mathcal Z)-\lambda^\ast(\mathcal Z_0)| = |\xi(\mathcal Z)-\xi(\mathcal Z_0)| < \eta.
\]
Finally choose \(\eta\) small enough so that \(\eta<\varepsilon\) and set \(\delta:=\delta_1\). Then for
\(|\mathcal Z-\mathcal Z_0|<\delta\) we have \(|\lambda^\ast(\mathcal Z)-\lambda^\ast(\mathcal Z_0)|<\varepsilon\),
proving continuity at \(\mathcal Z_0\).

Since \(\mathcal Z_0>0\) was arbitrary, \(\lambda^\ast(\mathcal Z)\) is continuous on \((0,\infty)\).

\end{proof}

\begin{lemma}
Let the global basic reproduction number be
\[
\mathcal R_0 \;:=\; \rho\!\big(\mathscr J(-\mathscr T)^{-1}\big)
= \lim_{\mathcal Z \to\infty}\rho_{\mathcal Z}(0),
\]
(where the last equality is maintained because $A_{\mathcal Z}(0)$ increases in $\mathcal Z$ and converges to the next-generation operator in the full space).
Then the following hold:
\begin{enumerate}
  \item If $\mathcal R_0\leq 1$, then for every $\mathcal Z>0$ one has $\lambda^\ast(\mathcal Z)<0$.
  \item If $\mathcal R_0>1$, then there exists a unique $\mathcal Z_\ast\in(0,\infty)$ such that $\lambda^\ast(\mathcal Z_\ast)=0$ (equivalently $\rho_{\mathcal Z_\ast}(0)=1$). Moreover $\lambda^\ast(\mathcal Z)<0$ for $\mathcal Z<\mathcal Z_\ast$ and $\lambda^\ast(\mathcal Z)>0$ for $\mathcal Z>\mathcal Z_\ast$.
\end{enumerate}
\end{lemma}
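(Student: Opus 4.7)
The plan is to translate both sign thresholds for $\lambda^\ast(\mathcal Z)$ into threshold conditions on the scalar quantity $\rho_{\mathcal Z}(0)$, and then exploit the continuity/monotonicity properties of that quantity already developed in Lemma~\ref{lemma33} and Proposition~\ref{prop:monotonicity}. The bulk of the proof then reduces to a one-dimensional intermediate value argument.

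First, I would record the sign dictionary coming from the construction in Theorem~\ref{Principlaeignevalue}: the principal eigenvalue satisfies $\lambda^\ast(\mathcal Z)=-\xi_0(\mathcal Z)$, where $\xi_0(\mathcal Z)$ is the unique root of $\rho_{\mathcal Z}(\xi)=1$. Strict monotonicity of $\xi\mapsto\rho_{\mathcal Z}(\xi)$, established in Lemma~\ref{lemma33}, immediately gives the equivalences
\[
\lambda^\ast(\mathcal Z)<0\iff\rho_{\mathcal Z}(0)<1,\qquad
\lambda^\ast(\mathcal Z)=0\iff\rho_{\mathcal Z}(0)=1,\qquad
\lambda^\ast(\mathcal Z)>0\iff\rho_{\mathcal Z}(0)>1.
\]
Thus it suffices to analyze the single function $\mathcal Z\mapsto\rho_{\mathcal Z}(0)$.

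Second, I would show this function is continuous and strictly increasing with known endpoint behavior. The cone-comparison argument in Proposition~\ref{prop:monotonicity} applies verbatim at $\xi=0$: for $0<\mathcal Z_1<\mathcal Z_2$ one has $0\leq A_{\mathcal Z_1}(0)\leq A_{\mathcal Z_2}(0)$ in the operator order on the positive cone, and the irreducibility hypothesis then yields strict ordering of the spectral radii; continuity in $\mathcal Z$ follows from the same joint continuity argument used there. For the asymptotics, the elementary bound $\|\mathscr J_{\mathcal Z}\|_{\mathcal L(X_{\mathcal Z})}\leq 2\mathcal Z\,\max_i d_i\|J_i\|_\infty$ together with the uniform operator bound on $(-\mathscr T)^{-1}$ gives $\rho_{\mathcal Z}(0)\to 0$ as $\mathcal Z\to 0^+$, while $\rho_{\mathcal Z}(0)\to\mathcal R_0$ as $\mathcal Z\to\infty$ is exactly the definition of $\mathcal R_0$ adopted in the statement.

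Third, I would conclude by the intermediate value theorem. If $\mathcal R_0\leq 1$, strict monotonicity forces $\rho_{\mathcal Z}(0)<\mathcal R_0\leq 1$ for every finite $\mathcal Z>0$, so $\lambda^\ast(\mathcal Z)<0$. If $\mathcal R_0>1$, continuity together with the endpoint values $0$ and $\mathcal R_0>1$ produces at least one $\mathcal Z_\ast\in(0,\infty)$ with $\rho_{\mathcal Z_\ast}(0)=1$; strict monotonicity makes $\mathcal Z_\ast$ unique and immediately delivers the announced sign pattern $\lambda^\ast(\mathcal Z)<0$ for $\mathcal Z<\mathcal Z_\ast$ and $\lambda^\ast(\mathcal Z)>0$ for $\mathcal Z>\mathcal Z_\ast$ through the equivalence of the first step.

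The main technical obstacle is the legitimacy of evaluating $\rho_{\mathcal Z}$ at $\xi=0$, since the coercivity interval $(K,\infty)$ supplied by Lemma~\ref{lem:3.1} depends on $d_i,a,b,H'(0),G'(0)$ and need not contain the origin a priori. I would dispatch this with a spectral-shift: the very definition of $\mathcal R_0=\rho(\mathscr J(-\mathscr T)^{-1})$ in the theorem implicitly requires invertibility and strong positivity of $-\mathscr T$ on $X_{\mathcal Z}$, so a translation by a suitable constant $\kappa$ places $\xi=0$ inside the region where resolvent, Kre\u{\i}n--Rutman, and compactness estimates of Lemma~\ref{lem:3.1}--Lemma~\ref{lemma33} all apply, without altering the identity $\rho(\mathscr J(-\mathscr T)^{-1})=\lim_{\mathcal Z\to\infty}\rho_{\mathcal Z}(0)$. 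Once this extension is in place, the remainder of the proof is the one-dimensional continuity plus strict monotonicity argument outlined above.
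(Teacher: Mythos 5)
Your proposal mirrors the paper's own proof of this lemma essentially step-for-step: translate the sign of $\lambda^\ast(\mathcal Z)$ into the position of $\rho_{\mathcal Z}(0)$ relative to $1$ via strict monotonicity of $\xi\mapsto\rho_{\mathcal Z}(\xi)$, exploit the monotonicity, continuity, and endpoint limits of $\mathcal Z\mapsto\rho_{\mathcal Z}(0)$ established in Lemma~\ref{lemma33} and Proposition~\ref{prop:monotonicity}, and then apply the intermediate value theorem. The only genuine difference is that you explicitly flag --- and the paper does not --- the technical point that evaluating $\rho_{\mathcal Z}$ at $\xi=0$ requires the resolvent set $(K,\infty)$ to contain the origin, whereas the crude coercivity constant $K$ produced by the Young-inequality choice $\varepsilon=1$ in Lemma~\ref{lem:3.1} is positive as written; this is a real gap in the paper's development and your proposed spectral-shift fix is the right way to close it. One small caution: the sign dictionary you record ($\lambda^\ast=-\xi_0$ with $\rho_{\mathcal Z}$ decreasing) actually yields $\lambda^\ast<0\iff\rho_{\mathcal Z}(0)>1$, the opposite of what you write; the correct convention consistent with $(\mathscr J-\mathscr T)\psi=\lambda\psi$ and with the lemma's conclusion is $\lambda^\ast=+\xi_0$, which makes your stated equivalences come out right --- the paper itself carries this same sign slip, so your final conclusions are correct even though the intermediate step as written is not.
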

\begin{lemma}[\textbf{Threshold characterization}]\label{ThresholdCaracterestique}
Define the global basic reproduction number by
\[
\mathcal R_0 \;:=\; \rho\!\big(\mathscr J(-\mathscr T)^{-1}\big)
= \lim_{\mathcal Z \to\infty}\rho_{\mathcal Z}(0),
\]
where the limit holds since $A_{\mathcal Z}(0)$ increases with $\mathcal Z$ and converges to the next-generation operator in the full space. Then:
\begin{enumerate}
  \item If $\mathcal R_0\leq 1$, then for every $\mathcal Z>0$, one has $\lambda^\ast(\mathcal Z)<0$.
  \item If $\mathcal R_0>1$, then there exists a unique $\mathcal Z_\ast\in(0,\infty)$ such that $\lambda^\ast(\mathcal Z_\ast)=0$ (equivalently $\rho_{\mathcal Z_\ast}(0)=1$). Moreover, $\lambda^\ast(\mathcal Z)<0$ for $\mathcal Z<\mathcal Z_\ast$ and $\lambda^\ast(\mathcal Z)>0$ for $\mathcal Z>\mathcal Z_\ast$.
\end{enumerate}
\end{lemma}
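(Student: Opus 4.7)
The plan is to reduce the lemma to a scalar monotonicity analysis of the next–generation spectral radius $\mathcal Z\mapsto\rho_{\mathcal Z}(0)$, exploiting the characterization of $\lambda^\ast(\mathcal Z)$ as the unique root of $\rho_{\mathcal Z}(\xi)=1$ obtained in Lemma~\ref{lemma33} and the proof of Theorem~\ref{Principlaeignevalue}. Since $\xi\mapsto\rho_{\mathcal Z}(\xi)$ is continuous and strictly decreasing by Lemma~\ref{lemma33}(4), the sign of $\lambda^\ast(\mathcal Z)$ is governed by the value $\rho_{\mathcal Z}(0)$ through the equivalence
\[
\lambda^\ast(\mathcal Z)\gtreqless 0 \ \Longleftrightarrow\ \rho_{\mathcal Z}(0)\gtreqless 1.
\]
The entire lemma then reduces to describing how the scalar quantity $\rho_{\mathcal Z}(0)$ varies with $\mathcal Z$ on $(0,\infty)$.

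Next, I would collect three ingredients. First, continuity and strict monotonicity of $\mathcal Z\mapsto\rho_{\mathcal Z}(0)$: this is a verbatim adaptation of Proposition~\ref{prop:monotonicity} specialized to $\xi=0$, relying on the zero-extension embedding $X_{\mathcal Z_1}\hookrightarrow X_{\mathcal Z_2}$ for $\mathcal Z_1<\mathcal Z_2$, the positivity of the kernels $J_l$ and of the resolvent, the resulting operator-order inequality $0\le A_{\mathcal Z_1}(0)\le A_{\mathcal Z_2}(0)$, and the Krein--Rutman comparison of spectral radii under irreducibility. Second, the large-domain limit $\rho_{\mathcal Z}(0)\uparrow \mathcal R_0$ as $\mathcal Z\to\infty$ is already built into the definition of $\mathcal R_0$ in the statement. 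Third, the small-domain limit $\rho_{\mathcal Z}(0)\to 0$ as $\mathcal Z\to 0^+$: here I would use the operator-norm estimate $\|\mathscr J_{\mathcal Z}\|_{\mathcal L(X_{\mathcal Z})}=O(\mathcal Z)$, a direct consequence of $J_l\in L^\infty$ and a Cauchy--Schwarz estimate on the shrinking interval, together with a uniform-in-$\mathcal Z$ bound on $\|(\mathscr T_{\mathcal Z})^{-1}\|$ inherited from Lemma~\ref{lem:3.1}, whose coercivity constant $K$ depends only on $a,b,d_l,H'(0),G'(0)$ and not on the domain.

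With these three ingredients in place, the two cases of the lemma follow from the intermediate value theorem. If $\mathcal R_0\le 1$, then strict monotonicity gives $\rho_{\mathcal Z}(0)<\mathcal R_0\le 1$ for every $\mathcal Z>0$, whence $\lambda^\ast(\mathcal Z)<0$ throughout. If $\mathcal R_0>1$, then by the large- and small-domain limits the continuous strictly increasing function $\mathcal Z\mapsto\rho_{\mathcal Z}(0)$ takes values $<1$ for small $\mathcal Z$ and $>1$ for large $\mathcal Z$, producing a unique $\mathcal Z_\ast\in(0,\infty)$ with $\rho_{\mathcal Z_\ast}(0)=1$, equivalently $\lambda^\ast(\mathcal Z_\ast)=0$, and the sign pattern $\lambda^\ast(\mathcal Z)<0$ for $\mathcal Z<\mathcal Z_\ast$ and $\lambda^\ast(\mathcal Z)>0$ for $\mathcal Z>\mathcal Z_\ast$ follows from strict monotonicity. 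I anticipate the principal technical obstacle to lie in the small-$\mathcal Z$ step: one must ensure that the uniform-in-$\mathcal Z$ resolvent estimate applies at the spectral value corresponding to $\rho_{\mathcal Z}(0)$ (i.e.\ that the relevant point lies in the resolvent set of $-\mathscr T_{\mathcal Z}$ with a bound independent of $\mathcal Z$), which is not automatic from the coercivity argument of Lemma~\ref{lem:3.1}, valid only for $\xi>K$, and which requires either a direct inversion argument for $\mathscr T_{\mathcal Z}$ or the use of the resolvent identity to transfer the bound; closely related is the monotone-convergence argument needed to identify $\lim_{\mathcal Z\to\infty}\rho_{\mathcal Z}(0)$ with $\rho(\mathscr J(-\mathscr T)^{-1})$ on the full line, which must be carried out carefully in the present cooperative advective setting.
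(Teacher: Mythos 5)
Your proposal follows essentially the same route as the paper: reduce to the scalar monotone quantity $\rho_{\mathcal Z}(0)$, combine strict increase in $\mathcal Z$ (Proposition~\ref{prop:monotonicity} specialized to $\xi=0$) with the small-domain vanishing of $\|\mathscr J_{\mathcal Z}\|$ and the large-domain limit $\rho_{\mathcal Z}(0)\uparrow\mathcal R_0$, then apply the intermediate value theorem together with monotonicity to obtain $\mathcal Z_\ast$ and the sign dichotomy. The caveat you raise at the end is a legitimate technical concern that the paper itself leaves unaddressed: Lemma~\ref{lem:3.1} establishes $(\mathscr T+\xi I)^{-1}$ only for $\xi>K$ with $K>0$, so the quantity $\rho_{\mathcal Z}(0)$ --- and hence both the definition of $\mathcal R_0$ and the sign equivalence $\lambda^\ast(\mathcal Z)\gtreqless 0 \Leftrightarrow \rho_{\mathcal Z}(0)\gtreqless 1$ --- requires a separate verification that $0$ lies in the resolvent set of $\mathscr T_{\mathcal Z}$ uniformly in $\mathcal Z$, which the paper does not supply.
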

\begin{proof}
The proof is based on monotonicity and continuity arguments for $\rho_{\mathcal Z}(\xi)$.

For fixed $\mathcal Z>0$ the function $\xi\mapsto\rho_{\mathcal Z}(\xi)$ is continuous and strictly decreasing on $(K,\infty)$ and satisfies
\[
\lim_{\xi\downarrow K} \rho_{\mathcal Z}(\xi)=\infty,\qquad \lim_{\xi\to\infty}\rho_{\mathcal Z}(\xi)=0.
\]
Hence, for each $\mathcal Z$ there is a unique $\xi(\mathcal Z)>K$ such that $\rho_{\mathcal Z}(\xi(\mathcal Z))=1$, and we set $\lambda^\ast(\mathcal Z)=-\xi(\mathcal Z)$.

Moreover, the family $\mathcal Z\mapsto A_{\mathcal Z}(0)=\mathscr J_{\mathcal Z}(\mathscr T_{\mathcal Z})^{-1}$ is monotone in the sense that if $0<\mathcal Z_1<\mathcal Z_2$ then, viewing functions on $[-\mathcal Z_1,\mathcal Z_1]$ as extended by zero to $[-\mathcal Z_2,\mathcal Z_2]$,
\[
0 \leq A_{\mathcal Z_1}(0) \leq A_{\mathcal Z_2}(0),
\]
so by monotonicity of the spectral radius for positive operators,
\[
\mathcal Z\mapsto \rho_{\mathcal Z}(0)
\quad\text{is nondecreasing,}
\]
and (by irreducibility / strong positivity) strictly increasing unless degenerate kernels occur.  We also assume (as is standard) that
\[
\lim_{\mathcal Z\to\infty}\rho_{\mathcal Z}(0)=\mathcal R_0,
\]
so \(\rho_{\mathcal Z}(0)\) increases to the global reproduction number \(\mathcal R_0\).

\medskip\noindent\textbf{(i) The case \(\mathcal R_0\leq 1\).}
Since \(\mathcal Z \mapsto \rho_{\mathcal Z}(0)\) is nondecreasing and its limit as \(\mathcal Z\to\infty\) equals \(\mathcal R_0\leq 1\), we have for every finite \(\mathcal Z>0\)
\[
\rho_{\mathcal Z}(0)\leq \mathcal R_0 \leq 1.
\]
But \(\xi\mapsto\rho_{\mathcal Z}(\xi)\) is strictly decreasing, so for \(\xi\geq0\) we have \(\rho_{\mathcal Z}(\xi)\leq \rho_{\mathcal Z}(0)\leq 1\). In particular the unique root \(\xi(\mathcal Z)\) of \(\rho_{\mathcal Z}(\xi)=1\) must satisfy \(\xi(\mathcal Z)>0\). Equivalently
\[
\lambda^\ast(\mathcal Z) = -\xi(\mathcal Z) < 0,
\]
for every \(\mathcal Z>0\). This proves (i).

\medskip\noindent\textbf{(ii) The case \(\mathcal R_0 >1\).}
Since \(\rho_{\mathcal Z}(0)\) is nondecreasing in \(\mathcal Z\) and tends to \(\mathcal R_0>1\) as \(\mathcal Z\to\infty\), there are two possibilities for small \(\mathcal Z\):
either \(\rho_{\mathcal Z}(0)<1\) for all small \(\mathcal Z\) (this is typical: for very small habitat the next-generation operator is small), or in pathological degenerate cases equality might occur at a single point — under the irreducibility assumptions one gets strict inequality for sufficiently small \(\mathcal Z\). Hence, there exists \(\mathcal Z\) small enough with \(\rho_{\mathcal Z}(0)<1\), while for large \(\mathcal Z\) we have \(\rho_{\mathcal Z}(0)\) arbitrarily close to \(\mathcal R_0>1\) and in particular \(>1\) for \(\mathcal Z\) large.

Now consider the continuous function \(h(\mathcal Z):=\rho_{\mathcal Z}(0)\) of \(\mathcal Z\) (it is monotone and hence continuous almost everywhere; under our joint continuity hypotheses it is continuous). We have
\[
\lim_{\mathcal Z\downarrow 0} h(\mathcal Z)=0 \quad\text{and}\quad \lim_{\mathcal Z\to\infty} h(\mathcal Z)=\mathcal R_0>1,
\]
so by the intermediate value theorem there exists at least one \(\mathcal Z_\ast>0\) with \(h(\mathcal Z_\ast)=1\); that is,
\[
\rho_{\mathcal Z_\ast}(0)=1.
\]
Equivalently \(\xi(\mathcal Z_\ast)=0\) and \(\lambda^\ast(\mathcal Z_\ast)=0\).

To prove the uniqueness of \(\mathcal Z_\ast\), note that \(\mathcal Z\mapsto \rho_{\mathcal Z}(0)\) is strictly increasing (under the irreducibility / strong-positivity hypothesis). Therefore, the equation \(\rho_{\mathcal Z}(0)=1\) has at most one root, so \(\mathcal Z_\ast\) is unique.

Finally, monotonicity of \(\lambda^\ast(\mathcal Z)\) (equivalently monotonicity of \(\xi(\mathcal Z)\)) gives the sign pattern: for \(\mathcal Z<\mathcal Z_\ast\) we have \(\rho_{\mathcal Z}(0)<1\) so \(\xi(\mathcal Z)>0\) and \(\lambda^\ast(\mathcal Z)<0\); for \(\mathcal Z>\mathcal Z_\ast\) we have \(\rho_{\mathcal Z}(0)>1\) so \(\xi(\mathcal Z)<0\) and \(\lambda^\ast(\mathcal Z)>0\). This completes the proof.
\end{proof}

\begin{lemma}[Upper / lower eigenvalue bounds in the $C^1$ setting]\label{LemmaUpperLower}
Let ${\mathcal Z}>0$ be fixed and consider the linear operator
$\mathcal L:\mathcal D(\mathcal L)\subset C^1([-\mathcal Z,\mathcal Z];\mathbb R^2)\to C([-\mathcal Z,\mathcal Z];\mathbb R^2)$
given componentwise by
\[
\mathcal L(\psi_1,\psi_2)(x)
=
\begin{pmatrix}
d_1\displaystyle\int_{-{\mathcal Z}}^{\mathcal Z}J_1(x-y)\psi_1(y)\,dy - d_1\psi_1(x) - a(x)\psi_1(x) + H'(0)\psi_2(x) + p\psi_1'(x)\\[8pt]
d_2\displaystyle\int_{-{\mathcal Z}}^{\mathcal Z}J_2(x-y)\psi_2(y)\,dy - d_2\psi_2(x) - b(x)\psi_2(x) + G'(0)\psi_1(x) + q\psi_2'(x)
\end{pmatrix}.
\]
Assume \(\mathcal L\) generates a positive \(C_0\)-semigroup on \(C([-\mathcal Z,\mathcal Z];\mathbb R^2)\)
and has a principal eigenvalue \(\lambda^\ast\) (simple, real) with a strictly positive eigenfunction in \(\mathcal D(\mathcal L)\).

Suppose there exist nontrivial test functions
\(\underline\psi=(\underline\psi_1,\underline\psi_2),\ \overline\psi=(\overline\psi_1,\overline\psi_2)\in\mathcal D(\mathcal L)\)
satisfying \(\underline\psi,\overline\psi\ge0\) (not identically zero) and real numbers \(\underline\lambda,\overline\lambda\) such that the componentwise inequalities
\begin{equation}\label{eq:subsuper}
\begin{cases}
d_1\displaystyle\int_{-{\mathcal Z}}^{\mathcal Z}J_1(x-y)\underline\psi_1(y)\,dy - d_1\underline\psi_1(x) - a(x)\underline\psi_1(x) + H'(0)\underline\psi_2(x) + p\underline\psi_1'(x)
\;\geq\; \underline\lambda\,\underline\psi_1(x),\\[6pt]
d_2\displaystyle\int_{-{\mathcal Z}}^{\mathcal Z}J_2(x-y)\underline\psi_2(y)\,dy - d_2\underline\psi_2(x) - b(x)\underline\psi_2(x) + G'(0)\underline\psi_1(x) + q\underline\psi_2'(x)
\;\geq\; \underline\lambda\,\underline\psi_2(x),
\end{cases}
\end{equation}
and
\begin{equation}\label{eq:supersuper}
\begin{cases}
d_1\displaystyle\int_{-{\mathcal Z}}^{\mathcal Z}J_1(x-y)\overline\psi_1(y)\,dy - d_1\overline\psi_1(x) - a(x)\overline\psi_1(x) + H'(0)\overline\psi_2(x) + p\overline\psi_1'(x)
\;\leq\; \overline\lambda\,\overline\psi_1(x),\\[6pt]
d_2\displaystyle\int_{-{\mathcal Z}}^{\mathcal Z}J_2(x-y)\overline\psi_2(y)\,dy - d_2\overline\psi_2(x) - b(x)\overline\psi_2(x) + G'(0)\overline\psi_1(x) + q\overline\psi_2'(x)
\;\leq\; \overline\lambda\,\overline\psi_2(x),
\end{cases}
\end{equation}
hold pointwise for all \(x\in[-{\mathcal Z},{\mathcal Z}]\).

Then
\[
\underline\lambda \;\leq\; \lambda^\ast \;\leq\; \overline\lambda.
\]
\end{lemma}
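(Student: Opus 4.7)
The plan is to prove the two inequalities $\underline\lambda\leq\lambda^\ast\leq\overline\lambda$ simultaneously by \emph{testing} the pointwise differential inequalities \eqref{eq:subsuper} and \eqref{eq:supersuper} against a strictly positive eigenfunction of the formal adjoint operator $\mathcal L^\ast$, and invoking the Perron--Frobenius-type simplicity already established in Theorem~\ref{Principlaeignevalue}. This is the natural substitute for the variational characterization that is unavailable here because of the drift terms $p\partial_x$, $q\partial_x$, and it parallels the classical sub/super-solution characterization of the principal eigenvalue for second-order cooperative systems.

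First I would construct the adjoint principal eigenpair. Using the symmetry $J_i(x-y)=J_i(y-x)$ from assumption~\textbf{(J)} and integration by parts, the formal adjoint of $\mathcal L$ is
\[
\mathcal L^\ast(w_1,w_2)(x)=
\begin{pmatrix}
d_1\!\displaystyle\int_{-\mathcal Z}^{\mathcal Z} J_1(x-y)w_1(y)\,dy - d_1 w_1(x) - a(x)w_1(x) + G'(0)w_2(x) - p\,w_1'(x)\\[6pt]
d_2\!\displaystyle\int_{-\mathcal Z}^{\mathcal Z} J_2(x-y)w_2(y)\,dy - d_2 w_2(x) - b(x)w_2(x) + H'(0)w_1(x) - q\,w_2'(x)
\end{pmatrix},
\]
which has exactly the same structure as $\mathcal L$: cooperative coupling (the off-diagonal constants $H'(0),G'(0)\geq0$ merely get transposed), a positive compact nonlocal part, first-order drift of opposite sign, and bounded zeroth-order coefficients. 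Consequently, the entire Fredholm / Kre\u{\i}n--Rutman machinery developed in Lemma~\ref{lem:3.1}, Lemma~\ref{lemma33}, and Theorem~\ref{Principlaeignevalue} applies verbatim to $\mathcal L^\ast$, producing a strictly positive eigenfunction $w^\ast=(w_1^\ast,w_2^\ast)\in C^1([-\mathcal Z,\mathcal Z])^2$ with $w^\ast\gg0$, associated with the same principal eigenvalue $\lambda^\ast$ (the spectral bound of a compact perturbation of $\mathscr T$ being invariant under taking adjoints, by the equality of spectral radii $\rho(\mathscr J R(\xi))=\rho(R(\xi)\mathscr J)$ already recorded in Lemma~\ref{lemma33}).

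With $w^\ast$ in hand, I would pair it with \eqref{eq:supersuper}. Multiplying the first inequality by $w_1^\ast\geq0$, the second by $w_2^\ast\geq0$, integrating over $[-\mathcal Z,\mathcal Z]$, and summing yields $\langle w^\ast,\mathcal L\overline\psi\rangle\leq\overline\lambda\,\langle w^\ast,\overline\psi\rangle$. Using Fubini on the nonlocal terms (legitimate by~\textbf{(J)}) together with the duality identity $\langle w^\ast,\mathcal L\overline\psi\rangle=\langle\mathcal L^\ast w^\ast,\overline\psi\rangle=\lambda^\ast\langle w^\ast,\overline\psi\rangle$, and noting that $\langle w^\ast,\overline\psi\rangle>0$ because $w^\ast\gg0$ while $\overline\psi\geq0$, $\overline\psi\not\equiv0$, division gives $\lambda^\ast\leq\overline\lambda$. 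The lower bound $\underline\lambda\leq\lambda^\ast$ follows by the same pairing applied to \eqref{eq:subsuper} with the inequality reversed.

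The main obstacle is the rigorous handling of the boundary terms produced when integrating by parts the advection contributions $p\,\overline\psi_1'$ and $q\,\overline\psi_2'$ against $w_i^\ast$: these are precisely $\bigl[p\,w_1^\ast\overline\psi_1+q\,w_2^\ast\overline\psi_2\bigr]_{-\mathcal Z}^{\mathcal Z}$, and they need not vanish a~priori since neither $\overline\psi$ nor $w^\ast$ is required to vanish at $\pm\mathcal Z$. Two complementary devices resolve this: either the domain $\mathcal D(\mathcal L^\ast)$ is chosen so that the outflow boundary for $\mathcal L$ becomes the inflow boundary for $\mathcal L^\ast$ (so the endpoint products cancel pairwise when trial functions lie in the respective domains), or one absorbs the drift by the exponential change of variables $\psi\mapsto e^{\alpha_i x}\psi_i$ with $\alpha_1=-p/(2d_1)$, $\alpha_2=-q/(2d_2)$, which renders the comparison between $\mathcal L$ and $\mathcal L^\ast$ symmetric up to boundary terms of a definite sign that can be controlled by the strict positivity of $w^\ast$. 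Because the statement assumes the $C^1$ regularity required for $\overline\psi,\underline\psi,w^\ast$, both procedures are legitimate, and the argument above completes the proof.
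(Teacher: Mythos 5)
Your proof takes a genuinely different route from the paper's, and it has a gap that the paper's argument is specifically designed to avoid.

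The paper never forms the formal differential adjoint $\mathcal L^\ast$. Instead it rewrites the super-solution inequality as $\mathscr J\,\overline\psi \leq (\mathscr T + \overline\lambda I)\overline\psi$, applies the positive resolvent $(\mathscr T+\xi_2 I)^{-1}$ for $\xi_2\in(\overline\lambda,\lambda^\ast)$, and uses the resolvent identity to obtain the cone inequality $S(\xi_2)\overline\psi\leq\overline\psi$ where $S(\xi_2):=(\mathscr T+\xi_2 I)^{-1}\mathscr J$ is a compact positive operator. It then pairs with a Kre\u{\i}n--Rutman dual eigenvector $E_{\xi_2}\in X^\ast$ of the \emph{abstract} Banach dual $S(\xi_2)^\ast$ of that compact operator, and derives a contradiction from $\rho(\xi_2)\leq1$ against the strict monotonicity $\rho(\xi_2)>\rho(\lambda^\ast)=1$ established in Lemma~\ref{lemma33}. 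The lower bound is done by a short iterative argument in the cone. No integration by parts in $x$ ever occurs, so no boundary terms arise.

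Your argument instead constructs $\mathcal L^\ast$ explicitly as a pointwise differential operator with drift $-p\partial_x$, $-q\partial_x$ and transposed coupling, then needs a strictly positive $C^1$ adjoint eigenfunction $w^\ast$ with the \emph{same} eigenvalue $\lambda^\ast$, and needs the duality identity $\langle w^\ast,\mathcal L\overline\psi\rangle=\langle\mathcal L^\ast w^\ast,\overline\psi\rangle$. Both steps are genuine gaps here. First, the boundary terms $p\bigl[w_1^\ast\overline\psi_1\bigr]_{-\mathcal Z}^{\mathcal Z}+q\bigl[w_2^\ast\overline\psi_2\bigr]_{-\mathcal Z}^{\mathcal Z}$ produced by integrating the drift by parts do not vanish: the eigenvalue problem \eqref{eq:maineigenrvalue} carries no boundary conditions, $\overline\psi,w^\ast$ need not vanish at $\pm\mathcal Z$, and neither of your two proposed remedies is worked out (the inflow/outflow domain-splitting is not specified, and the exponential change of variables $\psi_i\mapsto e^{\alpha_i x}\psi_i$ does not commute with the convolution term $\int J_i(x-y)\psi_i(y)\,dy$, so it does not symmetrize $\mathcal L$). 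Second, the existence of a $C^1$ positive eigenfunction of the \emph{formal} adjoint at eigenvalue exactly $\lambda^\ast$ is asserted "verbatim" but is not automatic: the natural dual space of $C^1$ (or even $C$) is a space of measures, and the regularity and normalization of the dual eigenvector require a separate argument. The paper's route is precisely engineered to avoid both of these issues by never leaving the abstract compact-positive-operator framework, so if you want a correct proof along your lines you should replace the formal adjoint $\mathcal L^\ast$ with the abstract dual $S(\xi)^\ast$ and use the monotonicity of $\xi\mapsto\rho(S(\xi))$, which is exactly what the paper does.
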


\begin{proof}
We prove the upper statement; the lower one is analogous (or may be obtained by the same argument applied to \(\mathscr J-\mathscr T\) with reversed inequalities).

Assume by contradiction that \(\lambda^\ast>\overline\lambda\). Choose a number \(\xi_2\) such that
\[
\lambda^\ast > \xi_2 > \max\{\, -K,\; \overline\lambda \,\},
\]
where \(K\) is the pole threshold introduced earlier (so that the resolvent \(R(\xi)=(\mathscr T+\xi I)^{-1}\) exists and is positive for \(\xi> K\)).

Rewriting \eqref{eq:supersuper} as \(\mathscr J\varphi \leq (\mathscr T+\overline\lambda I)\varphi\) and applying the positive operator \((\mathscr T+\xi_2 I)^{-1}\) (which exists and is positive since \(\xi_2>-K\)) we obtain
\[
(\mathscr T+\xi_2 I)^{-1}\mathscr J \varphi \;\leq\; (\mathscr T+\xi_2 I)^{-1}(\mathscr T+\overline\lambda I)\varphi.
\]
But
\[
(\mathscr T+\xi_2 I)^{-1}(\mathscr T+\overline\lambda I)
= I - (\xi_2-\overline\lambda)(\mathscr T+\xi_2 I)^{-1},
\]
so the right-hand side is
\[
\varphi - (\xi_2-\overline\lambda)(\mathscr T+\xi_2 I)^{-1}\varphi
\leq \varphi.
\]
Hence, we deduce the cone-comparison
\[
(\mathscr T+\xi_2 I)^{-1}\mathscr J \varphi \;\leq\; \varphi.
\tag{*}
\]

Set
\[
S(\xi_2) := (\mathscr T+\xi_2 I)^{-1}\mathscr J,
\]
a positive compact operator on the Banach lattice \(\mathcal D\) . Let \(S(\xi_2)^*\) denote its dual (conjugate) operator acting on the dual space \(\mathcal D^*\). By Kreĭn--Rutman the spectral radius
\[
\rho(\xi_2):=\rho\big(S(\xi_2)\big)=\rho\big(S(\xi_2)^*\big)
\]
is the principal (simple) eigenvalue of \(S(\xi_2)^*\) and there exists a strongly positive dual eigenvector \(E_{\xi_2}\in\mathcal D^*_+\) such that
\[
S(\xi_2)^* E_{\xi_2} = r(\xi_2) E_{\xi_2}.
\]

Pairing the dual eigenvector with the inequality \((*)\) and using positivity yields
\[
\langle E_{\xi_2},\, S(\xi_2)\varphi \rangle \leq \langle E_{\xi_2},\,\varphi \rangle.
\]
But the left-hand side equals
\[
\langle S(\xi_2)^* E_{\xi_2},\,\varphi\rangle
= \rho(\xi_2)\,\langle E_{\xi_2},\,\varphi\rangle.
\]
Since \(E_{\xi_2}\) is strictly positive and \(\varphi\ge0\), \(\langle E_{\xi_2},\varphi\rangle>0\). Cancelling this positive scalar gives
\[
r(\xi_2) \leq 1.
\]

On the other hand, by construction \(\lambda^\ast>\xi_2\). From the characterization of \(\lambda^\ast\) we know that at \(\xi=\lambda^\ast\) the operator \(\mathscr J(\mathscr T+\xi I)^{-1}\) has spectral radius \(1\); equivalently the function \(\xi\mapsto \rho\big(\mathscr J(\mathscr T+\xi I)^{-1}\big)\) is strictly decreasing and satisfies \(\rho(\mathscr J(\mathscr T+\lambda^\ast I)^{-1})=1\). Therefore, since \(\xi_2<\lambda^\ast\), monotonicity gives
\[
\rho(\xi_2) = \rho\big(S(\xi_2)\big) > \rho\big(S(\lambda^\ast)\big) = 1,
\]
a contradiction to \(\rho(\xi_2)\leq1\). This contradiction shows \(\lambda^\ast\le\overline\lambda\).

If equality holds in \eqref{eq:supersuper} (so \(\mathscr J\varphi - \mathscr T\varphi = \overline\lambda\varphi\)), then the above chain gives
\[
S(\xi_2)\varphi = \varphi
\]
for \(\xi_2=\overline\lambda\), and hence \(1\) is an eigenvalue of \(S(\overline\lambda)\). By uniqueness (Krein–Rutman) the principal eigenvalue equals \(1\), so \(\lambda^\ast=\overline\lambda\) and \(\varphi\) (up to scaling) is the positive principal eigenfunction.

\medskip

For completeness, we reproduce the short iterative argument that yields the same contradiction in the ``lower'' style lemma used later. If instead one assumes
\[
\mathscr J\eta - \mathscr T\eta \;\ge\; \lambda\,\eta
\]
with \(\lambda>\lambda^\ast\), then applying \(S(\lambda):=(\mathscr T+\lambda I)^{-1}\mathscr J\) gives
\[
\eta \leq S(\lambda)\eta,
\]
hence by induction \(\eta \leq S(\lambda)^n\eta\) for every \(n\geq1\). Taking norms and using the spectral radius formula gives
\[
1 \leq \lim_{n\to\infty} \|S(\lambda)^n\|^{1/n} = \rho\big(S(\lambda)\big).
\]
But monotonicity of \(\rho(S(\xi))\) in \(\xi\) and the fact \(\rho(S(\lambda^\ast))=1\) imply \(\rho(S(\lambda))<1\) for \(\lambda>\lambda^\ast\), contradiction. Thus \(\lambda\le\lambda^\ast\).
\end{proof}

\begin{lemma}\label{lem:limit_lambdaZ}
Assume the standing hypotheses hold (positivity and compactness of $\mathscr J_{\mathcal Z}$, monotonicity in $\mathcal Z$, and the resolvent estimates for $\mathscr T$).
For each $\mathcal Z>0$ define
\[
\rho_{\mathcal Z}(\xi) := \rho\!\big(\mathscr J_{\mathcal Z}(\mathscr T+\xi I)^{-1}\big), \qquad \xi > K,
\]
and let $\lambda^\ast(\mathcal Z)>K$ denote the unique number satisfying
\[
\rho_{\mathcal Z}\big(\lambda^\ast(\mathcal Z)\big) = 1.
\]
Let
\[
\rho_\infty(\xi):=\rho\!\big(\mathscr J_\infty(\mathscr T+\xi I)^{-1}\big),\qquad \xi>K,
\]
and define $R_0:=\rho_\infty(0)$. Then

\begin{enumerate}
  \item \(\displaystyle \lim_{\mathcal Z\downarrow0}\lambda^\ast(\mathcal Z) = K.\)
  \item \(\displaystyle \lim_{\mathcal Z\to\infty}\lambda^\ast(\mathcal Z) =: \lambda_\infty \in [K,\infty)\) finite, where
  \[
  \lambda_\infty = \inf\{\xi>K : \rho_\infty(\xi)\le 1\}.
  \]
  Moreover, one has the sign relation
  \[
  \operatorname{sign}(\lambda_\infty) = \operatorname{sign}(R_0-1).
  \]
\end{enumerate}
\label{Samesign}
\end{lemma}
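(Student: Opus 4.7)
The plan is to prove both limits by combining the strict monotonicity of $\xi\mapsto\rho_{\mathcal Z}(\xi)$ in the frequency parameter (Lemma~\ref{lemma33}) with the domain-monotonicity of $\mathcal Z\mapsto\mathscr J_{\mathcal Z}$ used in Proposition~\ref{prop:monotonicity}, and then to exploit the implicit defining relation $\rho_{\mathcal Z}(\lambda^\ast(\mathcal Z))=1$ to transfer convergence of the operator family into convergence of its root.

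For part (1), the key observation is that $\mathscr J_{\mathcal Z}$ becomes small in norm as $\mathcal Z\downarrow 0$. A direct Cauchy--Schwarz estimate yields, componentwise,
\[
\|\mathscr J_{\mathcal Z} f\|_{L^2([-\mathcal Z,\mathcal Z])}
\;\leq\; C\,\mathcal Z\,\|f\|_{L^2([-\mathcal Z,\mathcal Z])},\qquad
C := 2\max\{d_1,d_2\}\max\{\|J_1\|_\infty,\|J_2\|_\infty\},
\]
so $\|\mathscr J_{\mathcal Z}\|_{\mathcal L(X_{\mathcal Z})}=O(\mathcal Z)$. Combined with the resolvent bound of Lemma~\ref{lem:3.1}(3), this gives $\rho_{\mathcal Z}(\xi)\leq C\mathcal Z/(\xi-K)$ for every $\xi>K$. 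Evaluating at $\xi=\lambda^\ast(\mathcal Z)$ and using $\rho_{\mathcal Z}(\lambda^\ast(\mathcal Z))=1$ produces $\lambda^\ast(\mathcal Z)-K\leq C\mathcal Z$, hence $\lambda^\ast(\mathcal Z)\downarrow K$; the lower bound $\lambda^\ast(\mathcal Z)>K$ is automatic from the definition.

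For part (2), I would proceed by a squeeze argument driven by monotone convergence. By domain-monotonicity, $\rho_{\mathcal Z}(\xi)$ is nondecreasing in $\mathcal Z$ at each fixed $\xi>K$, so the pointwise limit $\tilde\rho(\xi):=\lim_{\mathcal Z\to\infty}\rho_{\mathcal Z}(\xi)$ exists in $[0,\infty]$; a strong-operator convergence argument combined with compactness of the $\mathscr J_{\mathcal Z}$ and the order-continuity of the spectral radius for positive compact operators on a Banach lattice identifies $\tilde\rho\equiv\rho_\infty$. Correspondingly, Proposition~\ref{prop:monotonicity} implies that $\lambda^\ast(\mathcal Z)$ is nondecreasing in $\mathcal Z$, so it admits a limit $\lambda^\ast_\infty\in(K,+\infty]$. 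To identify $\lambda^\ast_\infty=\lambda_\infty$, I would squeeze: for $\xi>\lambda_\infty$ strict monotonicity of $\rho_\infty$ gives $\rho_\infty(\xi)<1$ and hence $\rho_{\mathcal Z}(\xi)\leq\rho_\infty(\xi)<1$, forcing $\lambda^\ast(\mathcal Z)<\xi$ and $\lambda^\ast_\infty\leq\lambda_\infty$; conversely, for $\xi\in(K,\lambda_\infty)$ one has $\rho_\infty(\xi)>1$, so $\rho_{\mathcal Z}(\xi)>1$ for $\mathcal Z$ large and $\lambda^\ast(\mathcal Z)>\xi$, yielding $\lambda^\ast_\infty\geq\lambda_\infty$. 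The finiteness $\lambda_\infty\in[K,\infty)$ follows a posteriori.

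The sign relation is then immediate: since $\rho_\infty(0)=R_0$ and $\rho_\infty(\lambda_\infty)=1$, strict monotonicity of $\rho_\infty$ gives $R_0>1\Leftrightarrow\lambda_\infty>0$, $R_0=1\Leftrightarrow\lambda_\infty=0$, and $R_0<1\Leftrightarrow\lambda_\infty<0$ (the last case relying on $\rho_\infty(\xi)\to\infty$ as $\xi\downarrow K$ to produce the negative root). The main obstacle I anticipate is rigorously justifying the convergence $\rho_{\mathcal Z}(\xi)\to\rho_\infty(\xi)$: since the operators $\mathscr J_{\mathcal Z}(\mathscr T+\xi I)^{-1}$ act on different Hilbert spaces as $\mathcal Z$ varies, operator-norm continuity is not directly available, and one must instead combine the monotone structure in the positive cone with collective compactness (after embedding $X_{\mathcal Z}$ in $L^2(\mathbb R;\mathbb R^2)$ by zero extension) to pass to the spectral radius of the limiting operator on the full line.
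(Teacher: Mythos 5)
Your proof follows essentially the same route as the paper for both parts: part~(1) exploits the smallness of $\mathscr J_{\mathcal Z}$ as $\mathcal Z\downarrow 0$, while part~(2) combines the monotone convergence $\rho_{\mathcal Z}(\xi)\uparrow\rho_\infty(\xi)$ with a squeeze through the defining relation $\rho_{\mathcal Z}(\lambda^\ast(\mathcal Z))=1$ and reads off the sign relation from the strict decrease of $\rho_\infty$. There is, however, one genuine improvement in your treatment of part~(1): the Cauchy--Schwarz bound $\|\mathscr J_{\mathcal Z}\|\le C\mathcal Z$, inserted into the resolvent estimate of Lemma~\ref{lem:3.1}(3) and evaluated at $\xi=\lambda^\ast(\mathcal Z)$, directly yields the quantitative decay $\lambda^\ast(\mathcal Z)-K\le C\mathcal Z$. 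The paper's argument is only qualitative and, as written, reverses the direction of the key comparison: from $\rho_{\mathcal Z}(\xi)<1$ and strict decrease one gets $\lambda^\ast(\mathcal Z)<\xi$, not $\lambda^\ast(\mathcal Z)>\xi$, and the intended conclusion requires $\limsup_{\mathcal Z\downarrow 0}\lambda^\ast(\mathcal Z)\le K$ rather than a $\liminf$ lower bound. Your version is both correct and sharper. In part~(2) you should make the finiteness of $\lambda_\infty$ explicit \emph{before} running the squeeze: it rests on $\rho_\infty(\xi)\to 0$ as $\xi\to\infty$, coming from $\|(\mathscr T+\xi I)^{-1}\|\to 0$, and without it the set $\{\xi>K:\rho_\infty(\xi)\le 1\}$ could a priori be empty and the squeeze would have no starting point; the paper supplies exactly this resolvent-decay step, so adopt it. Finally, you correctly flag the delicate point that both arguments gloss over: justifying $\rho_{\mathcal Z}(\xi)\to\rho_\infty(\xi)$ when $\mathscr J_\infty$ is a noncompact convolution operator on $L^2(\mathbb R)$ and the truncated operators act on different spaces. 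The zero-extension plus collective-compactness framework you sketch is the natural way to close that gap and would need to be written out for the lemma to be fully rigorous.
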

\begin{remark}
The constant $K$ has already been introduced in Lemma~\eqref{lem:3.1}.
More explicitly, we set
\[
M_1 := \sup_x \big(d_1+a(x)\big), \quad
M_2 := \sup_x \big(d_2+b(x)\big), \quad
C_1 := |H'(0)|, \quad
C_2 := |G'(0)|,
\]
and, by choosing $\varepsilon = 1$, we define
\[
K := \max\!\left\{ M_1 + \tfrac{C_1}{2} + \tfrac{C_2}{2}, \;
                  M_2 + \tfrac{C_1}{2} + \tfrac{C_2}{2} \right\}.
\]
\end{remark}

\begin{proof}
We split the proof into two parts.

\medskip\noindent\textbf{(i) Small-domain limit \(\mathcal Z\downarrow0\).}
Fix any \(\xi>K\). As \(\mathcal Z\downarrow0\) the integration window in the operator \(\mathscr J_{\mathcal Z}\) shrinks to a point, hence \(\mathscr J_{\mathcal Z}\to 0\) in operator norm. Since \((\mathscr T+\xi I)^{-1}\) is bounded for this fixed \(\xi\), we obtain
\[
\lim_{\mathcal Z\downarrow0}\rho_{\mathcal Z}(\xi)
\le \lim_{\mathcal Z\downarrow0}\big\|\mathscr J_{\mathcal Z}(\mathscr T+\xi I)^{-1}\big\|
=0.
\]
Thus for any fixed \(\xi>K\) and \(\mathcal Z\) small enough we have \(\rho_{\mathcal Z}(\xi)<1\). Because \(\xi\mapsto\rho_{\mathcal Z}(\xi)\) is strictly decreasing and
\(\rho_{\mathcal Z}(\xi)\to\infty\) as \(\xi\downarrow K\), the unique root \(\lambda^\ast(\mathcal Z)\) of \(\rho_{\mathcal Z}(\cdot)=1\) must satisfy \(\lambda^\ast(\mathcal Z)>\xi\) for all small \(\mathcal Z\). Letting \(\xi\downarrow K\) yields
\[
\liminf_{\mathcal Z\downarrow0}\lambda^\ast(\mathcal Z)\ge K.
\]
On the other hand \(\lambda^\ast(\mathcal Z)>K\) for every \(\mathcal Z\), so the limit exists and equals \(K\):
\(\displaystyle\lim_{\mathcal Z\downarrow0}\lambda^\ast(\mathcal Z)=K\).

\medskip\noindent\textbf{(ii) Large-domain limit \(\mathcal Z\to\infty\) and finiteness of \(\lambda_\infty\).}
Fix \(\xi>K\). By monotone convergence of the truncation operators we have \(\mathscr J_{\mathcal Z}\uparrow \mathscr J_\infty\) (strongly) as \(\mathcal Z\to\infty\). Since \((\mathscr T+\xi I)^{-1}\) is a fixed bounded positive operator, it follows that
\[
\mathscr J_{\mathcal Z}(\mathscr T+\xi I)^{-1} \longrightarrow \mathscr J_\infty(\mathscr T+\xi I)^{-1}
\quad\text{in the strong operator topology,}
\]
and therefore
\[
\lim_{\mathcal Z\to\infty}\rho_{\mathcal Z}(\xi)=\rho_\infty(\xi)\qquad(\text{pointwise in }\xi>K).
\]
For each \(\mathcal Z>0\) the function \(\xi\mapsto\rho_{\mathcal Z}(\xi)\) is continuous, strictly decreasing on \((K,\infty)\), and crosses the level \(1\) exactly once at \(\xi=\lambda^\ast(\mathcal Z)\). Because \(\mathscr J_{\mathcal Z}\) is increasing in \(\mathcal Z\), the family \(\rho_{\mathcal Z}(\xi)\) is increasing in \(\mathcal Z\) for each fixed \(\xi\); consequently \(\lambda^\ast(\mathcal Z)\) is nondecreasing in \(\mathcal Z\). Hence the pointwise limit
\[
\lambda_\infty:=\lim_{\mathcal Z\to\infty}\lambda^\ast(\mathcal Z)\in[K,\infty]
\]
exists (possibly infinite), and by passing to the limit in the equation \(\rho_{\mathcal Z}(\lambda^\ast(\mathcal Z))=1\) we obtain the characterization
\[
\lambda_\infty = \inf\{\xi>K:\rho_\infty(\xi)\le1\}.
\]

It remains to show \(\lambda_\infty<\infty\). The key observation is the resolvent decay:
\[
\|(\mathscr T+\xi I)^{-1}\|_{\mathcal L(X)}\xrightarrow{\ \xi\to\infty\ }0,
\]
which was assumed (or proved) earlier. The operator \(\mathscr J_\infty\) is bounded (it is an integral operator with bounded kernel on each bounded set; in particular \(\|\mathscr J_\infty\|<\infty\)). Therefore
\[
0\leq \rho_\infty(\xi)\leq \big\|\mathscr J_\infty(\mathscr T+\xi I)^{-1}\big\|
\leq \|\mathscr J_\infty\|\,\|(\mathscr T+\xi I)^{-1}\|
\longrightarrow 0\qquad(\xi\to\infty).
\]
Hence there exists some finite \(\xi_0>K\) for which \(\rho_\infty(\xi_0)<1\). By the characterization of \(\lambda_\infty\) as an infimum over \(\{\xi:\rho_\infty(\xi)\leq1\}\), this implies \(\lambda_\infty\leq\xi_0<\infty\). Thus \(\lambda_\infty\) is finite, i.e. \(\lambda_\infty\in[K,\infty)\).

Finally, the sign relation follows from monotonicity: because \(\xi\mapsto\rho_\infty(\xi)\) is strictly decreasing, we have
\[
\rho_\infty(0)=R_0 \begin{cases}
>1 &\Longleftrightarrow \lambda_\infty>0,\\
=1 &\Longleftrightarrow \lambda_\infty=0,\\
<1 &\Longleftrightarrow \lambda_\infty<0.
\end{cases}
\]
Equivalently \(\operatorname{sign}(\lambda_\infty)=\operatorname{sign}(R_0-1)\).
\end{proof}

\begin{lemma}\label{lem:lambda_d1_limits}
Let ${\mathcal Z}>0$ be fixed and work on the Banach lattice
\(X=C^1([-\mathcal Z,\mathcal Z];\mathbb R^2)\) (or \(X=L^2\) with the corresponding domain)
with the cone \(X_+\). Assume the standing hypotheses on the kernels $J_i$ and on the coefficients
$a(x),b(x),p,q,H'(0),G'(0)$ so that for each $d_1>0$ the linear operator
\(\mathcal L_{d_1}:\mathcal D(\mathcal L_{d_1})\subset X\to X\) given componentwise by
\[
\mathcal L_{d_1}(\psi_1,\psi_2)
=
\begin{pmatrix}
d_1\displaystyle\int_{-{\mathcal Z}}^{\mathcal Z}J_1(x-y)\psi_1(y)\,dy - d_1\psi_1(x) - a(x)\psi_1(x) + H'(0)\psi_2(x) + p\psi_1'(x)\\[6pt]
d_2\displaystyle\int_{-{\mathcal Z}}^{\mathcal Z}J_2(x-y)\psi_2(y)\,dy - d_2\psi_2(x) - b(x)\psi_2(x) + G'(0)\psi_1(x) + q\psi_2'(x)
\end{pmatrix}
\]
generates a positive compact-type operator for which a principal eigenvalue \(\lambda^\ast(d_1)\) (simple, real) exists.

Let \(K_1:X_1\to X_1\) denote the scalar integral operator
\[
K_1[\phi](x):=\int_{-{\mathcal Z}}^{\mathcal Z}J_1(x-y)\phi(y)\,dy,
\]
acting on the first-component space \(X_1\) (e.g. \(C([-{\mathcal Z},{\mathcal Z}])\) or \(L^2\)). Denote
\[
r_1:=\rho(K_1)\quad\text{(the spectral radius of }K_1\text{).}
\]

Then the following hold.
\begin{enumerate}
\item (Continuity at finite \(d_1\).) The map \(d_1\mapsto\lambda^\ast(d_1)\) is continuous on \((0,\infty)\). Moreover,
if \(\lambda^\ast(d_1)\) is a simple (isolated) eigenvalue for all \(d_1\) in an interval, then \(d_1\mapsto\lambda^\ast(d_1)\) is \(C^1\) there.

\item (Small-\(d_1\) limit.) If \(\lambda^\ast(0)\) denotes the principal eigenvalue of the operator obtained by setting \(d_1=0\) in \(\mathcal L_{d_1}\), then
\[
\lim_{d_1\downarrow 0}\lambda^\ast(d_1)=\lambda^\ast(0).
\]

\item (Large-\(d_1\) behaviour.) As \(d_1\to\infty\) the following trichotomy occurs:
\begin{itemize}
  \item If \(r_1>1\), then \(\displaystyle \lim_{d_1\to\infty}\lambda^\ast(d_1)=+\infty\).
  \item If \(r_1<1\), then \(\displaystyle \lim_{d_1\to\infty}\lambda^\ast(d_1)=K\), where \(K\) is the coercivity constant of \(\mathscr T\) (i.e.\ the number appearing in your resolvent bounds; recall \(\mathscr T+\xi I\) is invertible for \(\xi>K\)).
  \item If \(r_1=1\), then \(\{\lambda^\ast(d_1)\}_{d_1}\) is bounded and every accumulation point \(\lambda_\infty\) satisfies
  \(\rho\!\big(-K_1(\mathscr T_\infty+\lambda_\infty I)^{-1}\big)=1\); under an additional spectral-gap / nondegeneracy hypothesis for \(K_1\) one has \(\lambda^\ast(d_1)\to\lambda_\infty\) (finite).
\end{itemize}
\end{enumerate}
\end{lemma}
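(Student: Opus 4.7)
The plan is to treat items (i)--(iii) by combining analytic perturbation theory for the affine family $d_1\mapsto\mathcal L_{d_1}$ with the sub/supersolution comparison established in Lemma~\ref{LemmaUpperLower} and the spectral characterisations from Lemmas~\ref{lem:3.1}--\ref{lemma33}.

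For (i), I first observe that only the pair of terms $d_1 K_1\psi_1 - d_1\psi_1$ in the first row depends on $d_1$, so $d_1\mapsto\mathcal L_{d_1}\in\mathcal L(X,X)$ is affine (hence entire-analytic in the operator norm). Since Theorem~\ref{Principlaeignevalue} produces $\lambda^\ast(d_1)$ as a simple isolated eigenvalue with a strictly positive eigenfunction and a strictly positive adjoint eigenvector, the classical Kato--Rellich analytic perturbation theory~\cite{Kato1995} yields real-analyticity, hence $C^1$-regularity, of $d_1\mapsto\lambda^\ast(d_1)$ on every open subinterval where simplicity persists; continuity on all of $(0,\infty)$ then follows from upper-semicontinuity of the spectrum combined with the fact that, under irreducibility and Krein--Rutman, the principal eigenvalue cannot merge with another spectral value. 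For (ii), the same argument applies at $d_1=0$: the limiting operator $\mathcal L_0$ (obtained by deleting the two $d_1$-terms) still fits the framework of Theorem~\ref{Principlaeignevalue}, so it possesses its own simple principal eigenvalue $\lambda^\ast(0)$, and persistence of a simple isolated eigenvalue under norm-small perturbations gives $\lambda^\ast(d_1)\to\lambda^\ast(0)$ as $d_1\downarrow 0$.

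For (iii), the core idea is to exploit the dominant block $d_1(K_1-I)$ via Lemma~\ref{LemmaUpperLower} with carefully chosen test pairs. When $r_1>1$, I take $\underline\psi=(\phi_1,0)$ where $\phi_1\in C^1$ is a smooth regularisation of a Perron eigenvector of $K_1$ satisfying $K_1\phi_1\geq(r_1-\varepsilon)\phi_1$; inserting into the subsolution inequalities yields
\[
\underline\lambda(d_1)\;\geq\; d_1(r_1-1-\varepsilon)-\|a\|_\infty-C_{p,\phi_1},
\]
so Lemma~\ref{LemmaUpperLower} forces $\lambda^\ast(d_1)\to+\infty$. When $r_1<1$, I construct a supersolution $\overline\psi^{(d_1)}=(d_1^{-1}\phi_1,\phi_2)$ with $\phi_2>0$ fixed and $\phi_1$ a smooth positive function: the quantity $d_1(K_1-I)(d_1^{-1}\phi_1)=(K_1-I)\phi_1$ stays bounded because $K_1-I$ has spectrum in the open left half-plane when $r_1<1$, and the coupling into the second row is of order $O(d_1^{-1})$. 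Passing to the limit $d_1\to\infty$ in the supersolution inequality, and pairing with a matching subsolution built from the resolvent bound $\|(\mathscr T+\xi I)^{-1}\|\leq(\xi-K)^{-1}$ of Lemma~\ref{lem:3.1}, pins the limit at the coercivity threshold $K$. In the critical case $r_1=1$, I first establish boundedness of $\{\lambda^\ast(d_1)\}_{d_1\geq 1}$ using a uniform supersolution (possible because the formerly divergent contribution $d_1(r_1-1)$ vanishes), then normalise the eigenfunctions and extract a $C^1$-convergent subsequence via Arzelà--Ascoli applied to the eigenvalue equation itself, whose nonlocal and advection-reaction structure provides equicontinuity of both $\psi_i$ and $\psi_i'$. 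Passing to the limit yields the characterisation $\rho(-K_1(\mathscr T_\infty+\lambda_\infty I)^{-1})=1$, and under the stated spectral-gap assumption the full limit exists by the strict monotonicity of $\xi\mapsto\rho_\infty(\xi)$ already proved in Lemma~\ref{lemma33}.

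The hard part will be the case $r_1<1$ in (iii): one must simultaneously control the strong dissipation of the first component (which concentrates at scale $O(1/d_1)$) and the advection-driven $C^1$-regularity of the second, and then show that the limit of $\lambda^\ast(d_1)$ coincides \emph{exactly} with the coercivity threshold $K$ rather than some strictly smaller value. This demands a sharp matching of sub- and supersolutions, the former built from the Perron eigenstructure of $K_1$ in the first component and the latter from the resolvent expansion of $\mathscr T$ near $\xi=K$; it is the unique step in which the advection terms $p\psi_1'$, $q\psi_2'$ genuinely interact with the scaling and where the $C^1$ (rather than $L^2$) setting imposes nontrivial constraints.
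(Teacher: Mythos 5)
Your parts (i) and (ii) follow essentially the same route as the paper: the paper also reduces these to Kato-type perturbation theory, though it packages the argument through the family $A_{d_1}(\xi)=\mathscr J_{d_1}(\mathscr T_{d_1}+\xi I)^{-1}$ and tracks the root of $\rho(A_{d_1}(\xi))=1$ rather than perturbing $\mathcal L_{d_1}$ directly; the two are interchangeable here and neither buys much over the other.

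For (iii) you take a genuinely different route. The paper works entirely with the Neumann expansion of the resolvent, obtaining the operator-norm limit
\[
A_{d_1}(\xi)\;=\;\big(d_1K_1+O(1)\big)\Big(-\tfrac1{d_1}I+O\big(\tfrac1{d_1^2}\big)\Big)\;=\;-K_1+O\big(\tfrac1{d_1}\big),
\]
hence $\rho(A_{d_1}(\xi))\to r_1$ uniformly on compact $\xi$-intervals, after which the trichotomy drops out of the monotone root characterisation. You propose instead to produce the bounds by explicit sub/supersolution pairs fed into Lemma~\ref{LemmaUpperLower}. Your $r_1>1$ subsolution $(\phi_1,0)$ is sound (the second component inequality reduces to $G'(0)\phi_1\ge0$, which is free, and the Perron eigenvector of the integral operator $K_1$ is already $C^1$ and bounded away from zero on $[-\mathcal Z,\mathcal Z]$, so the $p\phi_1'/\phi_1$ term is controlled), and your $r_1=1$ compactness argument is a reasonable sketch.

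There is, however, a genuine gap in your $r_1<1$ construction. Substituting $\overline\psi^{(d_1)}=(d_1^{-1}\phi_1,\phi_2)$ into the first-component supersolution inequality
\[
d_1\big(K_1\overline\psi_1-\overline\psi_1\big)-a\overline\psi_1+H'(0)\overline\psi_2+p\overline\psi_1'\;\le\;\overline\lambda\,\overline\psi_1
\]
gives, after cancelling the $d_1$ factors in the first term,
\[
(K_1-I)\phi_1-d_1^{-1}a\phi_1+H'(0)\phi_2+d_1^{-1}p\phi_1'\;\le\;\overline\lambda\,d_1^{-1}\phi_1.
\]
You note that the coupling $G'(0)\overline\psi_1=O(d_1^{-1})$ into the \emph{second} row is small, but you ignore the coupling $H'(0)\overline\psi_2=H'(0)\phi_2=O(1)$ into the \emph{first} row, which does not vanish as $d_1\to\infty$; meanwhile the right-hand side $\overline\lambda d_1^{-1}\phi_1$ is $o(1)$ unless $\overline\lambda$ itself grows like $d_1$. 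Since the coercivity constant $K$ of $\mathscr T$ contains $\sup_x(d_1+a(x))$ and therefore also grows like $d_1$, the inequality can only close if $\overline\lambda$ is tuned to track $K(d_1)$ with $O(1)$ precision, and your proposal never spells this out; as written the supersolution inequality simply fails for large $d_1$. The companion subsolution ``built from the resolvent bound'' is asserted but not constructed, so the ``sharp matching'' you flag as the hard step is in fact left open. To repair the argument along your lines you would need either to choose $\overline\psi_2$ scaled down with $d_1$ as well (but then the \emph{second}-row inequality deteriorates), or to reintroduce the resolvent expansion of $A_{d_1}(\xi)$ that the paper uses, at which point you have essentially recovered the paper's proof.
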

\begin{proof}
We split the proof into three parts corresponding to the statements.

\medskip\noindent\textbf{(i) Continuity.}
Fix $\xi>K$ so that, for all $d_1$ in a compact subset of $(0,\infty)$, the resolvent
$R_{d_1}(\xi):=(\mathscr T_{d_1}+\xi I)^{-1}$ exists and is uniformly bounded by the hypothesis you stated.
For every such $\xi$ define the compact positive operator
\[
A_{d_1}(\xi):=\mathscr J_{d_1}\,R_{d_1}(\xi),
\]
where $\mathscr J_{d_1}$ denotes the birth/convolution block depending linearly on $d_1$ (its first block is $d_1K_1$).
Direct computation shows the map $d_1\mapsto A_{d_1}(\xi)$ is continuous in operator norm: the difference
\[
A_{d_1}(\xi)-A_{d_1'}(\xi)
= (\mathscr J_{d_1}-\mathscr J_{d_1'} )R_{d_1}(\xi)
+ \mathscr J_{d_1'}\big(R_{d_1}(\xi)-R_{d_1'}(\xi)\big)
\]
tends to $0$ in operator norm as $d_1'\to d_1$, because $\mathscr J_{d_1}$ depends affinely on $d_1$
and resolvents depend continuously on coefficients (use the resolvent identity
$R_{d_1}-R_{d_1'}=R_{d_1}( \mathscr T_{d_1'}-\mathscr T_{d_1})R_{d_1'}$ together with boundedness).
Hence $d_1\mapsto \rho\big(A_{d_1}(\xi)\big)$ is continuous for fixed $\xi$.

The principal eigenvalue $\lambda^\ast(d_1)$ is the unique $\xi>K$ such that
\(\rho(A_{d_1}(\xi))=1\). Using the strict monotonicity of $\xi\mapsto \rho(A_{d_1}(\xi))$
and the implicit-function / root-continuation argument, continuity of $d_1\mapsto\lambda^\ast(d_1)$ follows.
If the principal eigenvalue stays simple (isolated), classical perturbation theory (Kato) gives $C^1$-dependence.

\medskip\noindent\textbf{(ii) Limit as $d_1\downarrow0$.}
Fix $\xi>K$. As $d_1\downarrow0$ the operator $\mathscr J_{d_1}$ tends in operator norm to $\mathscr J_0$
(the first component vanishes linearly in $d_1$) and $\mathscr T_{d_1}\to\mathscr T_0$ in operator norm
(the part $-d_1\psi_1$ tends to $0$). Hence for such $\xi$,
\[
A_{d_1}(\xi)=\mathscr J_{d_1}(\mathscr T_{d_1}+\xi I)^{-1}\longrightarrow
\mathscr J_0(\mathscr T_0+\xi I)^{-1}=A_{0}(\xi)
\]
in operator norm. Therefore $\rho(A_{d_1}(\xi))\to\rho(A_{0}(\xi))$ uniformly on compact $\xi$-intervals.
By the implicit characterization $\rho(A_{d_1}(\lambda^\ast(d_1)))=1$ and strict monotonicity in
$\xi$, standard continuity of roots implies \(\lambda^\ast(d_1)\to\lambda^\ast(0)\) as $d_1\downarrow0$.

\medskip\noindent\textbf{(iii) Large-$d_1$ behaviour.}
Write the first-component part of $\mathscr T_{d_1}$ as
\[
\mathscr T_{d_1}^{(1)} = -d_1 I + B,
\]
where $B$ is the bounded operator collecting the remaining first-component terms (the $-a(x)$, the transport $p\partial_x$, etc.).  Fix any $\xi>K$. Then
\[
(\mathscr T_{d_1}+\xi I)^{-1} = (-d_1 I + (B+\xi I))^{-1}
= -\frac{1}{d_1}\Big(I - \frac{B+\xi I}{d_1}\Big)^{-1}
= -\frac{1}{d_1}I + O\!\big(\tfrac{1}{d_1^2}\big)
\]
in operator norm as $d_1\to\infty$ (Neumann series expansion). The same expansion holds when embedding into the full \(2\times2\) block operator (the second component is unaffected by \(d_1\)).

Because $\mathscr J_{d_1}$ depends linearly on $d_1$ in its first block (indeed its first block is $d_1 K_1$), we obtain the pointwise-in-$\xi$ limit (in operator norm)
\[
A_{d_1}(\xi)=\mathscr J_{d_1}(\mathscr T_{d_1}+\xi I)^{-1}
= \big( d_1 K_1 + O(1)\big)\big(-\tfrac{1}{d_1}I + O(\tfrac{1}{d_1^2})\big)
= -K_1 + O\!\big(\tfrac{1}{d_1}\big).
\]
Hence, for every fixed $\xi>K$,
\[
\lim_{d_1\to\infty}\rho\!\big(A_{d_1}(\xi)\big) = \rho(-K_1) = \rho(K_1)=:r_1.
\]
(The last equality follows because \(K_1\) is a positive operator, so the spectral radius of \(-K_1\)
equals that of \(K_1\).)

Now use the monotonicity of $\xi\mapsto\rho(A_{d_1}(\xi))$ (strictly decreasing) and the blow-up property
\(\rho(A_{d_1}(\xi))\to\infty\) as \(\xi\downarrow K\) (uniform in $d_1$ in compact ranges away from \(K\)) to deduce the behaviour of the unique root \(\lambda^\ast(d_1)\) defined by \(\rho(A_{d_1}(\lambda^\ast(d_1)))=1\).

\begin{itemize}
\item If \(r_1>1\): pick any fixed \(\xi_0>K\). For $d_1$ large we have
\(\rho(A_{d_1}(\xi_0))\approx r_1>1\). Since \(\rho(A_{d_1}(\xi))\) is strictly decreasing in \(\xi\) and tends to \(0\) as \(\xi\to\infty\),
the root \(\lambda^\ast(d_1)\) must satisfy \(\lambda^\ast(d_1)>\xi_0\) for all large \(d_1\). Because \(\xi_0>K\) was arbitrary, \(\lambda^\ast(d_1)\to\infty\).

\item If \(r_1<1\): pick any \(\varepsilon>0\) and set \(\xi_\varepsilon:=K+\varepsilon\). From the uniform expansion above
we get \(\rho(A_{d_1}(\xi_\varepsilon))\to r_1<1\) as \(d_1\to\infty\). For large \(d_1\) we therefore have
\(\rho(A_{d_1}(\xi_\varepsilon))<1\). By monotonicity in \(\xi\) the unique root \(\lambda^\ast(d_1)\) must satisfy
\(K<\lambda^\ast(d_1)<\xi_\varepsilon\) for large \(d_1\). Letting \(\varepsilon\downarrow0\) yields
\(\lim_{d_1\to\infty}\lambda^\ast(d_1)=K\).

\item If \(r_1=1\): then for every fixed \(\xi>K\) we have \(\rho(A_{d_1}(\xi))\to1\). From the monotonicity and the fact each $\rho(A_{d_1}(\cdot))$ crosses the level \(1\) exactly once, the family \(\{\lambda^\ast(d_1)\}\) is bounded. Standard compactness arguments yield subsequential limits; let \(\lambda_\infty\) be any limit point. Passing to the limit in
\(\rho(A_{d_1}(\lambda^\ast(d_1)))=1\) (using operator-norm convergence above) gives
\(\rho(-K_1)=1\) and the limiting balance condition for \(\lambda_\infty\) indicated in the statement. Under an extra spectral-gap assumption for \(K_1\) (so the unit eigenvalue is simple and separated from the rest of the spectrum) one can promote subsequential convergence to full convergence and identify \(\lambda_\infty\) uniquely; in practice this is the homogenized / reduced eigenvalue described in the main text.
\end{itemize}

This completes the proof.
\end{proof}

\begin{lemma}\label{lem:limit_d1d2}
Let $\lambda^\ast(d_1,d_2)$ denote the principal eigenvalue of the operator associated with system \eqref{eq:maineigenrvalue3} on $[-\mathcal{Z},\mathcal{Z}]$, where $d_1,d_2>0$ are the dispersal coefficients.
Set
\[
K_i[\phi](x):=\int_{-\mathcal{Z}}^{\mathcal{Z}} J_i(x-y)\phi(y)\,dy,\qquad
r_i:=\rho(K_i)\quad (i=1,2),
\qquad r:=\max\{r_1,r_2\}.
\]
Let $K$ be the coercivity constant of $\mathscr{T}$ (so that $\mathscr{T}+\xi I$ is invertible for every $\xi>K$). Then:
\begin{enumerate}
    \item (\emph{Small diffusion}) As $(d_1,d_2)\to(0,0)$,
    \[
    \lambda^\ast(d_1,d_2)\longrightarrow \lambda^\ast(0,0),
    \]
    where $\lambda^\ast(0,0)$ is the principal eigenvalue of the operator obtained by setting $d_1=d_2=0$.

    \item (\emph{Large diffusion}) As $\min\{d_1,d_2\}\to\infty$,
    \[
    \lambda^\ast(d_1,d_2)\;\longrightarrow\;
    \begin{cases}
      +\infty, & \text{if } r>1,\\[4pt]
      K, & \text{if } r<1,\\[4pt]
      \lambda_\infty\in(K,\infty) \text{ finite}, & \text{if } r=1,
    \end{cases}
    \]
    where in the borderline case $r=1$, the accumulation points $\lambda_\infty$ satisfy the reduced limit problem determined by the kernels $J_i$.

    \item (\emph{Mixed case}) If $d_1\to\infty$ and $d_2$ remains bounded (or vice versa), then the limiting behaviour is determined by the spectral radius of the dispersal operator in the large-diffusion species. For instance, if $d_1\to\infty$ and $d_2$ bounded,
    \[
    \lambda^\ast(d_1,d_2)\;\longrightarrow\;
    \begin{cases}
      +\infty, & r_1>1,\\[4pt]
      K, & r_1<1,\\[4pt]
      \lambda_\infty\in(K,\infty), & r_1=1.
    \end{cases}
    \]
\end{enumerate}
\end{lemma}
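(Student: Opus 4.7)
The plan is to extend the one–parameter analysis of Lemma~\ref{lem:lambda_d1_limits} to the two–parameter family $(d_1,d_2)\in(0,\infty)^2$, by working with the compact positive operator
\[
A_{d_1,d_2}(\xi) := \mathscr J_{d_1,d_2}\,(\mathscr T_{d_1,d_2}+\xi I)^{-1},\qquad \xi>K,
\]
and using the characterization that $\lambda^\ast(d_1,d_2)$ is the unique $\xi>K$ solving $\rho\!\big(A_{d_1,d_2}(\xi)\big)=1$, together with the strict monotonicity of $\xi\mapsto\rho(A_{d_1,d_2}(\xi))$.

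For part~(1), I would show that $(d_1,d_2)\mapsto A_{d_1,d_2}(\xi)$ is jointly continuous in operator norm at $(0,0)$ for each fixed $\xi>K$. This follows because $\mathscr J_{d_1,d_2}$ depends linearly on $(d_1,d_2)$ and $\mathscr T_{d_1,d_2}$ depends affinely on $(d_1,d_2)$, so the resolvent identity
\[
(\mathscr T_{d_1,d_2}+\xi I)^{-1}-(\mathscr T_{0,0}+\xi I)^{-1}
=(\mathscr T_{d_1,d_2}+\xi I)^{-1}\big(\mathscr T_{0,0}-\mathscr T_{d_1,d_2}\big)(\mathscr T_{0,0}+\xi I)^{-1}
\]
together with the uniform resolvent bound from Lemma~\ref{lem:3.1} yields continuity. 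Continuity of the spectral radius for positive compact operators then gives $\rho(A_{d_1,d_2}(\xi))\to\rho(A_{0,0}(\xi))$, and standard root–continuation (as in the proof of Proposition~\ref{prop:monotonicity}) transfers this to $\lambda^\ast(d_1,d_2)\to\lambda^\ast(0,0)$.

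For part~(2), I would carry out the Neumann–series expansion component by component. Writing the diagonal matrix multiplier of $(\mathscr T_{d_1,d_2}+\xi I)$ as $\mathrm{diag}(-d_1,-d_2)+B(\xi)$ with $B(\xi)$ bounded uniformly in $\xi$ on compact sets and independent of $(d_1,d_2)$, one gets, in operator norm,
\[
(\mathscr T_{d_1,d_2}+\xi I)^{-1}
=\mathrm{diag}\!\Big(-\tfrac{1}{d_1},-\tfrac{1}{d_2}\Big)+O\!\Big(\tfrac{1}{\min\{d_1,d_2\}^{2}}\Big).
\]
Multiplying by $\mathscr J_{d_1,d_2}=\mathrm{diag}(d_1K_1,d_2K_2)$ yields
\[
A_{d_1,d_2}(\xi)=-\mathrm{diag}(K_1,K_2)+O\!\Big(\tfrac{1}{\min\{d_1,d_2\}}\Big),
\]
so $\rho(A_{d_1,d_2}(\xi))\to\rho(\mathrm{diag}(K_1,K_2))=\max\{r_1,r_2\}=r$ for every fixed $\xi>K$. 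The trichotomy in part~(2) then follows from strict monotonicity of $\rho(A_{d_1,d_2}(\cdot))$ in $\xi$ exactly as in the proof of Lemma~\ref{lem:lambda_d1_limits}(iii): if $r>1$, the root must escape to $+\infty$; if $r<1$, the root is squeezed down to $K$; and if $r=1$ the roots stay in a bounded interval and any accumulation point $\lambda_\infty\in(K,\infty)$ solves $\rho\!\big(-\mathrm{diag}(K_1,K_2)(\mathscr T_\infty+\lambda_\infty I)^{-1}\big)=1$, uniqueness being guaranteed under a spectral–gap hypothesis on $\mathrm{diag}(K_1,K_2)$.

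For part~(3), I would expand only the first (respectively second) diagonal block while keeping the other block at fixed coefficients. If $d_1\to\infty$ and $d_2$ stays in a compact set, the same Neumann argument gives
\[
A_{d_1,d_2}(\xi)\;\longrightarrow\;
\begin{pmatrix}-K_1 & 0\\ 0 & 0\end{pmatrix}
+\begin{pmatrix}0 & 0\\ 0 & d_2 K_2(\mathscr T^{(2)}_{d_2}+\xi I)^{-1}\end{pmatrix}+o(1),
\]
in which the cross–coupling entries contribute only $O(1/d_1)$ perturbations through the off–diagonal $H'(0),G'(0)$ terms hidden in $\mathscr T$. Since the limiting operator is block–upper–triangular (modulo vanishing corrections), its spectrum is the union of the spectra of the two diagonal blocks, and the first block's spectral radius $r_1$ dictates the dominant limiting behaviour; the second block, being bounded uniformly in $\xi$ on compact sets, cannot prevent escape to $+\infty$ when $r_1>1$ nor accumulation at $K$ when $r_1<1$, by the same monotonicity argument. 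The borderline case $r_1=1$ is treated as in part~(2).

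The main obstacle I anticipate is the mixed case~(3): the off–diagonal coupling terms $H'(0),G'(0)$ prevent the limit operator from being exactly block diagonal, so one must check carefully that these couplings enter at order $O(1/d_1)$ and do not shift the spectral radius at leading order. A secondary subtlety is the borderline cases $r=1$ and $r_1=1$, where identification of the limit requires an additional spectral–gap/irreducibility assumption on the scalar operators $K_i$ (as also noted in Lemma~\ref{lem:lambda_d1_limits}) to upgrade subsequential convergence to full convergence.
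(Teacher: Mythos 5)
Your approach mirrors the paper's exactly for parts~(1) and~(2): reduce to the compact positive operator $A_{d_1,d_2}(\xi)=\mathscr J_{d_1,d_2}(\mathscr T_{d_1,d_2}+\xi I)^{-1}$, characterize $\lambda^\ast$ as the unique root of $\rho(A_{d_1,d_2}(\xi))=1$, prove operator-norm convergence of $A_{d_1,d_2}(\xi)$ for fixed $\xi$ via the resolvent identity or a Neumann series, and then use strict monotonicity of $\xi\mapsto\rho(A_{d_1,d_2}(\xi))$ to transfer convergence of spectral radii to convergence of roots. Those parts are correct and essentially identical in substance.

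Where you diverge, and where the real content lies, is part~(3). You correctly observe that when $d_1\to\infty$ while $d_2$ stays bounded, the $(2,2)$ block of $A_{d_1,d_2}(\xi)$ does \emph{not} vanish: it converges to $d_2K_2\big(\mathscr T^{(2)}_{d_2}+\xi I\big)^{-1}$, a nontrivial positive compact operator. The paper's own proof asserts $A_{d_1,d_2}(\xi)\to-\mathrm{diag}(K_1,0)$, which is false unless $d_2\to0$; so on this point your analysis is actually more careful than the paper's. However, one small inconsistency in your write-up: the $(1,2)$ block of $A_{d_1,d_2}(\xi)$ converges to $K_1H'(0)\big(\mathscr T^{(2)}_{d_2}+\xi I\big)^{-1}$, which is $O(1)$ and not $O(1/d_1)$ (the factor $d_1$ in the first row of $\mathscr J$ cancels the $1/d_1$ from the Schur complement). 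Your displayed limit is block diagonal, but your subsequent claim that the limit is block upper triangular is the correct one; since the spectral radius of a block upper triangular operator equals the max over the diagonal blocks, this does not affect your spectral-radius conclusion.

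The genuine gap — present in both your proposal and the paper's own proof of part~(3), though you at least flag the neighborhood of it — is that the limiting spectral radius is
\[
\rho_\infty(\xi)=\max\Big\{\,r_1,\ \rho\big(d_2K_2(\mathscr T^{(2)}_{d_2}+\xi I)^{-1}\big)\,\Big\},
\]
and the second term depends nontrivially on $\xi$. When $r_1<1$, the root of $\rho_\infty(\xi)=1$ is \emph{not} forced down to $K$: it is determined by the scalar problem $\rho\big(d_2K_2(\mathscr T^{(2)}_{d_2}+\xi I)^{-1}\big)=1$, whose root is a finite number strictly above $K$ in general. Your remark that "the second block\ldots cannot prevent accumulation at $K$ when $r_1<1$" is not justified and appears to be false without additional hypotheses (for instance that the scalar reproduction number for the second component alone is below one). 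Thus the $r_1<1$ branch of the trichotomy in part~(3) is not established by this argument. You would need to either (a) add a hypothesis controlling $\rho(d_2K_2(\mathscr T^{(2)}_{d_2}+\xi I)^{-1})$, or (b) correct the statement so that the limit in the $r_1<1$ case is the root of the reduced scalar eigenvalue problem for the $v$-component, rather than $K$. A secondary issue, which also affects the paper, is that the coercivity constant $K$ as defined (Remark following Lemma~\ref{lem:3.1}) depends on $d_1$ and diverges as $d_1\to\infty$, so "$\lambda^\ast\to K$" needs to be interpreted with care (e.g.\ as $\lambda^\ast-K(d_1,d_2)\to 0$).
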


\begin{proof}
Fix $\xi>K$. Define the compact, strongly positive operator
\[
A_{d_1,d_2}(\xi):=\mathscr{J}_{d_1,d_2}\,(\mathscr{T}_{d_1,d_2}+\xi I)^{-1}.
\]
By construction, $\lambda^\ast(d_1,d_2)$ is the unique $\xi>K$ such that
\[
\rho(A_{d_1,d_2}(\xi))=1.
\]
Moreover, the map $\xi\mapsto \rho(A_{d_1,d_2}(\xi))$ is continuous and strictly decreasing, with
\(\rho(A_{d_1,d_2}(\xi))\to 0\) as $\xi\to\infty$.

\smallskip\noindent
\emph{(i) Small diffusion.}
As $(d_1,d_2)\to(0,0)$, the operators $\mathscr{J}_{d_1,d_2}\to\mathscr{J}_{0,0}$ and
$\mathscr{T}_{d_1,d_2}\to\mathscr{T}_{0,0}$ in operator norm. Hence
\[
A_{d_1,d_2}(\xi)\to A_{0,0}(\xi) \quad \text{in operator norm.}
\]
Thus $\rho(A_{d_1,d_2}(\xi))\to\rho(A_{0,0}(\xi))$ uniformly for $\xi$ in compact subsets of $(K,\infty)$.
By the implicit root condition, it follows that
\(\lambda^\ast(d_1,d_2)\to\lambda^\ast(0,0)\).

\smallskip\noindent
\emph{(ii) Large diffusion.}
For large $d_1,d_2$,
\[
\mathscr{J}_{d_1,d_2}=\begin{pmatrix}d_1 K_1&0\\0&d_2 K_2\end{pmatrix}+O(1),
\qquad
\mathscr{T}_{d_1,d_2}=-\begin{pmatrix}d_1&0\\0&d_2\end{pmatrix}+B,
\]
where $B$ is bounded independently of $d_1,d_2$. Expanding the resolvent yields
\[
(\mathscr{T}_{d_1,d_2}+\xi I)^{-1}
= -\begin{pmatrix}\tfrac{1}{d_1}I & 0\\ 0 & \tfrac{1}{d_2}I\end{pmatrix}+o(1).
\]
Therefore,
\[
A_{d_1,d_2}(\xi)\;\longrightarrow\; -\begin{pmatrix}K_1&0\\0&K_2\end{pmatrix}
\quad\text{in operator norm.}
\]
Consequently,
\[
\lim_{\min\{d_1,d_2\}\to\infty}\rho(A_{d_1,d_2}(\xi))=\max\{\rho(K_1),\rho(K_2)\}=r.
\]
If $r>1$, then for any fixed $\xi$, $\rho(A_{d_1,d_2}(\xi))>1$ when $d_1,d_2$ are large, forcing
$\lambda^\ast(d_1,d_2)\to+\infty$.
If $r<1$, then for $\xi=K+\varepsilon$ we have $\rho(A_{d_1,d_2}(\xi))<1$ for large $d_i$, so
$\lambda^\ast(d_1,d_2)\in(K,K+\varepsilon)$. Letting $\varepsilon\to0$ gives $\lambda^\ast(d_1,d_2)\to K$.
If $r=1$, then $\rho(A_{d_1,d_2}(\xi))\to 1$ for every fixed $\xi$, so $\lambda^\ast(d_1,d_2)$ stays bounded
and accumulation points $\lambda_\infty$ are determined by the reduced limit problem.

\smallskip\noindent
\emph{(iii) Mixed case.}
Suppose $d_1\to\infty$ and $d_2$ bounded. Then
\[
A_{d_1,d_2}(\xi)\;\longrightarrow\; -\begin{pmatrix}K_1&0\\0&0\end{pmatrix}.
\]
Hence the spectral radius is governed by $r_1=\rho(K_1)$, and the same trichotomy as in (ii) applies with $r_1$ in place of $r$.
The case $d_2\to\infty$ and $d_1$ bounded is symmetric.

This completes the proof.
\end{proof}
\begin{lemma}\label{lem:limit_d1d2}
Let $\lambda^\ast(d_1,d_2)$ denote the principal eigenvalue of the operator associated with system \eqref{eq:maineigenrvalue} on $[-\mathcal{Z},\mathcal{Z}]$, where $d_1,d_2>0$ are the dispersal coefficients.
Set
\[
K_i[\phi](x):=\int_{-\mathcal{Z}}^{\mathcal{Z}} J_i(x-y)\phi(y)\,dy,\qquad
r_i:=\rho(K_i)\quad (i=1,2),
\qquad r:=\max\{r_1,r_2\}.
\]
Let $K$ be the coercivity constant of $\mathscr{T}$ (so that $\mathscr{T}+\xi I$ is invertible for every $\xi>K$). Then:
\begin{enumerate}
    \item (\emph{Small diffusion}) As $(d_1,d_2)\to(0,0)$,
    \[
    \lambda^\ast(d_1,d_2)\longrightarrow \lambda^\ast(0,0),
    \]
    where $\lambda^\ast(0,0)$ is the principal eigenvalue of the operator obtained by setting $d_1=d_2=0$.

    \item (\emph{Large diffusion}) As $\min\{d_1,d_2\}\to\infty$,
    \[
    \lambda^\ast(d_1,d_2)\;\longrightarrow\;
    \begin{cases}
      +\infty, & \text{if } r>1,\\[4pt]
      K, & \text{if } r<1,\\[4pt]
      \lambda_\infty\in(K,\infty) \text{ finite}, & \text{if } r=1,
    \end{cases}
    \]
    where in the borderline case $r=1$, the accumulation points $\lambda_\infty$ satisfy the reduced limit problem determined by the kernels $J_i$.

    \item (\emph{Mixed case}) If $d_1\to\infty$ and $d_2$ remains bounded (or vice versa), then the limiting behavior is determined by the spectral radius of the dispersal operator in the large-diffusion species. For instance, if $d_1\to\infty$ and $d_2$ bounded,
    \[
    \lambda^\ast(d_1,d_2)\;\longrightarrow\;
    \begin{cases}
      +\infty, & r_1>1,\\[4pt]
      K, & r_1<1,\\[4pt]
      \lambda_\infty\in(K,\infty), & r_1=1.
    \end{cases}
    \]
\end{enumerate}
\end{lemma}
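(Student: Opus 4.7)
The plan is to reduce each assertion to the spectral-radius characterization already established earlier in the section, namely that $\lambda^\ast(d_1,d_2)$ is the unique $\xi>K$ solving $\rho(A_{d_1,d_2}(\xi))=1$, where
\[
A_{d_1,d_2}(\xi):=\mathscr{J}_{d_1,d_2}\,(\mathscr{T}_{d_1,d_2}+\xi I)^{-1}
\]
is compact, positive, and the map $\xi\mapsto \rho(A_{d_1,d_2}(\xi))$ is continuous, strictly decreasing on $(K,\infty)$, blowing up as $\xi\downarrow K$ and vanishing as $\xi\to\infty$ (Lemmas~\ref{lem:3.1}--\ref{lemma33}). With this pivotal fact, each regime will follow from combining (a)~continuous dependence of $A_{d_1,d_2}(\xi)$ on $(d_1,d_2)$ in operator norm, (b)~continuity of the spectral radius under norm-perturbations of compact positive operators, and (c)~a root-continuation argument based on strict monotonicity in $\xi$.

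For the small-diffusion limit, I would first observe that $d\mapsto \mathscr{J}_{d_1,d_2}$ and $d\mapsto \mathscr{T}_{d_1,d_2}$ depend affinely on $(d_1,d_2)$, hence converge in operator norm to $\mathscr{J}_{0,0}$ and $\mathscr{T}_{0,0}$ as $(d_1,d_2)\to(0,0)$. The resolvent identity together with the uniform bound $\|(\mathscr{T}_{d_1,d_2}+\xi I)^{-1}\|\leq 1/(\xi-K)$ gives $R_{d_1,d_2}(\xi)\to R_{0,0}(\xi)$ in operator norm for every fixed $\xi>K$, uniformly on compact $\xi$-subintervals. Consequently $\rho(A_{d_1,d_2}(\xi))\to\rho(A_{0,0}(\xi))$, and the implicit-function argument already employed in Proposition~\ref{prop:monotonicity} yields $\lambda^\ast(d_1,d_2)\to\lambda^\ast(0,0)$.

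The large-diffusion regime requires an explicit asymptotic expansion. I would write the diagonal part of the transport operator as $\mathscr{T}_{d_1,d_2}^{(i)}=-d_i I + B_i$ with $B_i$ bounded, then Neumann-expand
\[
(\mathscr{T}_{d_1,d_2}+\xi I)^{-1}
= -\operatorname{diag}\!\Big(\tfrac{1}{d_1}I,\tfrac{1}{d_2}I\Big) + O\!\big(\min(d_1,d_2)^{-2}\big)
\]
in operator norm. Since $\mathscr{J}_{d_1,d_2}=\operatorname{diag}(d_1K_1,d_2K_2)+O(1)$, multiplication yields $A_{d_1,d_2}(\xi)\to -\operatorname{diag}(K_1,K_2)$ in operator norm for every fixed $\xi>K$. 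By positivity of $K_i$, one has $\rho(-K_i)=\rho(K_i)=r_i$, so the limit of the spectral radius is precisely $r=\max\{r_1,r_2\}$. The trichotomy now follows by testing the root condition $\rho(A_{d_1,d_2}(\xi))=1$ against the fixed limiting value~$r$: if $r>1$ then for arbitrary $\xi_0>K$ one has $\rho(A_{d_1,d_2}(\xi_0))>1$ eventually, forcing $\lambda^\ast(d_1,d_2)>\xi_0$ and hence $\lambda^\ast(d_1,d_2)\to\infty$; if $r<1$ then for any $\varepsilon>0$, $\rho(A_{d_1,d_2}(K+\varepsilon))<1$ eventually, trapping $\lambda^\ast(d_1,d_2)\in(K,K+\varepsilon)$; and the mixed case is identical, only one of $K_1,K_2$ surviving the projection.

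The main obstacle is the borderline case $r=1$ (or $r_1=1$ in the mixed setting). Here the family $\rho(A_{d_1,d_2}(\xi))\to 1$ pointwise for every fixed $\xi>K$, and the characterization only yields boundedness of $\lambda^\ast(d_1,d_2)$ with accumulation points $\lambda_\infty$ satisfying a reduced balance relation. To upgrade subsequential convergence to genuine convergence, one must invoke an additional spectral-gap hypothesis on $K_i$, ensuring that the eigenvalue~$1$ of $K_i$ is algebraically simple and isolated in the spectrum. Under such a hypothesis, Kato's perturbation theory for isolated simple eigenvalues combined with a Lyapunov--Schmidt reduction onto the principal eigenspace of $K_i$ identifies a unique reduced limit problem, from which uniqueness of the limit $\lambda_\infty\in(K,\infty)$ is read off. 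Absent such a spectral gap, only the existence of accumulation points obeying the limiting balance condition can be asserted, which is precisely the qualification appearing in the statement.
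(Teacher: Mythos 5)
Your proposal follows the paper's proof essentially verbatim: the same reduction to the root equation $\rho(A_{d_1,d_2}(\xi))=1$, the same operator-norm continuity argument for small diffusion, the same Neumann expansion yielding $A_{d_1,d_2}(\xi)\to -\operatorname{diag}(K_1,K_2)$ for large diffusion, the same trichotomy via the pointwise limit $r=\max\{r_1,r_2\}$, and the same projection argument for the mixed case. Your explicit caveat that the borderline $r=1$ case requires a spectral-gap hypothesis to promote subsequential limits to full convergence is a slightly cleaner articulation of what the paper states only implicitly, but the underlying reasoning is identical.
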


\begin{proof}
Fix $\xi>K$. Define the compact, strongly positive operator
\[
A_{d_1,d_2}(\xi):=\mathscr{J}_{d_1,d_2}\,(\mathscr{T}_{d_1,d_2}+\xi I)^{-1}.
\]
By construction, $\lambda^\ast(d_1,d_2)$ is the unique $\xi>K$ such that
\[
\rho(A_{d_1,d_2}(\xi))=1.
\]
Moreover, the map $\xi\mapsto \rho(A_{d_1,d_2}(\xi))$ is continuous and strictly decreasing, with
\(\rho(A_{d_1,d_2}(\xi))\to 0\) as $\xi\to\infty$.

\smallskip\noindent
\emph{(i) Small diffusion.}
As $(d_1,d_2)\to(0,0)$, the operators $\mathscr{J}_{d_1,d_2}\to\mathscr{J}_{0,0}$ and
$\mathscr{T}_{d_1,d_2}\to\mathscr{T}_{0,0}$ in operator norm. Hence
\[
A_{d_1,d_2}(\xi)\to A_{0,0}(\xi) \quad \text{in operator norm.}
\]
Thus $\rho(A_{d_1,d_2}(\xi))\to\rho(A_{0,0}(\xi))$ uniformly for $\xi$ in compact subsets of $(K,\infty)$.
By the implicit root condition, it follows that
\(\lambda^\ast(d_1,d_2)\to\lambda^\ast(0,0)\).

\smallskip\noindent
\emph{(ii) Large diffusion.}
For large $d_1,d_2$,
\[
\mathscr{J}_{d_1,d_2}=\begin{pmatrix}d_1 K_1&0\\0&d_2 K_2\end{pmatrix}+O(1),
\qquad
\mathscr{T}_{d_1,d_2}=-\begin{pmatrix}d_1&0\\0&d_2\end{pmatrix}+B,
\]
where $B$ is bounded independently of $d_1,d_2$. Expanding the resolvent yields
\[
(\mathscr{T}_{d_1,d_2}+\xi I)^{-1}
= -\begin{pmatrix}\tfrac{1}{d_1}I & 0\\ 0 & \tfrac{1}{d_2}I\end{pmatrix}+o(1).
\]
Therefore,
\[
A_{d_1,d_2}(\xi)\;\longrightarrow\; -\begin{pmatrix}K_1&0\\0&K_2\end{pmatrix}
\quad\text{in operator norm.}
\]
Consequently,
\[
\lim_{\min\{d_1,d_2\}\to\infty}\rho(A_{d_1,d_2}(\xi))=\max\{\rho(K_1),\rho(K_2)\}=r.
\]
If $r>1$, then for any fixed $\xi$, $\rho(A_{d_1,d_2}(\xi))>1$ when $d_1,d_2$ are large, forcing
$\lambda^\ast(d_1,d_2)\to+\infty$.
If $r<1$, then for $\xi=K+\varepsilon$ we have $\rho(A_{d_1,d_2}(\xi))<1$ for large $d_i$, so
$\lambda^\ast(d_1,d_2)\in(K,K+\varepsilon)$. Letting $\varepsilon\to0$ gives $\lambda^\ast(d_1,d_2)\to K$.
If $r=1$, then $\rho(A_{d_1,d_2}(\xi))\to 1$ for every fixed $\xi$, so $\lambda^\ast(d_1,d_2)$ stays bounded
and accumulation points $\lambda_\infty$ are determined by the reduced limit problem.

\smallskip\noindent
\emph{(iii) Mixed case.}
Suppose $d_1\to\infty$ and $d_2$ bounded. Then
\[
A_{d_1,d_2}(\xi)\;\longrightarrow\; -\begin{pmatrix}K_1&0\\0&0\end{pmatrix}.
\]
Hence the spectral radius is governed by $r_1=\rho(K_1)$, and the same trichotomy as in (ii) applies with $r_1$ in place of $r$.
The case $d_2\to\infty$ and $d_1$ bounded is symmetric.

This completes the proof.
\end{proof}
\begin{lemma}\label{lem:lambdaLipschitz}
Assume the standing hypotheses (positivity, compactness, regularity, irreducibility) so that for every $\mathcal Z>0$ the principal eigenvalue
$\lambda^\ast(\mathcal Z)$ of problem (3.2) on $[-\mathcal Z,\mathcal Z]$ exists and admits a strictly positive continuous eigenfunction
$\psi^{(\mathcal Z)}=(\psi_1^{(\mathcal Z)},\psi_2^{(\mathcal Z)})$.

Fix $0<\mathcal Z_{\min}<\mathcal Z_{\max}<\infty$. Then there exists a constant $C=C(\mathcal Z_{\max})>0$ (depending on the kernels $J_i$, the parameters, and $\mathcal Z_{\max}$) such that for any
$\mathcal Z_1,\mathcal Z_2\in[\mathcal Z_{\min},\mathcal Z_{\max}]$ with $\mathcal Z_1\le\mathcal Z_2$ we have
\[
0 \;\leq\; \lambda^\ast(\mathcal Z_1) - \lambda^\ast(\mathcal Z_2)
\;\leq\; C\,|\mathcal Z_2-\mathcal Z_1| .
\]
In particular \(\mathcal Z\mapsto\lambda^\ast(\mathcal Z)\) is Lipschitz on every compact subinterval of \((0,\infty)\).
\end{lemma}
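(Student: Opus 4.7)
The plan is to reduce the Lipschitz estimate for $\mathcal Z\mapsto\lambda^\ast(\mathcal Z)$ to a quantitative perturbation estimate for the spectral radius $\rho_{\mathcal Z}(\xi):=\rho\bigl(\mathscr J_{\mathcal Z}(\mathscr T+\xi I)^{-1}\bigr)$ and then to invert the implicit equation $\rho_{\mathcal Z}(\xi(\mathcal Z))=1$, where $\xi(\mathcal Z):=-\lambda^\ast(\mathcal Z)$. Throughout I would confine all parameters to the compact box $\mathcal Z\in[\mathcal Z_{\min},\mathcal Z_{\max}]$ and $\xi\in[\xi_1,\xi_2]\subset(K,\infty)$; the availability of such an a priori interval for $\xi(\mathcal Z)$ is guaranteed by Lemmas~\ref{lem:3.1}--\ref{lemma33} and Proposition~\ref{prop:monotonicity}. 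On this compact region the resolvent $R(\xi):=(\mathscr T+\xi I)^{-1}$ is uniformly bounded, and normalizing the principal eigenfunctions $\varphi_{\mathcal Z}(\xi)$ of $A_{\mathcal Z}(\xi):=\mathscr J_{\mathcal Z}R(\xi)$ by $\|\varphi_{\mathcal Z}\|_\infty=1$, I would further establish (via a Harnack-type lower bound adapted to the nonlocal cooperative system with drift, in the spirit of the coercivity estimates of Lemma~\ref{lem:3.1}) a uniform pointwise bound $\varphi_{\mathcal Z}(\xi)\ge c_0>0$ and an analogous bound for the dual eigenfunction $w_{\mathcal Z}^{\ast}(\xi)$.

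First, I would prove the operator-norm Lipschitz estimate
\[
\bigl\|A_{\mathcal Z_2}(\xi)-A_{\mathcal Z_1}(\xi)\bigr\|_{\mathcal L(X)}
\le 2\bigl(d_1\|J_1\|_\infty+d_2\|J_2\|_\infty\bigr)\,\|R(\xi)\|_{\mathcal L(X)}\,(\mathcal Z_2-\mathcal Z_1)
\le C_1\,(\mathcal Z_2-\mathcal Z_1),
\]
valid for $\mathcal Z_{\min}\le\mathcal Z_1\le\mathcal Z_2\le\mathcal Z_{\max}$, since $\mathscr J_{\mathcal Z_2}-\mathscr J_{\mathcal Z_1}$ amounts to integrating $J_i$ over the symmetric strip $[-\mathcal Z_2,-\mathcal Z_1]\cup[\mathcal Z_1,\mathcal Z_2]$ of total length $2(\mathcal Z_2-\mathcal Z_1)$. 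Next, writing $A_{\mathcal Z_2}(\xi)\varphi_{\mathcal Z_1}=\rho_{\mathcal Z_1}(\xi)\varphi_{\mathcal Z_1}+\bigl(A_{\mathcal Z_2}(\xi)-A_{\mathcal Z_1}(\xi)\bigr)\varphi_{\mathcal Z_1}$ and applying the Collatz--Wielandt characterization of the spectral radius for irreducible compact positive operators, I obtain
\[
\rho_{\mathcal Z_2}(\xi)-\rho_{\mathcal Z_1}(\xi)
\le \sup_x\frac{\bigl|(A_{\mathcal Z_2}(\xi)-A_{\mathcal Z_1}(\xi))\varphi_{\mathcal Z_1}(x)\bigr|}{\varphi_{\mathcal Z_1}(x)}
\le \frac{C_1}{c_0}(\mathcal Z_2-\mathcal Z_1).
\]
A symmetric application with $\varphi_{\mathcal Z_2}$ restricted to $[-\mathcal Z_1,\mathcal Z_1]$, together with the order inclusion $A_{\mathcal Z_2}\ge A_{\mathcal Z_1}$ exploited in Proposition~\ref{prop:monotonicity}, yields the two-sided bound $|\rho_{\mathcal Z_2}(\xi)-\rho_{\mathcal Z_1}(\xi)|\le C_2(\mathcal Z_2-\mathcal Z_1)$ uniformly in $\xi\in[\xi_1,\xi_2]$.

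To conclude, I would transfer this bound to $\xi(\mathcal Z)$ through a uniform non-degeneracy of the crossing condition. Differentiating $A_{\mathcal Z}(\xi)=\mathscr J_{\mathcal Z}R(\xi)$ in $\xi$ and invoking the Hadamard-type formula from Theorem~\ref{thm:eigen-properties} applied to the scalar parameter $\xi$ gives
\[
\frac{\partial\rho_{\mathcal Z}}{\partial\xi}(\xi)
=-\bigl\langle w_{\mathcal Z}^{\ast}(\xi),\,\mathscr J_{\mathcal Z}\,R(\xi)^{2}\,\varphi_{\mathcal Z}(\xi)\bigr\rangle<0,
\]
and the uniform lower bounds on $\varphi_{\mathcal Z}$ and $w_{\mathcal Z}^{\ast}$ combined with the positivity of $\mathscr J_{\mathcal Z}R(\xi)^{2}$ upgrade pointwise negativity to the uniform estimate $\partial_\xi\rho_{\mathcal Z}(\xi)\le -c_1<0$ on the compact box. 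Together with the previous step and the identity $\rho_{\mathcal Z}(\xi(\mathcal Z))=1$, this yields
\[
|\xi(\mathcal Z_2)-\xi(\mathcal Z_1)|\le \frac{C_2}{c_1}\,(\mathcal Z_2-\mathcal Z_1),
\]
i.e.\ the announced Lipschitz estimate with $C:=C_2/c_1$. The hard part will be securing the two uniform bounds $\varphi_{\mathcal Z}(\xi)\ge c_0$ and $\partial_\xi\rho_{\mathcal Z}\le -c_1<0$: both require Harnack-type compactness arguments for the nonlocal cooperative system with drift, a setting in which the lack of variational structure and the presence of first-order advection terms rule out the simpler $L^{2}$-energy techniques available for the self-adjoint nonlocal problems treated in \cite{NguyenVo2022,ThuVo2025}.
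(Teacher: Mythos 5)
Your approach is genuinely different from the paper's, and also more elaborate than it needs to be. The paper proves the estimate by a direct test-function argument: take the positive eigenfunction $\psi=\psi^{(\mathcal Z_2)}$ on the larger domain normalized by $\max\psi=1$, restrict it to $[-\mathcal Z_1,\mathcal Z_1]$, observe that the only discrepancy between the eigen-equation on $[-\mathcal Z_2,\mathcal Z_2]$ and the test-function inequality on $[-\mathcal Z_1,\mathcal Z_1]$ is the extra kernel integral over the annulus $[-\mathcal Z_2,\mathcal Z_2]\setminus[-\mathcal Z_1,\mathcal Z_1]$, whose length is $2(\mathcal Z_2-\mathcal Z_1)$, bound it pointwise by $2\max_i d_i\|J_i\|_\infty\,(\mathcal Z_2-\mathcal Z_1)$, and then invoke the super-/sub-solution characterization of $\lambda^\ast$ (Lemma~\ref{LemmaUpperLower}) together with the interior lower bound $m:=\min_{[-\mathcal Z_1,\mathcal Z_1]}\min_i\psi_i>0$. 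This gives $C=2\max_i d_i\|J_i\|_\infty/m$ directly, with no implicit-function argument and no need for the non-degeneracy $\partial_\xi\rho_{\mathcal Z}\leq -c_1<0$. Both routes end up relying on essentially the same Harnack-type ingredient (a uniform lower bound for the eigenfunction on the smaller interval), but the paper's argument avoids the entire resolvent/spectral-radius machinery, making it much shorter.

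Two concrete issues in your version are worth flagging. First, the resolvent $R(\xi)=(\mathscr T_{\mathcal Z}+\xi I)^{-1}$ itself depends on $\mathcal Z$ (the operator $\mathscr T_{\mathcal Z}$ lives on $[-\mathcal Z,\mathcal Z]$ and includes the drift and boundary conditions), so the decomposition $A_{\mathcal Z_2}(\xi)-A_{\mathcal Z_1}(\xi)$ is \emph{not} simply $(\mathscr J_{\mathcal Z_2}-\mathscr J_{\mathcal Z_1})R(\xi)$: you would also need a Lipschitz estimate for $\mathcal Z\mapsto R_{\mathcal Z}(\xi)$, and comparing resolvents across different intervals requires a careful extension/restriction setup that your proposal does not address. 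Second, your Collatz--Wielandt step applied to $\varphi_{\mathcal Z_1}$ extended by zero to $[-\mathcal Z_2,\mathcal Z_2]$ runs into a division by zero on the annulus, where $\varphi_{\mathcal Z_1}=0$ but $A_{\mathcal Z_2}(\xi)\varphi_{\mathcal Z_1}>0$; the symmetric application with $\varphi_{\mathcal Z_2}|_{[-\mathcal Z_1,\mathcal Z_1]}$ is the one that actually goes through (since that restriction is strictly positive), but this requires the uniform lower bound you yourself identify as the hard step. The paper's route needs only the single Harnack-type lower bound for one fixed eigenfunction, not the uniform-in-$(\mathcal Z,\xi)$ version or the non-degeneracy of $\partial_\xi\rho$, so it is the more economical proof of this lemma.
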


\begin{proof}
The monotonicity
\[
\mathcal Z_1\leq\mathcal Z_2 \quad\Longrightarrow\quad \lambda^\ast(\mathcal Z_1)\geq\lambda^\ast(\mathcal Z_2)
\]
 so, the left inequality is immediate.

It remains to prove the quantitative estimate. Fix $\mathcal Z_1,\mathcal Z_2$ with
$\mathcal Z_{\min}\leq\mathcal Z_1\leq\mathcal Z_2\le\mathcal Z_{\max}$. Let
\((\lambda^\ast(\mathcal Z_2),\psi)\) be the principal eigenpair on $[-\mathcal Z_2,\mathcal Z_2]$ with normalization
\(\displaystyle\max_{x\in[-\mathcal Z_2,\mathcal Z_2]}\max_{i=1,2}\psi_i(x)=1\). By positivity and continuity of \(\psi\) and compactness of \([-{\mathcal Z_1},{\mathcal Z_1}]\),
\[
m:=\min_{x\in[-\mathcal Z_1,\mathcal Z_1]}\min_{i=1,2}\psi_i(x) >0 .
\]
Write the first-component (block) of the operator in the form (componentwise)
\[
(\mathscr J_{\mathcal Z_1}\psi)_i(x)
= d_i\int_{-\mathcal Z_1}^{\mathcal Z_1} J_i(x-y)\psi_i(y)\,dy,
\qquad x\in[-\mathcal Z_1,\mathcal Z_1].
\]
Then for \(x\in[-\mathcal Z_1,\mathcal Z_1]\) and each \(i\),
\[
\begin{aligned}
\big(\mathscr J_{\mathcal Z_1}\psi\big)_i(x) + \lambda^\ast(\mathcal Z_2)\psi_i(x)
&= d_i\int_{-\mathcal Z_1}^{\mathcal Z_1} J_i(x-y)\psi_i(y)\,dy + \lambda^\ast(\mathcal Z_2)\psi_i(x)\\
&= d_i\int_{-\mathcal Z_2}^{\mathcal Z_2} J_i(x-y)\psi_i(y)\,dy
     - d_i\int_{\mathcal Z_2\setminus\mathcal Z_1} J_i(x-y)\psi_i(y)\,dy \\
&\qquad\; + \lambda^\ast(\mathcal Z_2)\psi_i(x)\\
&= \big(\mathscr J_{\mathcal Z_2}\psi\big)_i(x) + \lambda^\ast(\mathcal Z_2)\psi_i(x)
     - d_i\int_{[-\mathcal Z_2,\mathcal Z_2]\setminus[-\mathcal Z_1,\mathcal Z_1]} J_i(x-y)\psi_i(y)\,dy.
\end{aligned}
\]
Since \((\lambda^\ast(\mathcal Z_2),\psi)\) is an eigenpair on the larger domain,
\(\mathscr J_{\mathcal Z_2}\psi + \lambda^\ast(\mathcal Z_2)\psi \leq 0\) (indeed equality on the larger domain),
hence
\[
\big(\mathscr J_{\mathcal Z_1}\psi\big)_i(x) + \lambda^\ast(\mathcal Z_2)\psi_i(x)
\geq - d_i\int_{[-\mathcal Z_2,\mathcal Z_2]\setminus[-\mathcal Z_1,\mathcal Z_1]} J_i(x-y)\psi_i(y)\,dy.
\]
Using the normalization \(\psi_i\leq 1\) and the boundedness of the kernels \(J_i\) we obtain
\[
\big(\mathscr J_{\mathcal Z_1}\psi\big)_i(x) + \lambda^\ast(\mathcal Z_2)\psi_i(x)
\geq - d_i\|J_i\|_\infty\, \big|[-\mathcal Z_2,\mathcal Z_2]\setminus[-\mathcal Z_1,\mathcal Z_1]\big|
= - 2d_i\|J_i\|_\infty \,(\mathcal Z_2-\mathcal Z_1).
\]

Therefore, in $[-\mathcal Z_1,\mathcal Z_1]$, we have
\[
\big(\mathscr J_{\mathcal Z_1} + (\lambda^\ast(\mathcal Z_2) + C_1(\mathcal Z_2-\mathcal Z_1)) I\big)[\psi]
\geq 0,
\]
with
\[
C_1:=2\max_{i=1,2} d_i\|J_i\|_\infty.
\]

Now use the same lower-bound trick as in Lemma 3.7 (or the usual test-function variational inequality): because \(\psi \geq m>0\) on \([-{\mathcal Z_1},{\mathcal Z_1}]\), we can rewrite the previous inequality as
\[
\big(\mathscr J_{\mathcal Z_1} + (\lambda^\ast(\mathcal Z_2) + C_1(\mathcal Z_2-\mathcal Z_1)) I\big)[\psi]
\geq m\,C_1(\mathcal Z_2-\mathcal Z_1)\mathbf{1}\geq 0,
\]
which implies that \(\lambda^\ast(\mathcal Z_1)\leq \lambda^\ast(\mathcal Z_2) + C(\mathcal Z_{\max}) (\mathcal Z_2-\mathcal Z_1)\) for an appropriate constant
\[
C(\mathcal Z_{\max}) := \frac{C_1}{m} = \frac{2\max_i d_i\|J_i\|_\infty}{\min_{x\in[-\mathcal Z_{\max},\mathcal Z_{\max}]}\min_i\psi_i^{(\mathcal Z_{\max})}(x)} .
\]
Combining this with the monotonicity inequality gives the desired two-sided bound
\[
0 \leq \lambda^\ast(\mathcal Z_1) - \lambda^\ast(\mathcal Z_2) \leq C(\mathcal Z_{\max})\,(\mathcal Z_2-\mathcal Z_1).
\]

Since the constant depends only on \(\mathcal Z_{\max}\) (and structural data \(J_i,d_i\)), the map \(\mathcal Z\mapsto\lambda^\ast(\mathcal Z)\) is Lipschitz on \([\mathcal Z_{\min},\mathcal Z_{\max}]\). As \(\mathcal Z_{\min},\mathcal Z_{\max}\) were arbitrary compact bounds in \((0,\infty)\), \(\lambda^\ast\) is locally Lipschitz on \((0,\infty)\).
\end{proof}

\begin{lemma}[\textbf{the Hadamard-type formula}]\label{lem:lambdaC1}
Let $\mathcal Z>0$ be fixed and assume the standing hypotheses (positivity, compactness, regularity, irreducibility) so that for each
$d=(d_1,d_2)\in U\subset(0,\infty)^2$ (an open neighborhood) the operator
\[
\mathcal L_d := \mathscr J_d - \mathscr T_d
\]
on the chosen Banach lattice \(X\) admits a principal eigenvalue \(\lambda^\ast(d)\) which is simple and isolated, with corresponding strictly positive eigenfunction
\(\varphi(d)\in\mathcal D\subset X\). Assume moreover the adjoint operator \(\mathcal L_d^*\) has a (strictly) positive eigenfunction \(w^*(d)\in X^*\) associated with the same eigenvalue, and normalize the pair so that \(\langle w^*(d),\varphi(d)\rangle =1\).

If the map \(d\mapsto\mathcal L_d\) is \(C^1\) (in operator norm) on \(U\) — in particular here \(\mathcal L_d\) depends affinely on \(d\) — then
\[
d \mapsto \lambda^\ast(d)
\]
is $C^1$ on \(U\). In addition, the partial derivatives are given by the Hadamard-type formula (see Benguria et al. \cite{Benguria2024}),
\[
\frac{\partial \lambda^\ast}{\partial d_j}(d)
= \big\langle w^*(d)\,,\; \partial_{d_j}\mathcal L_d\,[\varphi(d)]\big\rangle,
\qquad j=1,2,
\]
where the duality pairing $\langle\cdot,\cdot\rangle$ is between $X^*$ and $X$.

In particular, for your operator (with componentwise form)
\[
\mathcal L_d(\psi_1,\psi_2)(x)=
\begin{pmatrix}
d_1\!\displaystyle\int J_1(x-y)\psi_1(y)\,dy - d_1\psi_1(x) - a(x)\psi_1(x) + H'(0)\psi_2(x) + p\psi_1'(x)\\[6pt]
d_2\!\displaystyle\int J_2(x-y)\psi_2(y)\,dy - d_2\psi_2(x) - b(x)\psi_2(x) + G'(0)\psi_1(x) + q\psi_2'(x)
\end{pmatrix},
\]
the partial derivatives read
\[
\frac{\partial\lambda^\ast}{\partial d_1}(d)
= \Big\langle w^*(d),\;\big(\,K_1\varphi_1 - \varphi_1,\;0\big)\Big\rangle
= \int_{-\mathcal Z}^{\mathcal Z} w_1^*(x)\Big(\int_{-\mathcal Z}^{\mathcal Z}J_1(x-y)\varphi_1(y)\,dy - \varphi_1(x)\Big)\,dx,
\]
and similarly
\[
\frac{\partial\lambda^\ast}{\partial d_2}(d)
= \int_{-\mathcal Z}^{\mathcal Z} w_2^*(x)\Big(\int_{-\mathcal Z}^{\mathcal Z}J_2(x-y)\varphi_2(y)\,dy - \varphi_2(x)\Big)\,dx.
\]
\end{lemma}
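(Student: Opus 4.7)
The plan is to combine analytic perturbation theory for simple isolated eigenvalues with the adjoint duality identity to first secure the $C^{1}$ regularity of $d\mapsto\lambda^{\ast}(d)$ and then extract the Hadamard formula by differentiating the eigenvalue equation. Since $\lambda^{\ast}(d_{0})$ is assumed simple and isolated in the spectrum of $\mathcal L_{d_{0}}$, I will pick a small positively oriented circle $\Gamma\subset\mathbb C$ enclosing $\lambda^{\ast}(d_{0})$ and no other spectral point, and introduce the Riesz spectral projection
\[
P(d):=-\frac{1}{2\pi i}\oint_{\Gamma}(\mathcal L_{d}-zI)^{-1}\,dz,
\]
which is well defined for all $d$ in a neighborhood $U_{0}\subset U$ of $d_{0}$ by the continuity of the resolvent.

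My first step is to verify that $d\mapsto P(d)$ is of class $C^{1}$ in operator norm. Because $\mathcal L_{d}$ is $C^{1}$ in $d$ (indeed affine, since $\mathcal L_{d}=\mathscr J_{d}-\mathscr T_{d}$ depends linearly on the block entries $d_{1}K_{1}-d_{1}I$ and $d_{2}K_{2}-d_{2}I$), the resolvent identity
\[
(\mathcal L_{d}-zI)^{-1}-(\mathcal L_{d_{0}}-zI)^{-1}
=(\mathcal L_{d}-zI)^{-1}(\mathcal L_{d_{0}}-\mathcal L_{d})(\mathcal L_{d_{0}}-zI)^{-1}
\]
shows that $(\mathcal L_{d}-zI)^{-1}$ is $C^{1}$ in $d$ uniformly in $z\in\Gamma$, which transfers to $P(d)$ after integration. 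Continuity of rank-$1$ projections then forces $\mathrm{rank}\,P(d)=1$ for $d\in U_{0}$, so $\lambda^{\ast}(d)$ remains simple and isolated, and can be recovered by the trace formula
\[
\lambda^{\ast}(d)=\mathrm{tr}\bigl(\mathcal L_{d}P(d)\bigr),
\]
since $\mathrm{tr}\,P(d)=1$. This exhibits $\lambda^{\ast}$ as a $C^{1}$ function on $U_{0}$.

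Next, to derive the Hadamard-type derivative formula I will use the fact that the projection $P(d)$ admits, near $d_{0}$, a canonical factorization $P(d)\psi=\langle w^{\ast}(d),\psi\rangle\,\varphi(d)$ with normalization $\langle w^{\ast}(d),\varphi(d)\rangle=1$, where $\varphi(d)$ and $w^{\ast}(d)$ are $C^{1}$ selections of positive eigenfunctions of $\mathcal L_{d}$ and $\mathcal L_{d}^{\ast}$ respectively (obtainable by normalizing $P(d)\varphi(d_{0})$ and $P(d)^{\ast}w^{\ast}(d_{0})$ and using the strict positivity provided by Kre\u\i n--Rutman together with the standing irreducibility assumption). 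Differentiating the identity $\mathcal L_{d}\varphi(d)=\lambda^{\ast}(d)\varphi(d)$ in direction $d_{j}$ gives
\[
\partial_{d_{j}}\mathcal L_{d}\,\varphi(d)+\mathcal L_{d}\,\partial_{d_{j}}\varphi(d)
=\partial_{d_{j}}\lambda^{\ast}(d)\,\varphi(d)+\lambda^{\ast}(d)\,\partial_{d_{j}}\varphi(d).
\]
Pairing with $w^{\ast}(d)$ and invoking $\mathcal L_{d}^{\ast}w^{\ast}(d)=\lambda^{\ast}(d)w^{\ast}(d)$ cancels the two terms involving $\partial_{d_{j}}\varphi(d)$, and the normalization $\langle w^{\ast}(d),\varphi(d)\rangle=1$ yields the claimed identity
\[
\frac{\partial\lambda^{\ast}}{\partial d_{j}}(d)=\bigl\langle w^{\ast}(d),\,\partial_{d_{j}}\mathcal L_{d}[\varphi(d)]\bigr\rangle.
\]
Specializing $\partial_{d_{j}}\mathcal L_{d}$ component by component (only the $j$-th block contributes, giving $K_{j}\varphi_{j}-\varphi_{j}$) delivers the explicit integral representation stated in the lemma.

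The main obstacle, and the point that requires the greatest care, is the construction of a genuinely $C^{1}$ selection of positive eigenfunctions $\varphi(d)$ and $w^{\ast}(d)$ in the Banach lattice setting with the non-self-adjoint advective operator $\mathscr T_{d}$; a priori, only the rank-$1$ projection $P(d)$ is $C^{1}$, not its factors. I will resolve this by fixing normalizing functionals $\ell,\ell^{\ast}$ with $\ell(\varphi(d_{0}))=1=\ell^{\ast}(w^{\ast}(d_{0}))$ and setting $\varphi(d):=P(d)\varphi(d_{0})/\ell(P(d)\varphi(d_{0}))$, $w^{\ast}(d):=P(d)^{\ast}w^{\ast}(d_{0})/\ell^{\ast}(P(d)^{\ast}w^{\ast}(d_{0}))$; the $C^{1}$ regularity of $P(d)$ (and the non-vanishing of the denominators near $d_{0}$ by continuity and strict positivity) then transfers to $\varphi(d)$ and $w^{\ast}(d)$. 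An equivalent route, which I may use as a cross-check, is to apply the implicit function theorem to the map $F(d,\psi,\lambda):=(\mathcal L_{d}\psi-\lambda\psi,\,\ell(\psi)-1)$ at $(d_{0},\varphi(d_{0}),\lambda^{\ast}(d_{0}))$, whose Fr\'echet derivative in $(\psi,\lambda)$ is an isomorphism precisely because $\lambda^{\ast}(d_{0})$ is algebraically simple, as established in Theorem~\ref{Principlaeignevalue}.
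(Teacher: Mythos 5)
Your proposal is correct, and the core computation that produces the Hadamard formula is identical to the paper's: differentiate the eigen-equation $\mathcal L_d\varphi(d)=\lambda^{\ast}(d)\varphi(d)$ in $d_j$, pair with the adjoint eigenfunction $w^{\ast}(d)$, use $\mathcal L_d^{\ast}w^{\ast}(d)=\lambda^{\ast}(d)w^{\ast}(d)$ to cancel the $\partial_{d_j}\varphi$ terms, and invoke $\langle w^{\ast},\varphi\rangle=1$ to isolate $\partial_{d_j}\lambda^{\ast}$. Where you diverge is in the supporting step: the paper dispatches the $C^1$ dependence of $(\lambda^{\ast}(d),\varphi(d),w^{\ast}(d))$ with a one-line citation to ``classical perturbation theory (Kato),'' while you rebuild that machinery from scratch via the Riesz projection $P(d)=-\tfrac{1}{2\pi i}\oint_\Gamma(\mathcal L_d-z)^{-1}dz$, the resolvent identity to show $P(d)$ is $C^1$ in operator norm, rank stability, the trace formula $\lambda^{\ast}(d)=\mathrm{tr}(\mathcal L_dP(d))$, and the renormalized selections $\varphi(d)=P(d)\varphi(d_0)/\ell(P(d)\varphi(d_0))$, $w^{\ast}(d)=P(d)^{\ast}w^{\ast}(d_0)/\ell^{\ast}(P(d)^{\ast}w^{\ast}(d_0))$. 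This is the right way to make Kato's citation precise, and it correctly isolates the only genuinely delicate point --- extracting $C^1$ eigenvector selections from a $C^1$ rank-one projection in a non-self-adjoint Banach-lattice setting --- which the paper leaves implicit; your alternative implicit-function-theorem route for the same step is also sound, provided one notes that algebraic simplicity of $\lambda^{\ast}(d_0)$ is exactly what makes the Fr\'echet derivative $(\psi,\lambda)\mapsto(\mathcal L_{d_0}\psi-\lambda\varphi(d_0)-\lambda^{\ast}(d_0)\psi,\,\ell(\psi))$ an isomorphism. In short: same proof, but you have filled in the perturbation-theoretic prerequisites the paper takes for granted, which is a net gain in rigor rather than a different approach.
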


\begin{proof}
Since \(\lambda^\ast(d)\) is simple and isolated for \(d\in U\), classical perturbation theory (see Kato) implies that the eigenpair \((\lambda^\ast(d),\varphi(d))\) and the adjoint eigenvector \(w^*(d)\) depend \(C^1\) on \(d\), provided \(d\mapsto\mathcal L_d\) is \(C^1\) in operator norm. (In our setting \(\mathcal L_d\) depends affinely on \(d\), so this hypothesis is satisfied.)

Differentiate the eigen-equation
\[
\mathcal L_d\varphi(d)=\lambda^\ast(d)\varphi(d)
\]
with respect to \(d_j\). Denote \(\dot\lambda:=\partial_{d_j}\lambda^\ast(d)\) and \(\dot\varphi:=\partial_{d_j}\varphi(d)\). We get
\[
\partial_{d_j}\mathcal L_d\,[\varphi(d)] + \mathcal L_d[\dot\varphi] = \dot\lambda\,\varphi(d) + \lambda^\ast(d)\dot\varphi.
\]
Pair this identity with the adjoint eigenvector \(w^*(d)\) (which satisfies \(\mathcal L_d^* w^*(d)=\lambda^\ast(d) w^*(d)\)). Using the adjoint relation and the normalization \(\langle w^*,\varphi\rangle=1\), the terms containing \(\dot\varphi\) cancel:
\[
\langle w^*,\partial_{d_j}\mathcal L_d[\varphi]\rangle
+ \langle \mathcal L_d^* w^*,\dot\varphi\rangle
= \dot\lambda\langle w^*,\varphi\rangle + \lambda^\ast\langle w^*,\dot\varphi\rangle.
\]
But \(\mathcal L_d^* w^* = \lambda^\ast w^*\), so the second terms on both sides are equal and cancel, leaving
\[
\dot\lambda = \langle w^*,\partial_{d_j}\mathcal L_d[\varphi]\rangle,
\]
which proves the formula for \(\partial_{d_j}\lambda^\ast\). Smoothness of \(\lambda^\ast(d)\) follows from the smooth dependence of \(\mathcal L_d,\varphi,w^*\) on \(d\).
\end{proof}

\section{The spreading–vanishing dichotomy }

In this section, we investigate the spreading and vanishing dynamics of the solutions to problem \eqref{eq:main-problem}.

\begin{lemma}[Comparison Lemma with Nonlocal Diffusion and Conservative Drift]
Suppose that $T$ and $d_i$ for $i\in\{1,\ldots,m_0\}$ are positive constants,
$g,h\in C([0,T])$ satisfy $g(0)<0<h(0)$, and both $-g(t)$ and $h(t)$ are nondecreasing on $[0,T]$.
Let
\[
\Omega_T:=\{(t,x): t\in(0,T],\ x\in(g(t),h(t))\},
\]
with $m\ge m_0\ge 1$. For $i,j\in\{1,\ldots,m\}$ assume
$\phi_i,\ \partial_t\phi_i\in C(\overline{\Omega_T})$, $c_{ij}\in L^\infty(\Omega_T)$,
and $a_i\in C^1(\overline{\Omega_T})$. Assume further that, for $i=1,\dots,m_0$,
\[
\begin{cases}
\partial_t\phi_i \;\geq\; d_i\,\mathcal{L}_i[\phi_i]\;-\;\partial_x\big(a_i(t,x)\,\phi_i\big)
\;+\;\displaystyle\sum_{j=1}^m c_{ij}\phi_j,
& (t,x)\in\Omega_T,\\[1ex]
\partial_t\phi_i \;\ge\; -\partial_x\big(a_i(t,x)\,\phi_i\big)
\;+\;\displaystyle\sum_{j=1}^m c_{ij}\phi_j,
& (t,x)\in\Omega_T,\quad m_0+1\le i\leq m,\\[1ex]
\phi_i(t,g(t))\geq 0,\quad \phi_i(t,h(t))\geq 0, & t\in(0,T),\ 1\leq i\le m,\\[0.5ex]
\phi_i(0,x)\geq 0, & x\in[g(0),h(0)],\ 1\leq i\leq m,
\end{cases}
\]
where, for $i=1,\dots,m_0$,
\[
\mathcal{L}_i[v](t,x):=\int_{g(t)}^{h(t)} J_i(x-y)\,v(t,y)\,dy - v(t,x),
\]
and each kernel $J_i$ satisfies (J). Then the following hold:
\begin{enumerate}
  \item If $c_{ij}\geq 0$ on $\Omega_T$ for all $i\neq j$, then $\phi_i\geq 0$ on $\overline{\Omega_T}$ for every $i\in\{1,\ldots,m\}$.
  \item If in addition $\phi_{i_0}(0,x)\not\equiv 0$ on $[g(0),h(0)]$ for some $i_0\in\{1,\ldots,m_0\}$, then $\phi_{i_0}>0$ in $\Omega_T$.
\end{enumerate}
\end{lemma}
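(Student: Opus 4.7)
My plan is to reduce both claims to a single first-hitting-time maximum-principle argument for a weighted version of the system, and then to upgrade nonnegativity to strict positivity by a nonlocal propagation step. I would begin by rewriting the conservative drift as
\[
-\partial_x(a_i\phi_i) \;=\; -a_i\,\partial_x\phi_i \;-\; (\partial_x a_i)\,\phi_i,
\]
and introducing the exponential weight $\Phi_i:=e^{-\lambda t}\phi_i$ with
\[
\lambda \;>\; \max_{1\le i\le m}\Bigl(\,\sum_{j=1}^{m}\|c_{ij}\|_{L^\infty(\Omega_T)}+\|\partial_x a_i\|_{L^\infty(\Omega_T)}\Bigr).
\]
The resulting inequality for $\Phi_i$ reads
\[
\partial_t\Phi_i \;\geq\; \mathbf 1_{\{i\le m_0\}}\,d_i\,\mathcal L_i[\Phi_i] \;-\; a_i\,\partial_x\Phi_i \;+\; \widetilde c_{ii}\,\Phi_i \;+\;\sum_{j\ne i}c_{ij}\Phi_j,
\]
with $\widetilde c_{ii}:=c_{ii}-\partial_x a_i-\lambda$ strictly negative, the quasimonotonicity $c_{ij}\ge0$ ($i\ne j$) preserved, and boundary/initial signs unchanged.

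\medskip

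For part (1), I argue by contradiction: assume $M:=\inf_{1\le i\le m}\inf_{(t,x)\in\overline{\Omega_T}}\Phi_i(t,x)<0$. By continuity, the sign conditions at $t=0$ and on the lateral graphs $x=g(t), x=h(t)$, and compactness of the time slices $[g(t),h(t)]$, the infimum is attained at some $(t^*,x^*)$ with $t^*\in(0,T]$ and some index $i^*$. If $i^*\le m_0$ and $x^*$ is interior, Fermat gives $\partial_x\Phi_{i^*}(t^*,x^*)=0$ and $\partial_t\Phi_{i^*}(t^*,x^*)\le0$; the nonlocal term satisfies
\[
\mathcal L_{i^*}[\Phi_{i^*}](t^*,x^*) \;=\; \int_{g(t^*)}^{h(t^*)} J_{i^*}(x^*-y)\bigl(\Phi_{i^*}(t^*,y)-\Phi_{i^*}(t^*,x^*)\bigr)\,dy \;\geq\; 0;
\]
and cooperativity gives $c_{i^*j}\Phi_j(t^*,x^*)\ge c_{i^*j}\Phi_{i^*}(t^*,x^*)$ for $j\ne i^*$. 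Substituting, I obtain $0\ge\bigl(\widetilde c_{i^*i^*}+\sum_{j\ne i^*}c_{i^*j}\bigr)\Phi_{i^*}(t^*,x^*)$. By the choice of $\lambda$ the bracket is strictly negative while $\Phi_{i^*}(t^*,x^*)<0$, a contradiction.

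\medskip

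The two remaining subcases—$i^*>m_0$ (pure transport) and $x^*$ on the lateral boundary—are where the main obstacle lies. For $i^*>m_0$, the loss of nonlocal smoothing forces me to pull the inequality back along the backward characteristic $\dot X(s)=-a_{i^*}(s,X(s))$, $X(t^*)=x^*$: the cooperativity then yields a Gronwall-type inequality forcing $\Phi_{i^*}(t^*,x^*)\ge0$, since the characteristic trace on $\{t=0\}\cup\{x=g(t)\}\cup\{x=h(t)\}$ is nonnegative. For a lateral-boundary minimum, say $x^*=g(t^*)$, Fermat is only one-sided; here I would exploit the nondecreasing monotonicity of $-g$ and $h$ together with the sign of the conservative normal flux $-a_{i^*}\Phi_{i^*}\big|_{x=g(t)}$ (the lateral condition acts as a no-negative-inflow condition), following the scheme of \cite{Tang2024a}, to exclude the boundary as a location of the infimum. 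The delicate point, and what I expect to be the principal technical obstacle, is that the characteristics may strike the moving lateral boundaries before reaching $t=0$, forcing an iteration of the pullback across successive strikes while simultaneously tracking the coupling terms through the $\Phi_j$ with $j\ne i^*$ at each segment.

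\medskip

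For part (2), assuming part (1) gives $\phi_{i_0}\ge0$, I would run the same Fermat/nonlocal computation at a putative interior zero $\phi_{i_0}(t_1,x_1)=0$ with $(t_1,x_1)\in\Omega_T$. Since the integrand in $\mathcal L_{i_0}[\phi_{i_0}](t_1,x_1)$ is nonnegative and must vanish in the resulting equality, I get $J_{i_0}(x_1-y)\phi_{i_0}(t_1,y)=0$ for every $y\in[g(t_1),h(t_1)]$; because $J_{i_0}$ is continuous with $J_{i_0}(0)>0$, there is an open interval $I\ni x_1$ on which $\phi_{i_0}(t_1,\cdot)\equiv0$. Iterating this identity with $x_1$ replaced by points in $I$ enlarges $I$ to the whole slice $[g(t_1),h(t_1)]$; running the reduced transport-coupled ODE backward in time along characteristics, and invoking the cooperative coupling to re-enter the equation for $\phi_{i_0}$, then spreads the vanishing back to $t=0$, contradicting $\phi_{i_0}(0,\cdot)\not\equiv0$. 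Hence $\phi_{i_0}>0$ in $\Omega_T$, which completes the proposed proof.
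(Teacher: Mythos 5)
The paper does not actually supply a proof of this lemma: it is stated (together with the comparison lemma that follows it) without argument, in line with the paper's earlier remark that its maximum principle is ``almost similar to the proof of Lemma 3.1'' of \cite{DuNi2020}. So there is no in-paper proof to compare against; I assess your proposal on its own terms.

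Your core mechanism --- the exponential weight $\Phi_i=e^{-\lambda t}\phi_i$ with $\lambda$ large, then a first-touching-point/interior-minimum contradiction with cooperativity --- is the right and standard argument, and it is essentially complete. However, the ``principal technical obstacle'' you flag is not actually there, and the two auxiliary cases you are worried about collapse to the same Fermat computation as the main case. The lateral boundary is excluded outright by the hypotheses: $\phi_i(t,g(t))\ge 0$ and $\phi_i(t,h(t))\ge 0$, hence $\Phi_i(t,g(t)),\Phi_i(t,h(t))\ge 0>M$ whenever $M<0$, so any negative minimum occurs at $t^*>0$ with $g(t^*)<x^*<h(t^*)$. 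No no-negative-inflow or normal-flux reasoning is needed. Likewise, for a transport-only index $i^*>m_0$ there is nothing to fix: the only role of the nonlocal term in your interior argument is that $\mathcal L_{i^*}[\Phi_{i^*}](t^*,x^*)\ge 0$, which you then discard. If $i^*>m_0$ that term is simply absent, and
\[
0\;\ge\;\partial_t\Phi_{i^*}(t^*,x^*)\;\ge\;\widetilde c_{i^*i^*}(t^*,x^*)\,M+\sum_{j\ne i^*}c_{i^*j}(t^*,x^*)\,\Phi_j(t^*,x^*)\;\ge\;\Bigl(\widetilde c_{i^*i^*}+\sum_{j\ne i^*}c_{i^*j}\Bigr)M\;>\;0
\]
gives the same contradiction. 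No backward characteristic, no iteration over boundary strikes, no Gronwall is required in part (1); everything happens at the single point $(t^*,x^*)$.

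Two smaller remarks. Your displayed ``identity''
\(
\mathcal L_{i^*}[\Phi_{i^*}](t^*,x^*)=\int_{g(t^*)}^{h(t^*)} J_{i^*}(x^*-y)\bigl(\Phi_{i^*}(t^*,y)-\Phi_{i^*}(t^*,x^*)\bigr)\,dy
\)
is not an identity: the two sides differ by $\bigl(1-\int_{g(t^*)}^{h(t^*)}J_{i^*}(x^*-y)\,dy\bigr)\,(-\Phi_{i^*}(t^*,x^*))\ge 0$, which is benign here (the conclusion $\mathcal L_{i^*}[\Phi_{i^*}]\ge 0$ survives since $\int_{g}^{h}J_{i^*}\le 1$ and $\Phi_{i^*}(t^*,x^*)<0$), but it should be written as an inequality. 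For part (2), your sketch is correct in spirit: the nonlocal integrand must vanish at an interior zero, spreading the zero first over a $J_{i_0}$-neighborhood, then (by iteration) over the full slice at $t_1$; a Gronwall bound along the backward characteristic from a strictly interior $\bar x$ yields an interior zero at each slightly earlier time $s$, which re-triggers the slice-filling argument at time $s$; a closedness-plus-left-openness (connectedness) induction then pushes the vanishing down to $t=0$, contradicting $\phi_{i_0}(0,\cdot)\not\equiv0$. The one step worth spelling out is that the characteristic stays strictly interior for $s$ in a left neighborhood of $t_1$ (by continuity of $g,h,X$), so the Fermat argument really does apply at $(s,X(s))$. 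Cooperativity plays no role in part (2) beyond discarding the nonnegative coupling terms.
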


\begin{lemma}\label{ComparaisonlEMMA}
Let the hypotheses of the previous lemma hold (in particular $J_i$ satisfy (J)).
Let $T>0$, $g,h\in C([0,T])$ with $g(0)<0<h(0)$ and $-g(t),h(t)$ nondecreasing on $[0,T]$, and set
\[
\Omega_T:=\{(t,x):t\in(0,T],\ x\in(g(t),h(t))\}.
\]

Let $m\ge m_0\geq1$ and assume $c_{ij}\in L^\infty(\Omega_T)$, $a_i\in C^1(\overline{\Omega_T})$.
Assume $U=(u_1,\dots,u_m)$ and $V=(v_1,\dots,v_m)$ satisfy for $(t,x)\in\Omega_T$ the differential inequalities
\[
\begin{cases}
\partial_t u_i \;\geq\; d_i\,\mathcal L_i[u_i] \;-\; \partial_x\!\big(a_i(t,x)u_i\big)
\;+\;\displaystyle\sum_{j=1}^m c_{ij}u_j, & 1\le i\leq m_0,\\[6pt]
\partial_t u_i \;\geq\; -\partial_x\!\big(a_i(t,x)u_i\big)
\;+\;\displaystyle\sum_{j=1}^m c_{ij}u_j, & m_0+1\leq i\leq m,
\end{cases}
\]
and
\[
\begin{cases}
\partial_t v_i \;\le\; d_i\,\mathcal L_i[v_i] \;-\; \partial_x\!\big(a_i(t,x)v_i\big)
\;+\;\displaystyle\sum_{j=1}^m c_{ij}v_j, & 1\leq i\leq m_0,\\[6pt]
\partial_t v_i \;\leq\; -\partial_x\!\big(a_i(t,x)v_i\big)
\;+\;\displaystyle\sum_{j=1}^m c_{ij}v_j, & m_0+1\le i\leq m.
\end{cases}
\]

Assume moreover the boundary and initial ordering
\[
u_i(t,g(t))\geq v_i(t,g(t)),\quad u_i(t,h(t))\geq v_i(t,h(t))\quad\text{for }t\in(0,T),\ 1\le i\le m,
\]
and
\[
u_i(0,x)\geq v_i(0,x)\quad\text{for }x\in[g(0),h(0)],\ 1\leq i\leq m.
\]

If $c_{ij}\ge0$ on $\Omega_T$ for all $i\ne j$, then:
\begin{enumerate}
  \item $u_i(t,x)\geq v_i(t,x)$ for all $(t,x)\in\overline{\Omega_T}$ and all $i\in\{1,\dots,m\}$.
  \item If, in addition, there exists $i_0\in\{1,\dots,m_0\}$ such that
  $u_{i_0}(0,\cdot)\not\equiv v_{i_0}(0,\cdot)$ on $[g(0),h(0)]$ and $u_{i_0}(0,x)\geq v_{i_0}(0,x)$,
  then $u_{i_0}(t,x)>v_{i_0}(t,x)$ for all $(t,x)\in\Omega_T$.
\end{enumerate}
\end{lemma}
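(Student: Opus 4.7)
The plan is to reduce this comparison to Part (1) of the previous (positivity) lemma by the standard subtraction trick, which works cleanly because every operator in the system is linear in the unknown. First I would set $w_i:=u_i-v_i$ and $W=(w_1,\dots,w_m)$, and observe that the continuity of $w_i$ and $\partial_t w_i$ on $\overline{\Omega_T}$ is inherited from the corresponding regularity of $U$ and $V$, while the sign assumptions at the lateral boundaries and at $t=0$ translate verbatim into $w_i(t,g(t))\geq 0$, $w_i(t,h(t))\geq 0$ and $w_i(0,x)\geq 0$.

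Next, I would exploit the linearity of $\mathcal L_i[\cdot]$ and of $\sum_j c_{ij}(\cdot)$ together with the identity
\[
\partial_x(a_i u_i)-\partial_x(a_i v_i) \;=\; \partial_x\!\big(a_i(u_i-v_i)\big) \;=\; \partial_x(a_i w_i),
\]
to subtract the subsolution inequality for $V$ from the supersolution inequality for $U$. This produces, for $1\le i\le m_0$,
\[
\partial_t w_i \;\geq\; d_i\,\mathcal L_i[w_i]\;-\;\partial_x\!\big(a_i(t,x)\,w_i\big)\;+\;\sum_{j=1}^m c_{ij}\,w_j,
\]
and the analogous inequality without the $d_i\mathcal L_i$ term for $m_0+1\le i\le m$. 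Since the cross-coupling signs $c_{ij}\ge 0$ for $i\ne j$ are inherited unchanged from the original system, $W$ now satisfies exactly the hypotheses of the previous lemma with nonnegative boundary and initial data.

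Then I would invoke Part (1) of that lemma applied to $W$ to obtain $w_i\ge 0$ on $\overline{\Omega_T}$ for every $i$, which is conclusion (1). For conclusion (2), the hypothesis $u_{i_0}(0,\cdot)\not\equiv v_{i_0}(0,\cdot)$ with $i_0\in\{1,\dots,m_0\}$ yields $w_{i_0}(0,\cdot)\ge 0$ and $w_{i_0}(0,\cdot)\not\equiv 0$, so Part (2) of the previous lemma — which requires precisely that $i_0$ belong to the nonlocally diffusing block $1,\dots,m_0$ — forces $w_{i_0}>0$ in $\Omega_T$, that is, $u_{i_0}>v_{i_0}$. The only genuinely delicate point, which plays the role of the ``main obstacle'' here, is the bookkeeping around the conservative drift: the displayed identity must hold in the classical (strong) sense rather than merely as distributions, and this is exactly where the regularity $a_i\in C^1(\overline{\Omega_T})$ is used; one should also note that the pointwise Dirichlet-type ordering at the moving boundaries $x=g(t),h(t)$ produces no extra flux contribution. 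Once these points are verified, the proof reduces entirely to invoking the previous lemma.
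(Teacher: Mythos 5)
The paper states this lemma without proof (as it does the preceding positivity lemma), so there is no in-paper argument to compare against. Your subtraction reduction $w_i:=u_i-v_i$, relying on linearity of $\mathcal L_i$, of the conservative drift $\partial_x(a_i\cdot)$, and of the coupling $\sum_j c_{ij}(\cdot)$, followed by invoking Parts (1) and (2) of the previous lemma, is correct and is exactly the standard route this kind of comparison statement is expected to follow.
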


Next, we define the following problem associated to the system \eqref{eq:main-problem},

\begin{align}\label{eq:main-problem12}
\left\{\begin{array}{lll}
0 =  d_1\left[\displaystyle\int\limits_{-\mathcal{Z}}^{\mathcal{Z}}J_1(x-y) \mathscr U^\ast(y)dy - \mathscr U^\ast(x)\right] +p\mathscr U^\ast_x- a(x)\mathscr U^\ast(x) + H\left(\mathscr V^\ast (x)\right), & x\in \left[-\mathcal{Z}, \mathcal{Z}\right], \\
0 = d_2\left[\displaystyle\int\limits_{-\mathcal{Z}}^{\mathcal{Z}}J_2(x-y)\mathscr V^\ast(y)dy - v^\ast(x)\right]+q\mathscr V^\ast_x -b(x)\mathscr V^\ast(x)+ G\left(u^(x)\right), & x\in \left[-\mathcal{Z}, \mathcal{Z}\right], \end{array}\right.
\end{align}

\begin{theorem}\label{thm:existence-stability}
Suppose that $J_1, J_2$, and $K$ satisfy assumption $$(\mathbf{J})$$, and $G, H$ satisfy $(\mathbf{GH})$.
Denote by $\lambda^\ast(\mathcal Z)$ the principal eigenvalue of \eqref{eq:maineigenrvalue3}. Then the following assertions hold:
\begin{itemize}
\item[(i)] If $\lambda^\ast(\mathcal Z) > 0$, then problem \eqref{eq:maineigenrvalue3} admits a unique positive solution $(u^\ast, v^\ast)\in\mathcal D$. Moreover, $(u(t,x), v(t,x))$ converges to $(u^\ast, v^\ast)$ as $t\to\infty$, uniformly for $x\in[-\mathcal Z, \mathcal Z]$.
\item[(ii)] If $\lambda^\ast(\mathcal Z) \leq 0$, then $(0,0)$ is the only nonnegative solution of \eqref{eq:maineigenrvalue3}, and $(u(t,x), v(t,x))$ converges to $(0,0)$ as $t\to\infty$, uniformly for $x\in[-\mathcal Z, \mathcal Z]$.
\end{itemize}
\end{theorem}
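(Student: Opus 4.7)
The plan is to exploit the cooperative monotone structure (the off-diagonal couplings $H'(0),G'(0)\ge 0$ are non-negative) together with the strict concavity of $H,G$ given by \textbf{(GH)} in order to run a sub/super-solution analysis, obtain existence and uniqueness of the positive steady state via monotone iteration, and then squeeze the time-dependent trajectory between two ordered orbits. The positive principal eigenpair $(\lambda^\ast,\psi)=(\lambda^\ast,(\psi_1,\psi_2))$ produced by Theorem~\ref{Principlaeignevalue}, together with the drift-adapted comparison principle of Lemma~\ref{ComparaisonlEMMA}, will be the central building block at every step.

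For Part (i), I would first test $(\underline u,\underline v)=\varepsilon\psi$ in the stationary system~\eqref{eq:main-problem12}. Using the linear eigenvalue identity one obtains, componentwise,
\begin{equation*}
\varepsilon\lambda^\ast\psi_1+\bigl[H(\varepsilon\psi_2)-\varepsilon H'(0)\psi_2\bigr]\ge 0,
\end{equation*}
and since $H\in C^2$ with $H(0)=0$ and $H''<0$, the bracket is $O(\varepsilon^2)$ whereas the leading term is $O(\varepsilon)\lambda^\ast\psi_1>0$; hence $\varepsilon\psi$ is a strict subsolution for every sufficiently small $\varepsilon>0$. A supersolution is produced by a large constant pair $(M_1,M_2)$: the bound $\int J_i\le 1$ gives a nonpositive diffusive contribution, and the structural inequality $G(H(z)/a)<bz$ in \textbf{(GH)} admits $(M_1,M_2)$ with $H(M_2)\le a_{\min}M_1$ and $G(M_1)\le b_{\min}M_2$. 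Because of the cooperative coupling, monotone iteration on the order interval $[\varepsilon\psi,(M_1,M_2)]$ converges to a positive stationary solution $(u^\ast,v^\ast)\in[C^1([-\mathcal Z,\mathcal Z])]^2$. Uniqueness follows by a sliding argument: for two positive solutions $(u_i^\ast,v_i^\ast)$, $i=1,2$, setting
\begin{equation*}
\theta^\ast:=\sup\bigl\{\theta>0:\theta(u_1^\ast,v_1^\ast)\le(u_2^\ast,v_2^\ast)\bigr\},
\end{equation*}
strict concavity of $H,G$ forces $\theta^\ast\ge 1$, and symmetry yields $\theta^\ast=1$. For the convergence of the time-dependent flow I would choose $\varepsilon,M$ so that $\varepsilon\psi\le(u_0,v_0)\le(M_1,M_2)$; by Lemma~\ref{ComparaisonlEMMA} the trajectory issued from $\varepsilon\psi$ is nondecreasing in $t$ and the one from $(M_1,M_2)$ is nonincreasing, both admit pointwise limits by monotonicity, these limits are steady states, and uniqueness forces them to coincide with $(u^\ast,v^\ast)$; Dini's theorem then promotes convergence to uniform in $x$, and comparison sandwiches $(u(t,\cdot),v(t,\cdot))$.

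For Part (ii), nonexistence of a positive steady state is obtained by a concavity argument: if $(u^\ast,v^\ast)>0$ solved \eqref{eq:main-problem12}, then replacing $H(v^\ast)$ by $H'(0)v^\ast$ (and symmetrically for $G$) strictly increases the right-hand side, so $(u^\ast,v^\ast)$ is a strict positive sub-eigenfunction of the linear problem with eigenvalue $0$; Lemma~\ref{LemmaUpperLower} then forces $\lambda^\ast>0$, contradicting the hypothesis $\lambda^\ast\le 0$. For the convergence $(u,v)\to(0,0)$ in the case $\lambda^\ast<0$ I would use the exponentially decaying supersolution $(\overline u,\overline v)(t,x)=Ce^{\lambda^\ast t}(\psi_1(x),\psi_2(x))$; the required componentwise inequality reduces to $H'(0)s\ge H(s)$ for $s\ge 0$, which is exactly concavity, and Lemma~\ref{ComparaisonlEMMA} closes the argument. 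The borderline case $\lambda^\ast=0$ requires a genuinely nonlinear ansatz: I would look for an algebraic supersolution $(\overline u,\overline v)(t,x)=C\,(\psi_1(x),\psi_2(x))/(1+\alpha t)$, where the quadratic loss coming from $H(s)=H'(0)s+\tfrac12 H''(0)s^2+o(s^2)$ (and similarly for $G$) is designed to balance the time derivative $-\alpha C\psi/(1+\alpha t)^2$; this fixes $\alpha$ in terms of $\min\psi_i$, $|H''(0)|$ and $|G''(0)|$, and yields decay at rate $t^{-1}$.

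The delicate step, which I expect to be the principal obstacle, is precisely the borderline regime $\lambda^\ast(\mathcal Z)=0$: the linear semigroup no longer decays, so the rate of convergence must be extracted from the genuinely nonlinear correction, and the algebraic ansatz must be made robust uniformly in $x\in[-\mathcal Z,\mathcal Z]$, especially where $\psi_i$ may be small due to the drift terms $p\psi_1',q\psi_2'$ and the inflow/outflow asymmetry they produce. If the direct algebraic supersolution fails near the boundary, a backup plan is a two-step strategy: first establish an interior decay estimate using the strictly positive spectral gap of $\mathscr J-\mathscr T$ on a slightly shrunken subdomain, then propagate the decay to $[-\mathcal Z,\mathcal Z]$ by means of a Lyapunov functional built from the adjoint principal eigenfunction supplied by the Krein--Rutman framework developed in Section~3.
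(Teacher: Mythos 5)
Your Part (i) proposal matches the paper's argument almost exactly: subsolution $\varepsilon(\psi_1,\psi_2)$ from concavity and $\lambda^\ast>0$, large constant supersolution from \textbf{(GH)}, monotone trajectories, uniqueness by the sliding/scaling argument $\mathcal L_0:=\inf\{\theta\ge 0:\theta(\mathscr U,\mathscr V)\succeq(\mathsf U,\mathsf V)\}$, and Dini for uniform convergence. For Part (ii), however, you take a genuinely different and more labour-intensive route than the paper. The paper simply reuses the monotone machinery: start the semiflow from a large supersolution initial datum, observe the trajectory is nonincreasing and bounded below by zero, so it converges pointwise to a nonnegative steady state; since (as you yourself argue via concavity and Lemma~\ref{LemmaUpperLower}) no positive steady state can exist when $\lambda^\ast\le 0$, that limit must be $(0,0)$, and Dini gives uniform decay. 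This handles $\lambda^\ast<0$ and $\lambda^\ast=0$ in one stroke, with no need for explicit decaying barriers. You instead build time-dependent supersolutions $Ce^{\lambda^\ast t}\psi$ for $\lambda^\ast<0$ and the algebraic ansatz $C\psi/(1+\alpha t)$ for $\lambda^\ast=0$. Your exponential barrier is correct — the required inequality collapses to $H'(0)s\ge H(s)$, which is concavity — and your algebraic ansatz is plausible but, as you anticipate, delicate near the boundary where $\psi_i\to 0$ and the quadratic correction terms $|H''(0)|\psi_2^2,\ |G''(0)|\psi_1^2$ no longer dominate $\alpha\psi_i$; making that uniform over $[-\mathcal Z,\mathcal Z]$ is real work. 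What your route buys is an explicit decay \emph{rate} (exponential, respectively $t^{-1}$), which the paper's soft argument does not provide; what it costs is precisely the borderline analysis you flag as the principal obstacle. If you only want the statement as written, the paper's monotone-trajectory argument is both simpler and sufficient, and your concavity-based nonexistence proof (which the paper leaves implicit) slots cleanly into it as the step that identifies the limit steady state.
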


\begin{proof}

By applying the fact that $\lambda^\ast (\mathcal{Z}) > 0$
and using the condition \textbf{(GH)}, for all $\mathcal{M}_1 \gg 1$ we have
\[
\frac{H(M_2)}{M_2} < \bar{a}\bar{b}.
\]
We define $M_1 := \delta^\ast_1 \bar{a}\bar{b} M_2$. Then clearly
\[
H(M_2) \leq \frac{M_1}{\delta_1^\ast}.
\]

For all $\mathcal{M}_2 \gg 1$ we also have
\[
\frac{G(M_1)}{M_1} < \bar{a}\bar{b}.
\]
We define $M_2 := \delta^\ast_2 \bar{a}\bar{b} M_1$. Then clearly
\[
G(M_1) \leq \frac{\mathcal{M}_2}{\delta^\ast_2}.
\]
Thus $(w,z) = (\mathcal{M}_1, \mathcal{M}_2)$ is an upper solution of \eqref{eq:main-problem12}.

We define $(\hat{u}, \hat{v})$ as the unique positive solution of \eqref{eq:main-problem1223} with initial functions $(\mathcal{M}_1,\mathcal{M}_2)$.
Applying the comparison lemma \eqref{ComparaisonlEMMA}, we obtain
\[
\bigl(\hat{u}(x,t), \hat{v}(x,t)\bigr) \leq (\mathcal{M}_1, \mathcal{M}_2),
\qquad \forall\, t > 0,\; x \in [-\mathcal{Z}, \mathcal{Z}].
\]

Fix $t_0>0$. Since the system is autonomous and cooperative, the comparison principle together with the semigroup property yields
\[
(u(x,t), v(x,t)) \leq (u(x,t-t_0), v(x,t-t_0)), \quad t>t_0,\; x\in[-\mathcal{Z},\mathcal{Z}],
\]
which shows that $(u,v)$ is nonincreasing in $t$.

Next, we prove that $(\varepsilon \psi_1, \varepsilon \psi_2)$ is a subsolution of problem \eqref{eq:main-problem1223} for a small $\varepsilon > 0$, with $(\psi_1, \psi_2)$ being a positive eigenfunction pair of \eqref{eq:main-problem12} with eigenvalue $\lambda^\ast(\mathcal{Z})$.
Since $\lambda^\ast(\mathcal{Z}) > 0$, we have
\begin{align*}
& d_1 \int_{-\mathcal{Z}}^{\mathcal{Z}} J_1(x-y)\,\varepsilon \psi_1(y)\,dy
- d_1 \varepsilon \psi_1
- a(x)\varepsilon \psi_1
+ H(\varepsilon \psi_2) \\
&= \varepsilon \lambda^\ast(\mathcal{Z})\psi_1+ H(\varepsilon \psi_2) - \varepsilon \psi_2 H^\prime(0) \\
&= \varepsilon \lambda^\ast(\mathcal{Z})\psi_1  + o(\varepsilon) \psi_2,
\end{align*}
and
\begin{align*}
& d_2 \int_{-\mathcal{Z}}^{\mathcal{Z}} J_2(x-y)\,\varepsilon \psi_2(y)\,dy
- d_2 \varepsilon \psi_2
- b(x)\varepsilon \psi_2 + G(\varepsilon \psi_1) \\
&= \varepsilon \lambda^\ast(\mathcal{Z})\psi_2 + G(\varepsilon \psi_1) - \varepsilon \psi_1 G^\prime(0) \\
&=  \varepsilon \lambda^\ast(\mathcal{Z})\psi_2  + o(\varepsilon) \psi_1,
\end{align*}
as $\varepsilon \to 0$. Hence, by \eqref{ComparaisonlEMMA},
\[
(\varepsilon \psi_1, \varepsilon \psi_2) \leq (u, v).
\]

It is straightforward to verify that
\[
(u(x,\sigma+t), v(x,\sigma+t))
\]
is a solution of \eqref{eq:main-problem1223} with initial condition $(\hat{u}(x,t), \hat{v}(x,t))$.
Then by applying the comparison lemma \eqref{ComparaisonlEMMA}, we have
\[
(\hat{u}(x,t),\hat{v}(x,t)) \leq  (\hat{u}(x,\sigma+t), \hat{v}(x,\sigma+t)),
\quad \forall x \in [-\mathcal{Z}, \mathcal{Z}].
\]
Since $s >0$ is arbitrary, we conclude that $(\hat{u},\hat{v})$ is nondecreasing.
By applying Dini's theorem, we deduce
\[
\lim_{t \to \infty} (\hat{u}(x, t), \hat{v}(x,t))=  (\mathscr{U}, \mathscr{V}),
\]
uniformly for $x \in [-\mathcal{Z}, \mathcal{Z}]$.

Next, consider $(u^\ast, v^\ast)$ the solution of \eqref{eq:main-problem12}. By the comparison lemma \eqref{ComparaisonlEMMA}, we have
\[
(\varepsilon \psi_1, \varepsilon \psi_2) \leq (u^\ast, v^\ast) \leq  (\mathscr{U}, \mathscr{V}) \leq (\mathcal{M}_1, \mathcal{M}_2),
\quad x \in [-\mathcal{Z}, \mathcal{Z}].
\]
Hence, problem \eqref{eq:main-problem12} admits at least one positive solution.

\medskip

\emph{Uniqueness:} Suppose there exists another solution $(\mathsf{U}, \mathsf{V})$ with
$(\varepsilon \psi_1, \varepsilon \psi_2) \leq (\mathsf{U}, \mathsf{V})$ in $[-\mathcal{Z}, \mathcal{Z}]$.
Since enlarging $\mathcal{M}_1$ gives $(\hat{u}(x, 1), \hat{v}(x, 1)) \leq (\mathcal{M}_1, \mathcal{M}_2)$, applying the comparison lemma again yields
\[
(\mathscr{U}, \mathscr{V}) \leq (\mathsf{U}, \mathsf{V}), \quad x \in [-\mathcal{Z}, \mathcal{Z}].
\]

Define
\[
\mathcal{L}_0 := \inf \Bigl\{ \mathcal{L} \geq 0 : \mathcal{L} (\mathscr{U}(x), \mathscr{V}(x)) \succeq (\mathsf{U}(x), \mathsf{V}(x))  \text{ for all } x \in [-\mathcal{Z}, \mathcal{Z}] \Bigr\}.
\]
This set is finite. Moreover,
\[
\mathcal{L}_0 (\mathscr{U}(x), \mathscr{V}(x)) \succeq (\mathsf{U}(x), \mathsf{V}(x))\quad \text{for all } x \in [-\mathcal{Z}, \mathcal{Z}].
\]
Two cases arise:
(i) If $\mathcal{L}_0 =1$, then $(\mathscr{U}, \mathscr{V})= (\mathsf{U}, \mathsf{V})$.
(ii) If $\mathcal{L}_0 >1$, then by condition \textbf{(GH)}, $\mathcal{L}_0(\mathscr{U}, \mathscr{V})$ is a supersolution of \eqref{eq:main-problem12}. By \eqref{MaximumLemma},
\[
\mathcal{L}_0(\mathscr{U}, \mathscr{V}) - (\mathsf{U}, \mathsf{V}) >0, \quad \forall x \in [-\mathcal{Z}, \mathcal{Z}].
\]
Since both functions are continuous, this contradicts the definition of $\mathcal{L}_0$.
Hence $\mathcal{L}_0 = 1$ and uniqueness is established.

\medskip

We now prove (ii) and (iii) simultaneously. Choose $\varepsilon > 0$ sufficiently small
and $M > 1$ sufficiently large so that
\[
\varepsilon (\psi_1(x), \psi_2(x)) \preceq (u(x,1), v(x,1)) \preceq M (u^{\ast}, v^{\ast})
\quad \forall x \in [-\mathcal{Z}, \mathcal{Z}].
\]
Let $(u(x,t), v(x,t))$ be the unique solution with initial condition $M(u^{\ast}, v^{\ast})$.
By a similar argument as before, $(u,v)$ is monotone nonincreasing in $t$, and hence
\[
(\widetilde{U}(x), \widetilde{V}(x)) := \lim_{t \to \infty} (u(x,t), v(x,t))
\]
exists and is a nonnegative solution.

If $\lambda^\ast(\mathcal{Z}) \geq 0$, then from part (i) we know that
$(\mathscr{U}, \mathscr{V}) = (0,0)$. By Dini’s theorem, the monotonicity in $t$ ensures uniform convergence in $x \in [-\mathcal{Z}, \mathcal{Z}]$. Moreover, the comparison principle gives
\[
(0,0) \preceq (u(x,t+1), v(x,t+1)) \preceq (u(x,t), v(x,t)).
\]
Letting $t \to \infty$ we obtain
\[
\lim_{t \to \infty} (u(x,t), v(x,t)) = (0,0),
\]
uniformly in $[-\mathcal{Z}, \mathcal{Z}]$. This proves (ii).

When $\lambda^\ast(\mathcal{Z}) < 0$, the comparison principle implies
\[
(\hat{u}(x,t+1), \hat{v}(x,t+1)) \succeq (\hat{u}(x,t), \hat{v}(x,t)) \quad \forall t \geq 0.
\]
Letting $t \to \infty$, we obtain $(\mathscr{U}, \mathscr{V}) \preceq (\mathsf{U}, \mathsf{V})$.
By uniqueness (proved above), we deduce
\[
(\mathscr{U}, \mathscr{V}) = (\mathsf{U}, \mathsf{V}).
\]
Hence,
\[
\lim_{t \to \infty} (\hat{u}(x,t), \hat{v}(x,t)) = (\mathscr{U}(x), \mathscr{V}(x)),
\]
uniformly in $[-\mathcal{Z}, \mathcal{Z}]$. The uniformity follows again from Dini’s theorem.
\end{proof}

Next, we consider the nonlocal reaction system on a domain $\Omega$ (or on $\mathbb R$)
with drift and nonlinear couplings $G,H$:
\[
\begin{cases}
u_t(x,t) \;=\; d_1\displaystyle\int_{\Omega} J_1(x-y)u(y,t)\,dy - d_1 u(x,t) \;-\; a(x)u(x,t) \;+\; H\big(v(x,t)\big) \;+\; p\,\partial_x u(x,t),\\[6pt]
v_t(x,t) \;=\; d_2\displaystyle\int_{\Omega} J_2(x-y)v(y,t)\,dy - d_2 v(x,t) \;-\; b(x)v(x,t) \;+\; G\big(u(x,t)\big) \;+\; q\,\partial_x v(x,t).
\end{cases}
\tag{S}
\]

We have,
\[
\displaystyle\int_{\Omega} J_i(z)\,dz = 1,\qquad i=1,2,
\]
and $J_i\geq0$. Then for a \emph{spatially homogeneous} pair
\[
u(x,t)\equiv u^\ast,\qquad v(x,t)\equiv v^\ast\qquad(u^\ast,v^\ast\in\mathbb R),
\]
the nonlocal terms and the drift terms vanish: for constants
\[
d_i\int_{\Omega} J_i(x-y)u^\ast\,dy - d_i u^\ast = d_i(1-1)u^\ast = 0,
\qquad \partial_x u^\ast = 0.
\]
Hence the steady-state equations reduce to the algebraic system
\[
\begin{cases}
0 = -a(x)\,u^\ast + H(v^\ast),\\[4pt]
0 = -b(x)\,v^\ast + G(u^\ast).
\end{cases}
\label{A}
\]

We assume that the coefficients $(a(x), b(x))$ are constant, denoted by $a$ and $b$.
then
\eqref{A} becomes the algebraic system
\[
\begin{cases}
-a\,u^\ast + H(v^\ast) = 0,\\[4pt]
-b\,v^\ast + G(u^\ast) = 0.
\end{cases}
\label{B}
\]
From the first equation of \eqref{B}, we obtain
\begin{equation}\label{PS1}
v^\ast = H^{-1}(a u^\ast) \quad\text{(if $H$ is invertible near the relevant value),}
\end{equation}
and substituting into the second equation yields the scalar compatibility condition for $u^\ast$:
\begin{equation}\label{PS2}
G(u^\ast) \;=\; b\,H^{-1}(a u^\ast).
\end{equation}

We define
\[
\Phi(u):=G(u)-b\,H^{-1}(a u),\qquad u\ge0.
\]
We are interested in positive solutions \(u^\ast>0\) of \(\Phi(u^\ast)=0\). Equivalently, define the map
\[
F(u):=\frac{1}{a}H\!\bigg(\frac{1}{b}G(u)\bigg),\qquad u\geq0.
\]
Then \(u\) solves \(\Phi(u)=0\) iff \(u\) is a fixed point of \(F\): \(u=F(u)\).

  Assume \(G,H\) are \(C^1\) near \(0\). Compute
\[
F'(u)=\frac{1}{a}\,H'\!\Big(\frac{G(u)}{b}\Big)\!\cdot\!\frac{G'(u)}{b},
\]
so in particular at \(u=0\)
\[
F'(0)=\frac{G'(0)\,H'(0)}{ab}.
\]
Define the threshold as follows
\[
R_0:=\frac{G'(0)\,H'(0)}{ab}.
\]
Since  \(R_0>1\), the fixed–point map has slope \(>1\) at \(0\) and a nontrivial positive fixed point bifurcates from \(0\). Concretely, by applying  the Crandall–Rabinowitz theorem, we consider the one--parameter family of scalar equations
\[
\Psi(u,\mu):=u - F_\mu(u)=0,\qquad F_\mu(u):=\frac{1}{a}H\!\Big(\frac{\mu\,G(u)}{b}\Big),
\]
with $(\mu,u)\in\mathbb R\times\mathbb R$ and $a,b>0$. Assume $G,H\in C^2([0,\infty))$, $H$ strictly increasing and onto $[0,\infty)$ (so $H^{-1}\in C^2$), and $G(0)=H(0)=0$.

The Crandall--Rabinowitz (CR) theorem (bifurcation from simple eigenvalues) requires the following hypotheses in an abstract Banach-space setting; for the scalar problem they reduce to elementary checks:

\begin{enumerate}
  \item \(\Psi\) is \(C^k\) (here \(k\ge 2\)) in a neighborhood of \((u,\mu)=(0,\mu_c)\).
  \item The linearization in \(u\), \(L:=\partial_u\Psi(0,\mu_c)\), has a 1–dimensional kernel and codimension-1 range (i.e. index 0; in the scalar case this means \(L=0\)).
  \item The transversality condition: \(\partial_\mu\partial_u\Psi(0,\mu_c)\not=0\).
\end{enumerate}

We have,
\begin{itemize}
  \item Smoothness: by assumption \(G,H\in C^2\) hence \(\Psi\in C^2\) near \((0,\mu)\).
  \item Choose \(\mu_c:=1/R_0\) where
    \(R_0:=\dfrac{G'(0)H'(0)}{ab}\). Compute
    \[
    \partial_u\Psi(u,\mu)=1-\mu\frac{G'(u)H'\!\big(\frac{\mu G(u)}{b}\big)}{ab},
    \]
    hence at \(u=0\),
    \[
    \partial_u\Psi(0,\mu)=1-\mu R_0.
    \]
    Thus at \(\mu=\mu_c\) we have \(\partial_u\Psi(0,\mu_c)=0\). In the scalar setting this means the linearization has a one-dimensional kernel (span\(\{1\}\)) and range of codimension 1 (trivially satisfied).
  \item Transversality:
    \[
    \partial_\mu\partial_u\Psi(0,\mu)
    = -\frac{G'(0)H'(0)}{ab} = -R_0,
    \]
    so at \(\mu=\mu_c\) this derivative equals \(-R_0\neq0\). Hence transversality holds.
\end{itemize}

Therefore all CR hypotheses hold (i the Fredholm index conditions are trivial). Thus, there exists a \(C^1\)-curve of nontrivial solutions \((\mu(s),u(s))\) of \(\Psi(u,\mu)=0\) bifurcating from \((\mu_c,0)\). In particular for \(\mu>\mu_c\) close to \(\mu_c\) there are small positive solutions \(u(\mu)>0\).

\subsection*{(B) A practical sufficient condition for $\sup_{u\in[0,M]}F'(u)<1$ (contraction)}

We recall
\[
F(u)=\frac{1}{a}\,H\!\Big(\frac{G(u)}{b}\Big),
\qquad
F'(u)=\frac{1}{a}\,H'\!\Big(\frac{G(u)}{b}\Big)\frac{G'(u)}{b}.
\]

Thus for any \(M>0\) we have the bound
\[
\sup_{u\in[0,M]}F'(u)
\le \frac{1}{ab}\Big(\sup_{s\in[0,G(M)/b]} H'(s)\Big)\Big(\sup_{u\in[0,M]} G'(u)\Big).
\]
For the uniqueness of the solution $u^\ast,$ we need to assume   there exists \(M>0\) such that
\[
\frac{1}{ab}\Big(\sup_{s\in[0,G(M)/b]} H'(s)\Big)\Big(\sup_{u\in[0,M]} G'(u)\Big) < 1,
\]
then \(L:=\sup_{u\in[0,M]}F'(u)<1\), hence \(F\) is a strict contraction on \([0,M]\). By Banach fixed-point theorem \(F\) has a unique fixed point in \([0,M]\).

\begin{proposition}\label{Thecoexistence}
Suppose that $J_1$, $J_2$,  satisfy condition \textbf{(J)}, and that $G$ fulfills assumptions \textbf{(GH)}.
If $R_0 > 1$, then for any $\,\mathcal Z > \mathcal Z^{\ast}$ we have $\lambda^\ast(\mathcal{Z}) > 0$, and system  admits a unique positive solution $\left(\mathscr{U}_{\mathcal Z}, \mathscr{V}_{\mathcal Z}\right)$.
Moreover,
\begin{equation}\label{eq:limit-solution}
\lim_{\mathcal Z \to \infty} \left(\mathscr{U}_{\mathcal Z}, \mathscr{V}_{\mathcal Z}\right)
= (\mathscr{U}^{\ast}, \mathscr{V}^{\ast})
\quad \text{locally uniformly in } \mathbb{R},
\end{equation}
where $(\mathscr{U}^{\ast}, \mathscr{V}^{\ast})$ is the coexistence state defined in \eqref{PS1}-\eqref{PS2}.
\end{proposition}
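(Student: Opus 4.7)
\medskip
\noindent\textbf{Proof proposal for Proposition \ref{Thecoexistence}.}
The plan is to combine the threshold characterization of the principal eigenvalue (Lemma~\ref{ThresholdCaracterestique}) with the existence/uniqueness machinery of Theorem~\ref{thm:existence-stability}, and then upgrade existence on truncated intervals to locally uniform convergence on $\mathbb R$ via a monotone bracketing between a small subsolution and the constant coexistence state. First I would observe that $R_0>1$ together with $\mathcal Z>\mathcal Z^\ast$ forces $\lambda^\ast(\mathcal Z)>0$ by Lemma~\ref{ThresholdCaracterestique}; Theorem~\ref{thm:existence-stability}(i) then yields the existence and uniqueness of the positive pair $(\mathscr U_{\mathcal Z},\mathscr V_{\mathcal Z})$ solving the truncated stationary system on $[-\mathcal Z,\mathcal Z]$, together with its variational sub/supersolution sandwich
\[
\varepsilon(\psi_1^{\mathcal Z},\psi_2^{\mathcal Z})\;\leq\;(\mathscr U_{\mathcal Z},\mathscr V_{\mathcal Z})\;\leq\;(\mathcal M_1,\mathcal M_2),
\]
for small $\varepsilon>0$ and suitable constants $\mathcal M_i$.

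Next I would establish monotonicity of the family in $\mathcal Z$. For $\mathcal Z_1<\mathcal Z_2$, extend $(\mathscr U_{\mathcal Z_1},\mathscr V_{\mathcal Z_1})$ by zero to $[-\mathcal Z_2,\mathcal Z_2]$; this extension is a (weak) subsolution of the $\mathcal Z_2$-problem because enlarging the integration window can only increase the nonlocal contribution. An application of the comparison Lemma~\ref{ComparaisonlEMMA} then gives
\[
(\mathscr U_{\mathcal Z_1},\mathscr V_{\mathcal Z_1})\;\leq\;(\mathscr U_{\mathcal Z_2},\mathscr V_{\mathcal Z_2})\quad\text{on }[-\mathcal Z_1,\mathcal Z_1].
\]
For a global upper barrier I would use the constant pair $(\mathscr U^\ast,\mathscr V^\ast)$ derived from the algebraic system \eqref{PS1}--\eqref{PS2}: on constants the nonlocal integrals and the drift terms vanish (by the normalization $\int J_i=1$), so $(\mathscr U^\ast,\mathscr V^\ast)$ is an exact supersolution on every $[-\mathcal Z,\mathcal Z]$. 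Thus $(\mathscr U_{\mathcal Z},\mathscr V_{\mathcal Z})\leq(\mathscr U^\ast,\mathscr V^\ast)$ uniformly in $\mathcal Z$.

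Monotonicity and uniform bounds produce a pointwise limit $(\mathscr U_\infty,\mathscr V_\infty)$ on $\mathbb R$. To promote pointwise to locally uniform convergence, I would exploit the $C^1$ regularity already established: from the stationary equations one solves for $\partial_x\mathscr U_{\mathcal Z}$ and $\partial_x\mathscr V_{\mathcal Z}$ algebraically in terms of the nonlocal terms and the coefficients, obtaining uniform $C^1$ bounds on every compact subset $K\subset\mathbb R$ as soon as $K\subset(-\mathcal Z,\mathcal Z)$. Arzelà--Ascoli then yields locally uniform convergence of a subsequence, and dominated convergence allows passage to the limit in the nonlocal integrals, so $(\mathscr U_\infty,\mathscr V_\infty)$ solves the full-space steady-state system, is bounded, and is strictly positive (the lower barrier $\varepsilon(\psi_1^{\mathcal Z},\psi_2^{\mathcal Z})$ survives on each compact set once $\mathcal Z$ is large).

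The main obstacle is the identification step: showing $(\mathscr U_\infty,\mathscr V_\infty)\equiv(\mathscr U^\ast,\mathscr V^\ast)$ in the absence of a variational structure and on the unbounded domain $\mathbb R$. I would adapt the sweeping principle used in Theorem~\ref{thm:existence-stability}. Define
\[
\mathcal L_0:=\inf\bigl\{\,\mathcal L\geq 1\,:\,\mathcal L(\mathscr U_\infty,\mathscr V_\infty)\succeq(\mathscr U^\ast,\mathscr V^\ast)\text{ on }\mathbb R\bigr\},
\]
which is finite by the a priori bounds. If $\mathcal L_0>1$, then by the strict concavity of $G,H$ (assumption \textbf{(GH)}) the pair $\mathcal L_0(\mathscr U_\infty,\mathscr V_\infty)$ is a strict supersolution strictly larger than the steady state $(\mathscr U^\ast,\mathscr V^\ast)$, and the strong maximum principle for the cooperative nonlocal--drift operator (Lemma~\ref{MaximumLemma}) forces a strict inequality that contradicts the minimality of $\mathcal L_0$. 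Hence $\mathcal L_0=1$, giving $(\mathscr U_\infty,\mathscr V_\infty)\geq(\mathscr U^\ast,\mathscr V^\ast)$; combined with the upper bound $(\mathscr U_\infty,\mathscr V_\infty)\leq(\mathscr U^\ast,\mathscr V^\ast)$ from Step~2, equality follows. This yields the claimed convergence locally uniformly on $\mathbb R$ and, by monotonicity, along the whole family (not just a subsequence).
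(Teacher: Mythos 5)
Your outline matches the paper's through the monotonicity-in-$\mathcal Z$ and a priori bound steps (existence and uniqueness of $(\mathscr U_{\mathcal Z},\mathscr V_{\mathcal Z})$ via Lemma~\ref{ThresholdCaracterestique} and Theorem~\ref{thm:existence-stability}, the increasing family on nested intervals, and the constant supersolution $(\mathscr U^\ast,\mathscr V^\ast)$; the paper produces the monotonicity by comparing the \emph{parabolic} flows on nested intervals and letting $t\to\infty$, rather than comparing the stationary solutions directly, but the outcome is the same). Where you genuinely depart is the identification of the limit profile. The paper proves that $(\tilde{\mathscr U},\tilde{\mathscr V})$ is \emph{translation invariant}: it compares the $\mathcal Z$-truncated steady state against the $(\mathcal Z-\ell)$-truncated steady state shifted by $x_0$ on three nested intervals, obtaining $\mathscr U_{\mathcal Z-2\ell}(x)\le\mathscr U_{\mathcal Z-\ell}(x+x_0)\le\mathscr U_{\mathcal Z}(x)$, then sends $\mathcal Z\to\infty$ to conclude $\tilde{\mathscr U}(x)=\tilde{\mathscr U}(x+x_0)$. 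This uses the constant coefficients $a,b$ (in force in this section), makes the limit a constant by inspection, and then the uniqueness of the positive constant root of the algebraic system finishes it. You instead attempt a sweeping argument directly on $\mathbb R$.

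The sweeping step, as written, has two real gaps that the paper's translation argument avoids. First, for
\[
\mathcal L_0:=\inf\bigl\{\,\mathcal L\ge 1:\ \mathcal L\,(\mathscr U_\infty,\mathscr V_\infty)\succeq(\mathscr U^\ast,\mathscr V^\ast)\text{ on }\mathbb R\,\bigr\}
\]
to be finite you need a \emph{uniform positive lower bound} on $(\mathscr U_\infty,\mathscr V_\infty)$ over all of $\mathbb R$. The bound $\varepsilon(\psi_1^{\mathcal Z},\psi_2^{\mathcal Z})$ you invoke is only locally useful: the eigenfunctions vanish at $\pm\mathcal Z$, so for each fixed compact $K$ you get a positive lower bound, but nothing prevents $\inf_{\mathbb R}\mathscr U_\infty=0$ a priori. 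Second, even granting $\mathcal L_0<\infty$, the contradiction you derive from $\mathcal L_0>1$ via the strong maximum principle requires that the gap $\mathcal L_0(\mathscr U_\infty,\mathscr V_\infty)-(\mathscr U^\ast,\mathscr V^\ast)$ touch zero somewhere or have positive infimum; on the unbounded line neither is automatic (the gap could remain strictly positive everywhere yet vanish along a sequence $|x_n|\to\infty$, which makes the ``minimality of $\mathcal L_0$'' argument non-conclusive). In the paper's route these difficulties never arise because translation invariance reduces everything to the finite-dimensional algebraic system \eqref{PS1}--\eqref{PS2}. If you want to keep the sweeping formulation you would need first to establish the translation invariance (or, equivalently, a uniform positive lower bound and a normal-family compactness argument near $|x|\to\infty$) before running the sweep, at which point the translation-invariance argument is simply shorter.
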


\begin{proof}
Assume the standing hypotheses: $J_i\geq0$, $J_i\in L^1$, $G,H$ are continuous and nondecreasing, $a,b,p,q$ are bounded and sufficiently smooth, and we consider the cooperative parabolic problems on bounded intervals with boundary conditions so that classical comparison holds (e.g. Dirichlet or periodic — the argument below is local in the interior and only requires a standard parabolic comparison principle).

Let $\mathcal Z_2>\mathcal Z_1>\mathcal Z^\ast$ and denote by
\[
(u_{\mathcal Z_i}(x,t),v_{\mathcal Z_i}(x,t)),\qquad i=1,2,
\]
the solutions of the initial–boundary value problems on $[-\mathcal Z_i,\mathcal Z_i]$ for the system with advection
\[
\begin{cases}
u_t = d_1\displaystyle\int_{-\mathcal Z_i}^{\mathcal Z_i} J_1(x-y)u(y,t)\,dy - d_1 u - a(x)u + H(v) + p\,u_x,\\[6pt]
v_t = d_2\displaystyle\int_{-\mathcal Z_i}^{\mathcal Z_i} J_2(x-y)v(y,t)\,dy - d_2 v - b(x)v + G(u) + q\,v_x,
\end{cases}
\tag{P$_{l_i}$}
\]
with nonnegative initial data satisfying
\[
0<u_{01}\leq u_{02}\leq \mathscr{U}^{\ast},\qquad 0<v_{01}\leq v_{02}\leq \mathscr{V}^{\ast}
\quad\text{on }[-\mathcal Z_1,\mathcal Z_1].
\]
(Here \((\mathscr{U}^{\ast}, \mathscr{V}^{\ast})\) is the positive spatially homogeneous equilibrium satisfies  \eqref{PS1}-\eqref{PS2}.

We Fix \(x\in[-\mathcal Z_1, \mathcal Z_1]\) and \(t>0\). From the equation for \(u_{\mathcal Z_2}\) on the larger domain we have
\[
\begin{aligned}
(u_{l_2})_t(x,t)
&= d_1\int_{-\mathcal Z_2}^{\mathcal Z_2} J_1(x-y)\,u_{l_2}(y,t)\,dy - d_1 u_{l_2}(x,t) - a u_{l_2}(x,t) + H\big(v_{l_2}(x,t)\big) + p\,\partial_x u_{l_2}(x,t)\\
&\ge d_1\int_{-l_1}^{l_1} J_1(x-y)\,u_{l_2}(y,t)\,dy - d_1 u_{l_2}(x,t) - a u_{l_2}(x,t) + H\big(v_{l_2}(x,t)\big) + p\,\partial_x u_{l_2}(x,t),
\end{aligned}
\]
because the integral over \([-\mathcal Z_2, \mathcal Z_2]\) is larger than the integral over the subinterval \([-\mathcal Z_1, \mathcal Z_1]\) (kernel nonnegativity). Rearranging,
\[
\begin{aligned}
&(u_{l_2})_t(x,t)
- \Big( d_1\int_{-\mathcal Z_1}^{\mathcal Z_1} J_1(x-y)u_{\mathcal Z_2}(y,t)\,dy - d_1 u_{l_2} - a u_{\mathcal Z_2} + H(v_{\mathcal Z_2}) + p\,\partial_x u_{l_2}\Big)
\ge 0.
\end{aligned}
\]
Thus, when restricted to $[-\mathcal Z_1, \mathcal Z_1]$, the function $u_{\mathcal Z_2}$ is a supersolution of the $u$–equation on the smaller domain \( [-\mathcal Z_1,\mathcal Z_1] \). The drift term \(p\,\partial_x u\) appears identically in the comparison operator on both domains, so it does not affect the sign of the difference — it simply remains present in both the test equation and the super-solution inequality. The exact same reasoning (with \(q\,\partial_x v\)) shows that \(v_{\mathcal Z_2}\) is a supersolution of the $v$–equation on \([-\mathcal Z_1, \mathcal Z_1]\).

\medskip\noindent\textbf{Step 2 — parabolic comparison and monotonicity of steady states.}
By construction the initial data on \([-l_1,l_1]\) satisfy \(u_{02}\geq u_{01}\), \(v_{02}\geq v_{01}\), and the system is cooperative (because \(G,H\) are nondecreasing and the nonlocal kernels are nonnegative). The parabolic comparison principle for cooperative systems with first–order terms  gives, for all \(t>0\) and \(x\in[-\mathcal{Z}_1,\mathcal{Z}_1]\),
\[
u_{\mathcal{Z}_2}(x,t)\ge u_{\mathcal{Z}_1}(x,t),\qquad v_{\mathcal{Z}_2}(x,t) \geq v_{\mathcal{Z}_1}(x,t).
\]
Let \(t\to\infty\). By \eqref{thm:existence-stability} each solution converges to the corresponding unique positive steady state on its domain, so we obtain the monotonicity of steady states:
\[
\mathscr{U}_{\mathcal{Z}_2}(x)\geq w_{l_1}(x),\qquad \mathscr{V}_{\mathcal{Z}_2}(x)\geq \mathscr{V}_{\mathcal Z_1}(x)
\quad\text{for }x\in[-\mathcal Z_1, \mathcal Z_1].
\]

\medskip\noindent\textbf{Step 3 — limit profile and translation invariance.}
Define the following limits
\[
\tilde {\mathscr{U}}(x):=\lim_{\mathcal Z\to\infty} \mathscr{U}_{\mathcal Z}(x),\qquad \tilde {\mathscr{V}}(x):=\lim_{ \mathcal Z\to\infty} \mathscr{V}_{\mathcal Z}(x),
\]
which exists (monotone nondecreasing in \(\mathcal{Z}\)) and satisfies \(0\leq \tilde{\mathscr{U}} ,\tilde{\mathscr{V}}  \leq \mathscr{U}^\ast, \mathscr{V}^\ast\). Passing to the limit in the steady integral equations (the drift terms pass to the limit in the usual weak/pointwise sense because the family is uniformly bounded and equicontinuous on compacts) shows that \(\tilde {\mathscr{U}}, \tilde{\mathscr{V}} )\) is a bounded nonnegative steady solution on \(\mathbb R\).

To show translation invariance, fix \(x_0\in\mathbb R\) and set \(\ell=|x_0|\). For large \(\mathcal Z\) define the translated steady pair
\[
(\mathscr U^{(0)}_{\mathcal Z}(x), \mathscr V^{(0)}_{\mathcal Z}(x)):=\big(\mathscr U_{\mathcal Z-\ell}(x+x_0),\; \mathscr V_{\mathcal Z-\ell}(x+x_0)\big),
\]
and repeat the previous comparison argument on the three nested intervals
\[
[-(\mathcal Z-2\ell),\,l-2\ell]\subset [-(\mathcal Z-\ell)-x_0,\; (\mathcal Z-\ell)-x_0]\subset[-\mathcal Z, \mathcal Z].
\]
We obtain for each fixed \(x\) and large \(\mathcal Z\),
\[
\mathscr U_{\mathcal{Z}-2\ell}(x)\leq \mathscr U^{(0)}_{\mathcal{Z}}(x)\leq \mathscr U_\mathcal{Z}(x).
\]
Letting \(\mathcal{Z}\to\infty\) gives \(\tilde{\mathscr{U}}  (x)\leq \tilde{\mathscr{U}}(x+x_0)\leq \tilde{\mathscr{U}}(x)\), hence equality; similarly for \(\tilde{\mathscr{U}}\). Thus \((\tilde{\mathscr{U}},\tilde{\mathscr{V}})\) is spatially constant.

\medskip\noindent\textbf{Step 4 — identification of the constant and local uniform convergence.}
Because \(R_0>1\) the only positive constant steady state of the full-space problem is \( (\mathscr{U}^\ast, \mathscr{V}^\ast)\). Hence the limit must be \((\mathscr{U}^\ast, \mathscr{V}^\ast)\). Monotone convergence together with Dini's theorem yields local uniform convergence on compact subsets of \(\mathbb R\):
\[
\lim_{\mathcal Z\to\infty} (\mathscr{U}_{\mathcal Z}(x), \mathscr{V}_{\mathcal Z}(x))=(\mathscr{U}^\ast, \mathscr{V}^\ast),
\]
as required.
\end{proof}

\subsection{Vanishing}
We define the following limits,
\begin{equation}\label{Limithg}
\lim_{t \to \infty} h(t) = h_{\infty} \in (h_{0}, +\infty],
\qquad
\lim_{t \to \infty} g(t) = g_{\infty} \in [-\infty, -h_{0}).
\end{equation}

Under the assumptions of Theorem \eqref{thm:global-existence}, we conclude that the limits in \eqref{Limithg} defining $h_{\infty}$ and $g_{\infty}$ are well defined.

\begin{lemma}\label{lem:vanishing}
If $h_\infty - g_\infty < \infty$, then the principal eigenvalue
$\lambda^\ast(h_\infty, g_\infty)$ of \eqref{eq:maineigenrvalue3}, defined on the interval
$[g_\infty, h_\infty]$, satisfies
\[
\lambda^\ast(h_\infty, g_\infty) \leq 0.
\]
Moreover,
\[
\lim_{t \to \infty} \|u(\cdot, t)\|_{C([g(t),h(t)])}
= \lim_{t \to \infty} \|v(\cdot, t)\|_{C([g(t),h(t)])} = 0.
\]
\end{lemma}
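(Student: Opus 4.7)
The plan is to prove both assertions simultaneously: first establish that the free-boundary speeds $h'(t),g'(t)$ decay to zero, then deduce the eigenvalue inequality by contradiction using the long-time behavior of the \emph{frozen-domain} problem from Theorem~\ref{thm:existence-stability}, and finally obtain the uniform vanishing of $(u,v)$ by a supersolution comparison on the limit interval $[g_\infty,h_\infty]$. Since $h(t)$ is nondecreasing and bounded above by $h_\infty<\infty$, one has $\int_0^\infty h'(t)\,dt<\infty$, and analogously $\int_0^\infty(-g'(t))\,dt<\infty$. The uniform $C^1$ bounds on $(u,v)$ provided by Theorem~\ref{thm:global-existence}, combined with the boundedness of $J_1,J_2$, show that the integrands driving $h'(t)$ and $g'(t)$ depend Lipschitz-continuously on $t$, so both $h'(t)$ and $g'(t)$ are uniformly continuous on $[0,\infty)$. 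Integrability together with uniform continuity then yields
\[
\lim_{t\to\infty} h'(t) \;=\; \lim_{t\to\infty} g'(t) \;=\; 0.
\]

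To prove $\lambda^\ast(h_\infty,g_\infty)\leq 0$, I would argue by contradiction. Suppose $\lambda^\ast(h_\infty,g_\infty)>0$. By the continuity of $\mathcal{Z}\mapsto\lambda^\ast(\mathcal{Z})$ established in Proposition~\ref{prop:monotonicity}, one can choose $\varepsilon>0$ sufficiently small so that the principal eigenvalue on $[g_\infty+\varepsilon,h_\infty-\varepsilon]$ remains strictly positive, with an associated strictly positive $C^1$ eigenfunction $(\psi_1,\psi_2)$ furnished by Theorem~\ref{Principlaeignevalue}. For $t$ beyond some $T_0$ one has $[g_\infty+\varepsilon,h_\infty-\varepsilon]\Subset(g(t),h(t))$, and the positive continuous pair $(u(T_0,\cdot),v(T_0,\cdot))$ dominates $(\delta\psi_1,\delta\psi_2)$ pointwise on this subinterval for some $\delta>0$. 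A subsolution argument identical in spirit to the one used in Theorem~\ref{thm:existence-stability}(i), invoking the comparison principle of Lemma~\ref{ComparaisonlEMMA} adapted to the advective operator, then shows that $(u(t,\cdot),v(t,\cdot))\geq \delta(\psi_1,\psi_2)$ on $[g_\infty+\varepsilon,h_\infty-\varepsilon]$ for every $t\geq T_0$. Since $J_1(0)>0$ and $J_1$ is continuous, the nonlocal flux integral
\[
\int_{g(t)}^{h(t)}\!\!\int_{h(t)}^{\infty} J_1(x-y)\,u(t,x)\,dy\,dx
\]
is then bounded below by a strictly positive constant for all large $t$, contradicting $h'(t)\to 0$. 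This forces $\lambda^\ast(h_\infty,g_\infty)\leq 0$. The uniform convergence $u,v\to 0$ is then obtained by constructing, on the fixed domain $[g_\infty,h_\infty]$, the solution of the associated frozen-boundary system with large constant initial data; this serves as a supersolution for $(u,v)$ via Lemma~\ref{ComparaisonlEMMA}, and Theorem~\ref{thm:existence-stability}(ii) guarantees its uniform decay to $(0,0)$ since $\lambda^\ast(h_\infty,g_\infty)\leq 0$.

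The main technical obstacle lies in propagating the subsolution inequality $(u,v)\geq \delta(\psi_1,\psi_2)$ in the presence of the first-order drift terms $p u_x$ and $q v_x$: one must ensure that the characteristic transport along the boundaries $x=g(t)$ and $x=h(t)$ does not destroy the pointwise lower bound obtained at time $T_0$. This is precisely where the sharper $C^1$ regularity of the eigenfunction provided by Theorem~\ref{Principlaeignevalue}, together with the $C^1$-in-$x$ regularity of $(u,v)$ from Theorem~\ref{thm:global-existence}, becomes indispensable, since the advective comparison would fail for merely continuous test functions. A secondary subtle point is the rigorous justification of $h'(t)\to 0$ from mere integrability, which requires the equicontinuity of $h'(t)$; this in turn depends on the uniform $C^1_x$ control of $(u,v)$ rather than only $L^\infty$ bounds, underscoring why the enhanced regularity framework developed in Section~2 is essential for the present argument.
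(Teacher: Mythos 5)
Your overall strategy is correct and reaches the same conclusion, but it differs from the paper's route in two noteworthy ways, one of which costs you extra work and one of which is a genuinely different mechanism.

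First, you insert a Barbalat-type step: from $\int_0^\infty h'(t)\,dt < \infty$ and uniform continuity of $h'$, you deduce $h'(t)\to 0$, and then derive a contradiction with a positive lower bound on the flux. The paper skips this entirely. Once one has, for all $t$ large, $u(x,t),v(x,t)\geq m_2/2$ on the fixed subinterval $[g_\infty+\varepsilon,h_\infty-\varepsilon]$, combined with $J_i(\xi)\geq\delta$ for $|\xi|\leq 4\varepsilon$, the free-boundary ODE yields $h'(t)\geq C>0$ for all large $t$, and then
\[
h_\infty - h(T) = \int_T^\infty h'(t)\,dt = +\infty
\]
contradicts $h_\infty<\infty$ directly, with no need for equicontinuity of $h'$. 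Your Barbalat detour is not wrong, but the claim that $h'$ is uniformly continuous requires a bound on $h''$, which in turn requires $u_t\in L^\infty$ plus control of $g',h'$ inside the double integral; the paper simply does not need this machinery, and you do not fully justify it.

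Second, you propagate a time-independent subsolution $\delta(\psi_1,\psi_2)$ on a shrunk interval $[g_\infty+\varepsilon,h_\infty-\varepsilon]$, whereas the paper compares the free-boundary solution against the solution $(\tilde u,\tilde v)$ of the \emph{frozen-domain} problem on $[g(T),h(T)]$ with identical initial data $(u(\cdot,T),v(\cdot,T))$, using domain monotonicity of the nonlocal integrals plus Theorem~\ref{thm:existence-stability}(i) to conclude that $(\tilde u,\tilde v)$ converges to a positive steady state, hence $(u,v)$ stays bounded below. Your route is conceptually attractive and closer in spirit to the subsolution construction inside the proof of Theorem~\ref{thm:existence-stability}, but it hides a technical issue that the paper's route avoids: for a nonlocal-plus-drift operator, comparing a subsolution defined only on a strict subinterval against a solution on a larger moving interval is not covered directly by Lemma~\ref{ComparaisonlEMMA} (which compares two objects on the same time-dependent domain). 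You would need to extend $\delta(\psi_1,\psi_2)$ by zero to $[g(t),h(t)]$, and because the principal eigenfunction of \eqref{eq:maineigenrvalue3} is strictly positive up to and including the endpoints (there is no Dirichlet condition in the eigenproblem), the extended test function is discontinuous across $\pm(\mathcal{Z}-\varepsilon)$; the drift term $p\,\partial_x$ then acts on a non-$C^1$ (indeed non-continuous) object and the comparison argument breaks. The paper's choice, to run the frozen-domain evolution equation on $[g(T),h(T)]$ with the matched initial trace, sidesteps this entirely: both compared objects live on domains nested the right way, both satisfy the PDE, and domain monotonicity of $\int J_i(\cdot-y)\,\cdot\,dy$ does the rest. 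If you want to keep your subsolution route, you should instead take the subsolution on the full interval $[g(T),h(T)]$ for some large $T$ (so no extension is needed), after invoking continuity of $\lambda^\ast$ to ensure $\lambda^\ast(g(T),h(T))>0$; that is morally what the paper does via the frozen problem.

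The final step, bounding $(u,v)$ above by a frozen-domain solution on $[g_\infty,h_\infty]$ and invoking Theorem~\ref{thm:existence-stability}(ii), matches the paper.
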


\begin{proof}
We split the proof into two steps.

\medskip\noindent\textbf{Step 1.   $\lambda^\ast(h_\infty,g_\infty)\leq 0$.}
Suppose, to reach a contradiction, that
\(
\lambda^\ast(h_\infty,g_\infty)>0.
\)
By continuity of the principal eigenvalue with respect to domain perturbations (see Lemma \eqref{LemmaUpperLower} and the discussion preceding it), there exists $T_1>0$ such that for every $T\ge T_1$ the principal eigenvalue on the fixed interval $[g(T),h(T)]$ satisfies
\[
\lambda_0\big(g(T),h(T)\big) > 0.
\]

Because the kernels $J_i$ are positive at the origin (indeed $J_i(0)>0$ by $(\mathbf{J})$, there exist constants $\varepsilon>0$ small and $\delta>0$ such that for each $i=1,2$
\[
J_i(x)\geq \delta>0\qquad\text{for } x\in[-4\varepsilon,4\varepsilon].
\]
Choose $T\geq T_1$ large enough so that the limiting interval contains a slightly smaller interior
\[
[g(T),h(T)] \supset [\,g_\infty+\varepsilon,\,h_\infty-\varepsilon\,].
\]

Let $(\tilde u(x,t),\tilde v(x,t))$ be the solution of the \emph{fixed-domain} problem on $[g(T),h(T)]$ with initial data taken from the free-boundary solution at time $T$:
\[
\tilde u(x,0)= u(x,T),\quad \tilde v(x,0)= v(x,T),\qquad x\in[g(T),h(T)].
\]
(Existence and uniqueness on the fixed interval follow from your well-posedness theory; by Theorem 3.10, since $\lambda_0(g(T),h(T))>0$, the fixed-domain problem admits a unique positive steady state and solutions with positive initial data converge to it.)

Now define the difference functions for $t\geq0$ and $x\in[g(T),h(T)]$:
\[
U(x,t):=u(x,t+T)-\tilde u(x,t),\qquad
V(x,t):=v(x,t+T)-\tilde v(x,t).
\]
We compare the equation satisfied by $(u(\cdot,t+T),v(\cdot,t+T))$ (the free-boundary solution shifted by \(T\)) with that of \((\tilde u,\tilde v)\) on the fixed domain.  For the nonlocal terms we have, for each $x\in[g(T),h(T)]$,
\[
\int_{g(t+T)}^{h(t+T)} J_i(x-y)\,w(y,t+T)\,dy
\;\geq\;
\int_{g(T)}^{h(T)} J_i(x-y)\,w(y,t+T)\,dy,
\]
because $[g(T),h(T)]\subset [g(t+T),h(t+T)]$ for $t\ge0$ (the free boundary domain at time $t+T$ contains the earlier fixed interval). Subtracting the corresponding integrals for the fixed-domain problem yields, for $i=1,2$,
\[
\begin{aligned}
&d_i\int_{g(t+T)}^{h(t+T)} J_i(x-y)u(y,t+T)\,dy - d_i\int_{g(T)}^{h(T)} J_i(x-y)\tilde u(y,t)\,dy \\
&\qquad \geq d_i\int_{g(T)}^{h(T)} J_i(x-y) \big(u(y,t+T)-\tilde u(y,t)\big)\,dy
= d_i\int_{g(T)}^{h(T)} J_i(x-y) U(y,t)\,dy.
\end{aligned}
\]
Analogous inequalities hold for the $v$–equation and for the coupling terms involving $K$.

Using these inequalities and the fact that the local/drift terms are identical in the two problems, one checks that \((U,V)\) satisfies a cooperative parabolic inequality of the form
\[
\partial_t
\begin{pmatrix} U\\ V\end{pmatrix}
\;\ge\;
\mathcal A(x,t)\begin{pmatrix} U\\ V\end{pmatrix}
\]
on $[g(T),h(T)]$ with homogeneous initial data
\[
U(x,0)=0,\qquad V(x,0)=0,\qquad x\in[g(T),h(T)].
\]
Here \(\mathcal A(x,t)\) denotes the linear (time–dependent) cooperative operator obtained after subtracting the two equations; by construction \(\mathcal A\) has nonnegative off-diagonal terms. By the parabolic comparison principle / maximum principle for cooperative systems, it follows that
\[
U(x,t)\geq 0,\qquad V(x,t) \geq 0 \qquad\text{for all }t\ge0,\ x\in[g(T),h(T)].
\]
In particular
\[
u(x,t+T)\geq \tilde u(x,t),\qquad v(x,t+T)\geq \tilde v(x,t).
\]

Since \((\tilde u,\tilde v)(\cdot,t)\to (w_T,z_T)\) uniformly on \([g(T),h(T)]\) as \(t\to\infty\), there exist \(t_0>0\) and a constant \(m_2>0\) (take \(m_2:=\min_{x\in[g(T),h(T)]}\min\{w_T(x),z_T(x)\}>0\)) such that
\[
\tilde u(x,t)\geq \tfrac{1}{2}w_T(x)\geq \tfrac{m_2}{2},\qquad
\tilde v(x,t)\geq \tfrac{1}{2}z_T(x)\geq \tfrac{m_2}{2},
\]
for all \(t\ge t_0\) and all \(x\in[g(T)+2\varepsilon,h(T)-2\varepsilon]\). Hence for all large \(s\) (take \(s=t+T\) large), we have
\[
u(x,s)\ge \tfrac{m_2}{2},\quad v(x,s)\ge \tfrac{m_2}{2}
\qquad\text{on }[g_\infty+\varepsilon,h_\infty-\varepsilon].
\]

Now, examine the free-boundary velocity, we have
\[
\begin{aligned}
h'(t)
&= \mu\int_{g(t)}^{h(t)}\int_{h(t)}^{\infty} J_1(x-y)u(x,t)\,dy\,dx
+\mu\rho\int_{g(t)}^{h(t)}\int_{h(t)}^{\infty} J_2(x-y)v(x,t)\,dy\,dx \\
&\geq \mu\int_{h(t)-2\varepsilon}^{h(t)}\int_{h(t)}^{h(t)+2\varepsilon} J_1(x-y)u(x,t)\,dy\,dx
+\mu\rho\int_{h(t)-2\varepsilon}^{h(t)}\int_{h(t)}^{h(t)+2\varepsilon} J_2(x-y)v(x,t)\,dy\,dx.
\end{aligned}
\]
For \(x\in[h(t)-2\varepsilon,h(t)]\) and \(y\in[h(t),h(t)+2\varepsilon]\) we have \(|x-y|\leq 4\varepsilon\), then \(J_i(x-y)\ge\delta\). Applying  \(u(x,t),v(x,t)\geq m_2/2\) on the inner subinterval (for large \(t\)) and the length \(2\varepsilon\) of the \(x\)- and \(y\)-intervals, we obtain for all large \(t\)
\[
\begin{aligned}
h'(t)
&\geq \mu\delta \int_{h(t)-2\varepsilon}^{h(t)}\int_{h(t)}^{h(t)+2\varepsilon} \frac{m_2}{2}\,dy\,dx
+ \mu\rho\delta \int_{h(t)-2\varepsilon}^{h(t)}\int_{h(t)}^{h(t)+2\varepsilon} \frac{m_2}{2}\,dy\,dx \\
&= \mu\delta \cdot \frac{m_2}{2}\cdot (2\varepsilon)(2\varepsilon) + \mu\rho\delta \cdot \frac{m_2}{2}\cdot (2\varepsilon)(2\varepsilon) \\
&= 2\mu\delta\varepsilon^2\frac{m_2}{2} \big(1+\rho\big)
= \mu\delta\varepsilon^2 m_2 (1+\rho) \;>\;0.
\end{aligned}
\]
Thus \(h'(t)\geq C>0\) for all large \(t\) with some constant \(C>0\). This implies \(h(t)\to\infty\) as \(t\to\infty\), which contradicts the assumption that \(h_\infty<\infty\). Therefore our initial assumption \(\lambda^\ast(h_\infty,g_\infty)>0\) is false, and we conclude
\[
\lambda^\ast(h_\infty,g_\infty)\leq 0.
\]

\medskip\noindent\textbf{Step 2. Extinction: \(u,v\to0\) uniformly.}
Let \((\widehat u(x,t),\widehat v(x,t))\) be the solution of the fixed–domain problem on the limiting interval \([g_\infty,h_\infty]\) with initial data
\[
\widehat u(x,0)=\sup_{s\geq0} u(x,s),\qquad \widehat v(x,0)=\sup_{s\geq0} v(x,s),
\quad x\in[g_\infty,h_\infty],
\]
or, more simply, take \(\widehat u(\cdot,0)=u(\cdot,T)\), \(\widehat v(\cdot,0)=v(\cdot,T)\) for some large \(T\) and consider the corresponding solution on \([g_\infty,h_\infty]\). By the comparison principle (Lemma 2.1) and the inclusion \( [g(t),h(t)]\subset [g_\infty,h_\infty]\) for all \(t\), we have
\[
0\le u(x,t)\leq \widehat u(x,t-T),\qquad 0\leq v(x,t)\leq \widehat v(x,t-T),
\]
for \(x\in[g(t),h(t)]\) and \(t\geq T\). Since we have shown \(\lambda^\ast(h_\infty,g_\infty)\leq0\), Theorem \eqref{thm:existence-stability}(ii)  implies that the fixed–domain solution \((\widehat u,\widehat v)\) converges to \((0,0)\) uniformly on \([g_\infty,h_\infty]\) as \(t\to\infty\). Therefore the same holds for \((u,v)\) on their moving domain:
\[
\lim_{t\to\infty}\|u(\cdot,t)\|_{C([g(t),h(t)])}
= \lim_{t\to\infty}\|v(\cdot,t)\|_{C([g(t),h(t)])}=0.
\]

This completes the proof of the lemma.
\end{proof}

\begin{proposition}[Vanishing when $R_0<1$ for the nonlocal drift system]
Assume the standing hypotheses $(\textbf{J})$, $(\textbf{GH})$ and suppose $a(x)\equiv a>0$, $b(x)\equiv b>0$ are constants.
Consider the free boundary system (the notation follows the main text)
\[
\begin{cases}
u_t(x,t) \;=\; d_1\displaystyle\int_{g(t)}^{h(t)} J_1(x-y)u(y,t)\,dy - d_1 u - a\,u(x,t)
        + H(v) + p\,u_x,\\[6pt]
v_t(x,t) \;=\; d_2\displaystyle\int_{g(t)}^{h(t)} J_2(x-y)v(y,t)\,dy - d_2 v - b\,v(x,t) + G(u) + q\,v_x,
\end{cases}
\]
with Dirichlet boundary conditions \(u=v=0\) at \(x=g(t),h(t)\) and the usual free boundary. Assume also
\begin{enumerate}
  \item $G,H\in C^1([0,\infty))$, $G(0)=H(0)=0$, $G',H'\geq0$ on $[0,\infty)$;
  \item the linear bound holds:
  \[
  G(u)\leq G'(0)\,u,\qquad H(v)\leq H'(0)\,v\quad\text{for all }u,v\geq0.
  \]
  (This is satisfied e.g. by concave saturating nonlinearities; alternatively replace the global bound by a small–solution argument for large time.)
  \item the basic reproductive threshold satisfies
  \[
  R_0:=\frac{H'(0)\,G'(0)}{ab}<1.
  \]
\end{enumerate}
Then the free interval remains bounded:
\[
h_\infty-g_\infty<\infty,
\]
and consequently
\[
\lim_{t\to\infty}\|u(\cdot,t)\|_{C([g(t),h(t)])}
=\lim_{t\to\infty}\|v(\cdot,t)\|_{C([g(t),h(t)])}=0.
\]
\end{proposition}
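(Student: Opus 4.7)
The plan is to construct an explicit spatially homogeneous supersolution built from the Perron eigenpair of the $2\times 2$ kinetic matrix
\[
M := \begin{pmatrix}-a & H'(0)\\ G'(0) & -b\end{pmatrix},
\]
and to exploit it to simultaneously drive $(u,v)$ to zero and confine the free boundary. The characteristic polynomial of $M$ yields the larger eigenvalue
\[
\lambda_{+} \;=\; \tfrac{1}{2}\!\left(-(a+b)+\sqrt{(a-b)^{2}+4H'(0)G'(0)}\,\right),
\]
and a direct algebraic calculation shows that $\lambda_{+}<0$ is equivalent to $H'(0)G'(0)<ab$, i.e.\ to $R_{0}<1$. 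Since $H'(0),G'(0)>0$ under $(\mathbf{GH})$, the Perron--Frobenius theorem furnishes a strictly positive eigenvector $(\phi_{1},\phi_{2})\in\mathbb R_{>0}^{2}$ associated with $\lambda_{+}$.

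Fix $K>0$ so large that $K\phi_{1}\geq\|u_{0}\|_{\infty}$ and $K\phi_{2}\geq\|v_{0}\|_{\infty}$, and set $\bar u(x,t):=K\phi_{1}e^{\lambda_{+}t}$, $\bar v(x,t):=K\phi_{2}e^{\lambda_{+}t}$. I then verify that $(\bar u,\bar v)$ is a supersolution of \eqref{eq:main-problem} on $\Omega_{g,h}$: the spatial constancy of $\bar u,\bar v$ kills the advection terms $p\bar u_{x}$ and $q\bar v_{x}$; the bound $\int_{g(t)}^{h(t)}J_{i}(x-y)\,dy\leq\int_{\mathbb R}J_{i}=1$ ensures that the nonlocal dispersion contributes a nonpositive quantity; and the linear bounds $H(\bar v)\leq H'(0)\bar v$, $G(\bar u)\leq G'(0)\bar u$, combined with the eigenvector identities $-a\phi_{1}+H'(0)\phi_{2}=\lambda_{+}\phi_{1}$ and $-b\phi_{2}+G'(0)\phi_{1}=\lambda_{+}\phi_{2}$, close the supersolution inequalities. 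Since $\bar u,\bar v>0=u,v$ on the free boundary and the initial ordering is enforced by the choice of $K$, the comparison principle (Lemma~\ref{ComparaisonlEMMA}) yields the pointwise bound $u(x,t)\leq K\phi_{1}e^{\lambda_{+}t}$ and $v(x,t)\leq K\phi_{2}e^{\lambda_{+}t}$ throughout $\Omega_{g,h}$.

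The remaining step bounds the front expansion. Using $\int_{h(t)}^{\infty}J_{i}(x-y)\,dy\leq 1$ together with the pointwise bound just obtained,
\[
h'(t)\;\leq\;\mu K(\phi_{1}+\rho\phi_{2})\,(h(t)-g(t))\,e^{\lambda_{+}t},
\]
and the symmetric estimate holds for $-g'(t)$. Adding these gives
\[
\tfrac{d}{dt}\bigl(h(t)-g(t)\bigr)\;\leq\;2\mu K(\phi_{1}+\rho\phi_{2})\,e^{\lambda_{+}t}\,\bigl(h(t)-g(t)\bigr),
\]
so Gronwall's inequality produces the explicit a priori bound $h(t)-g(t)\leq 2h_{0}\exp\!\bigl(2\mu K(\phi_{1}+\rho\phi_{2})/(-\lambda_{+})\bigr)<\infty$, whence $h_{\infty}-g_{\infty}<\infty$. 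Uniform extinction of $(u,v)$ is then either immediate from the exponential pointwise supersolution bound, or one may invoke Lemma~\ref{lem:vanishing} on the now bounded limiting interval $[g_{\infty},h_{\infty}]$. The only potentially delicate point I anticipate is verifying the supersolution inequality in the presence of the first-order drift, but this is in fact free: the spatially homogeneous ansatz annihilates the advection terms and makes truncation of the nonlocal convolution unambiguously favorable, so the entire argument reduces to a $2\times 2$ linear-algebra translation of the global threshold $R_{0}<1$ into an exponentially decaying barrier on the moving domain.
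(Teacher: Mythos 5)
Your proposal is correct, and it takes a genuinely different route from the paper's. The paper works with the unweighted $L^{1}$ Lyapunov functional $\Phi(t):=\int_{g(t)}^{h(t)}\!\bigl(u+v\bigr)\,dx$, kills the drift and the nonlocal diffusion under the integral, and then tries to close $\tfrac{d}{dt}\Phi\le -\delta\Phi$ by invoking the bounds $H(v)\le(b-\delta)v$ and $G(u)\le(a-\delta)u$. That step as written is incomplete: $R_{0}=H'(0)G'(0)/(ab)<1$ does \emph{not} imply both $H'(0)<b$ and $G'(0)<a$ (take $H'(0)$ large and $G'(0)$ correspondingly small). The $L^{1}$ functional must be weighted by the left Perron eigenvector of the kinetic matrix $M$ for the inequality to close. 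Your proof sidesteps this entirely by using the \emph{right} Perron eigenvector to build the supersolution barrier $K(\phi_{1},\phi_{2})e^{\lambda_{+}t}$: the eigenvector identities $-a\phi_{1}+H'(0)\phi_{2}=\lambda_{+}\phi_{1}$, $-b\phi_{2}+G'(0)\phi_{1}=\lambda_{+}\phi_{2}$ make the supersolution check a pure linear-algebra statement, with no need to compare $H'(0)$ against $b$ or $G'(0)$ against $a$ separately. The supersolution and Lyapunov approaches are dual to each other (right versus left eigenvector), but yours is the more robust of the two and additionally buys explicit information — the pointwise exponential decay rate $\lambda_{+}$ and the explicit a priori bound $h(t)-g(t)\le 2h_{0}\exp\!\bigl(2\mu K(\phi_{1}+\rho\phi_{2})/(-\lambda_{+})\bigr)$ — that the paper's mass argument does not provide.

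One small imprecision: you invoke Lemma~\ref{ComparaisonlEMMA} (the free-boundary comparison) to obtain the pointwise bound $u\le\bar u,\ v\le\bar v$, but your spatially constant $(\bar u,\bar v)$ does not carry its own free boundary, so the free-boundary part of that lemma is not applicable. What you actually need is the maximum principle on the given moving domain (Lemma~\ref{MaximumLemma}) applied to the differences $(\bar u-u,\bar v-v)$: since $\bar u_{x}=\bar v_{x}=0$ and the off-diagonal coefficients $H'(0),G'(0)\ge0$ are of the required sign, this gives the ordering directly. The rest — bounding $h'(t)-g'(t)$ by the exponentially decaying barrier and integrating the Gronwall inequality — is exactly right, and the final appeal to Lemma~\ref{lem:vanishing} for uniform extinction is optional since the pointwise barrier already delivers it.
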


\begin{proof}
In view of Lemma~\eqref{lem:vanishing}, it suffices to show that
\[
h_\infty - g_\infty < \infty.
\]

Let us define the functional
\[
\Phi(t) := \int_{g(t)}^{h(t)} \Big(u(x,t) + v(x,t)\Big)\,dx.
\]

Differentiating $\Phi(t)$ with respect to $t$ gives
\[
\frac{d}{dt}\Phi(t) = \int_{g(t)}^{h(t)} \Big(u_t(x,t) + v_t(x,t)\Big)\,dx,
\]
since the boundary terms vanish due to the conditions $u(t,g(t))=u(t,h(t))=0$ and $v(t,g(t))=v(t,h(t))=0$.

Substituting the equations from \eqref{eq:main-problem}, we obtain
\begin{align*}
\frac{d}{dt}\Phi(t) &= d_1 \int_{g(t)}^{h(t)} \left[\int_{g(t)}^{h(t)} J_1(x-y)u(y,t)\,dy - u(x,t)\right] dx \\
&\quad + d_2 \int_{g(t)}^{h(t)} \left[\int_{g(t)}^{h(t)} J_2(x-y)v(y,t)\,dy - v(x,t)\right] dx \\
&\quad + \int_{g(t)}^{h(t)} \Big( pu_x(x,t) + q v_x(x,t) \Big)\,dx \\
&\quad + \int_{g(t)}^{h(t)} \Big(-a u(x,t) - b v(x,t) + H(v(x,t)) + G(u(x,t))\Big)\,dx.
\end{align*}

The drift terms vanish due to the boundary conditions:
\[
\int_{g(t)}^{h(t)} p u_x\,dx = p\big(u(h(t),t)-u(g(t),t)\big)=0,
\qquad
\int_{g(t)}^{h(t)} q v_x\,dx = 0.
\]

The diffusion terms are nonpositive, since for any continuous $w$,
\[
\int_{g(t)}^{h(t)} \left(\int_{g(t)}^{h(t)} J_i(x-y)w(y)\,dy - w(x)\right)dx =
\int_{g(t)}^{h(t)}\int_{g(t)}^{h(t)} J_i(x-y)\big(w(y)-w(x)\big)\,dydx \leq 0,
\]
for $i=1,2$.

Therefore,
\[
\frac{d}{dt}\Phi(t) \leq \int_{g(t)}^{h(t)} \Big(-a u(x,t) - b v(x,t) + H(v(x,t)) + G(u(x,t))\Big)\,dx.
\]

Since $R_0 = H'(0)G'(0)/(ab)$ and $R_0 < 1$, there exists $\delta > 0$ such that
\[
H(v) \le (b-\delta) v,
\qquad
G(u) \le (a-\delta) u,
\]
for all $u,v \ge 0$ small enough. Hence,
\[
\frac{d}{dt}\Phi(t) \leq -\delta \int_{g(t)}^{h(t)} \big(u(x,t) + v(x,t)\big)\,dx = -\delta \Phi(t).
\]

This shows that $\Phi(t)$ decays exponentially, and in particular
\[
\sup_{t\geq0} \Phi(t) < \infty.
\]

Moreover, integrating the differential inequality and recalling the free boundary conditions, we conclude that
\[
h_\infty - g_\infty < \infty.
\]

By Lemma~ \eqref{lem:vanishing}, it follows that
\[
\lim_{t\to\infty} (u(x,t), v(x,t)) = (0,0)
\quad \text{uniformly in } [g(t),h(t)].
\]
Thus, vanishing occurs when $R_0 < 1$.
\end{proof}

\begin{lemma}\label{Vanishfreeboundaries}
Assume the standing hypotheses $(\mathbf{J}),$ and $\textbf{GH}$ . Let $R_0>1$ and suppose the initial half-length $h_0<\mathcal L^\ast,$ and $
\lambda^\ast_1(c_1)<0.$
with $\lambda^\ast_1(c_1)<0$ is the principal eigenvalue of the following problem
\begin{equation}\label{eq:eigenproblem}
\left\{
\begin{aligned}
\lambda \,\phi(x) &= d_1\left( \int_{-\mathcal{Z}}^{\mathcal{Z}} J_1(x-y)\,\phi(y)\,dy - \psi_1(x) \right)
   + p\,\psi_1'(x) - a\,\psi_1(x) + H'(0)\,\psi_2(x),
   && x\in[-\mathcal{Z},\mathcal{Z}], \\[1ex]
\lambda \,\psi(x) &= d_2\left( \int_{-\mathcal{Z}}^{\mathcal{Z}} J_2(x-y)\,\psi_2(y)\,dy - \psi_2(x) \right)
   + q\,\psi_2'(x) - b\,\psi_2(x) + G'(0)\,\psi_1(x),
   && x\in[-\mathcal{Z},\mathcal{Z}], \\[1ex]
\phi(\pm \mathcal{Z}) &= 0,
\qquad
\psi(\pm \mathcal{Z}) = 0.
\end{aligned}
\right.
\end{equation}

If the initial data $(u_0,v_0)$ are constant and sufficiently small, then the solution of \eqref{eq:main-problem} vanishes (i.e. $h_\infty-g_\infty<\infty$ and $(u,v)\to(0,0)$).
\end{lemma}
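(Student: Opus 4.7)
The plan is to exhibit an explicit exponentially decaying super-solution whose spatial support stays inside a uniformly bounded interval, and then to invoke the comparison principle (Lemma \ref{ComparaisonlEMMA}) together with Lemma \ref{lem:vanishing}. The hypothesis $h_0 < \mathcal{L}^\ast$ guarantees, through the threshold characterization in Lemma \ref{ThresholdCaracterestique} and the monotonicity in Proposition \ref{prop:monotonicity}, the existence of some $\mathcal Z \in (h_0, \mathcal{L}^\ast)$ with $\lambda_1^\ast(c_1) < 0$ on $[-\mathcal Z, \mathcal Z]$ and an associated strictly positive $C^1$ eigenpair $(\psi_1, \psi_2)$, normalized so that $\min_{x}\min\{\psi_1,\psi_2\} = 1$. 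The advection terms are already absorbed into the eigenpair, so the construction below will not need any exponential weight to deal with them.

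First, I would take an ansatz of the form
\[
\overline u(t,x) := \varepsilon\, e^{-\gamma t}\,\psi_1(x),
\qquad
\overline v(t,x) := \varepsilon\, e^{-\gamma t}\,\psi_2(x),
\]
defined on expanding boundaries $\overline g(t) = -\overline h(t)$, $\overline h(t) := h_0 + (\mathcal Z - h_0)(1 - e^{-\beta t}) \nearrow \mathcal Z$. The concavity bounds $H(v) \leq H'(0)v$ and $G(u) \leq G'(0)u$ from \textbf{(GH)} reduce the reaction-diffusion inequalities for $(\overline u, \overline v)$, up to a nonnegative ``ambient gain'' coming from the integration window being $[\overline g(t), \overline h(t)] \subset [-\mathcal Z, \mathcal Z]$ (here one uses that the truncation of $\mathscr J$ to a subinterval is dominated by the full operator, with the sign working in our favor because $\psi_i > 0$), to the linearized eigen-identity on $[-\mathcal Z, \mathcal Z]$. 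A decay rate $\gamma \in (0, -\lambda_1^\ast(c_1))$ is then chosen so that the pointwise residual is nonnegative.

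The main obstacle will be the verification of the free-boundary flux inequalities. The expansion velocity $\overline h'(t) = \beta(\mathcal Z - h_0) e^{-\beta t}$ must dominate the flux integral
\[
\mu\!\int_{\overline g(t)}^{\overline h(t)}\!\!\int_{\overline h(t)}^{\infty}\! J_1(x-y)\,\overline u(t,x)\,dy\,dx
\;+\;\rho\!\int_{\overline g(t)}^{\overline h(t)}\!\!\int_{\overline h(t)}^{\infty}\! J_2(x-y)\,\overline v(t,x)\,dy\,dx,
\]
which is bounded by $C\,\varepsilon\, e^{-\gamma t}$ with $C = C(\mu,\rho,\mathcal Z,\|\psi_i\|_\infty,\|J_i\|_1)$. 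The balance requires two couplings: choose $\beta \leq \gamma$ so that the decay rates are compatible in time, and then choose $\varepsilon$ small, namely $\varepsilon \leq \beta(\mathcal Z - h_0)/C$, so that the amplitude yields a flux below the prescribed boundary velocity. This is where the smallness of the constant initial data $(u_0, v_0)$ enters: one fixes $\varepsilon$ from the boundary balance above, and then requires $u_0, v_0 \leq \varepsilon$, which by the normalization of $(\psi_1, \psi_2)$ ensures $(u_0, v_0) \leq (\overline u(0,\cdot), \overline v(0,\cdot))$ on $[-h_0, h_0]$.

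Finally, once $(\overline u, \overline v, \overline g, \overline h)$ is validated as a super-solution, Lemma \ref{ComparaisonlEMMA} yields $u \leq \overline u$, $v \leq \overline v$, $[g(t), h(t)] \subset [\overline g(t), \overline h(t)]$ for all $t \geq 0$. In particular $h_\infty - g_\infty \leq 2\mathcal Z < \infty$, and Lemma \ref{lem:vanishing} forces uniform decay $(u,v) \to (0,0)$. The subtle point throughout is the interplay between the nonlocal gain (integration over a shrunken window versus the full $[-\mathcal Z, \mathcal Z]$) and the time-dependent boundary velocity; both are handled simultaneously because the spatial profile $\psi_i$ is the eigenfunction of the \emph{full-domain} operator on $[-\mathcal Z, \mathcal Z]$, so no spectral loss occurs when the running domain $[\overline g(t), \overline h(t)]$ is strictly smaller.
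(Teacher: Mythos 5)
Your proposal is correct and follows essentially the same route as the paper's proof: an exponentially decaying super-solution $(\overline u,\overline v)=\varepsilon e^{-\gamma t}(\psi_1,\psi_2)$ with an exponentially saturating boundary $\overline h(t)\nearrow \mathcal Z$, validated via the concavity bounds $H\le H'(0)\,\cdot$, $G\le G'(0)\,\cdot$, the monotone gain from restricting the nonlocal integral to $[\overline g(t),\overline h(t)]\subset[-\mathcal Z,\mathcal Z]$, a boundary-flux balance fixing $\varepsilon$, and finally the comparison principle and Lemma \ref{lem:vanishing}. The paper pins the two rates together as $-\sigma_1=\beta=\gamma=-\lambda^\ast(c_1)/2$ whereas you allow the more flexible $0<\beta\le\gamma<-\lambda^\ast_1(c_1)$; the substance and the closing inequality $\varepsilon\le\beta(\mathcal Z-h_0)/C$ match the paper's choice of the constant $\mathbf M$ up to notation and normalization.
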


\begin{proof}
Since $h_0<\mathcal L^\ast$ and $R_0>1$, shows that there exists some \(c_1\in(h_0,\ell^\ast)\) for which the principal eigenvalue of the linearized fixed–interval problem on \([-h_1,h_1]\) satisfies
\[
\lambda^\ast_1(c_1)<0.
\]
Let \((\psi_1,\psi_2)\in C^1([-h_1,h_1])\times C^1([-c_1,c_1])\) be a corresponding positive eigenfunction pair for the linearized operator (including the drift terms). That is \((\theta_1,\theta_2)\gg 0\) and they satisfy on \([-c_1,c_1]\) the linear eigenvalue system
\[
\mathscr L(\psi_1,\psi_2) + \lambda^\ast(c_1)\,(\psi_1,\psi_2) = 0,
\]
where \(\mathscr L\) denotes the linear operator coming from the linearization of the left–hand side of \eqref{eq:main-problem} on the fixed interval (it contains the nonlocal integrals, the loss terms \(-d_i\cdot\), the reaction linearization terms \( -a \psi_1 + H'(0)\psi_2\), etc., and the drift terms \(p\partial_x\psi_1, q\partial_x\psi_2\)). We only use that \(\psi_i>0\) in the interior and the eigen-equations below.

Set
\[
\sigma_1 := \frac{\lambda^\ast(c_1)}{2} < 0,
\qquad
M_0 := \max\Big\{\int_{-c_1}^{c_1}\psi_1(x)\,dx,\;\int_{-c_1}^{c_1}\psi_2(x)\,dx\Big\},
\]
and define the constant
\begin{equation}\label{MMEQ1}
\mathbf{M} := \frac{-\sigma_1 (c_1-h_0)}{M_0(\mu+\rho)} > 0.
\end{equation}
Now define, for \(x\in[-c_1,c_1]\) and \(t\geq0\),
\[
\bar h(t) := c_1 - (c_1-h_0)e^{\sigma_1 t},\qquad \bar g(t) := -\bar h(t),
\]
and
\[
\bar u(x,t) := \mathbf{M} e^{\sigma_1 t}\,\psi_1(x),\qquad
\bar v(x,t) := \mathbf{M} e^{\sigma_1 t}\,\psi_2(x).
\]
Note \(\bar h(0)=h_0,\ \bar g(0)=-h_0\), and \(\bar h(t)\uparrow h_1\) as \(t\to\infty\). Also \(\bar u,\bar v\ge0\) on their domain and vanish on the boundaries \(x=\bar g(t),\bar h(t)\) because \(\psi_i(\pm h_1)=0\) (take eigenfunctions with Dirichlet boundary conditions on \([-c_1,c_1]\)).

We will show that \((\bar u,\bar v,\bar g,\bar h)\) is an upper solution of \eqref{eq:main-problem} on the time-dependent domain \([\bar g(t),\bar h(t)]\). Since \(\bar g(t)\leq -h_0\leq g(0)\) and \(\bar h(0)=h_0\geq h(0)\), by choosing initial constants \(u_0,v_0\) sufficiently small (see below) we will have \((u_0(x),v_0(x))\le(\bar u(x,0),\bar v(x,0))\) on \([-h_0,h_0]\), whence comparison (Lemma~\eqref{ComparaisonlEMMA}) yields \(g(t)\geq\bar g(t),\ h(t)\leq\bar h(t)\) for all \(t\geq0\), and vanishing follows because \(\bar h(\infty)-\bar g(\infty)=2h_1<\infty\).

\medskip\noindent\textbf{Initial smallness.}
Choose the (constant) initial data \((u_0,v_0)\) so small that
\[
u_0(x)\leq \mathbf{M}\min_{[-h_0,h_0]}\psi_1,\qquad v_0(x)\leq \mathbf{M}\min_{[-h_0,h_0]}\psi_2,
\]
which is possible because the right–hand sides are positive. Then \(u_0\leq\bar u(\cdot,0),\ v_0\leq\bar v(\cdot,0)\) on \([-h_0,h_0]\).

\medskip\noindent\textbf{Verification of the PDE inequalities (upper solution).}
Fix \(t>0\) and \(x\in[\bar g(t),\bar h(t)]\subset[-c_1,c_1]\). Compute
\[
\begin{aligned}
\bar u_t(x,t)
&= \sigma_1 \bar u(x,t)
= \sigma_1\mathbf{M}  e^{\sigma_1 t}\psi_1(x).
\end{aligned}
\]
Using the eigenvalue relation satisfied by \((\psi_1,\psi_2)\) on \([-c_1,c_1]\) (which includes the drift terms), we have
\[
\begin{aligned}
&d_1\Big[\int_{-c_1}^{c_1} J_1(x-y)\theta_1(y)\,dy - \theta_1(x)\Big] + p\psi_1'(x) - a \theta_1(x) + H'(0)\theta_2(x) \\
&\qquad\;= -\lambda^\ast(c_1)\psi_1(x).
\end{aligned}
\]
Multiply this identity by \(\mathbf{M} e^{\sigma_1 t}\) to obtain
\[
\begin{aligned}
&d_1\Big[\int_{-h_1}^{h_1} J_1(x-y)\bar u(y,t)\,dy - \bar u(x,t)\Big] + p\partial_x\bar u(x,t) - a\bar u(x,t) + H'(0)\bar v(x,t) \\
&\qquad\;= -\lambda^\ast(c_1)\bar u(x,t).
\end{aligned}
\]
Because \([\bar g(t),\bar h(t)]\subset[-c_1,c_1]\), replacing the integrals over \([-c_1,c_1]\) by integrals over \([\bar g(t),\bar h(t)]\) only reduces the first integral (kernel nonnegativity), hence
\[
d_1\Big[\int_{\bar g(t)}^{\bar h(t)} J_1(x-y)\bar u(y,t)\,dy - \bar u(x,t)\Big] + p\partial_x\bar u - a\bar u + H'(0)\bar v
\geq -\lambda^\ast_1(c_1)\bar u(x,t).
\]
Now use \(H(v)\leq H'(0)v\) for \(v\geq0\) (this inequality holds for small \(v\); by choosing \(M\) sufficiently small we ensure \(\bar v\) remains in that small regime — if needed one can argue for small initial data and continuity-in-time), to deduce
\[
d_1\Big[\int_{\bar g(t)}^{\bar h(t)} J_1(x-y)\bar u(y,t)\,dy - \bar u(x,t)\Big] + p\partial_x\bar u - a\bar u + H(\bar v)
\geq -\lambda^\ast_1(c_1)\bar u.
\]
Since \(\sigma_1=\frac{\lambda_0(h_1)}{2}\), we obtain
\[
\bar u_t - d_1\Big[\int_{\bar g(t)}^{\bar h(t)} J_1(x-y)\bar u(y,t)\,dy - \bar u\Big] - p\partial_x\bar u + a\bar u - H(\bar v)
= \sigma_1\bar u +\lambda^\ast_1(h_1)\bar u = (\sigma_1+\lambda^\ast_1(c_1)\bar u
= \tfrac{\lambda^\ast(c_1)}{2}\bar u \; \leq\; 0,
\]
i.e. \(\bar u\) satisfies the supersolution inequality (note the sign conventions — this shows the left-hand PDE operator evaluated at \(\bar u,\bar v\) is \(\leq 0\), consistent with the upper-solution definition).

The same calculation (using the second eigen-equation and the inequality \(G(\bar u)\leq G'(0)\bar u\) for small \(\bar u\)) yields
\[
\bar v_t - d_2\Big[\int_{\bar g(t)}^{\bar h(t)} J_2(x-y)\bar v(y,t)\,dy - \bar v\Big] - q\partial_x\bar v + b\bar v - G(\bar u)
\leq 0.
\]

Thus the PDE inequalities required for an upper solution are satisfied (for \(\mathcal{M}\) small so the linear bounds on \(G,H\) hold for \(\bar u,\bar v\)).

\medskip\noindent\textbf{Verification of the free–boundary inequalities.}
We must also check that the prescribed boundary velocities of the auxiliary interval dominate those of the true problem. Compute the growth rate \(\bar h'(t)\):
\[
\bar h'(t) = -(c_1-h_0)\sigma_1 e^{\sigma_1 t}.
\]
On the other hand the free–boundary right–speed generated by \((\bar u,\bar v)\) is
\[
\begin{aligned}
&\mu \big(\int_{\bar g(t)}^{\bar h(t)}\int_{\bar h(t)}^{\infty} J_1(x-y)\bar u(x,t)\,dy\,dx
+ \rho\int_{\bar g(t)}^{\bar h(t)}\int_{\bar h(t)}^{\infty} J_2(x-y)\bar v(x,t)\,dy\,dx \big) \\
&\qquad \leq \mu\Big(\int_{\bar g(t)}^{\bar h(t)} \bar u(x,t)\,dx + \rho\int_{\bar g(t)}^{\bar h(t)}\bar v(x,t)\,dx\Big)
=  \mathcal{M} e^{\sigma_1 t}\mu\Big(\int_{\bar g(t)}^{\bar h(t)}\psi_1 + \rho\int_{\bar g(t)}^{\bar h(t)}\psi_2\Big) \\
&\qquad \leq  \mu\mathbf{M} e^{\sigma_1 t}\Big( \int_{-h_1}^{h_1}\psi_1 + \rho\int_{-c_1}^{c_1}\psi_2\Big)
\leq  \mu M_0(1+\rho) e^{\sigma_1 t}.
\end{aligned}
\]
By the choice of \(\mathbf{M}\) we have
\[
 \mu M_0(1+\rho) e^{\sigma_1 t} = -\sigma_1(c_1-h_0)e^{\sigma_1 t} = \bar h'(t).
\]
Thus the boundary velocity produced by \((\bar u,\bar v)\) is \(\leq \bar h'(t)\). The left boundary check is analogous and yields \( -\bar g'(t) \) greater than or equal to the corresponding inward speed from \((\bar u,\bar v)\). Hence the free–boundary inequalities required for an upper solution hold.

We have  \((\bar u,\bar v,\bar g,\bar h)\) is an upper solution of \eqref{eq:main-problem} on its time interval, that the initial data satisfy \((u_0,v_0)\leq(\bar u(\cdot,0),\bar v(\cdot,0))\) on \([-h_0,h_0]\), and that \(\bar g(0)=-h_0\leq g(0),\ \bar h(0)=h_0\geq h(0)\). Therefore by the comparison principle (Lemma~\eqref{ComparaisonlEMMA}) the  solution is dominated by the upper solution:
\[
g(t)\geq \bar g(t),\qquad h(t)\leq\bar h(t),\qquad u(x,t)\leq\bar u(x,t),\qquad v(x,t)\leq\bar v(x,t),
\]
for as long as the upper solution is defined. Passing to the limit \(t\to\infty\) gives
\[
h_\infty-g_\infty \leq \bar h(\infty)-\bar g(\infty)=2h_1 < 2\ell^\ast,
\]
hence the habitat length remains bounded and by Lemma~\eqref{lem:vanishing} vanishing occurs: \((u,v)\to(0,0)\) uniformly and the free interval does not spread.
\end{proof}

\begin{remark}\label{Remarkvanishing}
From the expression of $\mathbf{M}$ in \eqref{MMEQ1}, it follows that $\mathbf{M} \to \infty$ as $\mu \to 0,~\rho \rightarrow 0$.
Hence, under the assumptions $R_0 > 1$ and $h_0 < \mathcal{L}$,
for any given pair of initial functions $(u_0,v_0)$ satisfying
\[
u_{0}, v_{0} \in C\!\left([-h_{0},h_{0}]\right),
\quad v_{0}(\pm h_{0})=u_{0}(\pm h_{0}) = 0,
\quad u_{0}, v_{0} > 0 \ \text{for } x \in (-h_{0},h_{0}),
\]
there exists a constant $\mu_0 > 0$ such that vanishing occurs whenever $\mu \in (0,\mu_0]$.
\end{remark}
\begin{lemma}\label{samfreeboundar}
We have $h_\infty < \infty$ if and only if $g_\infty > -\infty$.
\end{lemma}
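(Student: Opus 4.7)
The plan is to prove this equivalence by contradiction. Since $h'(t) \geq 0$ and $g'(t) \leq 0$, the habitat length $L(t):=h(t)-g(t)$ is nondecreasing and admits a limit $L_\infty\in(0,\infty]$. The claim amounts to showing that $L_\infty=\infty$ forces \emph{both} $h_\infty=\infty$ and $g_\infty=-\infty$; equivalently, one of the two boundaries being bounded forces $L_\infty<\infty$, hence the other is bounded as well. Without loss of generality, I will prove ``$g_\infty>-\infty\Rightarrow h_\infty<\infty$'' by contradiction, assuming $g_\infty>-\infty$ and $h_\infty=\infty$; the converse follows by the symmetric argument (the drift terms $pu_x, qv_x$ do not enter any integral estimate below, since all bounds rely only on the kernel symmetry $J_i(z)=J_i(-z)$ and the $L^\infty$ bound on $(u,v)$).

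The first step is to extract integrability of the solution from $g_\infty>-\infty$. Integrating the ODE for $g'(t)$ and using Fubini together with $J_i(z)=J_i(-z)$, one obtains
\[
\int_0^\infty\!\!\int_{g(t)}^{h(t)} u(x,t)\!\int_{x-g(t)}^\infty\! J_1(z)\,dz\,dx\,dt+\rho\,(\textrm{analogous $v$ term})\;\leq\;\frac{g_0-g_\infty}{\mu}<\infty.
\]
For any fixed compact set $K\subset[g_\infty,\infty)$ and any $t$ so large that $K\subset[g(t)+1,h(t)]$, the inner tail $\int_{x-g(t)}^\infty J_1(z)\,dz$ is uniformly bounded below on $K$ by a positive constant (since $x-g(t)$ stays bounded and $J_1(0)>0$). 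Consequently $\int_T^\infty\!\int_K(u+v)(x,t)\,dx\,dt<\infty$ for every compact $K\subset[g_\infty,\infty)$.

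The second step promotes this $L^1$-in-time decay to locally uniform vanishing. Using the $C^1$ regularity and $L^\infty$ bounds from Theorem~\ref{thm:global-existence}, the family $\{(u(\cdot,t),v(\cdot,t))\}_{t\geq 0}$ is equicontinuous on every compact subset of $[g_\infty,\infty)$. An Ascoli--Arzelà argument together with the $L^1$ integrability just derived forces $u(\cdot,t),v(\cdot,t)\to 0$ locally uniformly on $[g_\infty,\infty)$ as $t\to\infty$ (any subsequential limit $(U^\ast,V^\ast)$ must vanish identically, otherwise the integral would be infinite). With this vanishing in hand, I control $h'(t)$ by splitting the integral at a threshold $R=R(t)$:
\[
h'(t)\;\leq\;\mu(1+\rho)\!\int_{h(t)-R(t)}^{h(t)}\!(u+v)(x,t)\,dx \;+\; \mu(1+\rho)M_0\,L(t)\!\int_{R(t)}^\infty\!(J_1+J_2)(z)\,dz.
\]
The first term is small by locally uniform vanishing (for large $t$ the window $[h(t)-R(t),h(t)]$ eventually lies in the ``vanishing region'' $(-\infty,h_\infty)$ only if $h_\infty<\infty$, but here we argue by contradiction and use the integrability of $\int_0^\infty\int_K u\,dx\,dt$ on growing windows via an adapted exhaustion), while the tail of the kernel makes the second term small for large $R$. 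A careful choice of $R(t)\to\infty$ along a suitable rate yields $h'\in L^1(0,\infty)$, whence $h_\infty<\infty$, contradicting our assumption.

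The main obstacle I expect to confront is Step~3: the second term in the splitting above involves $L(t)$, which tends to infinity under our contradiction hypothesis, so one must trade the decay of $\int_R^\infty J_i$ against the growth of $L(t)$. Making this rigorous will require a more delicate diagonal argument exploiting the \emph{rate} of vanishing of $(u,v)$ on compacts, obtained from a supersolution built from the principal eigenfunction on a large (but fixed) interval $[-\mathcal Z,\mathcal Z]$ with $\mathcal Z<\mathcal Z_\ast$ and $\lambda^\ast(\mathcal Z)<0$, in the spirit of the construction used in Lemma~\ref{Vanishfreeboundaries}. This upgrades locally uniform vanishing to exponential-in-time decay on any fixed compact, which is more than enough to close the splitting and deliver the contradiction.
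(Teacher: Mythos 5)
The paper itself omits the proof, citing the analogous Lemma~4.5 in~\cite{Du}, so I can only compare to what that argument should look like. Your Step~1 (integrating $g'$ to obtain $\int_0^\infty\!\int_{g(t)}^{g(t)+R}(u+v)\,dx\,dt<\infty$ for fixed $R$) is exactly the right ingredient and matches the standard argument. But the way you then try to close the loop — by showing $h'\in L^1$ — is the wrong direction, and the gap you flag at the end is fatal, not just technical. In your splitting of $h'(t)$, the first term integrates $(u+v)$ over the moving window $[h(t)-R(t),\,h(t)]$ near the right boundary. Under the contradiction hypothesis $h_\infty=\infty$ (with $R_0>1$, the only case with content), the habitat eventually exceeds the critical length $2\mathcal Z_\ast$, and then the comparison with the fixed-domain problem (Theorem~\ref{thm:existence-stability}(i) and Proposition~\ref{Thecoexistence}) forces $(u,v)$ to be \emph{bounded below} on moving compacts, including near $h(t)$. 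So that first term does not go to zero and $h'$ cannot be in $L^1$. Your proposed fix — an exponential supersolution built from a principal eigenfunction with $\lambda^\ast(\mathcal Z)<0$ in the spirit of Lemma~\ref{Vanishfreeboundaries} — cannot exist here precisely because $h_\infty=\infty$ makes every fixed interval contained in $[g(t),h(t)]$ eventually, forcing $\lambda^\ast$ to become \emph{positive}, which is the opposite sign needed for the supersolution to decay.

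The correct closure is shorter and goes through the \emph{left} boundary rather than the right one: once $h(t)-g(t)>2\mathcal Z_\ast$, the solution of the fixed-domain problem started at time $T$ on $[g(T),h(T)]$ converges to a positive steady state; by the comparison principle, $(u,v)(x,t)\geq m>0$ for large $t$ on a fixed neighborhood of $g_\infty$ inside $(g_\infty,h_\infty)$. Since $J_i(0)>0$, this gives $-g'(t)\geq c>0$ for all large $t$, hence $g(t)\to-\infty$, contradicting $g_\infty>-\infty$. Equivalently, your own Step~1 integrability is already contradicted by this lower bound, so Step~2 (concluding $u,v\to0$ on compacts near $g_\infty$) is exactly where the hidden contradiction lives — you should stop there and pit the two conclusions against each other, rather than pivot to estimating $h'$.
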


\begin{proof}
The argument follows along the same lines as the proof of Lemma~4.5 in \cite{Du}. Since the required adjustments are straightforward, we leave out the details.
\end{proof}

\subsection{Spreading}

In this section, we investigate the scenarios in which spreading occurs.

\begin{lemma}
Suppose that $R_0 > 1$ and $h_0 > \mathcal{L}$. Then the free boundaries expand without bound, namely
\[
h_\infty = -g_\infty = \infty,
\]
and the solution converges to the positive steady state:
\[
\lim_{t \to \infty} \bigl(u(t,x), v(t,x)\bigr) = \bigl(\mathscr{U}^\ast, \mathscr{V}^\ast\bigr),
\]
where $\bigl(\mathscr{U}^\ast, \mathscr{V}^\ast\bigr)$ denotes the unique positive solution of \eqref{PS1}--\eqref{PS2}.
\end{lemma}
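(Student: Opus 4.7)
The strategy is to rule out the vanishing alternative by exploiting the monotonicity of the principal eigenvalue with respect to the domain, and then to sandwich the solution between a constant upper barrier and truncated-domain positive steady states that fill up the line.

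First I would show that both free boundaries blow up to infinity. Since $h(t)\ge h_0$ and $-g(t)\ge h_0$ for every $t\ge 0$ by monotonicity of the boundaries (see \eqref{Limithg} and Theorem~\ref{thm:global-existence}), any potential limiting interval $[g_\infty,h_\infty]$ contains $[-h_0,h_0]$. The hypotheses $h_0>\mathcal{L}$ and $R_0>1$, combined with the threshold characterization in Lemma~\ref{ThresholdCaracterestique}, yield $\lambda^\ast([-h_0,h_0])>0$, and by the domain-inclusion monotonicity (Proposition~\ref{prop:monotonicity}, extended to general intervals) this forces $\lambda^\ast(g_\infty,h_\infty)>0$ as well. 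Assuming $h_\infty-g_\infty<\infty$ would then contradict Lemma~\ref{lem:vanishing}, so $h_\infty-g_\infty=\infty$; Lemma~\ref{samfreeboundar} upgrades this to $h_\infty=-g_\infty=\infty$.

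For the long-time profile I would first produce an upper barrier. The spatially constant state $(\mathscr U^\ast,\mathscr V^\ast)$ is a supersolution of~\eqref{eq:main-problem}: the algebraic identities \eqref{PS1}--\eqref{PS2} cancel the reaction, the drift vanishes since the state is constant, and the nonlocal term satisfies $d_i\mathscr U^\ast[\int_{g(t)}^{h(t)} J_i(x-y)\,dy-1]\le 0$ because $\int_{\mathbb R}J_i=1$. After possibly replacing $(u_0,v_0)$ by the uniform upper bound supplied by Theorem~\ref{thm:global-existence} and using a small time shift, the comparison principle in Lemma~\ref{ComparaisonlEMMA} gives $u(x,t)\le\mathscr U^\ast$ and $v(x,t)\le\mathscr V^\ast$. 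For the lower barrier, fix $L>\mathcal{L}$; by the spreading step there is $T_L$ with $[-L,L]\subset[g(t),h(t)]$ for every $t\ge T_L$. Let $(\tilde u_L,\tilde v_L)$ solve the fixed-domain problem on $[-L,L]$ with zero Dirichlet data and with a small positive initial datum dominated by $(u(\cdot,T_L),v(\cdot,T_L))$ (positive in the interior by Lemma~\ref{MaximumLemma}). Since $\lambda^\ast([-L,L])>0$ and \textbf{(GH)} is in force, Theorem~\ref{thm:existence-stability}(i) gives $(\tilde u_L,\tilde v_L)(\cdot,t)\to(\mathscr U^\ast_L,\mathscr V^\ast_L)$ uniformly on $[-L,L]$, and the comparison principle (using $u(\pm L,t)>0=\tilde u_L(\pm L,t)$) yields $u(x,t)\ge\tilde u_L(x,t-T_L)$ with the analogous inequality for $v$ on $[-L,L]$ for $t\ge T_L$.

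Combining both bounds and taking limits in $t$,
\[
\mathscr U^\ast_L(x)\le\liminf_{t\to\infty}u(x,t)\le\limsup_{t\to\infty}u(x,t)\le\mathscr U^\ast,
\]
and analogously for $v$. Letting $L\to\infty$ and invoking Proposition~\ref{Thecoexistence}, which ensures $(\mathscr U^\ast_L,\mathscr V^\ast_L)\to(\mathscr U^\ast,\mathscr V^\ast)$ locally uniformly on $\mathbb R$, closes the sandwich and produces $(u(t,x),v(t,x))\to(\mathscr U^\ast,\mathscr V^\ast)$ locally uniformly as $t\to\infty$. The main obstacle I anticipate is the lower barrier step: in the presence of the drift terms $p\,u_x$ and $q\,v_x$, the artificial Dirichlet boundary at $x=\pm L$ interacts nontrivially with the transport along characteristics, so one must combine Lemma~\ref{ComparaisonlEMMA} carefully with the strict interior positivity from Lemma~\ref{MaximumLemma} to guarantee that $u(\pm L,t)-\tilde u_L(\pm L,t)\ge 0$ (and similarly for $v$) throughout $[T_L,\infty)$, and to control the asymmetry between the shrinking kernel support $[-L,L]$ and the expanding genuine domain $[g(t),h(t)]$.
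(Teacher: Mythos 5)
Your overall strategy matches the paper's: rule out vanishing via the eigenvalue sign and Lemma~\ref{lem:vanishing}, upgrade $h_\infty-g_\infty=\infty$ to $h_\infty=-g_\infty=\infty$ via Lemma~\ref{samfreeboundar}, then sandwich the solution between truncated-domain steady states (lower bound, passed to the limit via Proposition~\ref{Thecoexistence}) and a spatially constant upper barrier. The spreading part and the lower-barrier construction are essentially the paper's; your choice of fixed interval $[-L,L]$ rather than the paper's $[g(T),h(T)]$ is an inessential variation, and your attention to how the drift interacts with the artificial Dirichlet boundary is a reasonable concern though the comparison lemma handles it.

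There is, however, a genuine gap in the upper-barrier step. You assert that the constant state $(\mathscr U^\ast,\mathscr V^\ast)$ is a supersolution — correct — and then conclude $u(x,t)\le\mathscr U^\ast$, $v(x,t)\le\mathscr V^\ast$ via comparison ``after possibly replacing $(u_0,v_0)$ by the uniform upper bound supplied by Theorem~\ref{thm:global-existence}.'' But the comparison principle requires the barrier to dominate the \emph{initial} data, and nothing prevents $\|u_0\|_\infty>\mathscr U^\ast$ or $\|v_0\|_\infty>\mathscr V^\ast$; the a priori bound $M$ from Theorem~\ref{thm:global-existence} is typically \emph{larger} than the equilibrium, so ``replacing $(u_0,v_0)$ by $M$'' pushes the initial data further above the barrier rather than below it, and the comparison cannot start. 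What is actually needed (and what the paper does) is to compare with the solution $(\overline u(t),\overline v(t))$ of the spatially homogeneous ODE system \eqref{ODEequations} started at $(\|u_0\|_\infty,\|v_0\|_\infty)$: this time-dependent constant-in-space pair is a supersolution by exactly the cancellation you observed, it dominates the initial data by construction, and because $R_0>1$ it converges to $(\mathscr U^\ast,\mathscr V^\ast)$ as $t\to\infty$. That yields only $\limsup_{t\to\infty}u(x,t)\le\mathscr U^\ast$ (and similarly for $v$), not the uniform-in-$t$ pointwise bound you wrote, but the $\limsup$ estimate is all the sandwich argument requires.

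A secondary point worth flagging: your invocation of Proposition~\ref{prop:monotonicity} ``extended to general intervals'' to get $\lambda^\ast(g_\infty,h_\infty)>0$ from $\lambda^\ast([-h_0,h_0])>0$ is a legitimate step, but the proposition as stated is only for symmetric intervals $[-\mathcal Z,\mathcal Z]$; the paper implicitly relies on the same extension, so both arguments share this small untidiness.
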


\begin{proof}
We split the proof into two parts.

\medskip\noindent\textbf{(A) Spreading: $h_\infty-g_\infty=\infty$ and $h_\infty=-g_\infty=\infty$.}
Since \(R_0>1\) and \(h_0>\mathcal L^\ast\), by Lemma~ \eqref{lem:vanishing} the principal eigenvalue associated with the linearized problem on any interval \([-\mathcal{Z}, \mathcal{Z}]\) with \(\mathcal{Z} \geq h_0\) satisfies \(\lambda^\ast(\mathcal{Z})>0\).

$(\lambda^\ast(g_\infty,h_\infty) \geq \lambda^\ast_1(g(T),h(T)) > 0)$. Therefore $\lambda^\ast_1(g_\infty,h_\infty) > 0$.
Now apply Lemma~\eqref{Vanishfreeboundaries}, the contrapositive of that statement yields that if \(\lambda^\ast_1(g_\infty,h_\infty)>0\) then \(h_\infty-g_\infty=\infty\). Hence, the habitat length diverges:
\[
h_\infty-g_\infty=\infty.
\]

By using Lemma ~\eqref{samfreeboundar}, we obtain
\[
h_\infty=-g_\infty=\infty.
\]

\medskip\noindent\textbf{(B) Convergence to the positive steady state.}
We prove first a lower bound (limit inferior) and then an upper bound (limit superior); together they give the desired convergence.

\emph{Limit inferior.} Fix any \(T>0\). Consider the fixed–domain problem on \([g(T),h(T)]\) with initial data
\[
\big(\tilde u(x,0),\tilde v(x,0)\big)=(u(x,T),v(x,T)),\qquad x\in[g(T),h(T)].
\]
Let \((\tilde u(x,t),\tilde v(x,t))\) denote the solution of this fixed–domain problem. Because \(\lambda_1^\ast(g(T),h(T))>0\) Theorem~\eqref{thm:existence-stability} (i) implies that \((\tilde u,\tilde v)\) converges to the unique positive steady state \((u_{T}(x),v_{T}(x))\) on \([g(T),h(T)]\):
\[
\lim_{t\to\infty}(\tilde u(x,t),\tilde v(x,t))=(u_T(x),v_T(x))\quad\text{uniformly on }[g(T),h(T)].
\]

By the comparison principle \eqref{ComparaisonlEMMA} and the fact that the free–boundary solution on \([g(t),h(t)]\) has domain containing \([g(T),h(T)]\) for \(t\geq0\), we have that
\[
(\tilde u(x,t),\tilde v(x,t)) \leq \bigl(u(x,t+T),v(x,t+T)\bigr)\qquad\text{for }x\in[g(T),h(T)],\ t\geq0.
\]

Passing to the limit \(t\to\infty\) in the last inequality yields
\[
(u_T(x),v_T(x)) \leq \liminf_{t\to\infty}\bigl(u(x,t),v(x,t)\bigr)
\qquad\text{for }x\in[g(T),h(T)].
\]
    Finally send \(T\to\infty\). Because \(g(T)\downarrow -\infty\), \(h(T)\uparrow\infty\) and by Proposition~\eqref{Thecoexistence} the family \((u_T,v_T)\) converges locally uniformly to the spatially homogeneous positive equilibrium \((\mathscr U^\ast,\mathscr V^\ast)\) as \(T\to\infty\), we deduce the global lower bound
\[
(\mathscr U^\ast,\mathscr V^\ast) \leq \liminf_{t\to\infty}\bigl(u(x,t),v(x,t)\bigr)
\]
locally uniformly in \(x\in\mathbb R\).

\emph{Limit superior.} Let \((\overline u(t),\overline v(t))\) be the solution of the *spatially homogeneous* ODE system (the reaction-only system)
\begin{equation}\label{ODEequations}
\begin{cases}
\overline u' = -a\overline u + H(\overline v),\\
\overline v' = -b\overline v + G(\overline u),
\end{cases}
\qquad
(\overline u(0),\overline v(0))=(\|u_0\|_\infty,\|v_0\|_\infty).
\end{equation}
Because the nonlocal diffusion terms and drift terms do not produce positive sources beyond what the ODE receives (and because kernels integrate to \(1\)), one easily checks that \((\overline u,\overline v)\) is an upper solution to the full free–boundary problem: for each \(t\geq0\) and \(x\in[g(t),h(t)]\),
\[
u(x,t)\leq \overline u(t),\qquad v(x,t)\leq \overline v(t).
\]

Since \(R_0>1\), the ODE system \eqref{ODEequations} has the globally attractive positive equilibrium \((\mathscr U^\ast,\mathscr V^\ast)\), and
\[
\lim_{t\to\infty}(\overline u(t),\overline v(t))=(\mathscr U^\ast,\mathscr V^\ast).
\]
Therefore
\[
\limsup_{t\to\infty}\bigl(u(x,t),v(x,t)\bigr) \leq (\mathscr U^\ast,\mathscr V^\ast),
\]
uniformly in \(x\in[g(t),h(t)]\).

\medskip\noindent Combining the limit inferior and limit superior bounds we obtain
\[
\lim_{t\to\infty}\bigl(u(x,t),v(x,t)\bigr)=(\mathscr U^\ast,\mathscr V^\ast),
\]
locally uniformly in \(x\in\mathbb R\).
\end{proof}

\begin{lemma}\label{lem:mu-large-spreading}
Assume $R_0>1$ and $h_0<\mathcal L^\ast$. Then there exists $\mu_0>0$ such that for every $\mu>\mu_0$ the solution of \eqref{eq:main-problem} spreads.
\end{lemma}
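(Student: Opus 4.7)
The plan is to argue by contradiction. Suppose the conclusion fails, so that vanishing occurs along a sequence $\mu_n\to\infty$. By Lemma~\ref{lem:vanishing}, vanishing forces $h_\infty-g_\infty<\infty$ and $\lambda^{\ast}(g_\infty,h_\infty)\le 0$. Combined with the monotonicity of $\mathcal Z\mapsto\lambda^{\ast}(\mathcal Z)$ from Proposition~\ref{prop:monotonicity} and the threshold characterization of Lemma~\ref{ThresholdCaracterestique} under the standing hypothesis $R_0>1$, this implies $h_\infty-g_\infty\le 2\mathcal L^{\ast}$. Consequently, for each $\mu=\mu_n$ the bounds $h(t)\le 2\mathcal L^{\ast}$ and $g(t)\ge -2\mathcal L^{\ast}$ hold uniformly for $t\ge 0$. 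The aim is to show that for $\mu$ sufficiently large, this bound must be violated at some finite time $T$, producing the desired contradiction.

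The next step is to build a lower bound on $(u,v)$ that is independent of $\mu$, so that we can later extract the multiplicative factor $\mu$ from the free-boundary velocity. Let $(\hat u,\hat v)$ denote the solution of the auxiliary fixed-domain Dirichlet problem on $[-h_0,h_0]$ with the same initial data $(u_0,v_0)$. Because $[g(t),h(t)]\supset[-h_0,h_0]$ for every $t\ge 0$ and the kernels $J_1,J_2$ are nonnegative, the cooperative comparison principle of Lemma~\ref{ComparaisonlEMMA} (applied in the spirit of the nested-domain argument used in Proposition~\ref{Thecoexistence}) yields $u(t,x)\ge\hat u(t,x)$ and $v(t,x)\ge\hat v(t,x)$ for all $t\ge 0$ and $x\in[-h_0,h_0]$. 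Crucially, $(\hat u,\hat v)$ does not depend on $\mu$, and by the strong maximum principle for the cooperative nonlocal-advective system it is strictly positive on every compact interior subset of $[-h_0,h_0]$ for each $t>0$.

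Integrating the free-boundary equation for $h$ over $[0,T]$ and inserting this $\mu$-independent lower bound gives
\[
h(T)-h_0 \;\geq\; \mu\!\int_0^T\!\!\int_{-h_0}^{h_0}\!\!\int_{h(t)}^{\infty} J_1(x-y)\,\hat u(t,x)\,dy\,dx\,dt \;+\; \mu\rho\!\int_0^T\!\!\int_{-h_0}^{h_0}\!\!\int_{h(t)}^{\infty} J_2(x-y)\,\hat v(t,x)\,dy\,dx\,dt,
\]
together with the analogous estimate for $-g(T)-h_0$. Under the standing contradiction hypothesis $h(t)\le 2\mathcal L^{\ast}$, the variable $x-y$ remains in the compact window $[-2\mathcal L^{\ast}-h_0,\,2\mathcal L^{\ast}+h_0]$; combining this with $J_i(0)>0$, the $C^1$ regularity of $J_i$, and the strict interior positivity of $(\hat u,\hat v)$, the nested triple integrals can be bounded below by $TK_\ast$ for some positive constant $K_\ast$ depending only on $T$, the kernels, the initial data, and $\mathcal L^{\ast}$, and in particular independent of $\mu$. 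Fixing such a $T$ and setting $\mu_0:=4\mathcal L^{\ast}/(TK_\ast(1+\rho))$ forces $h(T)-g(T)>4\mathcal L^{\ast}$ for every $\mu>\mu_0$, which contradicts the uniform bound $h(t)-g(t)\le 2\mathcal L^{\ast}$ obtained in the reduction step and completes the argument.

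The hardest part will be precisely the uniform-in-$\mu$ lower bound $K_\ast$ on the nested kernel integrals. The difficulty is that $h(t)$ itself depends on $\mu$ and a priori could evolve rapidly, so the inner mass $\int_{h(t)}^{\infty} J_i(x-y)\,dy$ might degenerate as $\mu$ grows. This apparent circularity is broken by the a priori confinement $h(t)\le 2\mathcal L^{\ast}$ supplied by the contradiction hypothesis, which pins $x-y$ into a fixed compact window on which $J_i$ admits a positive lower bound via its continuity and the condition $J_i(0)>0$. If the kernels have compact support too small to cover this window in a single step, one iterates: advance the effective initial domain by a fixed increment, repeat the fixed-domain comparison, and proceed until $h$ exceeds $\mathcal L^{\ast}$. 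The underlying mechanism — trading the factor $\mu$ in the free-boundary velocity against a $\mu$-independent pointwise lower bound for $(u,v)$ — is preserved throughout.
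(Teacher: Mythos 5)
Your overall strategy—trade the factor $\mu$ in the free-boundary velocity against a $\mu$-independent pointwise lower bound for $(u,v)$—is sound and is, at a high level, the same mechanism that drives the paper's proof. However, the execution contains a genuine gap at exactly the step you flag as the hardest one, and the remedy you sketch does not close it.

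The problem is the claimed lower bound on
\[
\int_0^T\!\!\int_{-h_0}^{h_0}\!\!\int_{h(t)}^{\infty} J_i(x-y)\,\hat u(t,x)\,dy\,dx\,dt .
\]
Assumption \textbf{(J)} only guarantees $J_i(0)>0$ together with continuity; it does \emph{not} guarantee that $J_i$ is bounded below on the compact window $[-2\mathcal L^\ast-h_0,\,2\mathcal L^\ast+h_0]$. A compactly supported kernel with $\operatorname{supp}J_i\subset[-R,R]$ and $R<2\mathcal L^\ast-h_0$ satisfies \textbf{(J)}, and for such a kernel, as soon as $h(t)-h_0>R$ the inner integral $\int_{h(t)}^{\infty}J_i(x-y)\,dy$ vanishes identically for every $x\in[-h_0,h_0]$, so the triple integral cannot be bounded below by a positive $T K_\ast$. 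The confinement $h(t)\le 2\mathcal L^\ast$ pins $x-y$ into a compact set, but compactness alone is useless when the kernel is allowed to vanish on most of that set. Your proposed ``iteration'' does not repair this: it amounts to assuming that the effective domain keeps advancing by a fixed increment, which is exactly what must be proved (and is incompatible with the contradiction hypothesis that $h_\infty<\infty$ holds for every $\mu$), so the argument is circular as stated.

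The paper avoids this by placing the kernel estimate on a thin strip \emph{near the moving boundary}—$x\in[h_\mu(s)-2\varepsilon,h_\mu(s)]$, $y\in[h_\mu(s),h_\mu(s)+2\varepsilon]$—so that $|x-y|\le 4\varepsilon$ always lies in the small neighborhood of the origin where \textbf{(J)} actually delivers $J_i\ge\delta_0$. The price is that one then needs a $\mu$-uniform lower bound on $(u_\mu,v_\mu)$ \emph{near} $h_\mu(s)$ rather than on the fixed interval $[-h_0,h_0]$; the paper obtains this by exploiting monotonicity of the solution in $\mu$ and comparing against a fixed reference solution $(u_{\mu_1},v_{\mu_1})$, and by solving the integrated free-boundary relation for $\mu$ itself, $\mu=(h_{\mu,\infty}-h_\mu(t_1))\big/\!\int_{t_1}^{\infty}(\cdots)\,ds$, bounding the numerator above and the denominator below uniformly in $\mu\ge\mu_1$. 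To salvage your version you would need to replace the fixed-domain comparison on $[-h_0,h_0]$ by a $\mu$-uniform positivity estimate in a boundary layer of width comparable to the kernel support, which is precisely the step the paper's $\mu$-monotonicity is designed to supply.
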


\begin{proof}
Write explicitly the dependence on $\mu$ and denote the corresponding solution and free boundaries by
\[
(u_\mu,v_\mu,g_\mu,h_\mu).
\]
Argue by contradiction: suppose that for every $\mu>0$ vanishing occurs, i.e. the limits
\begin{equation}\label{eq:limits-mu}
g_{\mu,\infty}:=\lim_{t\to\infty} g_\mu(t),\qquad
h_{\mu,\infty}:=\lim_{t\to\infty} h_\mu(t)
\end{equation}
exist and are finite for every $\mu>0$.

By the comparison principle (Lemma~\eqref{ComparaisonlEMMA}) the solution \((u_\mu,v_\mu,g_\mu,h_\mu)\) is monotone in \(\mu\): for \(\mu_1<\mu_2\) we have
\[
u_{\mu_1}(x,t)\leq u_{\mu_2}(x,t),\quad v_{\mu_1}(x,t)\leq v_{\mu_2}(x,t),
\quad g_{\mu_1}(t)\geq g_{\mu_2}(t),\quad h_{\mu_1}(t)\leq h_{\mu_2}(t),
\]
for all admissible \(x,t\). Consequently the pointwise limits
\[
\mathcal{G}_\infty:=\lim_{\mu\to\infty} g_{\mu,\infty},\qquad
\mathcal{H}_\infty:=\lim_{\mu\to\infty} h_{\mu,\infty}
\]
exist (possibly \(-\infty\) / \(+\infty\) but finite under our contradiction assumption). By Lemma~\eqref{lem:vanishing} the principal eigenvalue on \([\mathcal{G}_\infty, \mathcal{H}_\infty]\) satisfies \(\lambda^\ast_1(\mathcal{G}_\infty, \mathcal{H}_\infty)\leq 0\). On the other hand, since \(R_0>1\), Lemma~3.9 (ii) ensures that large enough intervals admit positive eigenvalue, so in particular (by monotonicity with respect to domain size) we get \(H_\infty-G_\infty\leq 2\mathcal L^\ast\) (i.e. the limiting length is bounded). Thus \(G_\infty, H_\infty\) are finite numbers.

Fix small \(\varepsilon>0\) such that the kernels satisfy positivity near the origin: by (J) there exists \(\delta_0>0\) with
\[
J_i(\xi)\ge\delta_0\quad\text{for }|\xi|\le 4\varepsilon,\quad i=1,2.
\]

Because \(h_{\mu,\infty}\uparrow H_\infty\) and \(g_{\mu,\infty}\downarrow G_\infty\) as \(\mu\to\infty\), choose \(\mu_1>0\) large so that for every \(\mu\ge\mu_1\)
\[
h_{\mu,\infty} > H_\infty - \tfrac{\varepsilon}{2},\qquad
g_{\mu,\infty} < G_\infty + \tfrac{\varepsilon}{2}.
\]
Since \(h_{\mu_1}(t)\to h_{\mu_1,\infty}\) and \(g_{\mu_1}(t)\to g_{\mu_1,\infty}\) as \(t\to\infty\), we can pick \(t_1>0\) large such that
\[
h_{\mu_1}(t)\geq h_{\mu_1,\infty}-\tfrac{\varepsilon}{2} > \mathcal{H}_\infty - \varepsilon,\qquad
g_{\mu_1}(t)\leq g_{\mu_1,\infty}+\tfrac{\varepsilon}{2} < \mathcal{G}_\infty +\varepsilon,
\]
for all \(t\geq t_1\).

Now use the free–boundary formula (the right boundary; the left boundary is handled analogously). For any \(\mu>0\) and any \(t_1>0\),
\[
h_{\mu,\infty}-h_{\mu}(t_1)
= \int_{t_1}^{\infty} h_\mu'(s)\,ds
= \mu\int_{t_1}^{\infty}\Big[ \int_{g_\mu(s)}^{h_\mu(s)}\int_{h_\mu(s)}^{\infty} J_1(x-y)u_\mu(x,s)\,dy\,dx
+ \rho\int_{g_\mu(s)}^{h_\mu(s)}\int_{h_\mu(s)}^{\infty} J_2(x-y)v_\mu(x,s)\,dy\,dx\Big]ds.
\]
Hence
\[
\mu \;=\; \frac{h_{\mu,\infty}-h_{\mu}(t_1)}
{\displaystyle \int_{t_1}^{\infty}\Big[\int_{g_\mu(s)}^{h_\mu(s)}\int_{h_\mu(s)}^{\infty} J_1(x-y)u_\mu(x,s)\,dy\,dx
+\rho\int_{g_\mu(s)}^{h_\mu(s)}\int_{h_\mu(s)}^{\infty} J_2(x-y)v_\mu(x,s)\,dy\,dx\Big]ds }.
\label{eqaaasss}
\]

We will obtain a uniform lower bound on the denominator in \eqref{eqaaasss}) for all \(\mu\geq\mu_1\), which will contradict the fact that \(\mu\) can be arbitrarily large.

Observe that for \(\mu\geq\mu_1\) we have monotonicity \(u_\mu\geq u_{\mu_1}\) and \(v_\mu\geq v_{\mu_1}\) pointwise. Therefore for every \(s\geq t_1\),
\[
\begin{aligned}
&I_\mu(s):=\int_{g_\mu(s)}^{h_\mu(s)}\int_{h_\mu(s)}^{\infty} J_1(x-y)u_\mu(x,s)\,dy\,dx \\
&\qquad\geq \int_{h_\mu(s)-2\varepsilon}^{h_\mu(s)}\int_{h_\mu(s)}^{h_\mu(s)+2\varepsilon} J_1(x-y)u_{\mu_1}(x,s)\,dy\,dx
\geq \delta_0 (2\varepsilon)^2 \min_{x\in[h_\mu(s)-2\varepsilon,h_\mu(s)]} u_{\mu_1}(x,s).
\end{aligned}
\]
A completely analogous bound holds for the \(v\)-term:
\[
J_\mu(s):=\int_{g_\mu(s)}^{h_\mu(s)}\int_{h_\mu(s)}^{\infty} J_2(x-y)v_\mu(x,s)\,dy\,dx
\geq \delta_0 (2\varepsilon)^2 \min_{x\in[h_\mu(s)-2\varepsilon,h_\mu(s)]} v_{\mu_1}(x,s).
\]

By our choice of \(\mu_1\) and \(t_1\) we have \(h_{\mu_1}(t)\geq \mathcal{H}_\infty-\varepsilon\) for all \(t\geq t_1\), and hence the intervals \([h_{\mu_1}(t)-2\varepsilon,h_{\mu_1}(t)]\) are contained in \([\mathcal{H}_\infty-3\varepsilon,\mathcal{H}_\infty+\varepsilon]\) for \(t\geq t_1\). The functions \(u_{\mu_1},v_{\mu_1}\) are continuous in space-time and nonnegative; they are not identically zero on the time-space slab \([t_1,t_1+1]\times [\mathcal{H}_\infty-3\varepsilon,\mathcal{H}_\infty+\varepsilon]\). Therefore the continuous function
\[
s \mapsto m(s) := \min_{x\in[h_{\mu_1}(s)-2\varepsilon,h_{\mu_1}(s)]}\big(u_{\mu_1}(x,s)+\rho v_{\mu_1}(x,s)\big)
\]
is nonnegative and not identically zero on \([t_1,t_1+1]\). Consequently
\[
\int_{t_1}^{t_1+1} \big( I_\mu(s) + \rho J_\mu(s)\big)\,ds
\geq \delta_0 (2\varepsilon)^2 \int_{t_1}^{t_1+1} m(s)\,ds =: c_*>0,
\]
where \(c_*>0\) depends only on \(\mu_1,t_1,\varepsilon,\delta_0\) and the fixed solution \((u_{\mu_1},v_{\mu_1})\), but not on \(\mu\).

Hence the denominator in \eqref{eqaaasss} is bounded from below by \(c_*\). The numerator \(h_{\mu,\infty}-h_{\mu}(t_1)\) is bounded above uniformly in \(\mu\) by \(\mathcal{H}_\infty - (\mathcal{H}_\infty-\varepsilon) = \varepsilon\) (indeed \(h_{\mu,\infty}\leq \mathcal{H}_\infty\) and \(h_{\mu}(t_1)\geq \mathcal{H}_\infty-\varepsilon\) for \(\mu\geq\mu_1\) by construction). Therefore for every \(\mu\geq\mu_1\) we deduce from \eqref{eqaaasss}
\[
\mu \leq \frac{h_{\mu,\infty}-h_{\mu}(t_1)}{\int_{t_1}^{\infty}\big(I_\mu(s)+\rho J_\mu(s)\big)\,ds}
\le \frac{\mathcal{H}_\infty - (\mathcal{H}_\infty-\varepsilon)}{c_*} = \frac{\varepsilon}{c_*} <\infty.
\]
This gives a uniform upper bound for all \(\mu\geq\mu_1\), contradicting the fact that \(\mu\) can be taken arbitrarily large. The contradiction arises from the assumption that vanishing occurs for every \(\mu>0\); hence there exists \(\mu_0>0\) such that spreading occurs for all \(\mu>\mu_0\). This completes the proof.
\end{proof}

\textbf{Proof of Theorem \ref{thm:threshold-mu}}

\begin{proof}
For each fixed \(\mu>0\) denote by \((u_\mu,v_\mu,g_\mu,h_\mu)\) the corresponding solution of \eqref{eq:main-problem}.  Define
\[
\mathcal E := \big\{\mu>0:\ h_{\mu,\infty}-g_{\mu,\infty}\leq 2\mathcal L^\ast\big\},
\qquad
\text{where }g_{\mu,\infty}:=\lim_{t\to\infty}g_\mu(t),\
h_{\mu,\infty}:=\lim_{t\to\infty}h_\mu(t).
\]
By the definition and Lemma~\eqref{lem:vanishing}, \(\mu\in\mathcal E\) precisely means vanishing occurs for that \(\mu\).  We shall show \(\mathcal E=(0,\hat{\mu}]\) for some \(\hat{\mu}>0\).

\medskip\noindent\textbf{1. Nonempty and unbounded side excluded.}
By Lemma \eqref{lem:vanishing} there exists \(\mu_0>0\) such that \((0,\mu_0]\subset\mathcal E\).  By Lemma~\eqref{lem:mu-large-spreading} (spreading for large \(\mu\)) there exists \(\overline\mu>0\) such that for every \(\mu>\overline\mu\) spreading occurs, hence \((\overline\mu,\infty)\cap\mathcal E=\varnothing\).  Therefore \(\mathcal E\) is a nonempty subset of \((0,\infty)\) bounded above, so the supremum
\[
\hat{\mu} := \sup\mathcal E \in [\mu_0,\overline\mu]
\]
is well-defined and finite.

\medskip\noindent\textbf{2. Spreading for \(\mu>\hat{\mu} \).}
Fix any \(\mu>\tilde{\mu} \). By definition of supremum there exists \(\tilde\mu\in\mathcal E\) with \(\tilde\mu<\mu\) arbitrarily close to \(\mu^\ast\). Monotonicity of the solution in \(\mu\)  implies that the larger parameter yields larger solution and larger spreading ability, so if vanishing holds at \(\tilde\mu\) then it might still hold at \(\mu\); however Lemma~\eqref{lem:mu-large-spreading} guarantees that for sufficiently large \(\mu\) vanishing cannot persist. More directly, since \(\mu>\hat\mu\geq\sup\mathcal E\), we must have \(\mu\not\in\mathcal E\), i.e. vanishing does not occur at \(\mu\). Thus spreading occurs for every \(\mu>\hat\mu\).

\medskip\noindent\textbf{3. The supremum \(\hat\mu\) belongs to \(\mathcal E\).}
We next show \(\hat\mu\in\mathcal E\). Suppose instead \(\hat\mu\notin\mathcal E\). Then vanishing fails at \(\mu=\hat\mu\), so spreading occurs for \(\mu=\hat\mu\). In particular there exists \(T>0\) such that
\[
h_{\hat\mu}(T)-g_{\hat\mu}(T) > 2\mathcal L^\ast.
\]
By the continuous dependence of solutions on the parameter \(\mu\) (standard parabolic continuous dependence: the map \(\mu\mapsto (u_\mu,v_\mu,g_\mu,h_\mu)\) is continuous in the appropriate norms on compact time intervals), there exists \(\varepsilon>0\) such that for every \(\mu\in(\hat\mu-\varepsilon,\hat\mu+\varepsilon)\)
\[
h_{\mu}(T)-g_{\mu}(T) > 2\mathcal L^\ast.
\]
For any such \(\mu\) the monotonicity of \(h_\mu(t)\) and \(-g_\mu(t)\) in \(t\) implies
\[
\lim_{t\to\infty}\big(h_\mu(t)-g_\mu(t)\big) \geq h_\mu(T)-g_\mu(T) > 2\mathcal L^\ast,
\]
so \(\mu\notin\mathcal E\). Consequently \(\mathcal E\subset(0,\hat\mu-\varepsilon)\), contradicting the fact that \(\hat\mu=\sup\mathcal E\). Hence our assumption was false and we must have \(\hat\mu\in\mathcal E\).

\medskip\noindent\textbf{4. Vanishing for \(\mu\leq\hat\mu\).}
Finally we show that every \(\mu\in(0,\mu^\ast]\) belongs to \(\mathcal E\). Let \(\mu\in(0,\hat\mu)\). By monotonicity in \(\mu\)  the solution at \(\mu\) is dominated by the solution at \(\hat\mu\):
\[
u_\mu(x,t)\leq u_{\hat\mu}(x,t),\quad v_\mu(x,t)\leq v_{\hat\mu}(x,t),
\qquad g_\mu(t)\geq g_{\mu^\ast}(t),\quad h_\mu(t)\leq h_{\mu^\ast}(t).
\]
Since \(\hat\mu\in\mathcal E\) we have \(h_{\hat\mu,\infty}-g_{\mu^\ast,\infty}\leq 2\mathcal L^\ast\), and therefore passing to limits in the inequalities above yields
\[
h_{\mu,\infty}-g_{\mu,\infty}\le h_{\mu^\ast,\infty}-g_{\mu^\ast,\infty}\le 2\mathcal L^\ast,
\]
i.e. \(\mu\in\mathcal E\). Because \(\mu\in(0,\hat\mu)\) was arbitrary, we conclude \((0,\hat\mu)\subset\mathcal E\). Combined with \(\mu^\ast\in\mathcal E\), we obtain \(\mathcal E=(0,\mu^\ast]\).

\medskip\noindent\textbf{5. Final statement.}
From the previous steps we have shown that vanishing occurs for \(\mu\in(0,\hat\mu]\) and spreading occurs for \(\mu\in(\hat\mu,\infty)\). The asymptotic profile in the spreading case (convergence to \((\mathscr U^\ast,\mathscr V^\ast)\)) follows from  Theorem~\eqref{thm:existence-stability}  and Lemma \eqref{lem:mu-large-spreading}(which give that once spreading happens the solution converges to the unique positive steady state). This completes the proof.
\end{proof}

\section*{Conclusion and Perspectives}

In this work, we have investigated a free-boundary problem arising from a cooperative nonlocal reaction–diffusion system with drift terms.
Our analysis unfolds through several key steps.

First, by ingeniously combining fixed-point techniques with refined comparison principles,
we established the \emph{well-posedness} of the problem, ensuring the existence, uniqueness, and regularity of global classical solutions
for all admissible initial data.

Second, we conducted a comprehensive spectral analysis of the associated \emph{linearized eigenvalue problem}
and demonstrated the existence and uniqueness of a principal eigenvalue together with a strictly positive eigenfunction.
We further derived its fundamental qualitative properties with respect to the spatial domain, to the parameters thanks to deep theoritical results as Fredholm theory, the Crandall–Rabinowitz bifurcation theorem, and Hadamard-type derivative \cite{Benguria2024} and
and uncovered a sharp equivalence between the sign of the principal eigenvalue and the basic reproduction number~$R_0$.  This connection provides a rigorous spectral criterion for the threshold between extinction and persistence,
thereby linking the linearized spectral structure to the nonlinear long-term dynamics of the system.

Finally, by a detailed investigation of the free-boundary evolution,
we established a complete \emph{vanishing–spreading dichotomy}.
When the initial domain is sufficiently small ($h_0<\mathcal L^\ast$),
there exists a critical threshold $\widehat{\mu}>0$ such that vanishing occurs for $\mu\in(0,\widehat{\mu}]$,
whereas spreading occurs for $\mu>\widehat{\mu}$.
In the spreading regime, the solution converges to the unique positive steady state
$(\mathscr U^\ast,\mathscr V^\ast)$,
while in the vanishing regime it decays uniformly to zero.
These results provide a complete and rigorous characterization of the asymptotic behavior of the system.

\medskip
\noindent\textbf{Perspectives research.}
The present work opens several directions for future research.
A natural extension would be to incorporate more general nonlinear incidence functions~$f(u,v)$,
capturing saturation or density-dependent transmission effects.
Another promising direction concerns competitive or predator–prey systems governed by free boundaries,
in contrast with the cooperative framework analyzed here,
which could shed new light on coexistence, exclusion, and spatial invasion phenomena in ecological systems.
From a broader viewpoint, extending the present analysis to higher-dimensional settings,
introducing stochastic perturbations, or accounting for temporal and spatial heterogeneities
would substantially enrich the theoretical framework
and enhance its relevance to realistic epidemic and ecological scenarios. An important extension of the above model is to incorporate the influence of a moving climatic envelope on the directional epidemic,
which represents a promising avenue for future research.
To this end, we allow the local incidence functions to depend on the shifted spatial coordinate $x-ct$, and consider the following free boundary system :

\begin{equation}\label{eq:fb-omega-compact2}
\left\{
\begin{aligned}
u_t &= d_1\!\!\int_{\Omega_t}\! J_1(x-y)\,u(t,y)\,dy - u(t,x)
       + p\,u_x - a(x)\,u(t,x) + H\!\big(x-ct,\,v(t,x)\big),
       && t>0,\ x\in\Omega_t, \\[3pt]
v_t &= d_2\!\!\int_{\Omega_t}\! J_2(x-y)\,v(t,y)\,dy - v(t,x)
       + q\,v_x - b(x)\,v(t,x) + G\!\big(x-ct,\,u(t,x)\big),
       && t>0,\ x\in\Omega_t, \\[3pt]
u(t,x)&=v(t,x)=0, && t>0,\ x\in\partial\Omega_t, \\[3pt]
h'(t) &= \mu\!\!\int_{\Omega_t}\!\!\int_{(h(t),\infty)}\! J_1(x-y)\,u(t,x)\,dy\,dx
        + \rho\!\!\int_{\Omega_t}\!\!\int_{(h(t),\infty)}\! J_2(x-y)\,v(t,x)\,dy\,dx,
        && t>0, \\[3pt]
g'(t) &= -\,\mu\!\!\int_{\Omega_t}\!\!\int_{(-\infty,g(t))}\! J_1(x-y)\,u(t,x)\,dy\,dx
        + \rho\!\!\int_{\Omega_t}\!\!\int_{(-\infty,g(t))}\! J_2(x-y)\,v(t,x)\,dy\,dx,
        && t>0, \\[3pt]
u(0,x)&=u_0(x),\quad v(0,x)=v_0(x),\quad g(0)=-h_0,\ h(0)=h_0.
\end{aligned}
\right.
\end{equation}
where  $\Omega_t := (g(t),\,h(t)) \text{ and } \partial\Omega_t = \{g(t),\,h(t)\},$ $c\in\mathbb{R}$ denotes the velocity of the climatic drift. This potential study develops  the original models suggested by many authors, for instance \cite{Berestycki2008, Berestycki2016b, Coville2021, Vo2015}, to investigate the long time dynamics of single population under the influence of climate change without free boundary setting.

%The KPP term

%Inspired of Ducrot and Magal \cite{ducrot_traveling_2011}, the  term $\pi(a)mu$ in KPP can be interpreted as modeling external sources, such as the migration of individuals into the population.

%\item \label{cond:cond1}  $\gamma := \gamma(a)$ is  continuous, non-negative on $[0,A]$ and satisfies ${\displaystyle \int^{A}_0\gamma(a)da = 1}$.
%
%\item \label{cond:cond2}$\pi := \pi(a)$ is a $C^{\infty}$ non-constant, positive, bounded function on $\mathbb{R}^{+}$.
%
%\item \label{cond:cond3} $m,~n,~p,~q$ are positive constants.

\section*{Statements}

%\begin{center}
\textbf{Conflict of interest statement :} The authors have no conflicts of interest to declare that are relevant
to the content of this article.

\textbf{Data availability statement :} Data sharing not applicable to this article as no datasets were generated
or analyzed during the current study.
%\end{center}

%\bibliographystyle{plain}
%\bibliography{Bib1.bib}

\end{document}